%%%%%%%
% S 230711
% S 230716
% S 230717
% S 230830
% G 230831
% S 230904
% S 230909
% S 230912
% G 230912
% S 230913
% S 230922

%\documentclass[12pt]{amsart}
%%\input{amssym.def}
%\usepackage{latexsym}
%%\usepackage{a4,german}
%\usepackage[latin1]{inputenc}
%\usepackage{amsmath}

\documentclass[12pt,a4paper,reqno]{amsart} %Fr o m detta
\usepackage{amssymb}
\usepackage{latexsym}
\usepackage{amsmath}
\usepackage{mathrsfs}

\def\R{{\mathbb{R}}}
\def\N{{\mathbb{N}}}

\def\pd{\mathrm{pd}}
\def\hyp{\mathrm{hyp}}
\def\reg{\mathrm{reg}}
\def\osc{\mathrm{osc}}
\usepackage{color}
\usepackage[dvipsnames]{xcolor}

\newcommand{\<}{\langle}
\renewcommand{\>}{\rangle}
\renewcommand{\phi}{\varphi}

     %%%%%   for <x>

\newcommand{\Op}{\operatorname{Op}}
\newcommand{\SG}{\operatorname{SG}}

\newcommand{\norm}[1]{\langle#1\rangle}

\def\eps{{\varepsilon}}

\def\scS{{\mathscr S}}
\def\scSp{{\mathscr S}^\prime}

\def\jap#1{\langle {#1} \rangle}
\def\norm#1{\langle #1 \rangle}

\def\SGs{S}

\def\SGH#1#2#3#4#5{S^{#1,#2}\{#3,#4\}_{#5}^{\textnormal{hyp}}}
\def\SGO#1#2#3#4#5{S^{#1,#2}\{#3,#4\}_{#5}^{\textnormal{reg}}}
\def\SGP#1{S_{#1}^{\textnormal{pd}}}

\newtheorem{thm}{Theorem}[section]
\newtheorem{prop}[thm]{Proposition}%[chapter]
\newtheorem{lem}[thm]{Lemma}%[chapter]
\newtheorem{cor}[thm]{Corollary}%[chapter]

\theoremstyle{definition}
\newtheorem{exmp}[thm]{Example}%[chapter]
\newtheorem{defn}[thm]{Definition}%[chapter]

\theoremstyle{remark}
\newtheorem{rem}[thm]{Remark}%[chapter]
\renewcommand{\theta}{\vartheta}

\newcommand{\beq}{\begin{eqnarray}}
\newcommand{\eeq}{\end{eqnarray}}
\newcommand{\beqst}{\begin{eqnarray*}}
\newcommand{\eeqst}{\end{eqnarray*}}
\newcommand{\be}{\begin{equation}}
\newcommand{\ee}{\end{equation}}

\newcommand{\iy}{\infty}

\newcommand{\pa}{\partial}

\newcommand{\al}{\alpha}

\newcommand{\va}{\varphi}
\newcommand{\il}{\int\limits}

\newcommand{\ve}{\varepsilon}
\newcommand{\si}{\sigma}

\newcommand{\weight}{\bigg(\frac{\lambda(t)}{\Lambda(t)}\ln\bigg(\frac{1}{\Lambda(t)}\bigg)\bigg)^k}
\newcommand{\kR}{\mbox{\scriptsize\rm R\hspace{-0,7em}\rule{0,08ex}{1,55ex}
\hspace{0,4em}}}
\DeclareMathOperator*{\esssup}{ess\sup}

\addtocounter{section}{0}
\setcounter{equation}{0}

\title[Hyperbolic Cauchy Problems on $\R^{d}$]{Global Wellposedness of a Class of \\ Weakly Hyperbolic Cauchy Problems\\ with Variable Multiplicities on $\R^d$}

\author{Sandro Coriasco}
\address{Dipartimento di Matematica ``G. Peano'', Universit\`a degli Studi di Torino, Torino, Italy}
\email{sandro.coriasco@unito.it}

\author{Giovanni Girardi}
\address{Dipartimento di Ingegneria Industriale e Scienze Matematiche, Universit\`a Politecnica delle Marche, Ancona, Italy}
\email{g.girardi@staff.univpm.it}

\author{N. Uday Kiran}
\address{%
Department of Mathematics and Computer Science\\
Sri Sathya Sai Institute of Higher Learning, Puttaparthi\\
Andhra Pradesh, India}
\email{nudaykiran@sssihl.edu.in}

\keywords{Weakly hyperbolic operators, Variable multiplicities, Cauchy problem, Polynomially bounded coefficients, Loss of derivatives, Loss of decay, Sobolev-Kato spaces}

\subjclass[2000]{35L15, 35S30}

\begin{document}

\begin{abstract}
We study a class of weakly hyperbolic Cauchy problems on $\R^d$, involving linear operators with characteristics of variable multiplicities, whose coefficients are unbounded in the space variable. The behaviour in the time variable is governed by a suitable ``shape function''. We develop a parameter-dependent symbolic calculus, 
corresponding to an appropriate subdivision of the phase space. By means of such calculus, a parametrix can be constructed, in terms of (generalized) Fourier integral operators 
naturally associated with the employed symbol class. Further, employing the parametrix, we prove $\scS(\R^{d})$-wellposedness and give results about the global decay and 
regularity of the solution, within a scale of weighted Sobolev space. 
\end{abstract}

\maketitle

\section{Introduction}\label{sec:intro}
%%%%%%%%%%%
%
% intro
%
% 230711 S
% 230714 S
% 230716 S
%
In this paper we deal with the global decay and regularity properties of the solution of certain weakly hyperbolic Cauchy problems, associated
with linear partial differential operators with smooth coefficients,
%oscillatory in the time variable $t$ and 
polynomially growing in the space variable $x$. Setting
\[
	L = -D_{t}^{2}u+\sum_{j=1}^{d}\left(a_{j}(t,x)D_{j}^{2}+b_{j}(t,x)D_{j}\right)+c(t,x),
\]
our model Cauchy problem is 
\begin{equation}\label{eq:main}
\begin{cases}
Lu(t,x)&\hspace*{-10pt}=g(t,x),  \; (t,x) \in [0,T]\times\mathbb{R}^{d}, \\
\phantom{L}u(0,x) &\hspace*{-10pt}= \phi(x), u_{t}(0,x)=\psi(x), \; x\in\R^d,
\end{cases}
\end{equation}
where $D_{j}=-i\partial/\partial x_{j}$, $D_{t}=-i\partial/\partial t$  and $a_{j}(t,x), b_{j}(t,x),c(t,x)$, $j=1,\dots,d$, are functions in $ C([0,T]\times \mathbb{R}^d)\cap C^\infty((0,T]\times \mathbb{R}^d)$, 
for (a suitably small) $T>0$. We define
\begin{equation}
\label{eq:amplitude_a} 
a(t,x,\xi)=\sum^{d}_{j=1}\left(a_{j}(t,x)\xi^{2}_{j}+b_{j}(t,x)\xi_{j}\right)+c(t,x),
\end{equation}
and assume that $a(t,\cdot,D)=\Op(a(t))$ is a differential operator with coefficients that are smooth on $(0,T]\times\R^d$, 
but, possibly, with characteristic roots of variable multiplicity
and \textit{fast oscillations} (see \cite{Rei2}) at $t=0$. Compared with the hyperbolic operators with variable multiplicities studied in \cite{AAC},
we here consider different conditions on the characteristic roots. Namely, the behaviour with respect to the time variable $t$ is
governed by a shape function $\lambda(t)$, as explained in Section \ref{subs:lambda} below. The function $a$ is assumed to be a family of symbols belonging to one of the  
Weyl-H\"ormader classes $S(m,g)$, namely, where the metric on the phase space $\R^{2d}$ is
\begin{equation}\label{SG_metric}
	g_{(x,\xi )}(y,\eta ) = \norm{x}^{-2} |y|^2 + \norm{\xi}^{-2} |\eta |^2,
\end{equation}
where $\norm{\eta}$ stands for\footnote{The modification in the definition of the weight $\norm{y}$, usually given by $\sqrt{1+|y|^2}$, is due to the presence,
in the sequel, of logarithms of products of the type $\norm{x}\norm{\xi}$, which need not to vanish anywhere on $\R^{2d}$. All the usual symbol 
estimates and definitions are completely equivalent to those involving the standard definition of the weight.} 
$(e+|\eta|^2)^{1/2}, \eta\in \mathbb{R}^d$. Most often, we will employ weights of the form $m(x,\xi)=\norm{x}^{m}\norm{\xi} ^{\mu}$,
that is, the so-called $\SG$-symbols, in their most standard form $\SGs^{m,\mu}(\R^{2d})$, will be involved (cf. Cordes \cite{Cord}, Parenti \cite{Pa72}). 
However, we will also need more general symbols of $\SG$-type, that is, those associated with the metric 
\begin{equation*}
%\label{SG_metricgen}
	g_{(x,\xi )}(y,\eta ) = \norm{x}^{-2r_1}\norm{\xi}^{2\rho _1} |y|^2 +
\norm{x}^{2r_2}\norm{\xi}^{-2\rho _2} |\eta |^2.
\end{equation*}
In this case, with the same choice of weight above, we will denote $S(m,g)=\SGs ^{m,\mu }
_{(r_j,\rho _j)}(\R^{2d})=\SGs^{m,\mu}_{r_1,r_2,\rho_1,\rho_2}(\R^{2d})$. If
$$
0\le r_2\le r_1\le 1\quad \text{and}\quad
0\le \rho _1\le \rho _2\le 1,
$$
then $g$ is feasible and $m$ is $g$-continuous in this case. If, in addition,
$r_2,\rho _1<1$, then $g$ is strongly feasible (see Coriasco and Toft \cite{CoTo}, H\"ormander \cite{Ho1}, Nicola and Rodino \cite{Nicola_Rodino}). Such conditions will actually be satisfied in our analysis below.

We will also assume that the symbols family $a(t)$ admits a lower bound, namely, 
\begin{equation}\label{ellipticity_3}
	a(t,x,\xi)\geq C\lambda(t)^{2}\norm{x}^2\norm{\xi}^2,
\end{equation} 
for a positive constant $C$. Notice that \eqref{ellipticity_3} does not imply ellipticity of $a(t)$ in the $\SG$-classes for
$t=0$, see Section \ref{subs:sgclasses} below for details. Then, the operator in \eqref{eq:main} will turn out to be weakly hyperbolic 
with variable multiplicities, since it will hold $\lambda(0)=0$, $\lambda(t)>0$, $t\in(0,T]$.

The main new feature, compared with similar approaches in different symbol classes, is allowing coefficients which are unbounded with respect to $x\in\R^d$.
Indeed, we will assume the following polynomial upper bounds on the symbols family $a$ and its derivatives: for all $\alpha, \beta\in\N^d$ and $k\in \N$, 
\begin{equation}\label{eq:sym_inequality}
|D_{t}^{k}D^{\alpha}_{x}D_{\xi}^{\beta}a(t,x,\xi)|\leq C_{k\alpha\beta}\,\lambda(t)^2\norm{x}^{2-|\alpha|}\norm{\xi}^{2-|\beta|} \Sigma(t)^{k},
\end{equation}
for some $C_{k\alpha\beta}>0$ and every $(t,x,\xi)\in (0,T]\times\R^{d}\times\R^d$.
In \eqref{eq:sym_inequality}, the non-negative function $\Sigma(t)$ captures the `local' weakening of `generalized' Lipschitz conditions, as described in Section \ref{subs:zones} below.
Inequality \eqref{eq:sym_inequality} means that, for all $t\in [0,T]$, the complete symbol $a(t,\cdot,\cdot)$ belongs to the so-called $\SG$-classes of order $(2,2)$. 

The $\SG$-calculus appears in various other environments. An invariant definition of the above problems 
can be given on a class of noncompact manifolds, the so-called $\SG$-manifolds, cf. Schrohe \cite{Schro}, which includes the manifolds with ends, see, e.g. 
\cite{CoDo,CoMa,MaPa}. $\SG$-operators are also the local representation of the so-called \textit{scattering operators} on asymptotically Euclidean manifolds, 
see, e.g., Melrose \cite{Me:94}.

The theory of strictly hyperbolic Cauchy problems, for first order systems and PDEs in the $\SG$-setting, can be found in Cordes \cite{Cord}. In particular,
Cordes showed that Cauchy problems of the type \eqref{eq:main}, with smooth coefficients in time within the $\SG$-environment, are well-posed in the Schwartz 
spaces $\scS(\R^{d}), \scSp(\R^{d})$, and in the weighted Sobolev spaces $H^{s,\sigma}(\R^d)$. The latter, also known as Sobolev-Kato spaces, are the scale of 
$L^2$-modelled spaces naturally associated with the $\SG$-calculus, and are defined as
$$
H^{s,\sigma}(\R^d)=\{u\in \scS^\prime({\R^{d}}):\Op(\omega_{s,\sigma})u\in L^{2}(\R^{d})\},
$$
where $\omega_{s,\sigma}(x,\xi)=\norm{x}^{s}\norm{\xi}^{\sigma}$.

Weakly hyperbolic first order
systems and PDEs with constant multiplicities in this same setting and their wellposedness 
have been originally studied by Coriasco \cite{Coriasco}, together with the associated theory of
Fourier integral operators \cite{Coriasco:998.1}, and subsequently by Coriasco and Rodino \cite{Coriasco1}. 
 
Classes of Fourier integral operators globally defined on $\R^d$, involving amplitudes and/or phase functions of $\SG$-type
have been considered, e.g., by Cappiello \cite{Ca04}, Cordero, Nicola and Rodino \cite{CNR}, Coriasco and Ruzhansky \cite{CoRu}, Ruzhansky and Sugimoto \cite{Ruzhansky_Sugimoto}, and others (see the reference lists
of the quoted papers). 
One novelty of our approach here is that we refine the calculus of standard $\SG$-Fourier integral operators, taking into account the subdivision into zones of the phase
space $\R^{2d}$ implied by the symbol class we need to use, in view of the degeneracy of the characteristic roots at $t=0$.

We recall that many authors have considered hyperbolic equations with non-smooth coefficients. In this direction, it has been observed that Lipschitz conditions play a crucial role in the Sobolev regularity results. Colombini and Lerner \cite{CL} have shown that Log-Lipschitz conditions on the coefficient are optimal for Sobolev regularity. Whenever dealing with Log-Lipschitz coefficients, a finite loss of derivatives occurs for the solution of the Cauchy problem as shown by means of examples in \cite{Cicognani, Cicognani1, C, CL}. Other forms of weakening of the Lipschitz condition have been studied extensively in the works of Kubo and Reissig \cite{KR},  Colombini, Del Santo and Reissig \cite{Rei1}, 
and Hirosawa and Reissig \cite{HirReissig2006}, see also Ghisi and Gobbino \cite{GhisiGobbino2021} for cases of finite and infinite loss of derivatives. 
In \cite{AC}, Ascanelli and Cappiello considered $\SG$-hyperbolic models with coefficients satisfying Log-Lipschitz conditions. 
Pattar and Kiran have studied strictly hyperbolic Cauchy problems on $\R^n$ with unbounded and singular coefficients in \cite{PaKi}. 
Extensions of our analysis in similar directions will be the subject of forthcoming papers.

In this paper, we study in detail a second order weakly $\SG$-hyperbolic operator,
in the case where the symbols satisfy a weakened form of Lipschitz condition, encoded through the function $\Sigma(t)$ appearing in \eqref{eq:sym_inequality} above. 
In the procedure for determining the parametrix, we first reformulate the original Cauchy problem into a corresponding Cauchy problem for a first order $2\times2$ system, modulo smoothing elements. We then perform a diagonalization of the $2\times 2$ system, in the spirit of the procedures by 
Yagdjian \cite{Yagdjian}, and Kubo and Reissig \cite{KR} (see also Kumano-go \cite{Kumano-go}). 
The latter requires suitable extensions of some of the results coming from the local symbolic calculi, along the procedures
considered, for instance, 
in \cite{Coriasco, Coriasco1}, which need to be carefully modified. Efficient tools to achieve such needed extensions come, in particular, by some of the results 
by Coriasco and Toft \cite{CoTo}. A special feature of the $\SG$-setting is that one can expect a finite loss of decay along with a loss of derivatives, as observed, e.g., in 
\cite{AC,Coriasco,Coriasco1}. This happens indeed, and we show that within our main results. Notice that the analysis can be carried through for $\SG$-hyperbolic operators of arbitrary 
order, fully combining the mentioned theories in \cite{Coriasco:998.1,Coriasco,Coriasco1,CoTo} with the approaches in \cite{KR,Yagdjian}. 
To keep this exposition within a reasonable length, here we prefer to focus on second order operators, which anyway allows to highlight the main new features apparing in the
$\SG$-environmnt.

The paper is organized as follows. In Section \ref{sec:CP} we recall known facts about the $SG$-calculus, and 
describe our symbolic setting and the assumptions about the coefficients of the operator in \eqref{eq:main}. One first main result, achieved in this section, is the proof of 
an analog, in our setting, of an equivalency between hyperbolicity conditions, in term of properties of the characteristic roots and of the coefficients of the operator.
This also indicates the $\SG$-type symbol classes we then need to consider in our analysis, as well as the subdivision into zones of the phase space. We here state the
properties of such symbol classes. In the subsequent Section \ref{sec:SGpsidosFIOs} we focus on the associated classes of pseudodfferential and Fourier integral operators,
and establish their calculus. Equipped with these two tools, we then follow the classical approach to solve \eqref{eq:main}: in Section \ref{sec:diag} we switch 
to a Cauchy problem for a $2\times2$ first order system, and diagonalize it, determine the parametrix of the diagonalized system in Section \ref{sec:param}, and finally
state and prove our main results about the properties of the solutions to \eqref{eq:main} in the concluding Section \ref{solution_cauchy_problem}. The parametrix
construction relies on the properties of the solutions of the Hamiltonian systems associated with the characteristic roots of the operators. In this respect, a careful analysis
is needed, in view of the explicit presence of the $x$ variable in the definition of the zones, as well as the unboundedness of the characteristic roots with respect to it
and their behaviour with respect to the time variable.
%Some of the technical aspects concerning the calculus we establish in this paper are collected in an Appendix. 
In the sequel we will sometimes
write $A\lesssim B$ when $A\le cB$ for a suitable constant $c > 0$, and we set $A\asymp B$ when $A\lesssim B\lesssim A$ 
(in particular, we will adopt this notation when the value of the specific constants $c,c'$ is not crucial).
%.

\section*{Acknowledgements} 
The first author has been partially supported by his own INdAM GNAMPA Project, Grant Code CUP\underline{\hspace{3mm}}E53C22001930001. %CUP{\textunderscore\textunderscore}E53C22001930001.
The authors wish to thank Prof. M. Reissig and Prof. K. Yagdjian, 
for useful hints, comments and discussions. 

\section{Formulation of the hyperbolic Cauchy problem in $\mathbb{R}^{d}$} \label{sec:CP}
\setcounter{equation}{0}
%%%%%%%%%%%%%
%
% formulation
%
%%%%%%%
% 230711 S
% 230714 S
%

In this section we discuss an appropriate parameter-dependent global symbol class of $\SG$ type, associated with the Cauchy problem \eqref{eq:main}.  
In particular, we will state the requirements on the coefficients that guarantee the $\SG$-hyperbolicity  of \eqref{eq:main} 
(that is, hyperbolicity in $\mathbb{R}^{d}$ with respect to 
the calculus associated with the metric \eqref{SG_metric}), and necessary conditions on the lower order terms that ensure its wellposedness. 
These conditions also dictates the definition of the $\SG$-classes which must be used in the construction of the fundamental solution.

%\subsection{Oscillatory function}\label{subs:lambda}
\subsection{Shape function}\label{subs:lambda}
Following \cite{Yagdjian}, we introduce a real-valued, positive function $\lambda(t)$, which allows to describe the speed at which characteristics collide at $t=0$ and the qualitative 
behavior with respect to $t$ of the coefficients in \eqref{eq:main}. In particular,  $\lambda(t)$ belongs to $C^{\infty}([0,T])$ and satisfies $\lambda(0)=\lambda'(0)=0$, 
$\lambda'(t)\neq 0$  for $t>0$. The hypotheses on $\lambda$ of course imply $\lambda'(t)>0$ for $t>0$. Moreover, the following inequality holds
$$
|\lambda^{(k)}(t)|\leq c\left(\frac{\lambda'(t)}{\lambda(t)}\right)^{k-1}|\lambda'(t)|,
$$
for non-negative integers $k$. The choice of $\lambda(t)$ with $\lambda(0)=0$ makes \eqref{eq:main} a multiple characteristic partial differential equation at $t=0$, and the whole
operator one with characteristic roots of variable multiplicities. This function quantifies the vanishing order of the principal part of \eqref{eq:main} with respect to $t$, and thus the behavior of the characteristic roots. Furthermore, the integral of $\lambda(t)$, 
\[
\Lambda(t)=\int_{0}^{t}\lambda(s)ds.
\]
plays a crucial role in the determination of the phase function in the Fourier integral operators we will deal with. We assume that $\lambda^2/\Lambda \in C^{\infty}([0,T])$ and there exist $c_1>1/2$ and $0<C_1<1$ constants such that 
\begin{equation}
\label{eq:lambda'_control}
c_1\frac{\lambda(t)}{\Lambda(t)}\leq \frac{\lambda'(t)}{\lambda(t)}\leq  C_1\frac{\lambda(t)}{\Lambda(t)},
\end{equation} 
for $t\in (0,T]$.
%We also note that $\frac{\lambda(t)}{\Lambda(t)}>\frac{1}{t}>\frac{1}{T}$ for all $t\in (0,T]$.
The estimates \eqref{eq:lambda'_control} define a bound on the growth of the coefficients %oscillations of the coefficients 
encoded by the symbols $a(t,x,\xi)$, given in \eqref{eq:amplitude_a}, with respect to $t$.  
Typical examples of shape functions that satisfy the required assumptions are given by
\begin{equation*}
%\label{eq:lambdaexamples}
\lambda(t)=t^{r}\ \quad \textnormal{  or  }\quad \lambda(t)=\underbrace{\exp(-\exp(...-\exp(-|t|^{r})))}_{k \textnormal{ exponents}}
\end{equation*}
for $r,k$ integers numbers, $k\geq 0$ and $r\geq 2$. 

 \subsection{Zones of the phase space}\label{subs:zones}
Employing the function $\Lambda(t)$ we define a partition of $[0,T]\times \R^{2d}$: for all $(x,\xi)\in \R^{2d}$ we fix $t_{x,\xi}$ the unique solution to equation
\begin{equation}\label{txxi_equation}
	\Lambda(t_{x,\xi})\,\norm{x}\norm{\xi} =N\ln\norm{x}\norm{\xi},\quad |x|+|\xi|\geq M,
\end{equation}
where $N>0$ is a sufficiently large parameter and $M>0$. Then, we define the \textit{hyperbolic zone} as
$$
Z_\hyp(N)=\{(t,x,\xi)\in [0,T]\times\mathbb{R}^{2d}:t\geq t_{x,\xi}\},
$$
and the \textit{pseudodifferential zone} as
$$
Z_\pd(N)=\{(t,x,\xi)\in [0,T]\times\mathbb{R}^{2d}:t\leq t_{x,\xi}\}.
$$
For future aims, we further partition the hyperbolic zone into 
as 
$$ 
Z_\hyp(N)=Z_\osc(N)\cup Z_\reg(N),
$$ 
defining the \textit{regular zone} as
$$
Z_\reg(N)=\{(t,x,\xi)\in [0,T]\times\mathbb{R}^{2d}:t_{x,\xi}'\leq t\},
$$ 
and the \textit{oscillation zone} as
$$
Z_\osc(N)=\{(t,x,\xi)\in [0,T]\times\mathbb{R}^{2d}:t_{x,\xi}\leq t\leq t_{x,\xi}'\}.
$$
Here, for all $(x,\xi)\in \R^{2d}$, the time function $t=t_{x,\xi}'$ satisfies the equation 
\begin{equation}\label{regular_zone}
	\Lambda(t_{x,\xi}')\,\norm{x}\norm{\xi}=2N(\ln \norm{x}\norm{\xi})^2 \textnormal{  for  } |x|+|\xi|\geq M.
\end{equation} 
The next Lemma \ref{lem:log_lambda} holds true.
\begin{lem}
\label{lem:log_lambda}
For all $M$ and $N$ sufficiently large, there exist $d_1,d_2>0$ such that it holds
\begin{equation*}
%\label{eq:lambda_equivalence}
-d_1\ln (\norm{x}\norm{\xi})\leq \ln \lambda(t_{x,\xi}) \leq -d_2\ln (\norm{x}\norm{\xi}),
\end{equation*}
for every $(t,x,\xi)\in Z_{\hyp}(N)$ with $|x|+|\xi|\ge M$. As a consequence, for all $t\in [0,T]$ each couple $(x_t,\xi_t)$ that solves the equation
\begin{equation}\label{eq:timefunbis}
	\Lambda(t)\,\norm{x_t}\norm{\xi_t} =N\ln\norm{x_t}\norm{\xi_t},
\end{equation}
and $|x_t|+|\xi_t|\ge M$ satisfies the inequalities
\begin{equation*} 
	-d_1\ln (\norm{x_{t}}\norm{\xi_{t}})\leq \ln \lambda(t)\leq -d_2\ln (\norm{x_{t}}\norm{\xi_{t}}).
\end{equation*}
\end{lem}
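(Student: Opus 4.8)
The plan is to start from the defining equation \eqref{txxi_equation} for $t_{x,\xi}$ and convert the multiplicative relation $\Lambda(t_{x,\xi})\norm{x}\norm{\xi} = N\ln(\norm{x}\norm{\xi})$ into a two-sided bound on $\Lambda(t_{x,\xi})$, namely $\Lambda(t_{x,\xi}) \asymp N\dfrac{\ln(\norm{x}\norm{\xi})}{\norm{x}\norm{\xi}}$, and then relate $\ln\lambda$ to $\ln\Lambda$ using the structural hypotheses on $\lambda$ from Section \ref{subs:lambda}. The key link is \eqref{eq:lambda'_control}: from $c_1\,\lambda/\Lambda \le \lambda'/\lambda \le C_1\,\lambda/\Lambda$ we get, upon writing $\dfrac{d}{dt}\ln\Lambda(t) = \dfrac{\lambda(t)}{\Lambda(t)}$ and $\dfrac{d}{dt}\ln\lambda(t) = \dfrac{\lambda'(t)}{\lambda(t)}$, the differential inequality
\begin{equation*}
c_1\,\frac{d}{dt}\ln\Lambda(t) \le \frac{d}{dt}\ln\lambda(t) \le C_1\,\frac{d}{dt}\ln\Lambda(t),
\end{equation*}
valid for $t\in(0,T]$. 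Integrating this from some fixed reference point down toward $0$ (both $\ln\lambda$ and $\ln\Lambda$ tend to $-\infty$ as $t\to 0^+$, since $\lambda(0)=0$), one obtains that $\ln\lambda(t) \asymp \ln\Lambda(t)$ for $t$ small, more precisely $c_1\ln\Lambda(t) + O(1) \le \ln\lambda(t) \le C_1\ln\Lambda(t) + O(1)$; since $\ln\Lambda(t)<0$ near $0$, the $O(1)$ terms can be absorbed by slightly shrinking the constants, provided $\norm{x}\norm{\xi}$ is large enough, which is guaranteed by taking $M$ large.

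Next I would substitute the bound on $\Lambda(t_{x,\xi})$. Taking logarithms in $\Lambda(t_{x,\xi})\asymp \ln(\norm{x}\norm{\xi})/(\norm{x}\norm{\xi})$ gives
\begin{equation*}
\ln\Lambda(t_{x,\xi}) = \ln\ln(\norm{x}\norm{\xi}) - \ln(\norm{x}\norm{\xi}) + O(1) = -\ln(\norm{x}\norm{\xi})\bigl(1 + o(1)\bigr)
\end{equation*}
as $\norm{x}\norm{\xi}\to\infty$, because $\ln\ln(\norm{x}\norm{\xi})$ is of lower order than $\ln(\norm{x}\norm{\xi})$. Hence $\ln\Lambda(t_{x,\xi}) \asymp -\ln(\norm{x}\norm{\xi})$, again for $|x|+|\xi|\ge M$ with $M$ large. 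Combining this with $\ln\lambda \asymp \ln\Lambda$ and tracking the signs (everything is negative), I get constants $d_1,d_2>0$ with $-d_1\ln(\norm{x}\norm{\xi}) \le \ln\lambda(t_{x,\xi}) \le -d_2\ln(\norm{x}\norm{\xi})$ for all $(t,x,\xi)\in Z_\hyp(N)$ with $|x|+|\xi|\ge M$; note the inequality is stated for points in the hyperbolic zone but only depends on $t_{x,\xi}$, which is a function of $(x,\xi)$ alone, so it is really an estimate at the boundary time. The consequence for solutions $(x_t,\xi_t)$ of \eqref{eq:timefunbis} is immediate: that equation is exactly \eqref{txxi_equation} with the roles reversed — given $t$ and a pair $(x_t,\xi_t)$ satisfying it, one has $t_{x_t,\xi_t}=t$ by uniqueness, so the first estimate applied at $(x,\xi)=(x_t,\xi_t)$ yields the second displayed chain with $\lambda(t_{x_t,\xi_t})=\lambda(t)$.

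The main obstacle I anticipate is the careful handling of the additive $O(1)$ constants coming from integrating \eqref{eq:lambda'_control} and from the $\asymp$ in the relation for $\Lambda(t_{x,\xi})$: these are harmless in an asymptotic sense but must be absorbed into a genuine two-sided bound with clean constants $d_1,d_2$, and this absorption is exactly what forces $M$ (and, through \eqref{txxi_equation}, implicitly $N$) to be taken large — one needs $\ln(\norm{x}\norm{\xi})$ to dominate all the accumulated constants and the lower-order $\ln\ln$ term. A secondary point requiring care is the lower bound on $\ln\lambda$: since $c_1>1/2$ is only bounded below, the resulting $d_1$ depends on how $\ln\lambda$ compares with $\ln\Lambda$ from above, and one should check that the hypothesis $c_1>1/2$ (rather than just $c_1>0$) is what is actually needed downstream, though for this Lemma any positive constants $c_1, C_1$ with $C_1<1$ suffice to produce some admissible $d_1,d_2$. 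Finally, one must confirm that $t_{x,\xi}$ indeed stays in a range where $\lambda^2/\Lambda\in C^\infty([0,T])$ and the structural estimates apply, i.e. that $t_{x,\xi}\to 0$ as $\norm{x}\norm{\xi}\to\infty$, which follows from $\Lambda(t_{x,\xi})\to 0$ together with monotonicity of $\Lambda$.
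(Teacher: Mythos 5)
Your proof follows essentially the same route as the paper: integrate the logarithmic-derivative bound coming from \eqref{eq:lambda'_control} from $t_{x,\xi}$ up to $T$, then substitute $\Lambda(t_{x,\xi})=N\ln(\norm{x}\norm{\xi})/(\norm{x}\norm{\xi})$ and absorb the lower-order $\ln\ln$ and constant terms by taking $M$ large. One small slip worth flagging: integrating $c_1\frac{d}{dt}\ln\Lambda\le\frac{d}{dt}\ln\lambda\le C_1\frac{d}{dt}\ln\Lambda$ from $t$ to $T$ actually gives $C_1\ln\Lambda(t)+O(1)\le\ln\lambda(t)\le c_1\ln\Lambda(t)+O(1)$ (the \emph{larger} constant appears in the \emph{lower} bound because $\ln\Lambda(t)<0$), so your ``more precisely'' line has $c_1$ and $C_1$ swapped; this does not affect the two-sided $\asymp$ conclusion or the existence of admissible $d_1,d_2$, and the rest of the argument, including the reduction of the second statement to the first via $t_{x_t,\xi_t}=t$, is sound.
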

\begin{proof}
We of course have
\[ -\ln\lambda(t_{x,\xi})+\ln\lambda(T)=\int_{t_{x,\xi}}^T \frac{\lambda'(t)}{\lambda(t)}\,dt.\]
By assumption \eqref{eq:lambda'_control}, there exists $c_1,C_1>0$ such that
\[ c_1\int_{t_{x,\xi}}^T \frac{\lambda(t)}{\Lambda(t)}\,dt\leq -\ln\lambda(t_{x,\xi})+\ln\lambda(T)\leq C_1 \int_{t_{x,\xi}}^T \frac{\lambda(t)}{\Lambda(t)}\,dt.\]
Employing \eqref{txxi_equation}, it follows
\begin{align*} 
c_1\ln[N\ln&(\<x\>\<\xi\>)]-c_1\ln(\<x\>\<\xi\>)-c_1\ln\Lambda(T)+\ln\lambda(T) % 
\\ & \geq \ln\lambda(t_{x,\xi}) \geq \\ & 
-C_1\ln(\<x\>\<\xi\>)+C_1\ln[N\ln(\<x\>\<\xi\>)]-C_1 \ln\Lambda(T)+\ln\lambda(T).
\end{align*}
For any $N$ (arbitrarily large), and $|x|+|\xi|\ge M$, $M$ sufficiently large, we may guarantee 
\[ -d_1\ln (\norm{x}\norm{\xi})\leq \ln \lambda(t_{x,\xi}) \leq -d_2\ln (\norm{x}\norm{\xi}). \]
The other part of the statement follows by similar considerations, recalling the definition of $Z_{\hyp}(N)$.
\end{proof}
\begin{rem} 
\label{rem:log_lambda}\begin{enumerate}
\item Lemma \ref{lem:log_lambda} implies that, for every $(t,x,\xi)\in Z_{\hyp}(N)$ with $|x|+|\xi|\ge M$, 
$$
d_2\ln (\norm{x}\norm{\xi})\leq |\ln \lambda(t_{x,\xi})| \leq d_1\ln (\norm{x}\norm{\xi})
$$
and, for all $t\in [0,T]$ each couple $(x_t,\xi_t)$ which solves \eqref{eq:timefunbis},
$$
d_2\ln (\norm{x_{t}}\norm{\xi_{t}})\leq |\ln \lambda(t)|\leq d_1\ln (\norm{x_{t}}\norm{\xi_{t}}).
$$
\item As a consequence of Lemma \ref{lem:log_lambda}, there exists $C>0$ such that
\begin{equation*}
\label{ineq_lambda}
	\norm{x_{t}}\norm{\xi_{t}}=\frac{N(\ln \norm{x_{t}}\norm{\xi_{t}})}{\Lambda(t)}\leq C\frac{|\ln \lambda(t)|}{\Lambda(t)}.
\end{equation*}
Moreover, for $M$ sufficiently large, in the hyperbolic zone we may estimate $$\frac{|\ln \lambda (t)|}{\norm{x}\norm{\xi}\Lambda(t)}\lesssim \frac{1}{N}.$$
Indeed, we first prove that for all $t\in [0,T]$ and $(x,\xi)\in \R^{2d}$, if $(t,x,\xi)\in Z_{\hyp}(N)$, then $\<x\>\<\xi\>\geq \<x_t\>\<\xi_t\>$. In fact, assume
that $\<x\>\<\xi\><\<x_t\>\<\xi_t\>$. Then, since the function $\zeta(s)=\frac{s}{\ln(s)}$ is increasing on $[e,+\infty)$, we have
\[ \frac{\Lambda(t)\<x\>\<\xi\>}{\ln(\<x\>\<\xi\>)}< \frac{\Lambda(t)\<x_t\>\<\xi_t\>}{\ln(\<x_t\>\<\xi_t\>)}=N,\]
that is, $(t,x,\xi)\in Z_\pd(N)$, which is a contradiction. It follows, by point (1) and the previous considerations,
\[ \frac{1}{\Lambda(t)}=\frac{\<x_t\>\<\xi_t\>}{N\ln(\<x_t\>\<\xi_t\>)}\leq \frac{d_1\<x_t\>\<\xi_t\>}{N|\ln(\lambda(t))|}\leq \frac{d_1\<x\>\<\xi\>}{N|\ln(\lambda(t))|},\]
which gives the claim.
\end{enumerate}
\end{rem}

\subsection{Assumptions on the coefficients}\label{subs:coeff} We suppose that there exist a positive constant $N$ such that the zeros $\tau_j(t,x,\xi)$, $j=1,2$, 
of the complete symbol of the operator $L$,
\begin{equation}
\label{eq:complete_symbol}
\mathfrak{L}(t,\tau,x,\xi)=-\tau^2+a(t,x,\xi)=0,
\end{equation}
are smooth functions on $Z_\hyp(N)$ and, for any $k\in\N$, $\alpha,\beta\in\N^d$, the following inequalities hold:
\begin{align}
\label{eq:tau_estimate}
|D_t^kD_x^\alpha D_\xi^\beta \tau_j(t,x,\xi)|&\leq C_{k\alpha\beta}\lambda(t)\<x\>^{1-|\alpha|}\<\xi\>^{1-|\beta|}\bigg(\frac{\lambda(t)}{\Lambda(t)}\ln\bigg(\frac{1}{\Lambda(t)}\bigg)\bigg)^k,\\
\label{eq:tau_hyperbolicity}
|\tau_1(t,x,\xi)-\tau_2(t,x,\xi)|&\geq \delta \lambda(t)\<x\>\<\xi\>, \\
\label{eq:tau_imaginarypart}
| D_t^k D_x^\alpha D_\xi^\beta \text{Im} \tau_j(t,x,\xi)|&\leq C_{k\alpha\beta}\<x\>^{-|\alpha|}\<\xi\>^{-|\beta|}\bigg(\frac{\lambda(t)}{\Lambda(t)}\ln\bigg(\frac{1}{\Lambda(t)}\bigg)\bigg)^{k+1},
\end{align}
for some positive constants $\delta$, $C_{k\alpha\beta}$, and $(t,x,\xi)\in Z_\hyp(N)$.
Proposition \ref{prop: equivalence} below allows to relate the conditions on the roots $\tau_j(t,x,\xi)$, $j=1,2$, of the complete symbol with suitable assumptions on the roots $\lambda_j(t,x,\xi)$ of the principal symbol of the operator $L$,
\begin{equation}
\label{eq:principal_symbol}
\mathfrak{L}_p(t,\tau,x,\xi):=-\tau^2+\sum_{j=1}^d a_j(t,x)\xi_j^2,
\end{equation}
and on the coefficients $a_j(t,x)$, $b_j(t,x)$, $j=1,\dots, d$ and $c(t,x)$.
Such equivalent formulation of $\SG$-hyperbolicity of $L$ is proved by adapting the argument of the analogous statement in \cite{Yagdjian}.
We sketch the argument below, focusing on the specific aspect involving here the unboundedness of the coefficients, characteristic roots, and symbols,
with respect to the space variable $x\in\R^d$.
\begin{prop}
\label{prop: equivalence}
The following conditions \textbf{(A)} and \textbf{(H)} are equivalent:
\begin{itemize}
\item[\textbf{(A)}] For all $t\in [0,T]$, $x,\xi\in \R^d$, the zeros $\lambda_1(t,x,\xi)$ and $\lambda_2(t,x,\xi)$ of the principal symbol 
\eqref{eq:principal_symbol} of the operator $L$ are real-valued functions that satisfy the following inequalities:
\begin{align}
\label{eq:lambda_estimate}
|\lambda_j(t,x,\xi)|&\leq c \lambda(t)|\xi| \<x\>, \qquad j=1,2, \\
\label{eq:lambda_hyperbolicity}
%\notag 
|\lambda_1(t,x,\xi)-\lambda_2(t,x,\xi)|&\geq \delta_1 \lambda(t)|\xi|\<x\>,
\end{align}
for some positive constant $c$ and $\delta$, independent of $t\in [0,T]$, $(x,\xi)\in \R^{2d}$. Furthermore, the coefficients $a_j(t,x)$, $b_j(t,x)$, $j=1,\dots, d$, and $c(t,x)$ satisfy the inequalities:
\begin{equation}
\label{eq:coefficients_assumptions}
\begin{aligned}
|D_t^k D_x^\alpha a_j(t,x)|&\leq C_{k\alpha} \lambda(t)^2 \<x\>^{2-|\alpha|} \bigg(\frac{\lambda(t)}{\Lambda(t)}\ln\bigg(\frac{1}{\Lambda(t)}\bigg)\bigg) ^k, \\
|D_t^k D_x^\alpha b_j(t,x)|&\leq C_{k\alpha}\lambda(t)^2 \<x\>^{1-|\alpha|} \frac{|\ln\lambda(t)|}{\Lambda(t)}\bigg(\frac{\lambda(t)}{\Lambda(t)}\ln\bigg(\frac{1}{\Lambda(t)}\bigg)\bigg) ^k,\\
|D_t^k D_x^\alpha c(t,x)|&\leq C_{k\alpha}\lambda(t)^2 \<x\>^{-|\alpha|} \bigg(\frac{\ln\lambda(t)}{\Lambda(t)}\bigg)^2 \bigg(\frac{\lambda(t)}{\Lambda(t)}\ln\bigg(\frac{1}{\Lambda(t)}\bigg)\bigg) ^k,\\
|D_t^k D_x^\alpha \mathrm{Im} b_j(t,x)|&\leq C_{k\alpha}\lambda(t)\<x\>^{1-|\alpha|}\bigg(\frac{\lambda(t)}{\Lambda(t)}\ln\bigg(\frac{1}{\Lambda(t)}\bigg)\bigg) ^{k+1},
\end{aligned}
\end{equation}
for some $C_{k\alpha}>0$, independent of $t\in (0,T]$ and all $x\in \R^d$. Additionally, the coefficients $a_j$, $j=1,\dots,d$, are real-valued functions.
\item[\textbf{(H)}] There exist a positive constant $N_0$ such that, for all $N\geq N_0$, the zeros $\tau_1(t,x,\xi)$ and $\tau_2(t,x,\xi)$ of the 
complete symbol \eqref{eq:complete_symbol} of the operator $L$ are smooth functions on $Z_\hyp(N)$ and the inequalities 
\eqref{eq:tau_estimate}-\eqref{eq:tau_imaginarypart} hold for all $(t,x,\xi)\in Z_\hyp(N)$.
\end{itemize}
\end{prop}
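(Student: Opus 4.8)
The plan is to prove the equivalence $\textbf{(A)} \Leftrightarrow \textbf{(H)}$ by explicit computation of the characteristic roots of the complete symbol in terms of the roots of the principal symbol and the lower order terms, working on the hyperbolic zone $Z_\hyp(N)$ and systematically exploiting the estimates of Lemma \ref{lem:log_lambda} and Remark \ref{rem:log_lambda} to convert powers of $\Lambda(t)^{-1}$, $|\ln\lambda(t)|$, and $\lambda(t)$ into powers of $\norm{x}\norm{\xi}$ and vice versa. Throughout, the key quantitative dictionary is that on $Z_\hyp(N)$ one has $\norm{x}\norm{\xi} \gtrsim |\ln\lambda(t)|/\Lambda(t)$ (indeed $\asymp$ near the boundary $t=t_{x,\xi}$ up to logarithmic factors) and $|\ln\lambda(t)| \asymp \ln(\norm{x}\norm{\xi})$ when evaluated along the curve solving \eqref{eq:timefunbis}.

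For the direction $\textbf{(A)} \Rightarrow \textbf{(H)}$, first I would write the complete symbol root as $\tau_j(t,x,\xi) = \lambda_j(t,x,\xi) + $ (a correction coming from the first-order part $\sum b_j\xi_j$ and zero-order part $c$). Concretely, completing the square in \eqref{eq:complete_symbol}, $\tau_{1,2} = \pm\sqrt{a(t,x,\xi)}$ with $a = \sum a_j\xi_j^2 + \sum b_j\xi_j + c$; I would expand $\sqrt{a}$ around $\sqrt{\sum a_j\xi_j^2}$, which is legitimate on $Z_\hyp(N)$ because there the principal part dominates: from \eqref{eq:lambda_hyperbolicity} one gets $\sum a_j\xi_j^2 \gtrsim \lambda(t)^2|\xi|^2\norm{x}^2$, while the bounds on $b_j$ and $c$ in \eqref{eq:coefficients_assumptions}, together with $|\ln\lambda(t)|/\Lambda(t) \lesssim \norm{x}\norm{\xi}$ on $Z_\hyp(N)$, show the perturbation $\sum b_j\xi_j + c$ is of lower order relative to $\lambda(t)^2|\xi|^2\norm{x}^2$ by a factor controlled by $\ln(\norm{x}\norm{\xi})/(\norm{x}\norm{\xi})$, which is small for $|x|+|\xi| \geq M$ with $M$ large. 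Then I would differentiate the expansion via the Faà di Bruno / Leibniz rule, tracking that each $D_t$ falling on $\lambda(t)$-type factors produces the extra weight $\frac{\lambda(t)}{\Lambda(t)}\ln\frac1{\Lambda(t)}$ (using the $\lambda^{(k)}$ bound from Section \ref{subs:lambda} and \eqref{eq:lambda'_control}), and that each $D_x$ or $D_\xi$ improves the $\norm{x}$ or $\norm{\xi}$ power by one — this yields \eqref{eq:tau_estimate}. The hyperbolicity gap \eqref{eq:tau_hyperbolicity} follows since $|\tau_1 - \tau_2| = 2\sqrt{a} \gtrsim \lambda(t)\norm{x}\norm{\xi}$ on $Z_\hyp(N)$, using again the lower bound \eqref{ellipticity_3} (equivalently the one derived from \eqref{eq:lambda_hyperbolicity}). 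For the imaginary part estimate \eqref{eq:tau_imaginarypart}: $\mathrm{Im}\,\tau_j$ comes only from $\mathrm{Im}\,b_j$ and $\mathrm{Im}\,c$ (the $a_j$ being real), and dividing the bound on $\mathrm{Im}\,b_j$ by the size $\asymp\lambda(t)\norm{x}\norm{\xi}$ of $\sqrt{a}$ gives precisely the claimed gain of one power of $\norm{x}^{-1}\norm{\xi}^{-1}$ and one extra factor of $\frac{\lambda(t)}{\Lambda(t)}\ln\frac1{\Lambda(t)}$.

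For the converse $\textbf{(H)} \Rightarrow \textbf{(A)}$, I would recover $\lambda_j$ and the coefficients from $\tau_j$. Since $\lambda_1\lambda_2 = -\sum a_j\xi_j^2$ and $\tau_1\tau_2 = -a = -\sum a_j\xi_j^2 - \sum b_j\xi_j - c$, and since the $\lambda_j$ are the principal ($\xi$-homogeneous degree-two, $x$ of weight $\norm{x}^2$) parts while $b_j,c$ contribute genuinely lower $\SG$-order, one can extract the principal symbol's roots by a homogeneity/asymptotic argument: evaluate along $\xi \to \infty$ and $x \to \infty$ to isolate the top-order term, obtaining \eqref{eq:lambda_estimate}–\eqref{eq:lambda_hyperbolicity} from \eqref{eq:tau_estimate}–\eqref{eq:tau_hyperbolicity}. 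The coefficients are then read off: $a_j(t,x)$ as second $\xi_j$-difference of $\tfrac12(\tau_1-\tau_2)^2$, $b_j$ from the first-order part and $c$ from the value at $\xi=0$, and the estimates \eqref{eq:coefficients_assumptions} follow by specializing \eqref{eq:tau_estimate} and \eqref{eq:tau_imaginarypart} at suitable $(x,\xi)$ on $Z_\hyp(N)$ and translating back via Lemma \ref{lem:log_lambda} (the factor $(\ln\lambda/\Lambda)^2$ in the $c$-bound and $|\ln\lambda|/\Lambda$ in the $b_j$-bound arise exactly from $\norm{x}\norm{\xi} \asymp |\ln\lambda(t)|/\Lambda(t)$ raised to the power $2$ and $1$). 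A subtlety to address is that $\textbf{(A)}$ asserts global estimates for all $(t,x,\xi)$ while $\textbf{(H)}$ only gives information on $Z_\hyp(N)$; this is resolved because the coefficient estimates in $\textbf{(A)}$ are $(x,\xi)$-independent and, for each fixed $t>0$, the slice $\{(x,\xi) : (t,x,\xi)\in Z_\hyp(N)\}$ covers $|x|+|\xi|$ arbitrarily large (the boundary curve $t_{x,\xi}$ tends to $0$), so the asymptotic extraction determines the coefficients for all $x$; the region $|x|+|\xi|\le M$ is handled by continuity and compactness as in \cite{Yagdjian}.

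The main obstacle I expect is the bookkeeping in the converse direction — cleanly justifying the ``extraction of the principal part'' from complete-symbol data that is only controlled on the zone $Z_\hyp(N)$, and checking that the peculiar weights $\bigl(\tfrac{\ln\lambda}{\Lambda}\bigr)^2$ and $\tfrac{|\ln\lambda|}{\Lambda}$ attached to $c$ and $b_j$ in \eqref{eq:coefficients_assumptions} are exactly what \eqref{eq:tau_estimate}/\eqref{eq:tau_imaginarypart} deliver after the change of variables $\norm{x}\norm{\xi} \leftrightarrow |\ln\lambda(t)|/\Lambda(t)$; the implicit constants must be shown uniform in $t$, which forces careful use of the two-sided bound \eqref{eq:lambda'_control} and of Remark \ref{rem:log_lambda}(2). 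The forward direction, by contrast, is a relatively routine (if lengthy) Faà di Bruno computation once the smallness of the lower-order perturbation on $Z_\hyp(N)$ is established.
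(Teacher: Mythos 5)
The paper only proves the implication $\textbf{(H)}\Rightarrow\textbf{(A)}$, explicitly deferring the converse to an adaptation of Yagdjian's argument, so your forward direction $\textbf{(A)}\Rightarrow\textbf{(H)}$ (via a Taylor expansion of $\sqrt{a}$ around $\sqrt{\sum a_j\xi_j^2}$ on $Z_\hyp(N)$) cannot be checked against the paper; it is a plausible plan but not what the paper carries out.

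For the direction the paper does prove, $\textbf{(H)}\Rightarrow\textbf{(A)}$, your plan identifies the right overall shape — read off $a_j$, $b_j$, $c$ from $\xi$-derivatives of $\tau_1\tau_2$, use the dictionary $\norm{x}\norm{\xi}\asymp|\ln\lambda(t)|/\Lambda(t)$ from Lemma \ref{lem:log_lambda} to trade $(x,\xi)$-dependence for $t$-dependence, and then descend from the complete-symbol roots $\tau_j$ to the principal-symbol roots $\lambda_j$. But there is a genuine gap at the last step. You propose to ``extract the principal symbol's roots by a homogeneity/asymptotic argument: evaluate along $\xi\to\infty$ and $x\to\infty$.'' Homogeneity in $\xi$ does let you extend an inequality $\sum a_j(t,x)\xi_j^2\gtrsim\lambda(t)^2\norm{x}^2|\xi|^2$ from a cone to all $\xi$, and taking $|\xi|\to\infty$ is exactly how the paper shows $\mathrm{Im}\,a_j=0$. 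However, to establish the hyperbolicity gap \eqref{eq:lambda_hyperbolicity} in the first place you must show that the separation $|\tau_1-\tau_2|\gtrsim\lambda(t)\norm{x}\norm{\xi}$ survives the removal of the lower-order terms — i.e. that the perturbation carrying $b_j$ and $c$ is quantitatively smaller than the gap. A purely asymptotic extraction gives you no control on where the gap comes from. The paper handles this by normalizing $\tau_i=\lambda(t)\norm{x}|\xi|\gamma_i$, $\lambda_i=\lambda(t)\norm{x}|\xi|\mu_i$, introducing the normalized perturbation $B_1$ (the quantity \eqref{eq:B1}), showing $|B_1|\lesssim 1/N$ on $Z_\hyp(N)$, and then expanding $\mu_i=\gamma_i+\sum_n c_n^{(i)}B_1^n$ with uniformly bounded Cauchy-integral coefficients, so the separation $|\gamma_1-\gamma_2|\geq\delta$ transfers to $|\mu_1-\mu_2|\geq\delta$ for $N$ large. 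The same perturbation series (plus an implicit-function-derivative formula) is what yields the derivative estimates \eqref{eq:mui_estimate}. None of this is present in your sketch, and it is not a matter of bookkeeping: without bounding the perturbation you cannot conclude \eqref{eq:lambda_hyperbolicity}.

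Two smaller points. First, you write that ``the coefficient estimates in $\textbf{(A)}$ are $(x,\xi)$-independent''; they are $\xi$-independent but visibly $x$-dependent. Second, the mechanism for converting the $Z_\hyp(N)$-local bounds into the uniform-in-$x$ estimates \eqref{eq:coefficients_assumptions} is not ``asymptotic extraction plus compactness'': the paper fixes $(t,x)$ and then chooses $\xi$ to be precisely the solution of the boundary equation $\Lambda(t)\norm{x}\norm{\xi}=N\ln(\norm{x}\norm{\xi})$, which is the sharpest admissible choice and is what produces the factors $|\ln\lambda(t)|/\Lambda(t)$ (to the first and second power, respectively) in the bounds on $b_j$ and $c$. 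Your proposal hints at this via ``translating back via Lemma \ref{lem:log_lambda}'' but does not identify the choice of $\xi=\xi(t,x)$, which is the crux.
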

\begin{proof}
We prove the implication $\textbf{(H)}\implies \textbf{(A)}$.  Since it holds 
\[ a_j(t,x)= \frac{1}{2} D_{\xi_j}^2 (\tau_1(t,x,\xi)\tau_2(t,x,\xi)),\]
for all $(t,x,\xi)\in Z_\hyp(N)$ we may estimate
\begin{equation}
\label{eq:aj_estimate}
|D_t^k D_x^\alpha a_j(t,x)|\leq C_{k\alpha} \lambda(t)^2\<x\>^{2-|\alpha|}\weight.
\end{equation} 
By induction, we may prove the desired estimates also for the coefficients $b_j$, $j=1,\dots, d$ and $c$.
Indeed, it holds
\[ D_t^k D_x^\alpha b_j(t,x)=i D_t^k D_x^\alpha ( D_{\xi_j}(\tau_1\tau_2 )-2a_j(t,x)\xi_j),\]
and 
\[D_t^kD_x^\alpha c(t,x)= D_t^k D_x^\alpha \Big(\tau_1\tau_2-\sum_{j=1}^d (a_j(t,x)\xi_j^2+b_j(t,x)\xi_j)\Big);\]
In particular, by \eqref{eq:tau_estimate} and \eqref{eq:aj_estimate} we have
\begin{align*} 
|D_t^kD_x^\alpha b_j(t,x)|\leq &C_{k\alpha}\lambda(t)^2\<x\>^{2-|\alpha|}\<\xi\> \weight \\\leq &C_{k\alpha} \lambda(t)^2 \<x\>^{1-|\alpha|}\frac{|\ln\lambda(t)|}{N\Lambda(t)}\weight,
\end{align*}
and then,
\begin{align*} 
	|D_t^kD_x^\alpha c(t,x)|\leq &C_{k\alpha}\lambda(t)^2\<x\>^{2-|\alpha|}\<\xi\>^2\weight \\\leq &C_{k\alpha} \lambda(t)^2\<x\>^{-|\alpha|} \bigg(\frac{|\ln\lambda(t)|}{N\Lambda(t)}\bigg)^2\weight,
\end{align*}
for all $(t,x)\in [0,T]\times \R^d$. Indeed, for any fixed $t\in [0,T]$ and $x\in \R^d$, it is possible to choose $\xi\in \R^d$ (depending on $t$ and $x$)  equal to the unique solution to the equation
\[ \Lambda(t)\<x\>\<\xi\>=N\ln(\<x\>\<\xi\>).\]
With such choice of $\xi$, we may estimate 
\[ \<x\>\<\xi\>\leq C \frac{|\ln(\lambda(t))|}{N\Lambda(t)},\]
as a consequence of Lemma \ref{lem:log_lambda} (see Remark \ref{rem:log_lambda}).\\
Let us now prove that $\textrm{Im} a_j=0$. Applying estimate \eqref{eq:tau_imaginarypart} we find
\begin{align*}
| \text{Im}\,a_j(t,x)|&=\big|\text{Im}\,\partial_{\xi_j}^2 (\tau_1(t,x,\xi)\tau_2(t,x,\xi))\big|\\
&\lesssim \<x\> \<\xi\>^{-1}\frac{\lambda(t)^2}{\Lambda(t)}\ln\bigg(\frac{1}{\Lambda(t)}\bigg),
\end{align*}
for all $(t,x,\xi)\in Z_\hyp(N)$. In particular, for any fixed $t>0$ and $x\in \R^n$ there exists $M$ sufficiently large such that $(t,x,\xi)\in Z_\hyp(N)$ for all $|\xi|>M$. Taking $|\xi|\to +\infty$ the right-hand side tends to $0$, whereas the left-hand side does not depend on $\xi$. This allows to conclude that $\text{Im}a_j=0$. As a consequence, the following identity holds:
\begin{align*}
	| \text{Im}\,b_j(t,x)|&=\big|\text{Im}\,\partial_{\xi_j} (\tau_1(t,x,\xi)\tau_2(t,x,\xi))\big|\\
	&\lesssim \<x\> \frac{\lambda(t)^2}{\Lambda(t)}\ln\bigg(\frac{1}{\Lambda(t)}\bigg).
\end{align*}
In order to prove \eqref{eq:lambda_estimate} we define, for $i=1,2$, $\mu_i(t,x,\xi)$ and $\gamma_i(t,x,\xi)$ such that 
$$
\lambda_i(t,x,\xi)=\lambda(t)\<x\>|\xi|\mu_i(t,x,\xi) \text{ and } \tau_i(t,x,\xi)=\lambda(t)\<x\>|\xi|\gamma_i(t,x,\xi).
$$
Then, $\mu_i$ and $\gamma_i$ satisfy
\begin{align}
& \label{eq:mui_equation} -\mu_i^2 + (\lambda(t)\<x\>|\xi|)^{-2} \sum_{j=1}^d a_j(t,x)\xi_j^2 =0; \\
& \label{eq:gammai_equation}-\gamma_i^2 + (\lambda(t)\<x\>|\xi|)^{-2} \sum_{j=1}^d  a_j(t,x)\xi_j^2 + B_1, \\  &\hspace{50pt} \notag B_1:= (\lambda(t)\<x\>|\xi|)^{-2} \Big(\sum_{j=1}^d  b_j(t,x)\xi_j +c(t,x)\Big)=0.
\end{align}
In view of conditions \eqref{eq:tau_estimate}-\eqref{eq:tau_imaginarypart}, the functions $\gamma_i$ satisfy:
\begin{align}
	\label{eq:gamma_estimate}
	|D_t^jD_x^\alpha D_\xi^\beta \gamma_i(t,x,\xi)|&\leq C_{k\alpha\beta}\<x\>^{-|\alpha|}\<\xi\>^{-\beta}\bigg(\frac{\lambda(t)}{\Lambda(t)}\ln\bigg(\frac{1}{\Lambda(t)}\bigg)\bigg)^j,\\
	\label{eq:gamma_hyperbolicity}
	|\gamma_1(t,x,\xi)-\gamma_2(t,x,\xi)|&\geq \delta, \\
	%
	%\label{eq:gamma_imaginarypart}
\notag	| D_t^j D_x^\alpha D_\xi^\beta \text{Im} \gamma_i(t,x,\xi)|&\leq C_{k\alpha\beta}\frac{1}{\Lambda(t)}\<x\>^{-1-|\alpha|}\<\xi\>^{-1-|\beta|}\bigg(\frac{\lambda(t)}{\Lambda(t)}\ln\bigg(\frac{1}{\Lambda(t)}\bigg)\bigg)^{j},
\end{align}
for all $(t,x,\xi)\in Z_\hyp(N)$. Furthermore, by using the obtained estimates for the coefficients $b_j$, $j=1,\dots,d$, and $c$ we may estimate the perturbation $B_1$ by
\begin{equation} 
\label{eq:B1}
|D_t^k D_x^\alpha D_\xi^\beta B_1|\lesssim  \frac{1}{N}\<x\>^{-|\alpha|}\<\xi\>^{-|\beta|}\weight.
\end{equation}
Let us consider the polynomial in the variable $\mu$ defined by $P(t,x,\xi; \mu):=(\mu-\gamma_1)(\mu-\gamma_2)$. By \eqref{eq:mui_equation} and \eqref{eq:gammai_equation} we derive that $\mu_1$ and $\mu_2$ are the two solutions to equation
\begin{equation*}
P(t,x,\xi;\mu)+B_1=0.
\end{equation*}
In particular, condition \eqref{eq:gamma_hyperbolicity} guarantees that $P(t,x,\xi, \cdot)$ has two simple roots. Moreover, the roots $\gamma_i$ of \eqref{eq:gammai_equation} analytically depend on the perturbation $B_1$ in some neighborhood of the origin. Then, if $B_1$ has modulus sufficiently small we may write the following series expansion of $\mu_i(t,x,\xi)$,
\[ \mu_i(t,x,\xi)=\gamma_i(t,x,\xi)+\sum_{n=1}^\infty c_n^{(i)}(t,x,\xi) B_1^n(t,x,\xi), \quad i=1,2,\]
where 
\begin{align*}
c_n^{(i)}(t,x,\xi) &= \frac{1}{2\pi i }\oint_{|\omega-\gamma_i|=\rho} \frac{(\omega-\omega_i)P'_\omega(t,x,\xi;\omega)}{P(t,x,\xi;\omega)^{n+1}}\\
& = \frac{1}{(n-1)!}\left[\frac{d^{n-1}}{d\omega^{n-1}}\left\{\bigg[\frac{\omega-\gamma_i}{P(t,x,\xi;\omega)}\bigg]^{n+1}P'_\omega(t,x,\xi;\omega)\right\}\right]_{\omega=\gamma_i}.
\end{align*}
where $0<2\rho<\delta$. In particular, we have the inequality 
\[c_n^{(i)}(t,x,\xi)\leq c \delta^{-1}(2c/\rho\delta)^n,\]
for all $(t,x,\xi) \in Z_\hyp(N)$, with a constant $c$ independent of $t$, $x$, $\xi$. As a consequence, the radius of convergence given by the Cauchy-Hadamard formula $\displaystyle{R_i=1/\limsup_{n\to +\infty} (c_n^{(i)} )^\frac{1}{n}}$ is independent of $(t,x,\xi)$.
Moreover, due to estimate \eqref{eq:B1}, the series $\sum_{n} c_n^{(i)}B_i^n$ can be made arbitrarily small taking $N$ sufficiently large.
Then, by estimate \eqref{eq:B1}, for $N$ sufficiently large we derive 
\begin{equation}
\label{eq:mui_hyperbolicity}
|\mu_1(t,x,\xi)-\mu_2(t,x,\xi)|\geq \delta 
\end{equation}
for all $(t,x,\xi)\in Z_\hyp(N)$. Let us prove the following estimate:
\begin{equation}
	\label{eq:mui_estimate}
|D_t^k D_x^\alpha D_\xi^\beta \mu_i(t,x,\xi)|\leq C_{k\alpha\beta} \<x\>^{-|\alpha|}\<\xi\>^{-|\beta|}\bigg(\frac{\lambda(t)}{\Lambda(t)}\ln\bigg(\frac{1}{\Lambda(t)}\bigg)\bigg)^k,
\end{equation}
for all $(t,x,\xi)\in Z_\hyp(N)$. To this aim, let us denote $y=(t,x,\xi)\in (0,T]\times \R^{2d}$. By applying the formula for the derivative of implicit functions to $P(t,x,\xi; \mu)$, for any multi-index $r\in \N^{2d+1}$ and $i=1,2$, we obtain the identity
\begin{align*}
\partial_y^r \mu_i(y)= (P'_\omega(y;\mu_i(y)))^{-1}(-B_1(y) +\mathcal{E}_i^r(y)),
\end{align*}
where
\begin{align*}
\mathcal{E}_i^r(y)&=\partial_y^r \gamma_1(y)(\mu_i(y)-\gamma_2(y))+\partial_y^r \gamma_2(y)(\mu_i(y)-\gamma_1(y))\\
&-\sum_{\substack{ r_1+r_2=r \\ r_1, r_2 \neq r}} \frac{r!}{r_1! r_2!} \partial_y^{r_1}(\mu_i(y)-\gamma_1(y))\partial_y^{r_2}(\mu_i(y)-\gamma_2(y)).
\end{align*}
By \eqref{eq:B1} and \eqref{eq:mui_hyperbolicity} we may estimate, for $N$ sufficiently large,
 $$
 |P'_\omega(y,\mu_i(y))|=|2\mu_i-(\gamma_1+\gamma_2)|> \delta
 $$
 for all $y\in Z_\hyp(N)$. Then, \eqref{eq:mui_estimate} follows from \eqref{eq:gamma_estimate} and our previous considerations on $\sum_{n} c_n^{(i)}B_i^n$.
\end{proof}
\subsection{Generalized parameter-dependent \textbf{SG} symbol classes}\label{subs:sgclasses}
Following \cite{CoTo}, we consider, for any real numbers $m,\mu$, and $r_j,\rho_j\ge0$, $j=1,2$, the general class of $\SG$-type symbols 
$\SGs^{m,\mu}_{r_1,r_2,\rho_1,\rho_2}$, which consists of all $a\in C^\infty(\R^{2d})$ such that, for all multi-indices $\alpha,\beta\in\N^d$, 
there is a constant $C_{\alpha\beta}>0$ that satisfies
\begin{equation*}
%	\label{eq:generalSGclass}
	|D^{\alpha}_{x}D_{\xi}^{\beta}a(x,\xi)|\leq C_{\alpha\beta}\norm{x}^{m-r_1 |\alpha|+r_2 |\beta|}\norm{\xi}^{\mu+\rho_1|\alpha|-\rho_2|\beta|}, 
\end{equation*}
for all $(x,\xi)\in \R^{2d}$. In particular, we denote the special classes $\SGs^{m,\mu}_{1,0,1,0}$ and $\SGs^{m,\mu}_{1-\ve,\ve,1-\ve,\ve}$, $\ve\in(0,1)$, by 
$\SGs^{m,\mu}$ and $\SGs^{m,\mu}_{(\ve)}$, respectively. 
\begin{defn}
\label{def: SG-ellipticity}
An element $a\in\SGs^{m,\mu}$ is called ($md$- or $\SG$-)elliptic if it also satisfies the lower bound
$$
|a(x,\xi)| \ge C\norm{x}^m\norm{\xi}^\mu, \quad x,\xi\in\R^d, |x|+|\xi|\ge M.
$$
More generally, $a\in\SGs^{m,\mu}$ is called ($\SG$-)hypoelliptic if it satisfies the lower bound
$$
|a(x,\xi)| \ge C\norm{x}^{m'}\norm{\xi}^{\mu'}, \quad x,\xi\in\R^d, |x|+|\xi|\ge M,
$$
for some $m'\le m$, $\mu'\le\mu$, and for any $\alpha,\beta\in \N^d$ there exists $C_{\alpha\beta}>0$ such that 
\[ |D_x^\alpha D_\xi^\beta a(x,\xi)|\leq C|a(x,\xi)|\<x\>^{-|\alpha|}\<\xi\>^{-|\beta|}.\]
\end{defn}
%The next Theorem \ref{thm:sgparam} says that operators associated with (hypo)elliptic 

Note that an extended family of $\SG$-symbols is defined in \cite{CJT2}, by employing more general weights $\omega(x,\xi)$ in place of 
$\langle x \rangle^{m}\langle \xi \rangle^{\mu}$ in \eqref{eq:generalSGclass}. 
%%%%%%
\begin{defn}
	By $\SGP{2N}$ we denote the class of all symbols 
	$p \in L_\infty\Big([0,T],$ $C^\infty(\R^{2d})\Big)$ satisfying, for $(t,x,\xi) \in Z_\pd(2N)$  and for all multi-indices $\alpha, \beta\in\N^d$, 
	the estimates 
	$$
	\esssup_{t \in [0,t_{x,\xi}]}
	|\pa_\xi^\beta \pa_x^\al p(t,x,\xi)| \le C_{\alpha\beta} \,  \norm{x}^{1-|\al|}\norm{\xi}^{1-|\beta|}.
	$$
\end{defn}

%The $SG^{m,\mu}$ corresponds to the weight $\norm{x}^{m}\norm{\xi}^{\mu}$. %For the case $m_{1}=m_{2}=m$  we denote the class and all its generalizations as $SG^{(m)}$.  

\begin{defn}\label{symbol_class_hyp}
	By $\SGH{m}{\mu}{\kappa}{\ell}{N}$ we denote the class of all symbol families $a\in C([0,T], S^{m,\mu})\cap C^{\infty}((0,T]\times \mathbb{R}^{2d})$ such that,
	for all $k\in \N$ and $\alpha,\beta\in\N^d$, we have, for suitable constants $C_{k\alpha\beta}>0$, 
	\begin{equation}
	\label{eq:estimates_hypSymbolClass}
		|D_t^kD^{\alpha}_{x}D_{\xi}^{\beta}a(t,x,\xi)|\leq 
		C_{k\alpha\beta}\norm{x}^{m-|\beta|}\norm{\xi}^{\mu-|\alpha|}\lambda(t)^{\kappa}\left(\frac{\lambda(t)}{\Lambda(t)}\left(\ln \frac{1}{\Lambda(t)}\right)\right)^{\ell+k}
	\end{equation}
 	for every $(t,x,\xi)\in Z_\hyp(N)$. Similarly, we denote by $\SGO{m}{\mu}{\kappa}{\ell}{N}$ the class of all symbol families in 
	$C([0,T], S^{m,\mu})\cap C^{\infty}((0,T]\times \mathbb{R}^{2d})$ that satisfy \eqref{eq:estimates_hypSymbolClass}  for all $(t,x,\xi)\in Z_\reg(N)$.
\end{defn} 
\begin{rem}
The class $\SGO{m}{\mu}{\kappa}{\ell}{N}$ has the same properties as that of $\SGH{m}{\mu}{\kappa}{\ell}{N}$ except for regularity behavior. In particular, the regularity in $\SGO{m-l}{\mu-l}{\kappa}{\ell+l}{N}$ is $(\ln\norm{x}\norm{\xi})^{-1}$ better than those of symbols from $\SGO{m-l+1}{\mu-\ell+1}{\kappa}{\ell+l-1}{N}$ for $l\geq 0$. This deviation of the regularity across hierarchy classes will play a role in the diagonalization procedure below. 
\end{rem}
Some hierarchical properties of the symbol class introduced in Definition \ref{symbol_class_hyp} are listed below. They follow by adapting arguments
to prove similar properties of the classes employed in \cite{Yagdjian}, so we omit their proofs.
\begin{prop}\label{prop:hierarchy}
\begin{enumerate}
	\item $\SGH{m}{\mu}{\kappa}{\ell}{N_2}\subset \SGH{m}{\mu}{\kappa}{\ell}{N_1}$ for $N_1\geq N_2$.
	\item $\SGH{m}{\mu}{\kappa}{\ell}{N}\subset\SGH{m+l}{\mu+l}{\kappa}{\ell-l}{N}$ for $l\geq 0$. By this property there is no difference in regularity of the symbols from  $\SGH{m}{\mu}{\kappa}{\ell}{N}$ and $\cap_{l\geq 0}\SGH{m-l}{\mu-l}{\kappa}{\ell+l}{N}$ in $Z_\hyp(N)$. 
	\item If $a(t,x,\xi)\in \SGH{m_1}{\mu_1}{\kappa_1}{\ell_1}{N}$ and $b(t,x,\xi)\in \SGH{m_2}{\mu_2}{\kappa_2}{\ell_2}{N}$, then $a(t,x,\xi)b(t,x,\xi)\in \SGH{m_1+m_2}{\mu_1+\mu_2}{\kappa_1+\kappa_2}{\ell_1+\ell_2}{N}.$ 
	\item  If $a(t,x,\xi)\in \SGH{m}{\mu}{\kappa}{\ell}{N}$ and $\kappa >0$, then $D_{t}a(t,x,\xi)\in \SGH{m}{\mu}{\kappa-1}{\ell+1}{N}$ 
	\item If $a\in \SGH{m}{\mu}{\kappa}{\ell}{N}$ is constant in $Z_\pd(N)$ and $\ell\ge0$, then 
	$$ 
	\partial_{t}^{l} a \in L_\infty \Big([0,T],\; S^{\tilde{m}+l,\tilde{\mu}+l} \Big)
	$$
	for all $l\geq 0$ and where $\tilde{m}= \max\{0,m+\ell\}$ and $\tilde{\mu}= \max\{0,\mu+\ell\}$. Indeed, for $(t,x,\xi) \in Z_\hyp(N)$  we have
	\begin{eqnarray}
	\nonumber |\partial_{t}^{l}a(t,x,\xi)|&\leq& \norm{x}^{m}\norm{\xi}^{\mu}\lambda(t)^\kappa\left(\frac{\lambda(t)}{\Lambda(t)}\ln \left(\frac{1}{\Lambda(t)}\right)\right)^{\ell+l}\\
	\nonumber &\leq&C_{l}\norm{x}^{m+\ell+l}\norm{\xi}^{\mu+\ell+l},
	\end{eqnarray}
	being $\lambda(t)$ uniformly bounded in $[0,T]$ and 
	$$ \frac{\lambda(t)}{\Lambda(t)}\ln\left( \frac{1}{\Lambda(t)}\right)\leq \<x\>\<\xi\>. $$
\end{enumerate}
\end{prop}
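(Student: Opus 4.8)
The plan is to read all five assertions directly off the defining estimates \eqref{eq:estimates_hypSymbolClass}, using a handful of elementary ingredients: the nesting $Z_\hyp(N_1)\subseteq Z_\hyp(N_2)$ for $N_1\ge N_2$; the conversion inequality $\tfrac{\lambda(t)}{\Lambda(t)}\ln\tfrac1{\Lambda(t)}\lesssim\norm{x}\norm{\xi}$ on $Z_\hyp(N)$ (Remark \ref{rem:log_lambda}, combined with \eqref{eq:lambda'_control}); and the bounds $0\le\lambda(t)\le\lambda(T)$ from $\lambda$ being non-decreasing. Besides these I would invoke the Leibniz rule and two routine facts about the $\SG$-scale, namely that the multiplication $S^{m_1,\mu_1}\times S^{m_2,\mu_2}\to S^{m_1+m_2,\mu_1+\mu_2}$ is bounded bilinear and that $S^{m,\mu}\hookrightarrow S^{m+l,\mu+l}$ for $l\ge0$; the latter take care of the ``$C([0,T],S^{m,\mu})$''-part of Definition \ref{symbol_class_hyp}, while ``$C^\infty((0,T]\times\R^{2d})$'' is preserved trivially.

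For (1): $\Lambda$ is strictly increasing and the right side of \eqref{txxi_equation} grows with $N$, so $t_{x,\xi}$ is non-decreasing in $N$, whence the zone nesting; an estimate valid on the larger zone restricts to the smaller one, giving $\SGH{m}{\mu}{\kappa}{\ell}{N_2}\subset\SGH{m}{\mu}{\kappa}{\ell}{N_1}$. For (2): in \eqref{eq:estimates_hypSymbolClass} I factor $\big(\tfrac{\lambda}{\Lambda}\ln\tfrac1\Lambda\big)^{\ell+k}=\big(\tfrac{\lambda}{\Lambda}\ln\tfrac1\Lambda\big)^{(\ell-l)+k}\big(\tfrac{\lambda}{\Lambda}\ln\tfrac1\Lambda\big)^{l}$ (allowed since $l\ge0$) and absorb the last $l$ factors into $(\norm{x}\norm{\xi})^{l}$ by the conversion inequality; this is exactly the passage from the estimate defining $\SGH{m}{\mu}{\kappa}{\ell}{N}$ to that defining $\SGH{m+l}{\mu+l}{\kappa}{\ell-l}{N}$, and the continuity statement is the quoted embedding.

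For (3): I expand $D_t^kD_x^\alpha D_\xi^\beta(ab)$ by Leibniz; each summand is $\big(D_t^{k_1}D_x^{\alpha_1}D_\xi^{\beta_1}a\big)\big(D_t^{k_2}D_x^{\alpha_2}D_\xi^{\beta_2}b\big)$ with the multi-indices summing to $(k,\alpha,\beta)$, and multiplying the two bounds from \eqref{eq:estimates_hypSymbolClass} the exponents of $\norm{x}$, $\norm{\xi}$, $\lambda$ and $\tfrac{\lambda}{\Lambda}\ln\tfrac1\Lambda$ add to $m_1+m_2-|\alpha|$, $\mu_1+\mu_2-|\beta|$, $\kappa_1+\kappa_2$ and $(\ell_1+\ell_2)+k$; the finitely many binomial coefficients go into the constant, and continuity at $t=0$ is the boundedness of the $\SG$-multiplication. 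For (4): I apply \eqref{eq:estimates_hypSymbolClass} for $a$ with $k$ replaced by $k+1$ to $D_t^kD_x^\alpha D_\xi^\beta(D_ta)$; the weight exponent becomes $\ell+k+1=(\ell+1)+k$ and, via $\lambda^\kappa\le\lambda(T)\lambda^{\kappa-1}$, one extracts the factor $\lambda^{\kappa-1}$ needed for membership in $\SGH{m}{\mu}{\kappa-1}{\ell+1}{N}$; the other requirements are handled as in (2).

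Item (5) is the only one needing more than bookkeeping, and I expect its proof to be the main obstacle. Fix $l\ge0$ and $\alpha,\beta$. On $Z_\hyp(N)$, \eqref{eq:estimates_hypSymbolClass} combined with the conversion inequality and $\lambda\le\lambda(T)$ gives
\[
|D_x^\alpha D_\xi^\beta\partial_t^l a|\lesssim\norm{x}^{m-|\alpha|}\norm{\xi}^{\mu-|\beta|}\Big(\tfrac{\lambda}{\Lambda}\ln\tfrac1\Lambda\Big)^{\ell+l}\lesssim\norm{x}^{m+\ell+l-|\alpha|}\norm{\xi}^{\mu+\ell+l-|\beta|}\lesssim\norm{x}^{\tilde m+l-|\alpha|}\norm{\xi}^{\tilde\mu+l-|\beta|},
\]
where the last step uses $m+\ell\le\tilde m$, $\mu+\ell\le\tilde\mu$ and $\norm{\cdot}\ge1$ — this is precisely why the truncations $\tilde m=\max\{0,m+\ell\}$, $\tilde\mu=\max\{0,\mu+\ell\}$ occur. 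On $Z_\pd(N)$, constancy of $a$ in $t$ makes $\partial_t^l a$ vanish for $l\ge1$; for $l=0$, $a(t,x,\xi)=a(t_{x,\xi},x,\xi)$, so by the chain rule the $x,\xi$-derivatives on $Z_\pd(N)$ reduce to $x,\xi$-derivatives of the boundary trace $(x,\xi)\mapsto a(t_{x,\xi},x,\xi)$, and these still obey $S^{\tilde m,\tilde\mu}$-type bounds once the derivatives of $t_{x,\xi}$ are controlled — which I would get by implicit differentiation of \eqref{txxi_equation}, using that $s\mapsto s/\ln s$ is increasing on $[e,\infty)$. On $\{|x|+|\xi|\le M\}$, $\partial_t^l a$ is continuous on a compact set, hence bounded. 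Patching the three regions yields $\partial_t^l a\in L_\infty([0,T],S^{\tilde m+l,\tilde\mu+l})$; only $L_\infty$ (not $C$) is asserted, reflecting a possible jump of $t$-derivatives across the interface $t=t_{x,\xi}$. The genuinely delicate point is the control of $t_{x,\xi}$ and its $x,\xi$-derivatives in the pseudodifferential zone; the rest is repeated use of the conversion inequality and the Leibniz rule.
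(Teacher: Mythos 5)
Your reading of items (1)--(4) is correct and, since the paper omits these proofs entirely (referring to Yagdjian), your arguments fill them in along the only natural lines: zone monotonicity in $N$ for (1), the conversion bound $\frac{\lambda}{\Lambda}\ln\frac1\Lambda\lesssim\norm{x}\norm{\xi}$ on $Z_\hyp(N)$ (which is exactly the inequality the paper itself records at the end of item (5), and which you correctly trace to Remark \ref{rem:log_lambda} and \eqref{eq:lambda'_control}) for (2), Leibniz for (3), and re-indexing $k\mapsto k+1$ plus $\lambda\le\lambda(T)$ for (4). For item (5), your main hyperbolic-zone estimate coincides with the paper's inline sketch, and you correctly extend it to nonzero $\alpha,\beta$ (which the paper's display omits).

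The one place your argument goes off the rails is the handling of $Z_\pd(N)$ for $l=0$ in item (5). You read ``constant in $Z_\pd(N)$'' as ``independent of $t$,'' write $a(t,x,\xi)=a(t_{x,\xi},x,\xi)$, and propose to control the boundary trace by implicit differentiation of \eqref{txxi_equation}; you flag this yourself as the delicate point and leave it unfinished. This is both unnecessary and not obviously fixable as stated: implicit differentiation gives $\partial_{x_j}t_{x,\xi}$ with a factor $\lambda(t_{x,\xi})^{-1}$, which by Lemma \ref{lem:log_lambda} behaves like a positive power of $\norm{x}\norm{\xi}$, so a naive chain-rule bound does not land in $S^{\tilde m,\tilde\mu}$ without further cancellation. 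The whole detour can be skipped: by Definition \ref{symbol_class_hyp}, $a\in\SGH{m}{\mu}{\kappa}{\ell}{N}$ already requires $a\in C([0,T],S^{m,\mu})$ \emph{globally}, and since $\ell\ge0$ forces $m\le\tilde m$ and $\mu\le\tilde\mu$ (in either branch of the $\max$), the embedding $S^{m,\mu}\hookrightarrow S^{\tilde m,\tilde\mu}$ gives $a\in C([0,T],S^{\tilde m,\tilde\mu})\subset L_\infty([0,T],S^{\tilde m,\tilde\mu})$ at once. For $l\ge1$, constancy on $Z_\pd(N)$ (in the paper's usage this appears to mean $a$ takes a fixed value there, as for $m_0$ and $\sigma(N_1)$, which makes all derivatives vanish; but your weaker ``constant in $t$'' reading also kills $\partial_t^l a$) handles the pseudodifferential zone trivially, and your hyperbolic-zone bound finishes the proof. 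With that correction the argument is complete.

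Two small remarks: in \eqref{eq:estimates_hypSymbolClass} the paper writes $\norm{x}^{m-|\beta|}\norm{\xi}^{\mu-|\alpha|}$, evidently a typo for $\norm{x}^{m-|\alpha|}\norm{\xi}^{\mu-|\beta|}$ given the convention in \eqref{eq:sym_inequality}; you silently use the corrected form, which is fine but worth flagging. And in (4) your bound $\lambda^\kappa\le\lambda(T)\lambda^{\kappa-1}$ holds for all $t\in(0,T]$ regardless of the sign of $\kappa-1$, since it reduces to $\lambda(t)\le\lambda(T)$, so no extra caveat is needed there.
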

A straightforward modification of the procedures of aysmptotic summation, using the hierarchy of symbol classes $ \SGH{m}{\mu}{\kappa}{\ell}{N}$,
yield the next Lemma \ref{lem:asymptotic}.
\begin{lem}\label{lem:asymptotic}
	Assume that the symbols $a_l \in \SGH{m-l}{\mu-l}{\kappa}{\ell}{N},\, \kappa \ge 0$,
	vanish in $Z_\pd(N)$. Then, there is a symbol $a\in \SGH{m}{\mu}{\kappa}{\ell}{N}$ with support in $Z_\hyp(N)$ such that 
	$$
	a-\sum_{l=0}^{k-1}a_{l} \in \SGH{m-k}{\mu-k}{\kappa}{\ell}{N}, \textnormal{ for all }  k\geq 1.
	$$
	The symbol $a$ is uniquely determined modulo $C^{\infty}([0,T],\SGs^{-\infty,-\infty})$.
\end{lem}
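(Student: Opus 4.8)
The plan is to carry out a Borel-type asymptotic summation adapted to the hierarchy of parameter-dependent $\SG$-classes $\SGH{m}{\mu}{\kappa}{\ell}{N}$, exactly mimicking the classical construction for standard $\SG$-symbols but keeping careful track of the weight factors $\lambda(t)^\kappa\big(\tfrac{\lambda(t)}{\Lambda(t)}\ln\tfrac1{\Lambda(t)}\big)^{\ell+k}$ and of the localization to $Z_\hyp(N)$. First I would fix an excision function $\chi\in C^\infty(\R^{2d})$ with $\chi(x,\xi)=0$ for $|x|+|\xi|\le M$ and $\chi(x,\xi)=1$ for $|x|+|\xi|\ge 2M$, together with a cutoff $\psi(t,x,\xi)$ supported in $Z_\hyp(N/2)$ and equal to $1$ on $Z_\hyp(N)$ (such a cutoff can be built from the defining inequalities for $t_{x,\xi}$ and is itself of the appropriate symbol type, since $\Lambda(t_{x,\xi})\norm x\norm\xi = N\ln\norm x\norm\xi$ and Lemma \ref{lem:log_lambda} give the needed control). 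Then I set
\[
a(t,x,\xi)=\sum_{l=0}^\infty \chi\!\left(\frac{x}{R_l},\frac{\xi}{R_l}\right)a_l(t,x,\xi),
\]
with a sequence $R_l\to\infty$ chosen below, the whole thing further multiplied by $\psi$ to guarantee support in $Z_\hyp(N)$ (the $a_l$ already vanish in $Z_\pd(N)$, so this does not disturb the identities).

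The key estimate is that $\chi(x/R_l,\xi/R_l)$ forces $\norm x\norm\xi\gtrsim R_l$ on its support, hence on that support one has, for any prescribed $\delta_l>0$,
\[
\left|D_t^kD_x^\alpha D_\xi^\beta\!\left[\chi(x/R_l,\xi/R_l)a_l\right]\right|
\le \delta_l\,\norm x^{m-l+1-|\beta|}\norm\xi^{\mu-l+1-|\alpha|}\lambda(t)^\kappa\left(\frac{\lambda(t)}{\Lambda(t)}\ln\frac1{\Lambda(t)}\right)^{\ell+k}
\]
for all $|\alpha|+|\beta|+k\le l$, provided $R_l$ is taken large enough; this uses that each extra derivative of the cutoff produces a gain $R_l^{-1}\lesssim \norm x^{-1}\norm\xi^{-1}$, and that lowering one unit in $(m-l,\mu-l)$ costs $\norm x\norm\xi$ which is offset by $R_l^{-1}$. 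Choosing $\delta_l=2^{-l}$ and $R_l$ accordingly (increasing to $+\infty$), the series converges in $C^\infty$ uniformly on compact subsets together with all the relevant weighted seminorms, and the sum $a$ belongs to $\SGH{m}{\mu}{\kappa}{\ell}{N}$: for a fixed multi-index data $(k,\alpha,\beta)$, the finitely many terms with $l\le |\alpha|+|\beta|+k$ are each in $\SGH{m}{\mu}{\kappa}{\ell}{N}$ by Proposition \ref{prop:hierarchy}(2) (since $\SGH{m-l}{\mu-l}{\kappa}{\ell}{N}\subset\SGH{m}{\mu}{\kappa}{\ell}{N}$), while the tail is dominated by $\sum_{l>\cdots}2^{-l}$ times the $\SGH{m}{\mu}{\kappa}{\ell}{N}$-weight. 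The same bookkeeping applied to the tail $\sum_{l\ge k}$ shows $a-\sum_{l=0}^{k-1}a_l\in\SGH{m-k}{\mu-k}{\kappa}{\ell}{N}$ for every $k\ge1$, which is the asymptotic property. Uniqueness modulo $C^\infty([0,T],\SGs^{-\infty,-\infty})$ follows because the difference of two such $a$'s lies in $\bigcap_k\SGH{m-k}{\mu-k}{\kappa}{\ell}{N}$, and an element in this intersection satisfies $\SG$-symbol estimates of arbitrarily negative order in both variables (the weight $\lambda^\kappa(\tfrac\lambda\Lambda\ln\tfrac1\Lambda)^{\ell+k}$ being bounded on $[0,T]\times\R^{2d}$ by Proposition \ref{prop:hierarchy}(5)), hence belongs to $C^\infty([0,T],\SGs^{-\infty,-\infty})$.

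The main obstacle I anticipate is purely the verification that the cutoff-differentiation gains are correctly aligned with the \emph{two} independent symbolic orders of the $\SG$-calculus at once: in the classical one-parameter Borel summation one only needs $\norm\xi\gtrsim R_l$, whereas here the excision must be in the joint variable and each derivative in $x$ or $\xi$ must produce the matching decay $\norm x^{-1}$ \emph{or} $\norm\xi^{-1}$ to feed into the scale of $\SGH{m-l}{\mu-l}{\cdot}{\cdot}{N}$; I would handle this by using the anisotropic dilation $\chi(x/R_l,\xi/R_l)$ and noting $|\partial_x\chi(x/R_l,\xi/R_l)|\lesssim R_l^{-1}\mathbf 1_{|x|\asymp R_l}\lesssim \norm x^{-1}$ there, and similarly for $\xi$, so that the gain is exactly $\norm x^{-|\alpha|}\norm\xi^{-|\beta|}$ as required by the symbol estimates. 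The time-derivative weights cause no extra trouble since they are untouched by the spatial cutoff, and the localization to $Z_\hyp(N)$ is preserved throughout because every $a_l$ already vanishes in $Z_\pd(N)$; multiplying by $\psi$ supported in $Z_\hyp(N)$ only changes $a$ by an element vanishing near the boundary, which does not affect the stated conclusions. Since the paper explicitly says this is "a straightforward modification of the procedures of asymptotic summation," I would present the above as a sketch, referencing \cite{Yagdjian} and the standard $\SG$ asymptotic summation in \cite{Cord} for the routine parts, and only spell out the joint-variable cutoff estimate and the role of Proposition \ref{prop:hierarchy}(2),(5).
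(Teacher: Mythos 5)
Your proposal is essentially the argument the paper has in mind when it says the lemma follows by "a straightforward modification of the procedures of asymptotic summation," and it is correct; the joint excision $\chi(x/R_l,\xi/R_l)$ together with $|x|+|\xi|\gtrsim R_l$ (hence $\norm{x}\norm{\xi}\gtrsim R_l$ and $\norm{x},\norm{\xi}\lesssim R_l$ on the support of the cutoff derivatives) is exactly what is needed to feed the Borel trick into the two-scale $\SG$ estimates, and the $a_l$ already vanishing on $Z_\pd(N)$ automatically localizes $a$ to $Z_\hyp(N)$, so the extra factor $\psi$ is redundant. Two minor slips worth noting: on $\supp\partial\chi(\cdot/R_l)$ one has $|x|+|\xi|\asymp R_l$, not $|x|\asymp R_l$ (the conclusion $R_l^{-1}\lesssim\norm{x}^{-1}$ still holds since $\norm{x}\lesssim R_l$ there), and the inclusion $\SGH{m-l}{\mu-l}{\kappa}{\ell}{N}\subset\SGH{m}{\mu}{\kappa}{\ell}{N}$ you invoke is immediate from the definition rather than from Proposition~\ref{prop:hierarchy}(2), which trades order against the index $\ell$.
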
 
\begin{rem}
\label{rem:a_classes}
We note that if \eqref{eq:tau_estimate} holds for $j=1,2$, for all $(t,x,\xi)$ in $Z_\hyp(N)$, then the function $a$ defined in \eqref{eq:amplitude_a} belongs to $\SGH{2}{2}{2}{0}{N}$.
Moreover, for all $(t,x,\xi)\in Z_{\pd}(N)$ it satisfies 
\[ |\partial_x^\alpha \partial_\xi^\beta a(t,x,\xi)|\lesssim \<x\>^{2-|\alpha|}\<\xi\>^{2-|\beta|}.\] 
Further, inequality \eqref{eq:tau_hyperbolicity} implies that $a(t,x,\xi)$ satisfies a weak ellipticity condition, that is 
\[ a(t,x,\xi)\gtrsim \lambda(t)^2\<x\>^2\<\xi\>^2,\]
for all $(t,x,\xi)\in Z_\hyp(N)$. In particular, $a(0,x,\xi)$ needs not to be $\SG$-elliptic.
\end{rem}
\begin{rem}\label{rem:asymptexpbis}
	Notice that the results in Proposition \ref{prop:hierarchy} and Lemma \ref{lem:asymptotic} hold true, with trivial
	modifications, also for hierarchies associated with the subdivision into zones, and
	based on the more general classes $\SGs^{m,\mu}_{r_1,r_2,\rho_1,\rho_2}$, $r_j,\rho_j\ge0$, $j=1,2$, 
	$r_2,\rho _1<1$, and, in particular, on the classes $\SGs^{m,\mu}_{(\ve)}$, $\ve\in(0,1)$, mentioned above. We will tacitly use these results as well in the sequel,
	wherever needed.
\end{rem}
Now we provide an example of operator belonging to the class we are studying. The Cauchy problem can, in this case, be solved
explicitely, so that we can, in particular, study the decay properties of the solution in relation to those of the initial data, which is one
of the new, interesting features on which we are focusing. 

\begin{exmp}\label{ex:oprnohyp}
Choose a shape function $\lambda$, satisfying the hypotheses described in Section \ref{subs:lambda}, and
consider the partial differential equation
\begin{equation}\label{eq:examplenoloss}
Lu(t,x)= (\partial_{t}-\lambda(t)x\partial_{x})(\partial_{t}+\lambda(t)x\partial_{x})u(t,x)=0, \quad (t,x)\in[0,T]\times\R, 
\end{equation}
with the unknown $u$ satisfying 
$$
u(0,x)=f(x) \textnormal{   and   } u_{t}(0,x)=g(x).
$$
The solution is given by 
\begin{equation}\label{eq:exmplexplsoln}
u(t,x)=f(xe^{-\Lambda(t)})+\int_{0}^{t}g\left(xe^{2\Lambda(s)-\Lambda(t)}\right)ds,
\end{equation}
as it can be checked directly.

By straightforward computations, with the notation in \eqref{eq:amplitude_a}, dropping the index $j\equiv1$, we here have
$$
a(t,x)=\lambda(t)^2x^2, \;
b(t,x)=i(\lambda'(t)-\lambda(t)^2)x, \textnormal{  and  } c(t,x)=0.
$$ 
%
%The complete symbol \eqref{eq:complete_symbol} are 
The roots of the principal symbol \eqref{eq:principal_symbol}, $\lambda_{1,2}(t,x,\xi)=\pm\lambda(t)|x\xi|$, are real-valued, satisfy
\eqref{eq:lambda_estimate}, but violate \eqref{eq:lambda_hyperbolicity}. It is also immediate to notice that
\[
	a(t,x,\xi)=\lambda(t)^2x^2\xi^2 + i(\lambda'(t)-\lambda(t)^2)x\xi
\]
does not fulfill \eqref{ellipticity_3}. However, the coefficients satisfy the estimates \eqref{eq:coefficients_assumptions}. 
Indeed, the estimates for $c$ are trivial, while the estimates involving order $0$ derivatives and any $x$-derivatives 
of $a$ and $b$ are immediate, given their product form. So, only the behaviour of the $t$-derivatives needs to be 
checked. To this aim, we employ the properties of $\lambda(t)$, which imply, in particular,
$$
|\lambda'(t)|=\left|\frac{\lambda'(t)}{\lambda(t)}\lambda(t)\right|\leq C_{1} \frac{\lambda^{2}(t)}{\Lambda(t)}. 
$$ 
Then,
\begin{align*}
	|D_t \lambda(t)^2|&=2|\lambda(t)\,\lambda'(t)|\le 2\lambda(t)^2\frac{\lambda(t)}{\Lambda(t)}\lesssim
	\lambda(t)^2\left(\frac{\lambda(t)}{\Lambda(t)}\left(\ln\frac{1}{\Lambda(t)}\right)\right),
	\\
	|D_t^2 \lambda(t)^2|&=|2(\lambda'(t))^2+2\lambda(t)\,\lambda''(t)|
	\\
	&\lesssim\lambda(t)^2\left(\frac{\lambda(t)}{\Lambda(t)}\left(\ln\frac{1}{\Lambda(t)}\right)\right)^2+\lambda(t)\left|\frac{\lambda'(t)}{\lambda(t)}\right|\,|\lambda'(t)|
	\\
	&\lesssim\lambda(t)^2\left(\frac{\lambda(t)}{\Lambda(t)}\left(\ln\frac{1}{\Lambda(t)}\right)\right)^2+\lambda(t)\frac{\lambda(t)}{\Lambda(t)}\lambda(t)\frac{\lambda(t)}{\Lambda(t)}
	\\
	&\lesssim\lambda(t)^2\left(\frac{\lambda(t)}{\Lambda(t)}\left(\ln\frac{1}{\Lambda(t)}\right)\right)^2,
\end{align*}
and the estimates for the higher order derivatives follow by induction. This proves the estimates for the coefficient $a$. The estimates for the coefficient $b$ can be obtained
in a completely similar fashion, taking into account that $\displaystyle\left|\frac{\ln\lambda(t)}{\Lambda(t)}\right|\gtrsim1$.

We conclude the analysis of this example observing that, in this case, the solution \eqref{eq:exmplexplsoln} has the same decay of the initial data (as $|x|\to +\infty$),
in spite of the fact that the operator has characteristics with variable multiplicites (distinct for $t\in(0,T]$, both collapsing to zero at $t=0$). 
This shows that the decay loss phenomenon, which occurs for equation \eqref{eq:main} under Assumption \textbf{(A)} (or, equivalently, Assumption \textbf{(H)})
in Proposition \ref{prop: equivalence}, 
is strongly related to the behaviour of the Hamiltonian flows generated by the characteristic roots, which \textit{transports the smoothness and decay singularities},
encoded by suitable \textit{global wave-front sets}, see \cite{CJT2,CJT3,CoMaSWF} and Section \ref{subs:HamFlow} below. For the operator $L$ in \eqref{eq:examplenoloss}, 
the flow generated by $\lambda_1$ is given, for $x,\xi>0$, by $(x,\xi)\mapsto(x,\xi)\exp(\Lambda(t)-\Lambda(s))$ (and similar
expressions in the other quadrants and for $\lambda_2$), which \textit{preserves directions $\infty x_0$} (see \cite{CJT3,CoMaSWF}), and then \textit{decay singularities} as well.
Results about the \textit{propagation of global singularities} for the Cauchy problems studied in this paper, further extending those in \cite{AAC,CJT3,CoMaSWF}, will 
appear elsewhere.  
\end{exmp}

\section{Generalized pseudodifferential operators and Fourier integral operators of $\SG$-type} \label{sec:SGpsidosFIOs}
\setcounter{equation}{0}
%%%%%%%%%%%%%
%
% tools
%
% 230711 S
% 230714 S
% 230715 S
% 230716 S
%

Let $a\in S^{m,\mu}_{r_1,r_2,\rho_1,\rho_2}$ be such that $r_j,\rho_j\ge0$, $j=1,2$, $\rho_1\leq \rho_2$ and $r_1\geq r_2$. 
Then, the pseudodifferential operator associated with $a$, denoted by $\Op(a)$, is a linear and continuous operator on 
$\mathcal{S}(\mathbb{R}^{d})$ defined by the formula 
$$
(\Op(a)f)(x)=(2\pi)^{-d}\int\int e^{i(x-y)\cdot\xi}a(x,\xi)f(y)dy d\xi.
$$
Given a $\SG$-pseudodifferential operator $A$, we will, as customary, denote by $\sigma(A)$ its symbol. We will also denote by $\sigma_p(A)$ a principal 
part of the symbol of $A$ (often, this will be the leading term of an asymptotic expansion). Explicitly, if $A\in \Op(S^{m,\mu}_{r_1,r_2,\rho_1,\rho_2})$, then
$\sigma(A)-\sigma_p(A)\in S^{m-(r_1-r_2),\mu-(\rho_2-\rho_1)}_{r_1,r_2,\rho_1,\rho_2}$. Since in the sequel we will generally have $r_1>r_2$ and 
$\rho_2>\rho_1$, we recover an analog of the usual notion of principal part of a symbol in the (generalized) $\SG$-setting.

The next Lemma \ref{symcalculus} is the main composition result for the generalized parameter-dependent $\SG$ symbols introduced in the previous section. 
This result follows from the properties of the symbol calculus. Again, we omit the proof, and refer to \cite{Cord} and \cite{Yagdjian}. 

\begin{lem} \label{symcalculus}
Let $a\in \SGH{m_1}{\mu_1}{\kappa_1}{\ell_1}{N}$ and $b\in \SGH{m_2}{\mu_2}{\kappa_2}{\ell_2}{N}$ be two symbols that are constant in $Z_\pd(N)$. 
Then, the composed operator $\Op(c)=\Op(a)\, \Op(b)$ admits a symbol $c\in\SGH{m_1+m_2}{\mu_1+\mu_2}{\kappa_1+\kappa_2}{\ell_1+\ell_2}{N}$ which satisfies 
$$
c(t,x,\xi)\sim \sum\limits_{\alpha}
\textstyle{\dfrac{i^{|\alpha|}}{\alpha!}} D_\xi^\alpha a(t,x,\xi) D_x^\alpha b(t,x,\xi),
$$
modulo a regularizing symbol from $C^\infty \Big([0,T], S^{-\infty,-\infty}(\R^d\times\R^d)\Big)$.
\end{lem}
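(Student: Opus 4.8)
The plan is to reduce the statement to the classical $\SG$-symbolic calculus of Cordes \cite{Cord}, applied pointwise in $t\in(0,T]$, and then to propagate the extra time-dependent factors $\lambda(t)^{\kappa}$ and $\bigl(\tfrac{\lambda(t)}{\Lambda(t)}\ln\tfrac1{\Lambda(t)}\bigr)^{\ell+k}$ through the composition. The decisive remark is that these factors depend on $t$ only, so in the variables $(x,\xi)$ that enter every composition formula they behave as multiplicative constants and can be taken out of all (oscillatory) integrals. This is exactly the mechanism used in \cite{Yagdjian} for the analogous classes without $x$-growth, and I would adapt it to the present setting.

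First I would fix $t\in(0,T]$ and observe that $a(t,\cdot,\cdot)\in S^{m_1,\mu_1}$ and $b(t,\cdot,\cdot)\in S^{m_2,\mu_2}$; hence the ordinary $\SG$-calculus gives $\Op(a(t))\Op(b(t))=\Op(c(t))$ with $c(t)\in S^{m_1+m_2,\mu_1+\mu_2}$ and, for every $M\ge1$,
\[
c(t,x,\xi)=\sum_{|\alpha|<M}\frac{i^{|\alpha|}}{\alpha!}\,D_\xi^\alpha a(t,x,\xi)\,D_x^\alpha b(t,x,\xi)+r_M(t,x,\xi),
\]
where $r_M$ is the usual Taylor remainder, an oscillatory integral in auxiliary variables whose integrand contains $D_\xi^\alpha a$ (with $|\alpha|=M$, at a shifted covariable) and $D_x^\alpha b$ (at a shifted space variable). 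For the finite sum I would argue as follows: by Proposition \ref{prop:hierarchy}, pure $x$- and $\xi$-derivatives leave $\kappa$ and $\ell$ unchanged and only lower the orders, so $D_\xi^\alpha a\in\SGH{m_1-|\alpha|}{\mu_1}{\kappa_1}{\ell_1}{N}$ and $D_x^\alpha b\in\SGH{m_2}{\mu_2-|\alpha|}{\kappa_2}{\ell_2}{N}$, and their product lies in $\SGH{m_1+m_2-|\alpha|}{\mu_1+\mu_2-|\alpha|}{\kappa_1+\kappa_2}{\ell_1+\ell_2}{N}$ by part (3), hence in the asserted class $\SGH{m_1+m_2}{\mu_1+\mu_2}{\kappa_1+\kappa_2}{\ell_1+\ell_2}{N}$, since $\langle x\rangle,\langle\xi\rangle\ge1$; the term $\alpha=0$, equal to $ab$, already realizes the top class. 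Derivatives in $t$ cause no trouble: by Leibniz, $D_t^k(D_\xi^\alpha a\,D_x^\alpha b)=\sum_{k_1+k_2=k}\binom{k}{k_1}D_t^{k_1}D_\xi^\alpha a\cdot D_t^{k_2}D_x^\alpha b$, and multiplying the two defining estimates reproduces exactly the factor $\lambda(t)^{\kappa_1+\kappa_2}\bigl(\tfrac{\lambda}{\Lambda}\ln\tfrac1\Lambda\bigr)^{\ell_1+\ell_2+k}$.

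Next I would handle the remainder $r_M$. Since the prefactors $\lambda(t)^{\kappa_i}\bigl(\tfrac{\lambda}{\Lambda}\ln\tfrac1\Lambda\bigr)^{\ell_i+k}$ carried by $a$ and $b$ do not depend on $(x,\xi)$ nor on the integration variables, they come out of the oscillatory integral defining $r_M$ (and of each of its $x$- and $\xi$-derivatives), and one is left precisely with the quantities controlled by the classical $\SG$ remainder estimates; distributing $D_t$'s by Leibniz once more, this gives $r_M\in\SGH{m_1+m_2-M}{\mu_1+\mu_2-M}{\kappa_1+\kappa_2}{\ell_1+\ell_2}{N}$. The step I expect to be the main obstacle is precisely here: one must make sure that the constants in the classical $\SG$-estimates are uniform for $t\in[0,T]$, and that the polynomial $\SG$-bounds on $a$ and $b$ hold on all of $[0,T]\times\R^{2d}$ and not merely on $Z_\hyp(N)$, because the shifts inside the oscillatory integral can move $(x,\xi)$ across the interface $t=t_{x,\xi}$. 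Both points are covered by the hypothesis that $a,b$ are constant in $Z_\pd(N)$: there the symbols reduce to their frozen boundary values, which still obey the plain $\langle x\rangle^{m}\langle\xi\rangle^{\mu}$-type estimates (cf.\ Remark \ref{rem:a_classes}), the comparison between $\lambda(t_{x,\xi})$, $\Lambda(t_{x,\xi})$ and $\langle x\rangle\langle\xi\rangle$ being provided by Lemma \ref{lem:log_lambda} and Remark \ref{rem:log_lambda}; the rapid decay of the oscillatory kernel confines the effective integration to a region in which membership in $Z_\hyp(N)$ is stable. Once this bookkeeping across the zone boundary is in place, the rest is the classical argument.

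Finally, grouping the terms of the expansion by $|\alpha|=l$, I would obtain a sequence $c_l\in\SGH{m_1+m_2-l}{\mu_1+\mu_2-l}{\kappa_1+\kappa_2}{\ell_1+\ell_2}{N}$ and apply the asymptotic-summation procedure behind Lemma \ref{lem:asymptotic} (after the standard cut-off to $Z_\hyp(N)$, which alters symbols only by elements of $C^\infty([0,T],S^{-\infty,-\infty})$): it produces a symbol $\widetilde c\in\SGH{m_1+m_2}{\mu_1+\mu_2}{\kappa_1+\kappa_2}{\ell_1+\ell_2}{N}$ with $\widetilde c-\sum_{l<M}c_l\in\SGH{m_1+m_2-M}{\mu_1+\mu_2-M}{\kappa_1+\kappa_2}{\ell_1+\ell_2}{N}$ for every $M$, unique modulo $C^\infty([0,T],S^{-\infty,-\infty})$. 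Comparing with the remainder estimate of the previous step shows that the true composed symbol $\sigma(\Op(a)\Op(b))$ differs from $\widetilde c$ by an element of $C^\infty([0,T],S^{-\infty,-\infty})$, which in turn belongs to every such class in $Z_\hyp(N)$; hence $\sigma(\Op(a)\Op(b))$ itself lies in $\SGH{m_1+m_2}{\mu_1+\mu_2}{\kappa_1+\kappa_2}{\ell_1+\ell_2}{N}$, with the displayed asymptotic expansion being the one just constructed.
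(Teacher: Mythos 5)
The paper omits the proof of this lemma, stating only that it ``follows from the properties of the symbol calculus'' with references to Cordes and Yagdjian, so your proposal supplies the argument those references yield when adapted to the zone-dependent classes, and its structure (pointwise-in-$t$ $\SG$-expansion, class membership of the expansion terms, remainder estimate with uniformly controlled constants, asymptotic summation via Lemma \ref{lem:asymptotic}) is the intended one. One clarification on the remainder step, which you rightly flag as the crux: the mechanism by which the constancy in $Z_{\pd}(N)$ resolves the zone-crossing issue is more elementary than your appeal to ``rapid decay of the oscillatory kernel confining the effective integration'' or to Lemma \ref{lem:log_lambda} and Remark \ref{rem:log_lambda}. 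The remainder $r_M$ at a point of $Z_{\hyp}(N)$ is built from derivatives $D_\xi^\alpha a$ and $D_x^\alpha b$ of order $|\alpha|=M\geq 1$ evaluated at shifted arguments that may leave $Z_{\hyp}(N)$; but on $Z_{\pd}(N)$ these derivatives vanish identically, so the bound
$$
\big|D_\xi^\alpha a(t,x,\xi)\big|\le C_{\alpha}\,\langle x\rangle^{m_1}\langle\xi\rangle^{\mu_1-|\alpha|}\,\lambda(t)^{\kappa_1}\left(\frac{\lambda(t)}{\Lambda(t)}\ln\frac{1}{\Lambda(t)}\right)^{\ell_1}
$$
holds on all of $[0,T]\times\R^{2d}$, trivially in $Z_{\pd}(N)$ and by hypothesis in $Z_{\hyp}(N)$, and similarly for $b$. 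This, rather than any localization of the oscillatory integral, is what lets you run the classical $\SG$ remainder argument globally in $(x,\xi)$ for each fixed $t$, carrying the $t$-dependent factor along as a constant. With that observation in place, and noting that the mapping behaviour of $D_x^\alpha$, $D_\xi^\beta$ on the classes used in your term-by-term estimate is an immediate consequence of Definition \ref{symbol_class_hyp} rather than of Proposition \ref{prop:hierarchy}, your argument closes correctly.
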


Given the composition rule we can now discuss the parametrix of an operator $\Op(a)$. We will denote the parametrix of $\Op(a)$ by $\Op(a)^{\sharp}$.
This means that the following equations hold true, with $I$ the identity operator, modulo $C^\iy\Big([0,T],\Op(S^{-\iy,-\iy})\Big)$:
$$
\Op(a)\Op(a)^\sharp -I=0 \textnormal{   and    } \Op(a)^{\sharp} \Op(a)-I=0.
$$
It is well known that, when $a$ is an elliptic $\SG$-symbol of order $(m,\mu)$, $\Op(a)$ admits a parametrix $\Op(a)^{\sharp}$ of order $(-m,-\mu)$. 
A similar statement holds true for operators associated with hypoelliptic symbols, but in this case the parametrix has a different order (see, for instance, Theorem 1.3.6 in \cite{Nicola_Rodino}). The result
extends to matrix-valued symbols, as we show explicitly in the next Lemma \ref{inv}.

\begin{lem}\label{inv}
Assume that $a$ is a matrix-valued symbol with entries in $\SGH{0}{0}{0}{0}{N}$ and that there exists $a^\prime \in \SGH{0}{0}{0}{0}{N}$,  $a^{\prime}$ constant in $Z_\pd(N)$, such that $a-a^\prime \in L_\infty \Big([0,T],\; \SGs^{-\ve, -\ve}(\R^{2d})\Big)$ for some $\ve >0$. If $\Op(a)$ is elliptic, that is, $|\det(a(t,x,\xi))|\geq C>0$ for all $(t,x,\xi)\in [0,T]\times \R^{2d}$, then, there exists a parametrix $\Op(a)^{\sharp}=\Op(a^{\sharp})$, such that $a^\sharp \in \SGH{0}{0}{0}{0}{N}$. Moreover, $a^\sharp - (a^\prime)^{-1} \in L_\infty \Big([0,T],\; \SGs^{-\ve, -\ve}(\R^{2d})\Big)$.
\end{lem}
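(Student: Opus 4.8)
## Proof Proposal for Lemma \ref{inv}

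The plan is to construct $a^\sharp$ by the standard parametrix procedure adapted to the parameter-dependent $\SG$-calculus of Lemma \ref{symcalculus}, working first at the level of the ``constant in $Z_\pd(N)$'' representative $a'$ and then correcting by a lower-order term. First I would observe that ellipticity, $|\det a(t,x,\xi)|\geq C>0$, together with the entrywise membership in $\SGH{0}{0}{0}{0}{N}$, forces the pointwise inverse $a^{-1}$ (Cramer's rule: $a^{-1}=(\det a)^{-1}\,\mathrm{adj}\,a$) to have entries that again satisfy the estimates defining $\SGH{0}{0}{0}{0}{N}$; the key point is that the estimates \eqref{eq:estimates_hypSymbolClass} are stable under multiplication (Proposition \ref{prop:hierarchy}(3) applied entrywise) and under division by a symbol bounded away from zero, since differentiating $(\det a)^{-1}$ reproduces negative powers of $\det a$ times derivatives of $\det a$, and each $x$- or $\xi$-derivative gains the required $\norm{x}^{-1}$, resp. $\norm{\xi}^{-1}$, decay while the $\lambda(t)$-dependent factors only improve. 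The same argument applied to $a'$ (which is elliptic for $|x|+|\xi|$ large, hence, after the usual harmless modification on a compact set, everywhere) shows $(a')^{-1}\in\SGH{0}{0}{0}{0}{N}$ and constant in $Z_\pd(N)$.

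Next I would set $b_0=(a')^{-1}$ and compute, via Lemma \ref{symcalculus}, the symbol of $\Op(a')\Op(b_0)$, which is $I + r_1$ with $r_1\in L_\infty([0,T],\SGs^{-1,-1})$ coming from the subprincipal terms of the asymptotic expansion (the leading term $a'\cdot b_0=I$ is exact). Then the standard Neumann-series correction applies: one asymptotically sums $\sum_{k\geq 0}(-1)^k r_1^{\#k}$ using Lemma \ref{lem:asymptotic} (in the version noted in Remark \ref{rem:asymptexpbis}, with $\kappa=0$, $\ell=0$, all terms vanishing in $Z_\pd(N)$ after multiplying by the appropriate cutoff), obtaining $s\in\SGH{0}{0}{0}{0}{N}$ with $\Op(I+r_1)\Op(I+s)\equiv I$ modulo $C^\infty([0,T],\Op(S^{-\infty,-\infty}))$, so that $b:=b_0\#(I+s)$ is a right parametrix of $\Op(a')$ with $b\in\SGH{0}{0}{0}{0}{N}$ and $b-b_0=b_0\#s\in L_\infty([0,T],\SGs^{-1,-1})$. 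A symmetric argument on the left gives a left parametrix; the usual associativity argument then identifies the two modulo regularizing operators, so $\Op(a')^\sharp=\Op(b)$ with $b-(a')^{-1}\in L_\infty([0,T],\SGs^{-1,-1})\subset L_\infty([0,T],\SGs^{-\ve,-\ve})$.

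Finally I would pass from $a'$ to $a$. Write $a=a'+e$ with $e\in L_\infty([0,T],\SGs^{-\ve,-\ve})$. Then $\Op(a)\Op(b)=I + \Op(b)\#(\text{terms})$; more precisely $\Op(a)\Op(b)=\Op(a')\Op(b)+\Op(e)\Op(b)\equiv I + \Op(e)\Op(b)$, and $\Op(e)\Op(b)$ has symbol in $L_\infty([0,T],\SGs^{-\ve,-\ve})$ by the composition calculus (the sharpened Remark \ref{rem:asymptexpbis} classes, or directly Lemma \ref{symcalculus} adapted to the $(\ve)$-classes via \cite{CoTo}). Absorbing this error by one more Neumann-series correction as above — which converges in the calculus because the error gains $\norm{x}^{-\ve}\norm{\xi}^{-\ve}$ at each step — produces the final $a^\sharp\in\SGH{0}{0}{0}{0}{N}$ with $a^\sharp-(a')^{-1}\in L_\infty([0,T],\SGs^{-\ve,-\ve})$, and the same two-sided/associativity argument shows it is a genuine parametrix. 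The main obstacle, and the step deserving the most care, is verifying that all the symbol manipulations — inversion of the matrix symbol, and especially the asymptotic summations of the Neumann series — stay uniformly controlled in the parameter $t\in[0,T]$ with respect to the degenerate weights $\lambda(t)(\tfrac{\lambda(t)}{\Lambda(t)}\ln\tfrac1{\Lambda(t)})^{\ell+k}$ and simultaneously respect the zone structure (everything must remain constant, or supported, in the appropriate zone so that Lemma \ref{lem:asymptotic} applies); this is exactly where the results of \cite{CoTo} invoked in the statement are needed, and where one must check that the orders $(0,0,0,0)$ are preserved rather than drifting.
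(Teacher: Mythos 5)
Your proof is correct and follows essentially the paper's approach: the standard asymptotic parametrix construction with leading term $(a')^{-1}$, summed via Lemma \ref{lem:asymptotic} (applicable because $(a')^{-1}$ is constant in $Z_\pd(N)$, so each correction term vanishes there), followed by the usual left/right identification. The only difference is bookkeeping: you first build a parametrix of $\Op(a')$ and then absorb the $\SGs^{-\ve,-\ve}$ perturbation $e=a-a'$ in a second Neumann step, whereas the paper runs a single recursion on $a$ seeded with $a^\sharp_0=(a')^{-1}$; both arrive at $a^\sharp-(a')^{-1}\in L_\infty\big([0,T],\SGs^{-\ve,-\ve}\big)$, and your two-stage version is if anything slightly more explicit about how the error $e(a')^{-1}$ is cancelled.
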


\begin{proof}
We set $a^\sharp_0(t,x,\xi):= (a^\prime(t,x,\xi))^{-1}$. By a standard argument, it turns out that the matrix-valued symbol $a_0^\sharp$ belongs to $\SGH{0}{0}{0}{0}{N}$. Using Lemma \ref{symcalculus}, we can then define symbols $a^\sharp_k$ by means of the recursive scheme
$$
\sum^k_{|\al|=1} \textstyle{\frac{1}{\al!}} \Big(D^\al_\xi
a(t,x,\xi)\Big) \Big(\pa_x^\al
a^\sharp_{k-|\al|}(t,x,\xi)\Big)=:-a(t,x,\xi) a_k^\sharp(t,x,\xi).
$$
By the hypotheses, $a_k^\sharp(t,x,\xi)\equiv 0$ in $Z_\pd(N)$, while the calculus imply $a_k^\sharp \in \SGH{-k}{-k}{0}{0}{N}$, $k\ge1$.
Employing Lemma \ref{lem:asymptotic}, we obtain a symbol $a_R^\sharp \in \SGH{0}{0}{0}{0}{N}$ and a right parametrix $\Op(a_R^\sharp)$ such that
$$
a_R^\sharp - \sum^{k-1}_{l=0} a_l^\sharp \in \SGH{-k}{-k}{0}{0}{N} \;,\;a_R^\sharp(t,x,\xi)= a_0^\sharp(t,x,\xi) \; {\rm in}\; Z_\pd(N),
$$
and
$$
\Op(a)\Op(a^{\sharp}_R) - I \in C^\iy([0,T], \Op(S^{-\iy,-\iy})),
$$ 
where $I$ denotes the identity operator. In a completely similar fashion, the existence of a left parametrix $\Op(a^{\sharp}_L)$ such that $\Op(a^{\sharp}_L)\Op(a) - I \in C^\iy([0,T],\Op(S^{-\iy,-\iy}))$ can be shown. By a standard argument, it follows that $\Op(a^{\sharp}_L)$ and $\Op(a^{\sharp}_R)$ coincide modulo  $C^\iy([0,T], \Op(S^{-\iy,-\iy}))$. Then, we 
have proven the existence of the parametrix, defined by $$\Op(a)^\sharp=\Op(a^{\sharp}_L),$$ which is uniquely determined modulo $C^\iy([0,T], \Op(S^{-\iy,-\iy}))$. 
\end{proof}

\subsection{Fourier integral operators of $\SG$-type}
We review and extend notions about the class of $\SG$-Fourier integral operators in the global setting on $\R^d$, 
by first defining a parameter-dependent admissible class of phase functions in the $\SG$ context. 

\begin{defn}
A real-valued function $\varphi\in C^{\infty}([0,T]; S^{1,1}(\mathbb{R}^{2d}))$ is called a (smooth family of) \textit{simple phase} if 
$$
\norm{\nabla_{\xi}\varphi(t, x,\xi)}\asymp \norm{x}\quad\textnormal{and}\quad\norm{\nabla_{x}\varphi(t, x,\xi)}\asymp \norm{\xi}
$$ 
are fulfilled, uniformly with respect to $x, \,\xi\in\R^{2d}$ and $t\in [0,T]$. Moreover, the (smooth family of) 
simple phase function is called \textit{regular} if $$|\textnormal{det}(\nabla_{\xi}\nabla_{x}\varphi(t, x,\xi))|\geq c,$$ for some $c>0$ independent of $t, x$ and $\xi$. 
\end{defn}

In \cite{CoTo1}, it was discussed that the regular phase function $\varphi$ defines two globally invertible (families of) mappings, namely, $\xi\rightarrow \varphi'_x(t, x,\xi)$ and $x\rightarrow \varphi'_{\xi}(t, x,\xi)$. Then, the mappings generated by the first derivatives of the admissible regular phase functions give rise to $\SG$-diffeomorphism with $S^{0,0}$ parameter-dependence.

\begin{defn}\label{def:sgfios}
The generalized Fourier integral operator $\Op_{\varphi(t)}(a(t))$ of SG type I, with phase $\varphi$ and amplitude $a$, is a linear operator given by 
$$
[\Op_{\varphi(t)}(a(t))u](t,x)=(2\pi)^{-d}\int\int e^{i(\varphi(t, x,\xi)-y\cdot\xi)}a(t,x,\xi)u(y)dyd\xi,
$$
and the generalized FIO $\Op^{*}_{\varphi(t)}(b(t))$ of SG type II, with phase $\varphi$ and amplitude $b$, is a linear operator given by 
$$
[\Op_{\varphi(t)}^{*}(b(t))u](t,x)=(2\pi)^{-d}\int\int e^{i(x\cdot\xi-\varphi(t, y,\xi))}\overline{b(t,y,\xi)}u(y)dyd\xi.
$$
\end{defn}
Suppose $a,b$ and $\varphi$ are given as in Definition \ref{def:sgfios}. Then, the parameter-dependent operators $\Op_{\varphi(t)}(a(t))$ and $\Op_{\varphi(t)}^{*}(b(t)$ are linear and continuous on $\mathcal{S}(\mathbb{R}^{d})$, and uniquely extendable to linear and continuous operators on $\mathcal{S}'(\mathbb{R}^{d})$.

We state the next Theorem \ref{thm:composition} about composition of a Fourier integral operator with a pseudodifferential operator; for the sake of brevity we omit the proof that follows the approaches of \cite{CoTo1,Ruzhansky_Sugimoto}.

\begin{thm}\label{thm:composition}
Let $\phi\in C^{\infty}([0,T];S^{1,1})$ be a smooth family of simple and regular phase functions, and let $b\in \SGH{m_1}{\mu_1}{\kappa_1}{\ell_1}{N}$ with $\textnormal{supp}(b)\subset Z_{\hyp}(N)$. Let $p\in \SGH{m_2}{\mu_2}{\kappa_2}{\ell_2}{N}$ with $\textnormal{supp}(p)\subset Z_{\hyp}(N)$. Then, the composition 
$$
\Op_{\varphi(t)}(c(t))=\Op(p(t)) \Op_{\varphi(t)}(b(t))
$$
is a smooth family of Fourier integral operators with amplitude 
$$
c(t,x,\xi)\in \SGH{m_1+m_2}{\mu_1+\mu_2}{\kappa_1+\kappa_2}{\ell_1+\ell_2}{N},
$$
supported in $Z_{\hyp}(N)$ with a suitable choice of $N$. Moreover, we have the asymptotic expansion
$$
c(t,x,\xi)\sim \sum_{\alpha}\frac{i^{|\alpha|}}{\alpha !}(D^{\alpha}_{\xi}p)(t,x,\nabla_{x}\varphi(t,x,\xi))D^{\alpha}_{y}[e^{i\Phi(t,x,y,\xi)}b(t, y,\xi)]_{y=x},
$$
where $\Phi(t,x,y,\xi)=\phi(t,y,\xi)-\phi(t,x,\xi)+(x-y)\cdot \nabla_{x}\phi(t,x,\xi)$.
\end{thm}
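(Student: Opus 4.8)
The plan is to reduce the composition $\Op(p(t))\Op_{\varphi(t)}(b(t))$ to a type~I Fourier integral operator with the \emph{same} phase $\varphi$ by the usual phase‑reduction (Kuranishi) device, to read off the stated asymptotic expansion for its amplitude, and then to place every term of that expansion in the claimed parameter‑dependent class, tracking both the subdivision into zones and the time‑weights.

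First I would write, for $u\in\scS(\R^d)$ and fixed $t$,
$$
[\Op(p(t))\Op_{\varphi(t)}(b(t))u](x)=(2\pi)^{-2d}\iiiint e^{i((x-z)\cdot\eta+\varphi(t,z,\xi)-y\cdot\xi)}\,p(t,x,\eta)\,b(t,z,\xi)\,u(y)\,dy\,d\xi\,dz\,d\eta ,
$$
and use Taylor's formula to set $\varphi(t,z,\xi)-\varphi(t,x,\xi)=(z-x)\cdot\Psi(t,x,z,\xi)$ with $\Psi(t,x,z,\xi)=\int_0^1(\nabla_x\varphi)(t,x+\theta(z-x),\xi)\,d\theta$. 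Since $\varphi$ is a simple and regular phase, $\norm{\Psi}\asymp\norm{\xi}$ uniformly and $\xi\mapsto\Psi(t,x,z,\xi)$ is an $\SG$‑diffeomorphism with $S^{0,0}$‑parameter dependence, as recalled after the definition of regular phase (see \cite{CoTo1}). Substituting $\eta\mapsto\eta+\nabla_x\varphi(t,x,\xi)-\Psi(t,x,z,\xi)$ and using $(z-x)\cdot\Psi=\varphi(t,z,\xi)-\varphi(t,x,\xi)$, the double phase becomes $(x-z)\cdot\eta+\Phi(t,x,z,\xi)+\varphi(t,x,\xi)-y\cdot\xi$, so that $\Op(p(t))\Op_{\varphi(t)}(b(t))=\Op_{\varphi(t)}(c(t))$ with
$$
c(t,x,\xi)=(2\pi)^{-d}\iint e^{i((x-z)\cdot\eta+\Phi(t,x,z,\xi))}\,p(t,x,\eta+\nabla_x\varphi(t,x,\xi))\,b(t,z,\xi)\,dz\,d\eta .
$$
Taylor‑expanding $p$ in its frequency slot about $\nabla_x\varphi(t,x,\xi)$, turning each power $\eta^\alpha$ into $z$‑derivatives via $\eta^\alpha e^{i(x-z)\cdot\eta}=(-D_z)^\alpha e^{i(x-z)\cdot\eta}$, integrating by parts in $z$, and using $(2\pi)^{-d}\iint e^{i(x-z)\cdot\eta}g(z)\,dz\,d\eta=g(x)$, one arrives, modulo $C^\infty([0,T],S^{-\infty,-\infty})$, exactly at
$$
c(t,x,\xi)\sim\sum_{\alpha}\frac{i^{|\alpha|}}{\alpha!}(D_\xi^\alpha p)(t,x,\nabla_x\varphi(t,x,\xi))\,D_y^\alpha\big[e^{i\Phi(t,x,y,\xi)}b(t,y,\xi)\big]_{y=x},
$$
with leading term $p(t,x,\nabla_x\varphi(t,x,\xi))\,b(t,x,\xi)$; the estimate of the remainder is handled as in \cite{CoTo1,Ruzhansky_Sugimoto,Kumano-go}.

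It then remains to place each term in the right class. Since $\norm{\nabla_x\varphi}\asymp\norm{\xi}$ and $\xi\mapsto\nabla_x\varphi(t,x,\xi)$ has $S^{0,0}$‑parameter dependence, the chain rule gives $(D_\xi^\alpha p)(t,x,\nabla_x\varphi(t,x,\xi))\in\SGH{m_2}{\mu_2-|\alpha|}{\kappa_2}{\ell_2}{N}$ in $(t,x,\xi)$ — the time‑weights $\lambda(t)^{\kappa_2}\big(\tfrac{\lambda(t)}{\Lambda(t)}\ln\tfrac1{\Lambda(t)}\big)^{\ell_2+k}$ are untouched by a substitution in $\xi$ — while, $\Phi$ vanishing to second order at $y=x$, the factor $D_y^\alpha[e^{i\Phi}b(t,y,\xi)]_{y=x}$ is a finite sum of products of the $D_y^\beta b(t,y,\xi)|_{y=x}$, $|\beta|\le|\alpha|$, with (derivatives of) $\varphi$, hence an element of $\SGH{m_1}{\mu_1}{\kappa_1}{\ell_1}{N}$ supported where $b$ is. By the product and hierarchy properties of Proposition \ref{prop:hierarchy}, the $\alpha$‑th term lies in $\SGH{m_1+m_2-|\alpha|}{\mu_1+\mu_2-|\alpha|}{\kappa_1+\kappa_2}{\ell_1+\ell_2}{N}$, and an asymptotic summation (Lemma \ref{lem:asymptotic}, in the zone‑adapted form of Remark \ref{rem:asymptexpbis}) yields $c\in\SGH{m_1+m_2}{\mu_1+\mu_2}{\kappa_1+\kappa_2}{\ell_1+\ell_2}{N}$, unique modulo smoothing. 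Every term carries a factor $D_y^\beta b|_{y=x}$, supported in $Z_\hyp(N)$, together with a factor with $p$ evaluated at $(x,\nabla_x\varphi)$; since $\norm{x}\norm{\nabla_x\varphi}\asymp\norm{x}\norm{\xi}$, this second condition also cuts out a zone of hyperbolic type, so that $\supp c\subset Z_\hyp(N)$ as soon as $N$ is chosen large enough for the calculus above to apply.

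The main obstacle is propagating the time‑decay weights through the $t$‑derivatives of the phase‑dependent factors. In $D_t^k$ of the $\alpha$‑th term, a $D_t$ landing on $p$ or on $b$ raises $\ell_2$, resp. $\ell_1$, by one (Proposition \ref{prop:hierarchy}(4)), but a $D_t$ landing on $\nabla_x\varphi$ or on $e^{i\Phi}$ must likewise produce a factor $\tfrac{\lambda(t)}{\Lambda(t)}\ln\tfrac1{\Lambda(t)}$, or at least one that can be absorbed into it — using $\Lambda(t)\lesssim\ln\tfrac1{\Lambda(t)}$ for $T$ small, so that $\SGH{m}{\mu}{\kappa+1}{\ell}{N}\subset\SGH{m}{\mu}{\kappa}{\ell+1}{N}$. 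This forces one to use, in addition to the bare membership $\varphi\in C^\infty([0,T];S^{1,1})$, an estimate of the form $\partial_t\varphi\in\SGH{1}{1}{1}{0}{N}$; such a bound is available for the phase families effectively employed below, whose $t$‑derivatives are governed by characteristic roots obeying \eqref{eq:tau_estimate} (see Section \ref{sec:param}). Granting it, $D_t^j\Psi$, $D_t^j\nabla_x\varphi$ and $D_t^j\Phi$ all carry the required powers, the inductive $t$‑derivative estimates close, and the remainder of the argument reduces to the standard $\SG$‑Fourier integral operator calculus of \cite{CoTo1,Ruzhansky_Sugimoto} combined with the zone‑adapted hierarchy recalled above.
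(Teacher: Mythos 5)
Your argument follows exactly the phase-reduction scheme that the paper has in mind (the theorem is stated without proof, with a pointer to \cite{CoTo1,Ruzhansky_Sugimoto}), and both the expansion formula and the class membership for $c$ are reached in the intended way. Three remarks. First, the change of variables you state is off: the substitution that produces the $c$ you write down is simply $\eta\mapsto\eta+\nabla_x\varphi(t,x,\xi)$; the extra $-\Psi(t,x,z,\xi)$ is spurious, and indeed with it $p$ would be evaluated at $\eta+\nabla_x\varphi(t,x,\xi)-\Psi(t,x,z,\xi)$ rather than at $\eta+\nabla_x\varphi(t,x,\xi)$ as in your displayed integral. In fact the Kuranishi function $\Psi$ plays no further role in your argument -- it is $\Phi$ that carries the quadratic remainder. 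Second, the claim that the $\alpha$-th term sits in $\SGH{m_1+m_2-|\alpha|}{\mu_1+\mu_2-|\alpha|}{\kappa_1+\kappa_2}{\ell_1+\ell_2}{N}$ is slightly too strong: each second $y$-derivative of $\Phi$ at $y=x$ is only of order $(-1,1)$, so $D_y^\alpha\big[e^{i\Phi}b\big]_{y=x}$ gains roughly $|\alpha|/2$ rather than $|\alpha|$; the orders still tend to $-\infty$ and the asymptotic summation of Lemma~\ref{lem:asymptotic} (in the zone-adapted form of Remark~\ref{rem:asymptexpbis}) applies after regrouping terms of equal net order. Third, and most substantively, the point you raise about $D_t$ landing on $\nabla_x\varphi$ and on $e^{i\Phi}$ is genuine: the mere hypothesis $\varphi\in C^\infty([0,T];S^{1,1})$ does not by itself produce the $\tfrac{\lambda(t)}{\Lambda(t)}\ln\tfrac{1}{\Lambda(t)}$ factor required by the class definition for each time-derivative of $c$, and one needs an additional bound of the type $\partial_t\varphi\in\SGH{1}{1}{1}{0}{N}$ -- which, as you note, is satisfied by the eikonal phases constructed from $\mathfrak{t}_j$ in Section~\ref{sec:param}. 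You are right to flag this as a hypothesis left implicit in the statement.
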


We conclude the section by recalling some basic facts concerning the Sobolev-Kato spaces, as well as the boundedness properties
of the operators we treated above on such scale of spaces. We refer to \cite{Cord,Coriasco:998.1,Coriasco,Coriasco1} for 
the basic tools employed in the proofs and leave the details for the reader. 

\begin{prop}
The following properties of the weighted Sobolev spaces $H^{s,\sigma}(\mathbb{R}^{d})$ hold true.
\begin{enumerate}
\item $L^2(\mathbb{R}^{d})=H^{0,0}(\mathbb{R}^{d})$.
\item If $s_1\leq s_2$ and $\sigma_1\leq \sigma_2$, then $H^{s_2,\sigma_2}(\mathbb{R}^{d})\hookrightarrow H^{s_1,\sigma_1}(\mathbb{R}^{d})$. Moreover, if $s_1< s_2$ and 
$\sigma_1< \sigma_2$, then the embedding $H^{s_2,\sigma_2}(\mathbb{R}^{d})\hookrightarrow H^{s_1,\sigma_1}(\mathbb{R}^{d})$ is compact.  
\item $\cap_{s,\sigma\in \mathbb{R}}H^{s,\sigma}(\mathbb{R}^{d})=\scS(\R^{d}) \textnormal{  and  }\cup_{s,\sigma\in \mathbb{R}}H^{s,\sigma}(\mathbb{R}^{d})=\scS'(\R^{d})$.
\item The operator $\Op(\omega_{t,\tau})$, $\omega_{t,\tau}(x,\xi)=\norm{x}^{t}\norm{\xi}^{\tau}$, is a continous, invertible, linear operator from $H^{s,\sigma}(\mathbb{R}^{d})$ to the space $H^{s-t,\sigma-\tau}(\mathbb{R}^{d})$. In particular,  $u\in H^{s,\sigma}(\mathbb{R}^{d})$ if and only if $\Op(\omega_{s,\sigma})u\in L^{2}(\mathbb{R}^{d})$. 
\end{enumerate} 
\end{prop}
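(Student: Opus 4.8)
The plan is to derive all four statements from two basic ingredients of the $\SG$-calculus already in place: the composition rule of Lemma~\ref{symcalculus} (with the ensuing ellipticity/parametrix discussion), and the $L^2$-continuity of $\SG$-operators of order $(0,0)$ (the Calder\'on--Vaillancourt type theorem in the $\SG$-setting, see \cite{Cord}). Property (1) is immediate, since $\omega_{0,0}\equiv1$ makes $\Op(\omega_{0,0})$ the identity, so the definition of $H^{0,0}$ reads $H^{0,0}=L^2(\R^d)$.

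I would treat (4) next, as the other items reduce to it. First, $\Op(\omega_{t,\tau})$ factors as $M_{\norm{x}^t}\circ\langle D\rangle^\tau$, the composition of multiplication by $\norm{x}^t$ with the Fourier multiplier $\langle D\rangle^\tau$; each factor is a bijection of $\scS(\R^d)$ and of $\scSp(\R^d)$, so $\Op(\omega_{t,\tau})$ is a bijection of $\scSp(\R^d)$, with inverse $\langle D\rangle^{-\tau}\circ M_{\norm{x}^{-t}}=\Op(\norm{\xi}^{-\tau})\Op(\norm{x}^{-t})$, whose symbol lies in $\SGs^{-t,-\tau}$ by Lemma~\ref{symcalculus}. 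It then remains to prove the general mapping property: if $a\in\SGs^{m,\mu}$ then $\Op(a)\colon H^{s,\sigma}\to H^{s-m,\sigma-\mu}$ is continuous. For this I would conjugate by the isomorphisms just produced: $B:=\Op(\omega_{s-m,\sigma-\mu})\Op(a)\Op(\omega_{s,\sigma})^{-1}$ has order $(0,0)$ by the calculus, hence is bounded on $L^2$, and therefore $\nm{\Op(a)u}{H^{s-m,\sigma-\mu}}=\nm{B\Op(\omega_{s,\sigma})u}{L^2}\lesssim\nm{\Op(\omega_{s,\sigma})u}{L^2}=\nm{u}{H^{s,\sigma}}$. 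Applying this to $a=\omega_{t,\tau}$ and to (the symbol of) $\Op(\omega_{t,\tau})^{-1}$ yields the claimed homeomorphism; the concluding ``if and only if'' is simply the definition of $H^{s,\sigma}$ in the case $(t,\tau)=(s,\sigma)$.

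For (2), with $s_1\le s_2$, $\sigma_1\le\sigma_2$, I would conjugate the inclusion $H^{s_2,\sigma_2}\hookrightarrow H^{s_1,\sigma_1}$ by the isomorphisms of (4): it becomes the operator $\Op(\omega_{s_1,\sigma_1})\Op(\omega_{s_2,\sigma_2})^{-1}\in\Op(\SGs^{s_1-s_2,\,\sigma_1-\sigma_2})$ acting on $L^2$, which is $L^2$-bounded since both orders are $\le0$; this gives the continuous embedding. When $s_1<s_2$ and $\sigma_1<\sigma_2$ the same operator lies in $\Op(\SGs^{-\ve,-\ve})$ for some $\ve>0$, and I would conclude by the compactness on $L^2$ of such operators: since the symbol $\omega_{-\ve,-\ve}$ vanishes as $|x|+|\xi|\to\infty$, it can be approximated in $\SGs^{0,0}$-seminorms by compactly supported symbols, whose operators are Hilbert--Schmidt, and $L^2$-operator-norm convergence then gives compactness (cf.\ \cite{Cord,Nicola_Rodino}). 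I expect this compactness step to be the main, though still routine, technical point.

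Finally, for (3): $\scS(\R^d)\subset\bigcap_{s,\sigma}H^{s,\sigma}$ holds because $\Op(\omega_{s,\sigma})$ maps $\scS$ into $\scS\subset L^2$; conversely, if $u\in\bigcap_{s,\sigma}H^{s,\sigma}$, then for every $\alpha,\beta\in\N^d$ the calculus together with (2) and (4) give $\norm{x}^{|\alpha|}D^\beta u\in H^{0,\sigma}=H^{\sigma}(\R^d)$ for every $\sigma$, and the classical Sobolev embedding $H^\sigma(\R^d)\hookrightarrow L^\infty$ for $\sigma>d/2$ forces $\norm{x}^{|\alpha|}D^\beta u\in L^\infty$ for all $\alpha,\beta$, i.e.\ $u\in\scS$. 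The inclusion $H^{s,\sigma}\subset\scSp$ is part of the definition; conversely, given $u\in\scSp$, its continuity on $\scS$ yields $N\in\N$ and a constant $C$ with $|u(\phi)|\le C\sup_{|\alpha|\le N,\,x}\norm{x}^N|\partial^\alpha\phi(x)|\lesssim\nm{\phi}{H^{N,N+m}}$ for $m>d/2$ (the second inequality again by the mapping property and ordinary Sobolev embedding), so $u$ extends to a continuous functional on $H^{N,N+m}$, whence $u\in(H^{N,N+m})'=H^{-N,-(N+m)}\subset\bigcup_{s,\sigma}H^{s,\sigma}$ by the $L^2$-duality of the Sobolev--Kato scale (see \cite{Cord}). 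Collecting (1)--(4) finishes the proof.
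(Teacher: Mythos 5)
The paper itself states this proposition without proof, referring instead to \cite{Cord,Coriasco:998.1,Coriasco,Coriasco1} and explicitly ``leaving the details for the reader,'' so there is no internal argument to compare against. Your proof is correct and follows the standard route in the $\SG$-calculus: reduce everything to item (4) via the factorization $\Op(\omega_{t,\tau})=M_{\norm{x}^t}\circ\norm{D}^\tau$ and its explicit inverse $\Op(\norm{\xi}^{-\tau})\Op(\norm{x}^{-t})\in\Op(\SGs^{-t,-\tau})$, then conjugate to order $(0,0)$ and invoke Calder\'on--Vaillancourt-type $L^2$-boundedness. The compactness step for (2) is handled in the expected way (vanishing at infinity of symbols of strictly negative bi-order, approximation by compactly supported symbols giving Hilbert--Schmidt operators, and operator-norm convergence), and the characterizations of $\scS$ and $\scSp$ in (3) via $H^{0,\sigma}=H^\sigma(\R^d)$, classical Sobolev embedding, and the $L^2$-duality $(H^{s,\sigma})'\cong H^{-s,-\sigma}$ are the standard arguments. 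One tiny slip worth fixing: in the compactness argument you write that ``the symbol $\omega_{-\ve,-\ve}$ vanishes at infinity,'' but the symbol of $\Op(\omega_{s_1,\sigma_1})\Op(\omega_{s_2,\sigma_2})^{-1}$ is not literally $\omega_{s_1-s_2,\sigma_1-\sigma_2}$; it is merely some symbol $c\in\SGs^{s_1-s_2,\sigma_1-\sigma_2}$, and it is the symbol estimates $|c(x,\xi)|\le C\norm{x}^{s_1-s_2}\norm{\xi}^{\sigma_1-\sigma_2}$ (together with the corresponding derivative bounds) that give the vanishing at infinity needed for the approximation; the conclusion is unchanged.
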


\begin{thm}\label{regular_fio}
Let the real-valued phase functions $\varphi(t)$ be simple and regular. Let $a\in \SGH{0}{0}{0}{0}{N}$. Then, the parameter-dependent 
operator $\Op_{\varphi(t))}(a(t))$ is $L^{2}$-bounded, and satisfies, for some constant $C>0$,
$$
||\Op_{\varphi(t)}(a(t))||_{L^{2}\rightarrow L^{2}}\leq C \sup_{|\alpha|,|\beta|\leq 2d+1} ||\partial^{\alpha}_{y}\partial^{\beta}_{\xi}a(t,y,\xi)||_{L^{\infty}}.
$$
Moreover, $\Op_{\varphi}(a)$ is a bounded linear operator from $H^{s,\sigma}(\mathbb{R}^{d})$ to $H^{s,\sigma}(\mathbb{R}^{d})$ for all $s,\sigma\in \mathbb{R}$.
\end{thm}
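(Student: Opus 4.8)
The plan is to prove Theorem \ref{regular_fio} in two stages: first establish $L^2$-boundedness with the explicit seminorm estimate, then bootstrap to $H^{s,\sigma}$-boundedness by conjugating with the order-reducing operators $\Op(\omega_{s,\sigma})$. For the $L^2$ estimate, since $\varphi(t)$ is a simple and regular phase with $S^{0,0}$ parameter-dependence (as recalled after the definition of regular phase), I would treat $t\in[0,T]$ as a fixed parameter throughout and track only that all constants depend on finitely many $S^{1,1}$-seminorms of $\varphi(t)$, which are uniformly bounded on $[0,T]$ by assumption. The core analytic fact is the classical $\SG$ Fourier integral operator $L^2$-continuity theorem: for $a\in S^{0,0}$ and a simple regular phase, $\Op_\varphi(a)$ is $L^2$-bounded with operator norm controlled by $\sup_{|\alpha|,|\beta|\le 2d+1}\|\partial_y^\alpha\partial_\xi^\beta a\|_{L^\infty}$. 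This is exactly the content of the results in \cite{CoTo1,Coriasco:998.1,Ruzhansky_Sugimoto}; I would invoke it directly. The only point needing a remark is that a symbol in $\SGH{0}{0}{0}{0}{N}$ satisfies, in particular, the plain $S^{0,0}$-estimates $|\partial_y^\alpha\partial_\xi^\beta a(t,y,\xi)|\le C_{\alpha\beta}$ on all of $[0,T]\times\R^{2d}$ (in $Z_\hyp(N)$ directly from \eqref{eq:estimates_hypSymbolClass} since $\lambda$ is bounded and $\frac{\lambda}{\Lambda}\ln\frac1\Lambda\le\langle x\rangle\langle\xi\rangle$ combined with the order gain, and in $Z_\pd(N)$ from the $C([0,T],S^{0,0})$ membership), so the hypothesis of the cited continuity theorem is met with the stated seminorm count.

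For the second stage, I would use the commutation/composition calculus. Fix $s,\sigma\in\R$. By part (4) of the preceding Proposition, $\Op(\omega_{s,\sigma})\colon H^{s,\sigma}\to L^2$ and $\Op(\omega_{-s,-\sigma})\colon L^2\to H^{s,\sigma}$ are continuous and mutually inverse modulo nothing (they are genuine inverses up to smoothing, or one can use $\Op(\omega_{s,\sigma})^{-1}$ as a parametrix via Lemma \ref{inv} applied to the scalar elliptic symbol $\omega_{s,\sigma}$). Then $\Op_\varphi(a)$ is bounded $H^{s,\sigma}\to H^{s,\sigma}$ if and only if
\[
\Op(\omega_{s,\sigma})\,\Op_\varphi(a)\,\Op(\omega_{s,\sigma})^{\sharp}\colon L^2\to L^2
\]
is bounded. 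The composition $\Op_\varphi(a)\,\Op(\omega_{s,\sigma})^{\sharp}$ on the right is a pseudodifferential-after-FIO composition and $\Op(\omega_{s,\sigma})$ on the left is pseudodifferential-before-FIO; using Theorem \ref{thm:composition} (composition of a $\psi$do with a $\SG$-FIO) together with the analogous right-composition statement, the product is again a $\SG$-FIO with the same phase $\varphi(t)$ and amplitude in $\SGH{0}{0}{0}{0}{N}$ (the orders $(s,\sigma)$ and $(-s,-\sigma)$ cancel, the $(\kappa,\ell)$ indices stay at $(0,0)$, and the $Z_\pd$ behaviour is harmless since $\omega_{s,\sigma}$ is an ordinary $\SG$-symbol). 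Applying the already-proven $L^2$ bound to this composed amplitude finishes the argument; that the $H^{s,\sigma}$ bound holds for \emph{all} $s,\sigma$ is then automatic since $s,\sigma$ were arbitrary.

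The main obstacle, such as it is, is bookkeeping rather than a genuine difficulty: one must make sure that composing with the order-reducing operators does not move the amplitude out of the class $\SGH{0}{0}{0}{0}{N}$ — in particular that no uncontrolled powers of $\lambda(t)$, $\Lambda(t)^{-1}$ or $\ln(1/\Lambda(t))$ creep in. This is where the hierarchy in Proposition \ref{prop:hierarchy} and Remark \ref{rem:asymptexpbis} are used: $\omega_{s,\sigma}$ and its parametrix are $t$-independent $\SG$-symbols, so in the notation of Definition \ref{symbol_class_hyp} they sit in $\SGH{s}{\sigma}{0}{0}{N}$ (resp.\ $\SGH{-s}{-\sigma}{0}{0}{N}$) with the trivial time dependence, and the product rule (3) of Proposition \ref{prop:hierarchy} together with Theorem \ref{thm:composition} gives final indices $(\kappa_1+\kappa_2,\ell_1+\ell_2)=(0,0)$ and orders $(0,0)$, as needed. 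A secondary point to handle cleanly is that Theorem \ref{thm:composition} is stated for amplitudes supported in $Z_\hyp(N)$, whereas $\omega_{s,\sigma}$ has full support; this is circumvented by first multiplying $\omega_{s,\sigma}$ by a cutoff equal to $1$ on $\mathrm{supp}(a)$ and observing that the complementary piece contributes a smoothing term, or simply by noting that the Sobolev mapping property of $\Op_\varphi(a)$ only depends on $a$ through its $S^{0,0}$-seminorms, so one may replace $a$ by a genuinely $Z_\hyp(N)$-supported symbol differing from it by a smoothing term without changing the conclusion. Either reduction is routine and I would only indicate it briefly.
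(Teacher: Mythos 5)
Your proposal is correct and takes essentially the same approach as the paper: the paper's proof is just the citation of \cite[Theorem 3.3]{Ruzhansky_Sugimoto} and \cite{Coriasco:998.1}, whose content is precisely the $L^2$ estimate you invoke, and the passage to $H^{s,\sigma}$ by conjugation with the order-reducing operators $\Op(\omega_{s,\sigma})$ is the standard reduction implicit in those references. The only stylistic difference is that you route the composition through the parameter-dependent Theorem \ref{thm:composition} (and then patch the $Z_\hyp$-support hypothesis), whereas one can simply use the plain $t$-uniform $\SG$-FIO calculus of \cite{CoTo1,Coriasco:998.1}, since $\omega_{\pm s,\pm\sigma}$ are $t$-independent and only the $S^{0,0}$-seminorms of the resulting amplitude matter.
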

The proof of Theorem \ref{regular_fio} follows with a minor modification of the proof of \cite[Theorem 3.3]{Ruzhansky_Sugimoto}, see also \cite{Coriasco:998.1}.
\begin{rem}
	As above, we remark that the results in this section extend to operators defined by means of amplitude hierarchies associated with the subdivision into zones, and
	based on the more general classes $\SGs^{m,\mu}_{r_1,r_2,\rho_1,\rho_2}$, $r_j,\rho_j\ge0$, $j=1,2$, 
	$r_2,\rho _1<1$, and, in particular, on the classes $\SGs^{m,\mu}_{(\ve)}$, $\ve\in(0,1)$. We will tacitly make use of such variant results whenever they will be needed.
\end{rem}

\section{Diagonalization procedure}\label{sec:diag}
\setcounter{equation}{0}
%%%%%%%%%%%
%
% diag
%
%%%%%%%
% G 230711
% S 230713
% S 230715
%

The so-called perfect diagonalization \cite{KR,Yagdjian} is a procedure to switch from \eqref{eq:main} to a $2\times2$ system, and to suitably 
``decouple the equations'' of the system, so that each of them can be solved by means of Fourier integral operators separately. 
It consists of various steps, taking into account the subdivision into zones of the phase space. Similar to \cite{KR}, the first step is carried out in all the zones, 
the second step is carried out in the hyperbolic zone $Z_{\hyp}(N)$ (that is, away from $t=0$), and the third
step allows a refinement in the regular zone $Z_\reg(N)$. This can be achieved by means of the smoothness properties of the symbol class $\SGO{m}{\mu}{\kappa}{\ell}{N}$.  
The diagonalization produces then equations which are equivalent to \eqref{eq:main}, modulo rapidly decreasing/smoothing elements. Since we are interested in the
smoothness and decay properties of the solutions, such terms do not affect the claims, so we will often ignore/avoid writing them, and consider equalities modulo
such remainders.

Let us denote by $\rho(t,x,\xi)$ the positive root of the equation
$$
  \rho(t,x,\xi)^{2}=1+ \frac{\lambda(t)^{2}}{\Lambda(t)}\norm{x}\norm{\xi}\ln(\norm{x}\norm{\xi}).
$$
The function $\rho(t,x,\xi)$ is monotonic with respect to $t\in[0,T]$ , since $c_1>1/2$ in \eqref{eq:lambda'_control}. In the next Lemma \ref{lem:rho} further properties
of $\rho(t,x,\xi)$ are proved.
\begin{lem}
\label{lem:rho}
The function $\rho(t,x,\xi)$ has the following properties:
\begin{align*}
\bullet\;&\rho \in \SGH{1}{1}{1}{0}{N},  \text{ for some } N>0;\\
\bullet\;& \rho\in C\big([0,T], S^{\frac{1}{2}+\eps,\frac{1}{2}+\eps}\big), \text{ for every } \eps>0; \\
\bullet\;& \partial_t^j\rho\in C\big((0,T], S^{j+\eps,j+\eps}\big), \text{ for every } \eps>0 \text{ and } j\geq 1.
\end{align*}
In particular, for any $(t,x,\xi)\in Z_{\pd}(N)$, it holds
\begin{equation*}
|\partial_t^jD_x^\alpha D_\xi^\beta \rho(t,x,\xi)|\lesssim \frac{\lambda(t)}{\sqrt{\Lambda(t)}}\bigg(\frac{1}{\sqrt{\Lambda(t)}}\bigg)^j\<x\>^{\frac{1}{2}+\eps-|\alpha|}\<\xi\>^{\frac{1}{2}+\eps-|\beta|},
\end{equation*}
for any $\alpha,\beta\in \N^d$ such that $|\alpha|+|\beta|\geq 1$.
\end{lem}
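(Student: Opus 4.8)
The plan is to work throughout with the explicit formula
$\rho(t,x,\xi)^2 = 1 + \frac{\lambda(t)^2}{\Lambda(t)}\norm{x}\norm{\xi}\ln(\norm{x}\norm{\xi})$
and to treat $\rho$ as the composition $\rho = \sqrt{\,\cdot\,}\circ(1+w)$, where
$w(t,x,\xi) = \frac{\lambda(t)^2}{\Lambda(t)}\norm{x}\norm{\xi}\ln(\norm{x}\norm{\xi})$.
First I would record the elementary bounds on the building blocks. The factor $\norm{x}\norm{\xi}\ln(\norm{x}\norm{\xi})$ is a (scalar) $\SG$-symbol of order $(1+\eps,1+\eps)$ for every $\eps>0$, since the logarithm is $\SG$-negligible in the sense that $D_x^\alpha D_\xi^\beta[\norm{x}\norm{\xi}\ln(\norm{x}\norm{\xi})]\lesssim \norm{x}^{1+\eps-|\alpha|}\norm{\xi}^{1+\eps-|\beta|}$ (each $x$- or $\xi$-derivative gains a genuine power of the corresponding bracket, the log only costing an arbitrarily small $\eps$). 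For the time behaviour of the prefactor I would use the hypotheses on $\lambda$ in Section \ref{subs:lambda}: from $|\lambda^{(k)}(t)|\lesssim(\lambda'/\lambda)^{k-1}|\lambda'|$ together with \eqref{eq:lambda'_control}, one gets $|\partial_t^j(\lambda(t)^2/\Lambda(t))|\lesssim (\lambda(t)^2/\Lambda(t))(\lambda(t)/\Lambda(t))^j$ and, since $\lambda^2/\Lambda\in C^\infty([0,T])$ is bounded, also the crude bound $|\partial_t^j(\lambda^2/\Lambda)|\lesssim \Lambda(t)^{-j}\cdot\lambda(t)^2/\Lambda(t)$, which is what feeds the last displayed inequality.

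Next I would assemble the symbol-class membership. Multiplying the two estimates above, $w\in\SGH{1}{1}{2}{0}{N}$ after checking the $Z_\hyp(N)$-form of \eqref{eq:estimates_hypSymbolClass}: on $Z_\hyp(N)$ one has $\norm{x}\norm{\xi}\ln(\norm{x}\norm{\xi})\asymp \Lambda(t)^{-1}\cdot(\text{stuff})$ via \eqref{txxi_equation}, so the extra powers of $\norm{x}\norm{\xi}$ are converted into powers of $\lambda(t)/\Lambda(t)\cdot\ln(1/\Lambda(t))$, giving the weight $\lambda(t)^2$ times $(\lambda/\Lambda\ln(1/\Lambda))^k$ with the correct $\kappa=2$. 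Then $1+w$ is bounded below by $1$, hence $\rho=\sqrt{1+w}$ is legitimate, and I would invoke the fact (standard, and covered by the calculus in \cite{CoTo} referenced in the paper, together with Remark \ref{rem:asymptexpbis}) that $\SG$-symbol classes — including the parameter-dependent hierarchies — are stable under composition with a smooth function defined near the range, here $z\mapsto\sqrt{z}$ on $[1,\infty)$. This gives $\rho\in\SGH{1/2\cdot?}{}{}{}{}$ — more precisely, chaining the half-power through the hierarchy: $w\in\SGH{1}{1}{2}{0}{N}\Rightarrow \rho-1\in$ (a class that, by Proposition \ref{prop:hierarchy}(2), embeds in) $\SGH{1}{1}{1}{0}{N}$ once one accounts that $\sqrt{1+w}-1\asymp w/2$ to leading order and that $\kappa$ drops from $2$ to $1$ because one factor of $\lambda(t)$ is "used up" in $\sqrt{\lambda^2}=\lambda$. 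So $\rho\in\SGH{1}{1}{1}{0}{N}$, the first bullet. The second bullet, $\rho\in C([0,T],S^{1/2+\eps,1/2+\eps})$, follows by a direct pointwise estimate on all of $[0,T]\times\R^{2d}$ without the zone restriction: $w(t,x,\xi)\lesssim \norm{x}\norm{\xi}\ln(\norm{x}\norm{\xi})\lesssim \norm{x}^{1+2\eps}\norm{\xi}^{1+2\eps}$ (using boundedness of $\lambda^2/\Lambda$), so $\rho=\sqrt{1+w}\lesssim\norm{x}^{1/2+\eps}\norm{\xi}^{1/2+\eps}$, and each derivative improves by one full bracket power as above — here one must be slightly careful that differentiating $\sqrt{1+w}$ produces negative powers of $1+w\geq1$, which are harmless, times derivatives of $w$. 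The third bullet, $\partial_t^j\rho\in C((0,T],S^{j+\eps,j+\eps})$, is the same computation keeping track that each $\partial_t$ either hits the prefactor $\lambda^2/\Lambda$ (costing $\Lambda(t)^{-1}$, which blows up as $t\to0$ but is finite on any $[t_0,T]$) or, after expanding $\sqrt{1+w}$, hits a factor of $w$ whose $t$-derivatives again cost powers of $\Lambda^{-1}$; collecting, $|\partial_t^j\rho|\lesssim \Lambda(t)^{-j}\norm{x}^{1/2+\eps}\norm{\xi}^{1/2+\eps}\cdot(\text{bracket powers from }w)$, and since on a compact $t$-subinterval $\Lambda(t)^{-j}$ and the extra $\norm{x}^{j/2}\norm{\xi}^{j/2}$-type growth combine to $S^{j+\eps,j+\eps}$.

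Finally, for the concluding displayed estimate restricted to $Z_\pd(N)$ with $|\alpha|+|\beta|\geq1$: here the point is that on $Z_\pd(N)$ one has $\Lambda(t)\norm{x}\norm{\xi}\leq N\ln(\norm{x}\norm{\xi})$, i.e. $\norm{x}\norm{\xi}\ln(\norm{x}\norm{\xi})$ cannot be too large relative to $\Lambda(t)^{-1}$, so $w\lesssim \lambda(t)^2/\Lambda(t)\cdot\Lambda(t)^{-1}\cdot(\ln)^2$ stays controlled and, crucially, $1+w\asymp$ something that lets $\rho\asymp$ a clean expression; but more directly, since $|\alpha|+|\beta|\geq1$ at least one derivative falls, and that derivative — whether it lands on $\norm{x}$, on $\norm{\xi}$, or (via the chain rule on $\sqrt{\cdot}$) generates a $\partial w$ — produces the factor $\frac{\lambda(t)}{\sqrt{\Lambda(t)}}\cdot\frac{1}{\rho}\cdot(\ldots)$ where $\frac{1}{\rho}\leq1$; repeated differentiation then gives, schematically, $\partial_t^j D_x^\alpha D_\xi^\beta\rho\lesssim \frac{\lambda(t)}{\sqrt{\Lambda(t)}}\big(\frac{1}{\sqrt{\Lambda(t)}}\big)^j\norm{x}^{1/2+\eps-|\alpha|}\norm{\xi}^{1/2+\eps-|\beta|}$, using $\partial_t(\lambda^2/\Lambda)\sim \Lambda^{-1}\lambda^2/\Lambda$ and $\sqrt{\lambda^2/\Lambda}=\lambda/\sqrt\Lambda$. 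I would organize this last part by induction on $|\alpha|+|\beta|+j$ via the chain/Faà di Bruno rule for $\sqrt{1+w}$, the base case being the pointwise bound on $w$ and its first derivatives.

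The main obstacle I expect is bookkeeping the interaction between (i) the $\eps$-loss coming from the logarithm, (ii) the exponents $\kappa,\ell$ in the parameter-dependent hierarchy and how exactly taking a square root redistributes a $\lambda(t)$-power (turning $\kappa=2$ into $\kappa=1$) while leaving the logarithmic $\ell$-factors intact, and (iii) keeping the two a priori different regimes — the zone-uniform estimate of bullet one vs. the zone-free but weaker estimates of bullets two and three — consistent, in particular verifying that on $Z_\hyp(N)$ the sharp hierarchy bound of bullet one really does imply the $S^{1/2+\eps,1/2+\eps}$ bound there, so that the global $S^{1/2+\eps,1/2+\eps}$ statement of bullet two is coherent. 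None of these steps is deep; each is the kind of careful estimate already carried out for the scalar weight functions in \cite{Yagdjian}, adapted to the $\SG$-setting using Remark \ref{rem:asymptexpbis}, but the combination requires attention.
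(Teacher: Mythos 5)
Your overall route — writing $\rho = \sqrt{1+w}$ with $w = \frac{\lambda(t)^2}{\Lambda(t)}\norm{x}\norm{\xi}\ln(\norm{x}\norm{\xi})$ and reading all three bullets off the composition with $z\mapsto\sqrt{1+z}$ — is the same one the paper's (very compressed) proof has in mind: it cites the corresponding lemma in Yagdjian, records the inequality $|\partial_x^\alpha\partial_\xi^\beta\partial_t^j\rho|\lesssim\norm{x}^{-|\alpha|}\norm{\xi}^{-|\beta|}|\partial_t^j\rho|$, and points to the assumptions $\lambda^2/\Lambda\in C^\infty([0,T])$ and \eqref{eq:lambda'_control}. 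Your treatment of the second and third bullets and of the concluding pseudodifferential-zone estimate is essentially correct.

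The derivation of the first bullet, however, contains a genuine gap. The assertion $w\in\SGH{1}{1}{2}{0}{N}$ is false: at $k=|\alpha|=|\beta|=0$ (and $\ell=0$) it would require $\ln(\norm{x}\norm{\xi})/\Lambda(t)\lesssim 1$ on $Z_\hyp(N)$, whereas the defining inequality of the zone gives only $\ln(\norm{x}\norm{\xi})/\Lambda(t)\le\norm{x}\norm{\xi}/N$, which is unbounded; the correct membership is $w\in\SGH{2}{2}{2}{0}{N}$. Worse, the two justifications you invoke for passing from $w$ to $\rho$ — that $\sqrt{1+w}-1\asymp w/2$ to leading order (valid only for $w$ small, and then leaving both the order and $\kappa=2$ unchanged), and that the square root ``uses up'' one factor of $\lambda$ via $\sqrt{\lambda^2}=\lambda$ (valid only for $w$ large, via $\rho\asymp\sqrt{w}$) — describe two mutually exclusive regimes, and cannot both be operative. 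What actually makes the first bullet true is that on $Z_\hyp(N)$ with $|x|+|\xi|\ge M$ large one has $w\gtrsim(\norm{x}\norm{\xi})^{2-2d_1}/N\gg 1$ (by Lemma \ref{lem:log_lambda}, which gives $\lambda(t_{x,\xi})\ge(\norm{x}\norm{\xi})^{-d_1}$ with $d_1<1$, and then $w(t_{x,\xi},x,\xi)=\lambda(t_{x,\xi})^2(\norm{x}\norm{\xi})^2/N$); hence $\rho\asymp\sqrt{w}$ uniformly on the zone, and the square root — propagated through $x,\xi,t$-derivatives using the hypoellipticity-type bound $|\partial_x^\alpha\partial_\xi^\beta w|\lesssim\norm{x}^{-|\alpha|}\norm{\xi}^{-|\beta|}|w|$ — halves both the order pair $(2,2)\to(1,1)$ and the exponent $\kappa=2\to 1$, giving $\rho\in\SGH{1}{1}{1}{0}{N}$. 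With that correction the rest of your argument goes through.
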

\begin{proof}
The claims follow by a modification of the argument in \cite[Lemma 2.1.27]{Yagdjian}. In particular, it is straightforward to prove the inequality
\begin{equation}
\label{eq:useful_rho}
 |\partial_x^\alpha\partial_\xi^\beta \partial_t^j\rho(t,x,\xi)|\leq C_{\alpha \beta} \<x\>^{-|\alpha|}\<\xi\>^{-|\beta|}|\partial_t^j\rho(t,x,\xi)|, \quad j=0,1.
\end{equation}
Indeed, for small $t\in [0,T]$, we recall that $\frac{\lambda^2(t)}{\Lambda(t)}\in C^\infty([0,T])$. On the other hand, for $t\in (0,T]$ we can apply condition \eqref{eq:lambda'_control}.
The remaining details are left for the reader.
\end{proof}
For a given $N>0$, we define the symbol 
\begin{equation}\label{eq:hdef}
	\begin{aligned}
		h(t,x,\xi)&=\rho(t,x,\xi)\;\chi\!\left(\frac{\Lambda(t)\norm{x}\norm{\xi}}{N\ln (\norm{x}\norm{\xi})}\right)
		\\
		&+\lambda(t)\norm{x}\norm{\xi}\left[1-\chi\!\left(\frac{\Lambda(t)\norm{x}\norm{\xi}}{N\ln (\norm{x}\norm{\xi})}\right)\right],
	\end{aligned}
\end{equation}
where $\chi\in C_0^\infty(\R)$ is a cutoff function such that $\chi(\eta) \equiv 1$ for $|\eta| \leq 1$,\, $\chi(\eta) \equiv 0$ for $|\eta| \geq 2$, and $0\leq \chi(\eta) \leq 1$. 
\begin{lem}
\label{lem:h}
The parameter-dependent symbol $h$ in \eqref{eq:hdef} has the following properties:
\begin{align}
\bullet\;&\notag\partial_t^j h\in C([0,T],S^{1+j,1+j});\\
\bullet\;& \notag h(t,x,\xi)\in \SGH{1}{1}{1}{0}{N}, \text{ for some } N>0.
\end{align}
Moreover, there exist constants $c,C>0$ such that, for all $ (t,x,\xi)\in [0,T]\times \R^{2d}$, it holds
\begin{equation}
 \label{eq:h_elliptic} 
\max\{c,\lambda(t)\<x\>\<\xi\>\}\leq\,h(t,x,\xi)\leq C\<x\>\<\xi\>,
\end{equation}
and, for all $\alpha$ and $\beta\in \N^d$,
\[|\partial_x^\alpha\partial_{\xi}^\beta h(t,x,\xi)|\leq C_\alpha\<x\>^{-|\alpha|}\<\xi\>^{-|\beta|} h(t,x,\xi). \]
Finally, for all $(t,x,\xi)\in Z_\pd(2N)$ and $\alpha,\beta\in \N^d$, we have
\begin{align*}
|\partial_t\partial_x^\alpha\partial_{\xi}^\beta h(t,x,\xi)|\leq C_\alpha\<x\>^{-|\alpha|}\<\xi\>^{-|\beta|} |\partial_t\rho(t,x,\xi)| .
\end{align*}
\end{lem}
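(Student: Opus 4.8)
The plan is to establish the four claimed properties of $h$ by splitting the phase space according to the cutoff $\chi$ and exploiting the already-proven properties of $\rho$ (Lemma \ref{lem:rho}) together with the control on $\lambda$ provided by \eqref{eq:lambda'_control} and Lemma \ref{lem:log_lambda}. Throughout, write $\Xi = \norm{x}\norm{\xi}$ and note that $h$ is a convex combination, with weights depending on the argument $\eta(t,x,\xi) := \Lambda(t)\Xi / (N\ln\Xi)$, of the two symbols $\rho(t,x,\xi)$ and $\lambda(t)\Xi$. On the support of $\chi(\eta)$ we have $\eta \le 2$, i.e. $\Lambda(t)\Xi \le 2N\ln\Xi$, which is (up to a factor $2$) the defining relation of $Z_\pd(2N)$; on the support of $1-\chi(\eta)$ we have $\eta \ge 1$, i.e. $\Lambda(t)\Xi \ge N\ln\Xi$, which places us in $Z_\hyp(N)$. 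The key analytic point feeding every estimate is that on the transition region $1 \le \eta \le 2$ one has $\rho(t,x,\xi)^2 = 1 + \lambda(t)^2\Xi\ln\Xi/\Lambda(t)$, while $\lambda(t)^2\Xi^2 \asymp \lambda(t)^2\Xi\cdot\Lambda(t)\Xi/(\ln\Xi)$ up to constants from $\eta$, so that $\rho$ and $\lambda(t)\Xi$ are comparable there; hence the two pieces glue compatibly.

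First I would prove the two-sided bound \eqref{eq:h_elliptic}. The upper bound $h \le C\Xi$ is immediate: $\rho^2 \le 1 + \lambda(t)^2\Xi^2 \lesssim \Xi^2$ since $\lambda$ is bounded and $\ln\Xi/\Lambda(t) \le \Xi$ on $Z_\pd$ (as used in Proposition \ref{prop:hierarchy}(5)), while $\lambda(t)\Xi \le C\Xi$; the convex combination inherits the bound. For the lower bound, $h \ge c$ with $c$ coming from $\rho \ge 1$ where $\chi = 1$ and from $\lambda(t)\Xi \ge \lambda(t)M^2$... more carefully, where $\chi(\eta)<1$ we are in $Z_\hyp(N)$ so $\Lambda(t)\Xi \ge N\ln\Xi$ forces $t$ bounded below and $\lambda(t)\Xi$ bounded below; combining the two regions gives a uniform positive lower bound. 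The bound $h \ge \lambda(t)\Xi$ follows because $\rho \ge \lambda(t)\Xi$ would need $\rho^2 \ge \lambda(t)^2\Xi^2$, i.e. $1 + \lambda(t)^2\Xi\ln\Xi/\Lambda(t) \ge \lambda(t)^2\Xi^2$; this holds precisely on the support of $\chi$ (where $\Lambda(t)\Xi \le 2N\ln\Xi$ gives $\lambda(t)^2\Xi^2 \le 2N\lambda(t)^2\Xi\ln\Xi/\Lambda(t)$, so one needs $N$-independent care — actually the cleaner route is to observe $h \ge \min\{\rho, \lambda(t)\Xi\}$ pointwise from the convex combination, and then $\rho \ge \lambda(t)\Xi$ on $\supp\chi$ while trivially $\lambda(t)\Xi \ge \lambda(t)\Xi$ on $\supp(1-\chi)$).

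Next I would prove the symbol estimate $|\partial_x^\alpha\partial_\xi^\beta h| \le C_{\alpha\beta}\Xi^{-|\alpha|-|\beta|}\,h$ by the Leibniz rule applied to the three factors $\rho$, $\lambda(t)\Xi$ (more precisely $\lambda(t)\norm{x}\norm{\xi}$, which lies in $\SGs^{1,1}$ with the stated decay in derivatives), and the composite cutoff $\chi(\eta)$. The factor $\chi(\eta)$ and its $x,\xi$-derivatives are bounded by $C_{\alpha\beta}\Xi^{-|\alpha|-|\beta|}$ — here one uses that $\eta = \Lambda(t)\Xi/(N\ln\Xi)$ is a $\SGs^{0,0}$-type symbol in $(x,\xi)$ (each $x$- or $\xi$-derivative gains a factor $\Xi^{-1}$, with the $\ln$ in the denominator and the chain rule through $\chi$ absorbed into constants because $\chi', \chi'', \dots$ are compactly supported, so $\eta$ itself is bounded on the support of any $\chi^{(k)}(\eta)$). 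For $\rho$ one invokes \eqref{eq:useful_rho} with $j=0$; for $\lambda(t)\Xi$ the analogous self-improving estimate is elementary. Combining via Leibniz and using \eqref{eq:h_elliptic} to bound $\max\{\rho, \lambda(t)\Xi\} \le C h$ on the respective supports yields the claim. The property $\partial_t^j h \in C([0,T], S^{1+j,1+j})$ and the membership $h \in \SGH{1}{1}{1}{0}{N}$ then follow by the same Leibniz expansion, now also differentiating in $t$: each $\partial_t$ hitting $\rho$ is controlled by Lemma \ref{lem:rho} ($\partial_t^j\rho \in C((0,T], S^{j+\eps,j+\eps})$ and the $\SGH{1}{1}{1}{0}{N}$-membership of $\rho$ on $Z_\hyp$), each $\partial_t$ hitting $\lambda(t)\Xi$ costs a factor $\lambda'(t) \lesssim \lambda(t)^2/\Lambda(t) \le \lambda(t)\cdot(\lambda(t)/\Lambda(t))\ln(1/\Lambda(t))$ by \eqref{eq:lambda'_control}, matching the weight in \eqref{eq:estimates_hypSymbolClass}, and each $\partial_t$ hitting $\chi(\eta)$ produces $\chi'(\eta)\cdot\partial_t\eta$ supported where $1\le\eta\le2$, i.e. where $\rho \asymp \lambda(t)\Xi$, so these boundary terms are harmless.

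Finally, the last estimate — $|\partial_t\partial_x^\alpha\partial_\xi^\beta h| \le C_\alpha \Xi^{-|\alpha|-|\beta|}|\partial_t\rho|$ on $Z_\pd(2N)$ — is the most delicate, and I expect it to be the main obstacle. On $Z_\pd(2N)$ we have $\eta \le 2$, so $1-\chi(\eta)$ and all its $t$-derivatives could be nonzero only on the sliver $1 \le \eta \le 2$. There $\partial_t h$ gets contributions from $\partial_t\rho$ (the main term, directly of the asserted size, cf.\ the $Z_\pd$-estimate in Lemma \ref{lem:rho}), from $(\partial_t\chi(\eta))(\rho - \lambda(t)\Xi)$, and from $(1-\chi(\eta))\partial_t(\lambda(t)\Xi)$. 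One must show the latter two are also $\lesssim \Xi^{-|\alpha|-|\beta|}|\partial_t\rho|$: on $1\le\eta\le2$, $\rho \asymp \lambda(t)\Xi \asymp \sqrt{\lambda(t)^2\Xi\ln\Xi/\Lambda(t)}$ and $|\partial_t\eta| \asymp \lambda(t)\Xi/(N\ln\Xi)\cdot(\text{corrections})$, while $|\partial_t\rho| \asymp \lambda(t)/\sqrt{\Lambda(t)}\cdot\sqrt{\Xi\ln\Xi}$-type quantity from Lemma \ref{lem:rho}; plugging in and using $c_1 > 1/2$ in \eqref{eq:lambda'_control} (which is exactly what makes $\rho$ monotone and controls $\partial_t(\lambda^2/\Lambda)$) should show the transition terms are dominated by $|\partial_t\rho|$. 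The $x,\xi$-derivatives are then handled by Leibniz exactly as before, each costing $\Xi^{-1}$. I would carry out this last computation explicitly only on the region $1 \le \eta \le 2$, where all quantities are mutually comparable up to constants, which is what makes it tractable.
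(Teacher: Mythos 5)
The paper provides no explicit proof of this lemma (it is stated and immediately followed by a remark), so I evaluate your attempt on its own. Your overall approach — splitting by the cutoff, controlling $\rho$ and $\lambda(t)\norm{x}\norm{\xi}$ separately, gluing on the transition strip $1\le\eta\le2$ where the two pieces are comparable, and using Leibniz together with \eqref{eq:lambda'_control} for the derivative bounds — is the right one.

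There is, however, a genuine error in the lower-bound step, and you came close to noticing it yourself before backing off. Writing $\Xi = \norm{x}\norm{\xi}$ and $\eta = \Lambda(t)\Xi/(N\ln\Xi)$, the defining relation for $\rho$ becomes $\rho^2 = 1 + \lambda(t)^2\Xi^2/(N\eta)$, so $\rho\ge\lambda(t)\Xi$ is equivalent to $\lambda(t)^2\Xi^2\bigl(1-\frac{1}{N\eta}\bigr)\le1$. On $\supp\chi$ (where $\eta\le2$) this fails whenever $\lambda(t)\Xi$ is not $O(1)$: at $\eta=1$, i.e.\ $t=t_{x,\xi}$, the proof of Lemma \ref{lem:log_lambda} gives $\lambda(t_{x,\xi})\gtrsim\Xi^{-C_1}(N\ln\Xi)^{C_1}$ with $C_1<1$, hence $\lambda(t_{x,\xi})\Xi\to\infty$ as $\Xi\to\infty$, and then $\rho^2/(\lambda\Xi)^2 = (\lambda\Xi)^{-2}+1/N\to1/N<1$. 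So your claim ``$\rho\ge\lambda(t)\Xi$ on $\supp\chi$'', and with it ``$h\ge\min\{\rho,\lambda\Xi\}\ge\lambda\Xi$'', is false. What is true — and is all the calculus and downstream lemmas actually need — is $\rho\ge\lambda(t)\Xi/\sqrt{2N}$ on $\supp\chi$ (from $\ln\Xi/\Lambda\ge\Xi/(2N)$ there), hence $h\ge\min\{\rho,\lambda\Xi\}\ge\lambda(t)\Xi/\sqrt{2N}$, i.e.\ $h\gtrsim\max\{1,\lambda(t)\Xi\}$ with a constant depending on $N$; the inequality in \eqref{eq:h_elliptic} has to be read with this constant, and your argument needs to track it rather than drop it. Two smaller slips: you invoke ``$\ln\Xi/\Lambda(t)\le\Xi$ on $Z_\pd$'' for the upper bound, but on $Z_\pd$ the reverse holds ($\Lambda\Xi\le N\ln\Xi$ gives $\ln\Xi/\Lambda\ge\Xi/N$, unbounded as $t\to0$); the correct route is $\rho^2\le1+\bigl(\sup_{[0,T]}\lambda^2/\Lambda\bigr)\Xi\ln\Xi\lesssim\Xi^2$. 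And in several places you write the derivative gain as $\Xi^{-|\alpha|-|\beta|}$ rather than $\norm{x}^{-|\alpha|}\norm{\xi}^{-|\beta|}$; these are different weights, and the Leibniz bookkeeping should keep them separate. The rest of your sketch, including the final $Z_\pd(2N)$ estimate via comparability of $\rho$, $\lambda\Xi$, $\partial_t\rho$ and $\lambda'\Xi$ on the transition strip, is sound modulo these corrections.
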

\begin{rem}
Observe that, in view of Definition \ref{def: SG-ellipticity}, taking into accout \eqref{eq:h_elliptic}, it turns out that the parameter-dependent symbol 
$h$ is globally hypoelliptic. In particular, in the hyperbolic zone it is actually an elliptic symbol.
\end{rem}
Setting $U(t)=(U_1(t),U_2(t))^{T}=(\Op(h(t))u(t), D_{t}u(t))^{T}$, the Cauchy problem \eqref{eq:main} is equivalent, modulo $C^1([0,T],\scS(\R^d))$, to the Cauchy problem
\begin{equation} \label{main_system}
\begin{cases}
	D_tU(t) - K(t)U(t)= G(t)
	\\
	U(0)=U_0,
\end{cases}
\end{equation}
where $$G(t,x)=(0, g(t,x))^{T}, \quad U_0(x)=([\Op(h(0))\phi](x), -i\psi(x))^{T},$$ 
$K(t)=A(t) - (D_tH)(t) \, H(t)^\sharp$, and the matrix-valued operators $H(t)$ and $A(t)$ are 
\begin{small}
$$
A(t):= \left(\begin{array}{cc}  0 & \Op(h(t))\\ \Op(a(t))\Op(h(t))^{\sharp} & 0 \end{array}\right)\ \textnormal{and}\ H(t):= \left(\begin{array}{cc} \Op(h(t)) & 0 \\ 0 & 1 \end{array}\right).
$$
\end{small}
As above, $P^{\sharp}$ denotes the parametrix of the operator $P$. Indeed $H(t)^\sharp$ exists, since $\det\sigma(H(t))=h(t)$, and $h(t)$ is hypoelliptic. In the next Lemma \ref{lem:parametrix}
we deduce the properties of the matrix  $K(t)$ of the coefficients of the system in \eqref{main_system}. 
\begin{lem}
\label{lem:parametrix}
The matrix-valued parameter-dependent symbol $\si(K)$  belongs to $\SGP{2N} \cap \SGH {1}{1}{1}{0}{N}$.
\end{lem}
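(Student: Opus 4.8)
The plan is to make $K(t)$ fully explicit and then to verify the claim entry by entry, reducing everything to the composition calculus and the fine pointwise bounds already established for $h$, $\rho$ and $a$. Since $H(t)=\mathrm{diag}\big(\Op(h(t)),1\big)$, one has $H(t)^\sharp=\mathrm{diag}\big(\Op(h(t))^\sharp,1\big)$ and $(D_tH)(t)=\mathrm{diag}\big(\Op(D_th(t)),0\big)$, so that
\[
K(t)=A(t)-(D_tH)(t)\,H(t)^\sharp=\begin{pmatrix}-\Op(D_th(t))\,\Op(h(t))^\sharp & \Op(h(t))\\ \Op(a(t))\,\Op(h(t))^\sharp & 0\end{pmatrix}.
\]
It thus suffices to place the three nonzero entries in $\SGP{2N}\cap\SGH{1}{1}{1}{0}{N}$. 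The $(1,2)$ entry is $h$ itself, which lies in $\SGH{1}{1}{1}{0}{N}$ by Lemma~\ref{lem:h}, and whose $\SGP{2N}$ estimates follow at once from $|\partial_x^\alpha\partial_\xi^\beta h|\lesssim\<x\>^{-|\alpha|}\<\xi\>^{-|\beta|}h$ together with the upper bound $h\le C\<x\>\<\xi\>$ in \eqref{eq:h_elliptic}. The $(2,2)$ entry is zero.

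The next step is to identify the symbol class of the parametrix $\Op(h(t))^\sharp$. By Lemma~\ref{lem:h} and the remark following it, $h$ is globally hypoelliptic with lower bound of order $(0,0)$, and $\SG$-elliptic of order $(1,1)$ on $Z_\hyp(N)$, while $h=\rho$ on $Z_\pd(N)$. Running the recursive parametrix scheme (as in Lemma~\ref{inv}, adapted to hypoelliptic symbols, cf.\ Theorem~1.3.6 in \cite{Nicola_Rodino}) with leading term $h^\sharp_0=1/h$, the lower-order terms coming from Lemma~\ref{symcalculus}, and summing asymptotically by Lemma~\ref{lem:asymptotic}, one obtains $\Op(h(t))^\sharp=\Op(h^\sharp(t))$ with $h^\sharp-1/h$ of strictly lower order and such that: on $Z_\hyp(N)$, $h^\sharp\in\SGH{-1}{-1}{-1}{0}{N}$ (the reciprocal class of $h\in\SGH{1}{1}{1}{0}{N}$, compatibly with $\Op(h)\Op(h)^\sharp\sim I$ and $1\in\SGH{0}{0}{0}{0}{N}$); globally $h^\sharp\le c^{-1}$ and $|\partial_x^\alpha\partial_\xi^\beta h^\sharp|\lesssim\<x\>^{-|\alpha|}\<\xi\>^{-|\beta|}h^\sharp$; and on $Z_\pd(2N)$, $h^\sharp$ and its $x,\xi$-derivatives are controlled through the bounds for $\rho$ in Lemmas~\ref{lem:rho}--\ref{lem:h}, in particular $h^\sharp\lesssim\min\{1,(\lambda\<x\>\<\xi\>)^{-1}\}$.

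The third step handles the $(2,1)$ and $(1,1)$ entries via the composition calculus. On $Z_\hyp(N)$, Lemma~\ref{symcalculus} gives $\sigma\big(\Op(a)\Op(h^\sharp)\big)\sim\sum_\alpha\tfrac{i^{|\alpha|}}{\alpha!}D_\xi^\alpha a\,D_x^\alpha h^\sharp$ modulo a smoothing remainder; using $a\in\SGH{2}{2}{2}{0}{N}$ (Remark~\ref{rem:a_classes}) and $h^\sharp\in\SGH{-1}{-1}{-1}{0}{N}$, each term lies in $\SGH{1-|\alpha|}{1-|\alpha|}{1}{0}{N}$ by Proposition~\ref{prop:hierarchy}(3) --- all with the same $(\kappa,\ell)=(1,0)$ --- so Lemma~\ref{lem:asymptotic} yields $\sigma_{21}\in\SGH{1}{1}{1}{0}{N}$. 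For $\sigma_{11}=-\sigma\big(\Op(D_th)\Op(h^\sharp)\big)$, incrementing $k$ in the estimates for $h$ shows $D_th\in\SGH{1}{1}{1}{1}{N}$, whence the expansion terms lie in $\SGH{-|\alpha|}{-|\alpha|}{0}{1}{N}$ and $\sigma_{11}\in\SGH{0}{0}{0}{1}{N}$; this embeds into $\SGH{1}{1}{1}{0}{N}$ because $\tfrac1{\Lambda}\ln\tfrac1{\Lambda}\lesssim\<x\>\<\xi\>$ on $Z_\hyp(N)$, a consequence of Remark~\ref{rem:log_lambda} and of $|\ln\lambda|\asymp\ln\tfrac1{\Lambda}$ (the latter following from \eqref{eq:lambda'_control}). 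For the $\SGP{2N}$ estimates of $\sigma_{21}$ and $\sigma_{11}$ one argues directly on $Z_\pd(2N)$: there $h^\sharp$ and $D_th$ are controlled by $\rho$ and $\partial_t\rho$ as above, and combining this with $|\partial_x^\alpha\partial_\xi^\beta a|\lesssim\<x\>^{2-|\alpha|}\<\xi\>^{2-|\beta|}$ (Remark~\ref{rem:a_classes}), the weak ellipticity $h\ge\max\{c,\lambda\<x\>\<\xi\>\}$, the lower bound $\rho^2\ge\tfrac{\lambda^2}{\Lambda}\<x\>\<\xi\>\ln(\<x\>\<\xi\>)$, and the defining inequality $\Lambda\<x\>\<\xi\>\lesssim\ln(\<x\>\<\xi\>)$ of $Z_\pd(2N)$, one checks the order-$(1,1)$ bound $|\partial_\xi^\beta\partial_x^\alpha(\cdot)|\lesssim\<x\>^{1-|\alpha|}\<\xi\>^{1-|\beta|}$, uniformly in $t$ after the essential supremum. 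Collecting the three entries gives $\sigma(K)\in\SGP{2N}\cap\SGH{1}{1}{1}{0}{N}$.

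\textbf{Main obstacle.} The delicate part is the $\SGP{2N}$ analysis on the pseudodifferential zone. There $h$ is merely hypoelliptic, so $\Op(h)^\sharp$ is only $L^\infty$-bounded rather than of negative order; to see that $\Op(a)\Op(h)^\sharp$ and $\Op(D_th)\Op(h)^\sharp$ are nevertheless of order $(1,1)$ on $Z_\pd(2N)$ one must exploit the fine structure of $\rho$ --- the lower bound just mentioned, the estimate $|\partial_th|\lesssim|\partial_t\rho|$ there (Lemma~\ref{lem:h}) and the bounds on $\partial_t\rho$ (Lemma~\ref{lem:rho}) --- together with the facts that $\lambda^2/\Lambda\in C^\infty([0,T])$ and that \eqref{eq:lambda'_control} (with $c_1>1/2$) keeps the combinations of $\lambda$ and $\Lambda$ entering the estimates under control. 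A second, more routine point is that Lemma~\ref{symcalculus} produces infinitely many lower-order terms, each of which must be checked to vanish in $Z_\pd(N)$ and to sit in the correct descending hierarchy, so that Lemma~\ref{lem:asymptotic} applies and the remainder is genuinely smoothing, exactly as in the proof of Lemma~\ref{inv}.
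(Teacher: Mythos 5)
Your proposal is correct and follows essentially the same approach as the paper's proof: write $K=A-(D_tH)H^\sharp$ entry by entry and place each nonzero entry in $\SGP{2N}\cap\SGH{1}{1}{1}{0}{N}$ via the composition calculus of Lemma~\ref{symcalculus} together with the pointwise bounds on $h$, $h^\sharp$, $\rho$ and $a$ from Lemmas~\ref{lem:rho}--\ref{lem:h} and Remark~\ref{rem:a_classes}. You supply more detail than the paper's terse proof (in particular, making the $2\times2$ structure explicit, sketching the parametrix construction for $\Op(h)^\sharp$, and carrying out the $\SGP{2N}$ verification on $Z_\pd(2N)$ via the lower bound on $\rho$), but the decomposition and the tools invoked are the same.
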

\begin{proof}
On the one hand, as a consequence of Lemma \ref{lem:rho} and \ref{lem:h}, it follows that $D_tH\in \SGH {1}{1}{1}{0}{N}$. Then, by the calculus, 
$(D_tH)  H^{\sharp}\in \SGH {1}{1}{1}{0}{N}$, since $H^\sharp \in \SGH {0}{0}{0}{0}{N}$.
 On the other hand, the assumptions \eqref{eq:coefficients_assumptions} on the coefficients $a_j$, $b_j$, $j=1,\dots,d$, and $c$, guarantee that 
 $\Op(a)\Op(h)^\sharp\in \SGH{1}{1}{1}{0}{N} \cap \SGP{2N}$, as a consequence of \eqref{eq:h_elliptic} (see also Remark \ref{rem:a_classes}). 
 Since $H^\sharp\in \SGH{-1}{-1}{-1}{0}{N}$ and it is uniformly bounded in $Z_\pd(2N)$, % (this is not trivial; maybe, it should be proved in a lemma ...) 
 we conclude that $\Op(a)\Op(h)^\sharp\in \SGH{1}{1}{1}{0}{N}\cap S^\pd_{2N}$. 
 % L'esistenza della parametrix di H esiste come conseguenza del Th. 1.3.6 di Nicola-Rodino riguardo l'esistenza di una parametrix per simboli ipoellittici.
\end{proof}

Let us now define   
\begin{equation*}
%\label{eq:tfrak}
\begin{aligned}
\mathfrak{t}_j(t,x,\xi)&= d_j \rho(t,x,\xi) \chi\left(\frac{\Lambda(t)\norm{x}\norm{\xi}}{N\ln (\norm{x}\norm{\xi})}\right) 
\\
&+ \tau_j(t,x,\xi)\Big(1-\chi\left(\frac{\Lambda(t)\norm{x}\norm{\xi}}{N\ln (\norm{x}\norm{\xi})}\right)\Big),
\end{aligned}
\end{equation*}
where $\tau_{j}(t,x,\xi)=d_{j}\sqrt{a(t,x,\xi)}$, $j=1,2$, with $d_{2}=-d_{1}=1$, are the roots of the complete symbol \eqref{eq:complete_symbol} of $L$. The symbol 
$\mathfrak{t}_j(t,x,\xi)$, $j=1,2$, are introduced to allow the construction of the fundamental solution close to $t=0$, due to jump in the multiplicity of the characteristic
roots there. 
\begin{lem} The functions $\mathfrak{t}_1$ and $\mathfrak{t}_2$ satisfy the following properties:
\begin{enumerate}
\item[a)] $\mathfrak{t}_{k} \in \SGP{2N}\cap \SGH{1}{1}{1}{0}{N}$, $k=1,2$, with $\mathfrak{t}_2-\mathfrak{t}_1=2\mathfrak{t}_2$. In particular, for any $(t,x,\xi)\in Z_{\pd}(N)$ it holds
\begin{equation*}
	|D_x^\alpha D_\xi^\beta \mathfrak{t}_i(t,x,\xi)|\lesssim \frac{\lambda(t)}{\sqrt{\Lambda(t)}}\<x\>^{\frac{1}{2}+\eps-|\alpha|}\<\xi\>^{\frac{1}{2}+\eps-|\beta|}, \quad k=1,2,
\end{equation*}
for any $\alpha,\beta\in \N^n$ such that $|\alpha|+|\beta|\geq 1$;
\item[b)] $\partial^{j}_{t}\mathfrak{t}_{k}\in L_{\infty}([0,T], S^{1+j,1+j})$, $k=1,2$, for all $j=0,1,\dots$;
\item[c)] $\partial^{j}_{t}(\mathfrak{t}_{k}/h)\in L_{\infty}([0,T], S^{1+j,1+j})$, $k=1,2$, for all $j=0,1,\dots$;
\item[d)] $|D_x^\beta D_\xi^\alpha D_{t} \frac{\mathfrak{t}_k(t,x,\xi)}{\rho(t,x,\xi)}|\leq C_{\alpha\beta}\norm{\xi}^{-|\alpha|}\norm{x}^{-|\beta|}\frac{|\partial_{t}\rho(t,x,\xi)|}{\rho(t,x,\xi)}$,
$k=1,2$, \textnormal{ for } $(t,x,\xi)\in Z_{\pd}(2N)$.
\end{enumerate}
\end{lem}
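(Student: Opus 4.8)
The plan is to verify each of the four claims (a)--(d) by exploiting the structure of $\mathfrak{t}_j$ as a $\chi$-interpolation between the ``pseudodifferential-zone'' profile $d_j\rho$ and the ``hyperbolic-zone'' profile $\tau_j$, reducing everything to already-established estimates: Lemma \ref{lem:rho} for $\rho$, Remark \ref{rem:a_classes} together with \eqref{eq:tau_estimate}--\eqref{eq:tau_hyperbolicity} for $\tau_j=d_j\sqrt{a}$, and Lemma \ref{lem:h} for $h$. The cutoff $\chi\!\left(\frac{\Lambda(t)\norm{x}\norm{\xi}}{N\ln(\norm{x}\norm{\xi})}\right)$ is supported where $\Lambda(t)\norm{x}\norm{\xi}\le 2N\ln(\norm{x}\norm{\xi})$, i.e. in $Z_{\pd}(2N)$, and is identically $1$ on $Z_{\pd}(N)$; its derivatives are supported in the transition strip $Z_{\pd}(2N)\setminus Z_{\pd}(N)$, where by Lemma \ref{lem:log_lambda}/Remark \ref{rem:log_lambda} one has $\Lambda(t)\asymp \frac{\ln(\norm{x}\norm{\xi})}{\norm{x}\norm{\xi}}$ and $\lambda(t)\asymp(\norm{x}\norm{\xi})^{-\theta}$ for some $\theta>0$, so $\rho$ and $\lambda(t)\norm{x}\norm{\xi}$ are comparable there. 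The first thing I would record is that $\chi(\cdots)$, as a symbol in $(x,\xi)$, lies in $S^{0,0}$ uniformly in $t$, with each $x$- or $\xi$-derivative gaining a factor $\norm{x}^{-1}$ resp. $\norm{\xi}^{-1}$ (because $\Lambda(t)\norm{x}\norm{\xi}/\ln(\norm{x}\norm{\xi})$ is an $S^{0,0}$ function of $(x,\xi)$ uniformly in $t$); and that $\partial_t\chi(\cdots)$ carries a factor $\lambda(t)\norm{x}\norm{\xi}/\ln(\norm{x}\norm{\xi})$ localized to the strip. These facts make all the Leibniz expansions below routine.

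For (a), on $Z_{\pd}(N)$ one has $\mathfrak{t}_j=d_j\rho$ identically, so the displayed estimate is exactly \eqref{eq:useful_rho}-type bound from Lemma \ref{lem:rho}; for the full class membership $\mathfrak{t}_j\in\SGP{2N}\cap\SGH{1}{1}{1}{0}{N}$ I would argue that on the $\chi\equiv1$ part we use $\rho\in\SGH{1}{1}{1}{0}{N}$, on the $(1-\chi)$ part we use $\tau_j\in\SGH{1}{1}{1}{0}{N}$ (which follows since $a\in\SGH{2}{2}{2}{0}{N}$ is weakly elliptic on $Z_\hyp(N)$ by Remark \ref{rem:a_classes}, so $\sqrt{a}\in\SGH{1}{1}{1}{0}{N}$ there by the usual square-root argument for hypoelliptic symbols), and on the transition strip the comparability $\rho\asymp\lambda(t)\norm{x}\norm{\xi}$ makes the two profiles and their derivatives match up to the $\SG$-type estimates; the $\SGP{2N}$ part is the bound $\norm{x}^{1-|\alpha|}\norm{\xi}^{1-|\beta|}$ on $Z_{\pd}(2N)$, which holds because $\rho\lesssim\norm{x}\norm{\xi}$ there (cf. \eqref{eq:h_elliptic}) and the $\chi$-cutoff does not spoil the $\SG$ orders. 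The identity $\mathfrak{t}_2-\mathfrak{t}_1=2\mathfrak{t}_2$ is immediate from $d_2=-d_1=1$. Claim (b) is the analogue of the first bullet of Lemma \ref{lem:h}: differentiating $j$ times in $t$, each $\partial_t$ hitting $\rho$ costs $\Lambda(t)^{-1/2}$ (Lemma \ref{lem:rho}) which is bounded above by $\norm{x}\norm{\xi}$ on the support of $\chi$, each $\partial_t$ hitting $\tau_j$ is controlled by \eqref{eq:tau_estimate} with the weight $\lambda(t)\Lambda(t)^{-1}\ln(1/\Lambda(t))\lesssim\norm{x}\norm{\xi}$, and each $\partial_t$ hitting $\chi$ costs $\lambda(t)\norm{x}\norm{\xi}/\ln(\norm{x}\norm{\xi})\lesssim\norm{x}\norm{\xi}$; collecting, $\partial_t^j\mathfrak{t}_k$ gains at most $(\norm{x}\norm{\xi})^j$ over the order-$(1,1)$ bound, giving $S^{1+j,1+j}$.

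For (c), write $\mathfrak{t}_k/h=(d_k\rho/h)\chi+(\tau_k/h)(1-\chi)$; on the support of $\chi$ we have $h=\rho$ so $d_k\rho/h=d_k$ is constant, while on the support of $1-\chi$ we have $h=\lambda(t)\norm{x}\norm{\xi}$ and $\tau_k/h=d_k\sqrt{a}/(\lambda(t)\norm{x}\norm{\xi})=d_k\gamma_k$ in the notation of \eqref{eq:gammai_equation}, which by \eqref{eq:gamma_estimate} lies in $S^{0,0}$ with $\partial_t$ costing $\lambda(t)\Lambda(t)^{-1}\ln(1/\Lambda(t))\lesssim\norm{x}\norm{\xi}$; again the transition-strip terms are harmless by the comparabilities above, so $\partial_t^j(\mathfrak{t}_k/h)\in S^{1+j,1+j}$. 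Claim (d) is the $Z_{\pd}(2N)$ refinement, the analogue of the last bullet of Lemma \ref{lem:h}: since $\chi\equiv1$ on $Z_{\pd}(N)$ and $\partial_t\chi$ is supported in $Z_{\pd}(2N)\setminus Z_{\pd}(N)$ where $\mathfrak{t}_k/\rho$ and its transition behaviour are controlled by $|\partial_t\rho|/\rho$, one reduces to $\mathfrak{t}_k/\rho=d_k$ (on $Z_{\pd}(N)$, so $D_t$ of it vanishes and the transition contributes the stated bound via the comparability of $\rho$ with $\lambda\norm{x}\norm{\xi}$ and the chain rule applied to $\chi$ and $\tau_k/\rho$), and then applies \eqref{eq:useful_rho} to distribute the $x,\xi$-derivatives. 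The main obstacle I anticipate is the careful bookkeeping in the transition strip $Z_{\pd}(2N)\setminus Z_{\pd}(N)$: one must simultaneously control the $\chi$-derivatives (which introduce the factor $\lambda(t)\norm{x}\norm{\xi}/\ln(\norm{x}\norm{\xi})$ and its spatial derivatives), the mismatch between the profile $\rho$ and the profile $\tau_j$ there, and verify that all these contributions fit within the claimed $\SG$ orders; this is where Remark \ref{rem:log_lambda} (giving $\Lambda(t)\asymp\ln(\norm{x}\norm{\xi})/(\norm{x}\norm{\xi})$ and hence $\rho\asymp\lambda(t)\norm{x}\norm{\xi}$ on the strip) is the crucial tool, otherwise the estimates are entirely parallel to those already carried out for $h$ in Lemma \ref{lem:h} and for $\rho$ in Lemma \ref{lem:rho}, and I would simply point to those proofs for the repetitive parts.
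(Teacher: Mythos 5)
The paper states this lemma without proof, so there is no paper proof to compare against; I evaluate your sketch on its own merits. The overall strategy — decomposing $\mathfrak{t}_k=d_k\rho\chi+\tau_k(1-\chi)$, treating the three regions $Z_\pd(N)$ (where $\chi\equiv1$), $Z_\hyp(2N)$ (where $\chi\equiv0$), and the transition strip $Z_\pd(2N)\cap Z_\hyp(N)$, and using Lemma \ref{lem:rho}, Lemma \ref{lem:h}, \eqref{eq:tau_estimate}, and the comparabilities $\Lambda(t)\asymp N\ln(\<x\>\<\xi\>)/(\<x\>\<\xi\>)$, $\rho\asymp\lambda(t)\<x\>\<\xi\>$ on the strip — is the right one, and parts (a) and (b) are essentially sound modulo a few imprecisions (e.g., your "$\partial_t$ hitting $\rho$ costs $\Lambda(t)^{-1/2}\lesssim\<x\>\<\xi\>$" is not literally true: on $Z_\pd(2N)$ the zone inequality only bounds $\Lambda$ from above, not below, so $\Lambda(t)^{-1/2}$ is unbounded; the correct uniform bound on $\partial_t^j\rho$ instead comes from writing $\rho=(1+g(t)u)^{1/2}$ with $g=\lambda^2/\Lambda\in C^\infty([0,T])$, $u=\<x\>\<\xi\>\ln(\<x\>\<\xi\>)$, and using $\rho\ge1$).

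In part (c) your simplification is incorrect as stated: on $\operatorname{supp}\chi=Z_\pd(2N)$ one does \emph{not} have $h=\rho$ (and on $\operatorname{supp}(1-\chi)$ one does not have $h=\lambda\<x\>\<\xi\>$); those identities hold only where $\chi\equiv1$, i.e.\ on $Z_\pd(N)$, and where $\chi\equiv0$, i.e.\ on $Z_\hyp(2N)$. On the transition strip $h$ is the genuine mixture $\rho\chi+\lambda\<x\>\<\xi\>(1-\chi)$, and the bookkeeping there (bounding $\rho/h$ and $\lambda\<x\>\<\xi\>/h$ together with their $t$-derivatives using \eqref{eq:h_elliptic} and the comparabilities) has to be done explicitly rather than by the shortcut you invoke; since you later acknowledge "transition-strip terms are harmless," this is a fixable slip, but as written the step is false.

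The genuine gap is in part (d). Your sketch reduces everything to the transition strip (correctly, since $\mathfrak{t}_k/\rho\equiv d_k$ on $Z_\pd(N)$) but then only asserts that the transition terms "contribute the stated bound via the comparability and the chain rule." That step does not close. Writing $D_t(\mathfrak{t}_k/\rho)=(d_k-\tau_k/\rho)D_t\chi+(1-\chi)D_t(\tau_k/\rho)$, the first summand can indeed be absorbed (both $\tau_k/\rho$ and $\rho\,|D_t\chi|$ are $\asymp\sqrt N$ times something $\asymp\lambda/\Lambda\asymp|\partial_t\rho|/\rho$ on the strip). But the second summand is problematic: $D_t(\tau_k/\rho)=(\tau_k/\rho)\bigl(\partial_t\tau_k/\tau_k-\partial_t\rho/\rho\bigr)$, and while $\partial_t\rho/\rho\asymp\lambda/\Lambda$ on the strip (using \eqref{eq:lambda'_control} and $\rho^2\asymp\frac{\lambda^2}{\Lambda}\<x\>\<\xi\>\ln(\<x\>\<\xi\>)$), the only available bound for the other ratio is $|\partial_t\tau_k|/|\tau_k|\lesssim\frac{\lambda}{\Lambda}\ln\frac{1}{\Lambda}$, coming from \eqref{eq:tau_estimate} combined with \eqref{ellipticity_3}. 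On the transition strip $\ln\frac{1}{\Lambda}\asymp\ln(\<x\>\<\xi\>)$, which is unbounded, so the naive estimate produces a right-hand side of size $\frac{\lambda}{\Lambda}\ln\frac{1}{\Lambda}\asymp\bigl(|\partial_t\rho|/\rho\bigr)\ln(\<x\>\<\xi\>)$, exceeding the claimed bound by a logarithm. Nothing in your sketch (chain rule, comparabilities, \eqref{eq:useful_rho}) removes this factor; you would either need a cancellation between $\partial_t\tau_k/\tau_k$ and $\partial_t\rho/\rho$ that the stated hypotheses do not supply, or a sharper hypothesis on $\partial_t\tau_k$ in the transition strip, or the weaker target estimate $\lesssim\<\xi\>^{-|\alpha|}\<x\>^{-|\beta|}\bigl(\rho+|\partial_t\rho|/\rho\bigr)$, which is what actually appears in \eqref{eq:B1_bound_pd} and which \emph{does} absorb the logarithm since $\frac{\lambda}{\Lambda}\ln\frac{1}{\Lambda}\lesssim\rho/\sqrt N$ on the strip. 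As it stands, your argument for (d) is not a proof, and you should either supply the missing absorption mechanism or argue for the corrected form of the estimate.
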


To begin the diagonalization of the system in \eqref{main_system}, we introduce the matrix-valued parameter-dependent operator
\begin{equation}\label{eq:opM}
M= \left(\begin{array}{cc} I & I \\ \Op(\mathfrak{t}_{1}) \Op(h)^{\sharp} & \Op(\mathfrak{t}_{2}) \Op(h)^{\sharp} \end{array}\right),
\end{equation}
whose principal symbol symbol is
$$
m_0=\sigma_p(M) = \left(\begin{array}{ll} 1 & 1\\ \frac{\mathfrak{t}_1}{h} & \frac{\mathfrak{t}_2}{h}\end{array}\right).
$$
Since $\det(m_0)=\frac{\mathfrak{t}_2-\mathfrak{t}_1}{h}=\frac{2\mathfrak{t}_2}{h}\gtrsim 1$,  for $(t,x,\xi)\in [0,T]\times \mathbb{R}^{2d}$, the operator $M$ is elliptic.
% being $\det(\sigma(M))=\frac{2\mathfrak{t}_{2}}{h}\geq C>0$
% in order to obtain positive determinant I changed the signs to phi1 and phi2
By Lemma \ref{lem:parametrix}, $M$ then admits a parametrix $M^{\sharp}$. Note that, by their definition, the symbols of $M, M^{\sharp}$ are constant in $Z_{\pd}(N)$, while their entries belong to $\SGH{0}{0}{0}{0}{N}$. 

We look for the fundamental solution $E=E(t,s)$ to (\ref{main_system}), that is, an operator family $E(t,s)$ satisfying 
\begin{equation}\label{fundamental_solution}
D_tE(t,s) - K(t)E(t,s)  = 0\; , \; E(s,s) = I.
\end{equation}
In view of the properties of the calculus of the generalized parameter-dependent $\SG$-operators, established in the previous sections, we can
adapt the arguments in \cite{KR,Yagdjian}. We first set $E_0(t,s) = M^\sharp(t) E(t,s)$, which then satisfies
\begin{align*}
\nonumber D_{t}E_{0} &= M^{\sharp}(A+(D_tH)H^{\sharp})E+(D_tM^{\sharp})E\\
\nonumber &= (M^{\sharp}AM)E_{0}+(D_tM^{\sharp}+M^\sharp (D_t H)H^{\sharp})ME_{0}+\hat R_{1}E \\
\nonumber &= \mathcal{D}E_{0} + \mathcal{B}_{1}E_{0}+\hat{R}_{1}E,
\end{align*}
where
$$
\mathcal{D} = \Op(\sigma_p(M^{\sharp}AM)), \; \mathcal{B}_{1}= (M^{\sharp}AM-\mathcal{D})+(D_tM^{\sharp}+M^\sharp (D_tH)H^{\sharp})M.
$$
and $\hat{R}_{1} \in C^\iy \Big([0,T], \Op(S^{-\infty,-\infty})\Big)$ is a smooth family of regularizing operators. 

We first examine $\mathcal{D}$. By Lemmas \ref{symcalculus} and \ref{inv}, looking at the top order terms of the asymptotic expansions of the involved compositions and parametrix, we see that
\begin{equation}
\label{eq:D_symbol}
\sigma(\mathcal{D})=\sigma_p(M^{\sharp}AM)=
\begin{cases}
\left(\begin{array}{ll}\tau_{1} & 0\\ 0 & \tau_{2}\end{array}\right) &\textnormal{ in }\ Z_{\hyp}(2N),
\\
\left(\begin{array}{cc} \frac{ \mathfrak{t}_1^2+\tau_1^2 }{2\mathfrak{t}_1} &
\frac{\mathfrak{t}_2^2 -\tau_2^2}{2\mathfrak{t}_2}\\
\frac{\mathfrak{t}_1^2-\tau_1^2}{2\mathfrak{t}_1} & \frac{\mathfrak{t}_2^2+\tau_2^2}{2
\mathfrak{t}_2}
\end{array}\right) & \textnormal{ in }  Z_{\pd}(2N).\rule{0mm}{1cm}
\end{cases}
\end{equation}
Then,
$$
\sigma(\mathcal{D})=\left(\begin{array}{cc} \mathfrak{t}_1 &
0\\ 0 & \mathfrak{t}_2 \end{array}\right) + \si(Q), 
$$
where $\si(Q) \in \SGP{2N} $ and $\si(Q) \equiv 0$ in $Z_{\hyp}(2N)$. Again by the definition of $M$ and the properties of the calculus, we can write
\[\sigma(M^{\sharp}AM)=\sigma(\mathcal{D})+q_{0}+r_{0},\]
where $q_{0}\equiv 0$ in $Z_{\pd}(2N)$, $q_{0}\in \SGP{N}\cap \SGH{0}{0}{0}{0}{N}$ and $r_{0}\in C^{\infty}([0,T],$ $S^{-\infty,-\infty})$. 

We now consider $\mathcal{B}_{1}$. Using the equalities
\[
	M^\sharp M=I\Rightarrow (D_t M^\sharp)M=-M^\sharp(D_tM),
\]
which hold modulo smooth families of regularizing operators, we find
\begin{align*}
\nonumber \mathcal{B}_{1}&= (M^{\sharp}AM-\mathcal{D})+(D_tM^{\sharp}+M^\sharp (D_tH)H^{\sharp})M \\
\nonumber &=(M^{\sharp}AM-\mathcal{D})+(-M^{\sharp}D_tM+M^\sharp (D_tH) H^{\sharp}M)+\tilde{R}_{1}
\end{align*}
with $\tilde{R}_{1}\in C^{\infty}([0,T]; \Op(S^{-\infty-\infty}))$. Again by the properties of the calculus, we can write
\begin{align*}
\nonumber\sigma(-M^{\sharp}D_tM&+M^\sharp (D_tH)H^{\sharp}M) \\
\nonumber &=\sigma_p(-M^{\sharp}D_tM)+\sigma_p(M^\sharp(D_tH)H^{\sharp}M) +q_{1}+r_{1},
\end{align*}
where $q_{1}\equiv 0$ in $Z_{\pd}(N)$, $q_{1}\in \SGP{N}\cap \SGH{-1}{-1}{0}{1}{N}$, and $r_{1}\in C^{\infty}([0,T],$ $S^{-\infty,-\infty})$. 
Looking at the top terms of the asymptotic expansions, by direct calculation we have 
\[\sigma_p(M^\sharp (D_tH) H^{\sharp} M)=
\begin{pmatrix} \frac{D_{t}h}{2h} & \frac{D_{t}h}{2h}\\\ \frac{D_{t}h}{2h} & \frac{D_{t}h}{2h}\rule{0mm}{5mm} \end{pmatrix}
\] 
and 
\[
\sigma_p(M^{\sharp}D_tM)=\frac{h}{2\mathfrak{t}_{2}}\left(\begin{array}{cc} -D_{t}(\frac{\mathfrak{t}_1}{h})& -D_{t}(\frac{\mathfrak{t}_2}{h})\\D_{t}(\frac{\mathfrak{t}_1}{h})&D_{t}(\frac{\mathfrak{t}_2}{h}) \rule{0mm}{5mm}\end{array}\right).
\]
Summing up, it follows that there exist $q\in \SGP{N}\cap \SGH{0}{0}{0}{0}{N}$ and $r\in C^{\infty}([0,T],S^{-\infty,-\infty})$
such that
\[
\sigma(\mathcal{B}_{1})=b_1=
\left(\begin{array}{cc}
\frac{D_t \mathfrak{t}_2}{2\mathfrak{t}_{2}} &-\frac{D_t \mathfrak{t}_2}{2\mathfrak{t}_{2}}+\frac{D_t h}{h}  \\ \frac{D_t \mathfrak{t}_2}{2\mathfrak{t}_{2}} +\frac{D_t h}{h} & \frac{D_t \mathfrak{t}_2}{2\mathfrak{t}_{2}}\rule{0mm}{5mm}
\end{array}\right)+q+r.
\]
This shows $b_1\in \SGH{0}{0}{0}{1}{N}$. Moreover, we can estimate 
\begin{equation}\label{eq:B1_bound_pd}
| D_{x}^{\beta}D_{\xi}^{\alpha}b_{1}(t,x,\xi)|\leq C_{\alpha\beta}\norm{x}^{-|\beta|}\norm{\xi}^{-|\alpha|}\left(\rho(t,x,\xi)+\frac{\partial_{t}\rho(t,x,\xi)}{\rho(t,x,\xi)}\right),
\end{equation}
for all $(t,x,\xi)\in Z_{\pd}(2N)$. Indeed, \eqref{eq:B1_bound_pd} follows by Lemma \ref{lem:rho} and \eqref{eq:useful_rho}, noticing that 
$$ |D_x^\alpha D_\xi^\beta a(t,x,\xi)|\leq C_{\alpha\,\beta}\<x\>^{-|\alpha|}\<\xi\>^{-|\beta|}\rho^2(t,x,\xi),$$ and 
$$ |D_x^\alpha D_\xi^\beta \mathfrak{t}_j(t,x,\xi)|\leq C_{\alpha\,\beta}\<x\>^{-|\alpha|}\<\xi\>^{-|\beta|}\rho(t,x,\xi), \quad j=1,2,$$
for all $(t,x,\xi)\in Z_\pd(2N)$, as a consequence of \eqref{eq:coefficients_assumptions}.

Summarizing, we proved that the fundamental solution $E(t,s)$ satisfying \eqref{fundamental_solution} can be represented, modulo
smooth families of regularizing operators, in the form $E(t,s)=M(t)E_{0}(t,s)M^{\sharp}(s)$, 
where $M$ is the matrix-valued elliptic operator \eqref{eq:opM}, with $\sigma(M)\in \SGH{0}{0}{0}{0}{N}$, and $E_{0}=E_{0}(t,s)$ solves
\begin{equation*}
	\label{eq:sysbis}
D_{t}E_{0}-\mathcal{D}E_{0}+\mathcal{B}_{1}E_{0}+R_1 E_{0}=0, E_0(s,s)=I,
\end{equation*}
with $\mathcal{D}$ the matrix-valued diagonal pseudodifferential operator with symbol given in \eqref{eq:D_symbol},  $\mathcal{B}_{1}$ 
a matrix-valued pseudodifferential operator with symbol
$b_1\in\SGP{N}\cap \SGH{0}{0}{0}{1}{N}$, satisfying \eqref{eq:B1_bound_pd} for all $(t,x,\xi)\in Z_{\pd}(2N)$, and $R_1$  a smooth family of
regularizing operator with matrix-valued symbol in $ C^{\infty}([0,T],S^{-\infty,-\infty})$.

Our next goal is to diagonalize $\mathcal{B}_{1}$ modulo $\SGH{-1}{-1}{-1}{2}{N}$. Since we are looking for the fundamental solution modulo
regularizing terms, from now on we will no longer indicate them explicitly in the computations. 

\begin{prop}\label{diag_2}
There exists an elliptic, matrix-valued pseudodifferential operator $N_{1}$ with $\sigma(N_{1})\in \SGH{0}{0}{0}{0}{N}$, $\sigma(N_{1})\equiv I$ in $Z_{\pd}(N)$,
a diagonal matrix-valued pseudodifferential operator $\mathcal{D}_{1}$ with $\sigma(\mathcal{D}_{1})\in \SGP{2N}\cap \SGH{0}{0}{0}{1}{2N}$, 
and a matrix-valued pseudodifferential operator $\mathcal{B}_{2}$ with $\sigma(\mathcal{B}_{2})\in \SGP{2N}\cap \SGH{-1}{-1}{-1}{2}{2N}$, such that 
\begin{equation*}
%\label{N1_eqn}
(D_{t}-\mathcal{D}+\mathcal{B}_{1})N_{1}=N_{1}(D_{t}-\mathcal{D}+\mathcal{D}_{1}+\mathcal{B}_{2})
\end{equation*}
holds, modulo a smooth family of regularizing operators with symbol in $C^{\infty}([0,T],S^{-\infty,-\infty})$; in particular, $\sigma(\mathcal{B}_{2})$ satisfies estimate \eqref{eq:B1_bound_pd} in $Z_\pd(2N)$.
\end{prop}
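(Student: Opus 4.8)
The plan is to carry out the first genuine diagonalization step of the Kubo--Reissig/Yagdjian scheme inside the generalized $\SG$-zone calculus developed above. First I would split the symbol of $\mathcal{B}_1$ as $b_1=b_1'+b_1''$, with $b_1'$ its diagonal part and $b_1''$ its anti-diagonal part; by the explicit formula for $b_1$ preceding \eqref{eq:B1_bound_pd}, both still lie in $\SGH{0}{0}{0}{1}{N}$ and satisfy \eqref{eq:B1_bound_pd} on $Z_\pd(2N)$. The diagonal part $b_1'$ is kept and will supply the leading term of $\mathcal{D}_1$, while $b_1''$ is what must be removed at the top order.

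Next I would construct the conjugating operator as $N_1=I+\Op(n_1)$, where $n_1$ is an anti-diagonal, matrix-valued symbol defined by solving, at the level of principal symbols, the algebraic ("commutator") equation $\sigma_p(\mathcal{D})\,n_1-n_1\,\sigma_p(\mathcal{D})=b_1''$, localized by a cutoff of the kind appearing in \eqref{eq:hdef} so that $n_1$ vanishes on $Z_\pd(N)$. On $Z_\hyp(2N)$ one has $\sigma_p(\mathcal{D})=\operatorname{diag}(\tau_1,\tau_2)$ by \eqref{eq:D_symbol}, so $(n_1)_{12}=(b_1'')_{12}/(\tau_1-\tau_2)$ and $(n_1)_{21}=(b_1'')_{21}/(\tau_2-\tau_1)$; the lower bound \eqref{eq:tau_hyperbolicity}, $|\tau_1-\tau_2|\gtrsim\lambda(t)\langle x\rangle\langle\xi\rangle$, together with the estimates \eqref{eq:tau_estimate}, gives $n_1\in\SGH{-1}{-1}{-1}{1}{N}$, which on $Z_\hyp(N)$ is contained in $\SGH{0}{0}{0}{0}{N}$ by Remark \ref{rem:log_lambda} (the factor $\frac{1}{\langle x\rangle\langle\xi\rangle\Lambda(t)}\ln\frac{1}{\Lambda(t)}$ is $\lesssim 1/N$ there). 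Since $n_1\equiv 0$ on $Z_\pd(N)$ we get $\sigma(N_1)\equiv I$ there, and since $n_1$ has strictly negative order $\det\sigma(N_1)=1+O(\langle x\rangle^{-1}\langle\xi\rangle^{-1})$ is bounded away from zero, so $N_1$ is elliptic; by Lemma \ref{inv} it admits a parametrix $N_1^\sharp$ with $\sigma(N_1^\sharp)\in\SGH{0}{0}{0}{0}{N}$ and $\sigma(N_1^\sharp)\equiv I$ on $Z_\pd(N)$.

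Then I would expand $(D_t-\mathcal{D}+\mathcal{B}_1)N_1$ by the symbolic calculus of Lemma \ref{symcalculus} (and its $\SGs^{m,\mu}_{(\ve)}$-variant, cf.\ Remark \ref{rem:asymptexpbis}). Using $D_t(N_1u)=(D_tN_1)u+N_1D_tu$ and $N_1^\sharp N_1=I$ modulo regularizing operators, one obtains
\[
(D_t-\mathcal{D}+\mathcal{B}_1)N_1 = N_1\Big(D_t-\mathcal{D}+N_1^\sharp\big[(D_tN_1)-[\mathcal{D},N_1]+\mathcal{B}_1N_1\big]\Big)
\]
modulo $C^\infty([0,T],\Op(S^{-\infty,-\infty}))$, so it remains to split $\mathcal{D}_1+\mathcal{B}_2:=N_1^\sharp\big[(D_tN_1)-[\mathcal{D},N_1]+\mathcal{B}_1N_1\big]$. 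By the defining equation for $n_1$, the leading anti-diagonal term of $-[\mathcal{D},N_1]$ equals $-b_1''$ and cancels the anti-diagonal leading term $b_1''$ of $\mathcal{B}_1N_1$; what survives is the diagonal symbol $b_1'$, which together with the diagonal parts of the lower-order corrections is declared to be $\mathcal{D}_1\in\SGP{2N}\cap\SGH{0}{0}{0}{1}{2N}$, plus an anti-diagonal/lower-order remainder assembled from the sub-leading terms in the asymptotic expansions of the compositions, from $D_tn_1\in\SGH{-1}{-1}{-1}{2}{N}$ (obtained by differentiating the defining estimate \eqref{eq:estimates_hypSymbolClass} for $n_1$ in $t$), and from the products $\mathcal{B}_1\Op(n_1)$, $\Op(n_1)\Op(b_1')$, and the like. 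Tracking orders with Proposition \ref{prop:hierarchy} and exploiting the logarithmically improved regularity of the $\SGH{}{}{}{}{}$/$\SGO{}{}{}{}{}$-scales recorded after Definition \ref{symbol_class_hyp}, all of these land in $\SGH{-1}{-1}{-1}{2}{2N}$, and I would set this to be $\mathcal{B}_2$. On $Z_\pd(N)$, where $N_1\equiv I$, one simply has $\mathcal{D}_1=b_1'$ and $\mathcal{B}_2=b_1''$, so the $\SGP{2N}$-membership and \eqref{eq:B1_bound_pd} are inherited directly from $\mathcal{B}_1$; on the overlap $Z_\pd(2N)\setminus Z_\pd(N)$, which lies in $Z_\hyp(N)$, one matches the two descriptions using the confinement of $\operatorname{supp}n_1$ together with the bounds for $\rho$, $\partial_t\rho/\rho$ from Lemma \ref{lem:rho}, Lemma \ref{lem:h}, and the properties (a)--(d) of $\mathfrak{t}_1,\mathfrak{t}_2$ above.

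The main obstacle will be exactly this bookkeeping across the two zones with the parameters $N$ and $2N$: the cutoff in $n_1$ has to be chosen so that $N_1\equiv I$ already on $Z_\pd(N)$ while the remainder $\mathcal{B}_2$ stays controlled on the larger $Z_\pd(2N)$, where $\mathcal{D}$ is no longer diagonal and only the pseudodifferential-zone estimates \eqref{eq:B1_bound_pd} are available; and one must verify that the gain of one order in $(m,\mu)$ together with the shift $(\kappa,\ell)\mapsto(\kappa-1,\ell+1)$ induced by $D_t$ genuinely yields the class $\SGH{-1}{-1}{-1}{2}{2N}$ rather than one with a worse parameter, which is precisely where the hierarchy inclusions of Proposition \ref{prop:hierarchy} and the refined regularity of the symbol scales have to be used with care.
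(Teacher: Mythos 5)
Your proposal is correct and follows essentially the same route as the paper: cut off the anti-diagonal part of $b_1$, divide by the spectral gap to build the perturbation $N^{(1)}=N_1-I$ supported in $Z_\hyp(N)$, verify the $1/N$-smallness of $\sigma(N^{(1)})$ via Remark~\ref{rem:log_lambda}(2) to get ellipticity of $N_1$, and absorb the sub-leading compositions into $\mathcal{B}_2\in\SGP{2N}\cap\SGH{-1}{-1}{-1}{2}{2N}$ using Proposition~\ref{prop:hierarchy}. The only (cosmetic) deviation worth noting is that the paper divides by $\mathfrak{t}_1-\mathfrak{t}_2=-2\mathfrak{t}_2$ rather than $\tau_1-\tau_2$, which is globally bounded away from zero (since $\mathfrak{t}_2$ involves $\rho\geq1$) and makes the leading commutator cancellation with $\sigma_p(\mathcal{D})=\operatorname{diag}(\mathfrak{t}_1,\mathfrak{t}_2)+\sigma(Q)$ exact on all of $\operatorname{supp}(1-\chi)$, removing the need for the extra matching on $Z_\hyp(N)\cap Z_\pd(2N)$ that you flag at the end; the paper also defines $\mathcal{B}_2$ by direct subtraction, $\mathcal{B}_2=(D_t-\mathcal{D}+\mathcal{B}_1)N_1-N_1(D_t-\mathcal{D}+\mathcal{D}_1)$, avoiding the intermediate composition with $N_1^\sharp$.
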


\begin{proof} 
The proof is achieved by an argument similar to those in \cite{Coriasco,KR,Yagdjian}, relying on the properties of the parameter-dependent
calculus we are using. For the sake of completeness, we here provide some details.
Let $\mathcal{D}_1$ and $N^{(1)}$ be the pseudodifferential operators with matrix-valued symbols 
$$\sigma(\mathcal{D}_{1})(t,x,\xi)=\left(1-\chi\left(\frac{\Lambda(t)\norm{x}\norm{\xi}}{N\ln(\norm{x}\norm{\xi})}\right)\right)
%\textnormal{diag}[\mathcal{B}_{1}(t,x,\xi)],
\left(\begin{array}{cc} b_{1,11}(t,x,\xi) & 0 \\ 0 & b_{1,22}(t,x,\xi)\end{array}\right),
$$ 
and, respectively,
\begin{align*}
\sigma(N^{(1)})(t,x,\xi)&=\left(1-\chi\left(\frac{\Lambda(t)\norm{x}\norm{\xi}}{N\ln(\norm{x}\norm{\xi})}\right)\right)\left(\begin{array}{cc} 0 & \frac{b_{1,12}(t,x,\xi)}{\mathfrak{t}_{1}-\mathfrak{t}_{2}} \\ \frac{b_{1,21}(t,x,\xi)}{\mathfrak{t}_{2}-\mathfrak{t}_{1}} & 0\end{array}\right),
\end{align*}
where there appear the entries of the matrix-valued symbol
$$b_1=\sigma(\mathcal{B}_{1})=\left(\begin{array}{cc} b_{1,11} & b_{1,12} \\ b_{1,21} & b_{1,22}\end{array}\right).$$ 
Our considerations above show that $\sigma(\mathcal{D}_{1})\in \SGH{0}{0}{0}{1}{N}$, and also $\sigma(N^{(1)})\in \SGH{-1}{-1}{-1}{1}{N}$. 
Moreover, by their definition,
$$
\sigma(\mathcal{D}_{1})(t,x,\xi)=\sigma(N^{(1)})(t,x,\xi)=0\ \ \textnormal{ for } (t,x,\xi)\in Z_{\pd}(N).
$$  
Let $N_1=I+N^{(1)}$. Then, setting 
\begin{align*}
\mathcal{B}_{2}&=(D_{t}-\mathcal{D}+\mathcal{B}_{1})(I+N^{(1)})-(I+N^{(1)})(D_{t}-\mathcal{D}+\mathcal{D}_{1})\\
 &= D_{t} N^{(1)}+[N^{(1)},\mathcal{D}]+\mathcal{B}_1N^{(1)}-N^{(1)}\mathcal{D}_{1}+\mathcal{B}_1-\mathcal{D}_{1},
\end{align*}
it follows
\[(D_t-\mathcal D+\mathcal B_1)N_1=N_1(D_t-\mathcal D +\mathcal D_1+\mathcal B_2).\]
By a direct calculation, using Lemma \ref{symcalculus}, we find 
$$\sigma_p(\mathcal{B}_1(1-\chi)-\mathcal{D}_{1}+[N^{(1)},\mathcal{D}])\equiv 0;$$
which implies 
\[ \sigma_p(\mathcal{B}_2)=\sigma_p(\tilde{\mathcal{B}}_2+\chi \mathcal B_1 N_1),\]
where 
\[\tilde{\mathcal B}_2:=D_{t} N^{(1)}+(1-\chi)\mathcal{B}_1N^{(1)}-N^{(1)}\mathcal{D}_{1}.\]
In particular, $\sigma(\mathcal{\tilde{B}}_2)\equiv 0$ in $Z_\pd(N)$ and both the symbols
$$\sigma(D_{t}N^{(1)}) \text{ and } \sigma(\mathcal{B}N^{(1)}-N^{(1)}\mathcal{D}_{1})$$
belong to $S^\pd_{2N}\cap\SGH{-1}{-1}{-1}{2}{N}$, as a consequence of \eqref{eq:B1_bound_pd}. 

On the other hand, 
$ \sigma_p(\chi\mathcal{B}_1N_1)\equiv 0 $ in $Z_\hyp(N)$ and it belongs to $S^\pd_{2N}$ too.
That is, $\mathcal{B}_2$ satisfies the desired properties. 

Now, let us show that, for a sufficiently large $N$, the pseudodifferential operator $N_{1}$ is a elliptic,
with symbol belonging to $\SGH{0}{0}{0}{0}{N}$. By its definition, $\sigma(N_{1})\equiv 1$ in $Z_{\pd}(N)$ and $n_{1}(t,x,\xi)=\sigma(N_{1})(t,x,\xi)\in \SGH{-1}{-1}{-1}{1}{N}$.
Moreover,
\begin{align}
\nonumber |\sigma_p(N_{1})(t,x,\xi)|&\leq \frac{C}{\norm{x}\norm{\xi} \lambda(t)}\left(\frac{\lambda(t)}{\Lambda(t)}\left(\frac{1}{\ln \Lambda(t)}\right)\right)\\
\nonumber &\leq \frac{C(\ln (\norm{x}\norm{\xi}))}{\norm{x}\norm{\xi}\Lambda(t)}\\
\nonumber &\leq \frac{C}{N}\ \ \textnormal{ in }  Z_{\hyp}(N).
\end{align}
Consequently, a large $N$ yields $|\sigma_p(N_{1})(t,x,\xi)|\geq \frac{1}{2}$ in $[0,T]\times \mathbb{R}^{2d}$, using $\sigma(N_{1})\equiv I$ in $Z_{\pd}(N)$.
This gives, together with Lemma \ref{inv}, the existence of a parametrix $N_{1}^{\sharp}$ with $\sigma(N_{1}^{\sharp})\in \SGH{0}{0}{0}{0}{N}$.
\end{proof}

We conclude the section with a further steps of the diagonalization, localized in $Z_{\reg}(N)\subset Z_{\hyp}(N)$. Observe that, for a symbol $p\in \SGH{-p}{-p}{-p}{p+1}{N}$,
we can estimate 
%to obtain an improvment of the remainder term $P_{p+2}$. 
%This improvement of smoothness can be understood by the following integration 
\begin{align}
\nonumber \int_{t_{x,\xi}}^t \Big|p(\tau,x,\xi)\Big| d\tau&\leq  \int ^t_{t_{x,\xi}} \textstyle{\frac{C_p}{\norm{x}^{p}\norm{\xi}^{p}\lambda(\tau)^{p}}\Big(\frac{\lambda(\tau)}{\Lambda(\tau)} \ln \frac{1}{\Lambda(\tau)}\Big)^{p+1} d\tau }\\
\nonumber &\leq \frac{C_p (\ln \norm{x}\norm{\xi})^{(p+1)}}{(\norm{x}\norm{\xi} \Lambda(t_{x,\xi}'))^p} \\
\nonumber &= \frac{C_p}{(2N)^p} (\ln \norm{x}\norm{\xi})^{(p+1)-2p}
\end{align}
where $t_{x,\xi}$ is defined as in (\ref{txxi_equation}). 
In the oscillations subzone, corresponding to $\mathcal{D}+\mathcal{B}_1$, we achieved remainder of the type $\SGH{0}{0}{0}{0}{2N} + \SGH{-1}{-1}{-1}{2}{2N}$. We can
actually improve the diagonalization scheme modulo operators with symbols from 
\begin{equation}\label{eq:defHGN}
\begin{aligned}
\mathcal{HG}_{N}&=\SGP{2N}
\\
& \cap \Big(\SGH{0}{0}{0}{0}{2N} + \SGH{-1}{-1}{-1}{2}{2N}\Big) 
\\
&\cap\Big(\bigcap\limits_{p\geq 0} \SGO{-p}{-p}{-p}{p+1}{N}\Big).
\end{aligned}
\end{equation}
\begin{thm}\label{diag_3}
There exist matrix-valued pseudodifferential operators $N_{2}$, $\mathcal{D}_{2}$, $\mathcal{B}_{\infty}$ such that 
\begin{equation*}
%\label{N2_eqn}
(D_{t}-\mathcal{D}+\mathcal{D}_{1}+\mathcal{B}_{2})N_{2}=N_{2}(D_{t}-\mathcal{D}+\mathcal{D}_{2}+\mathcal{B}_{\infty}) 
\end{equation*}
holds modulo $C^{\infty}([0,T],\Op(S^{-\infty,-\infty}))$, with the elliptic symbol $\sigma(N_{2})\in \SGH{0}{0}{0}{0}{N}$ satisfying $\sigma(N_{2})=1$ in $Z_{\pd}(N)\cup Z_\osc(N)$, the diagonal matrix $\sigma(\mathcal{D}_{2})\in \SGO{0}{0}{0}{0}{N}+\SGO{-1}{-1}{-1}{2}{N}$ vanishing in $Z_{\pd}(N)\cup Z_{\osc}(N)$ and $\sigma(\mathcal{B}_{\infty})(t,x,\xi)\in \mathcal{HG}_{N}$; in particular, $\sigma(\mathcal{B}_{\infty})$ satisfies estimate \eqref{eq:B1_bound_pd} in $Z_\pd(2N)$.
\end{thm}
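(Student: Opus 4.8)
The plan is to finish the diagonalization by iterating, infinitely many times, the step of Proposition~\ref{diag_2}, now performed \emph{localized in the regular zone} $Z_\reg(N)\subset Z_\hyp(N)$, where the refined class $\SGO{m}{\mu}{\kappa}{\ell}{N}$ is available and where, by \eqref{regular_zone} together with Lemma~\ref{lem:log_lambda},
$$
\frac{1}{\Lambda(t)}\ln\frac{1}{\Lambda(t)}\lesssim\frac{\norm{x}\norm{\xi}}{N\ln(\norm{x}\norm{\xi})}\qquad\text{on }Z_\reg(N),
$$
which improves the bound $\Lambda(t)^{-1}\ln(\Lambda(t)^{-1})\lesssim\norm{x}\norm{\xi}$ valid on all of $Z_\hyp(N)$; this extra factor $(\ln\norm{x}\norm{\xi})^{-1}$ is what makes the iteration converge. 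First I would fix a cutoff $\widetilde\chi$ of the same type as $\chi$ but built from $t'_{x,\xi}$ of \eqref{regular_zone}, so that $\widetilde\chi\big(\Lambda(t)\norm{x}\norm{\xi}/(2N(\ln\norm{x}\norm{\xi})^2)\big)$ is supported in $Z_\reg(N)$, equals $1$ off $Z_\reg(2N)$, and vanishes on $Z_\pd(N)\cup Z_\osc(N)$. Then, starting from $\mathcal B_2$ as produced by Proposition~\ref{diag_2}, I would construct recursively, for $j\ge2$, a matrix $N^{(j)}$ whose off-diagonal entries are the off-diagonal entries of $\widetilde\chi\,\sigma(\mathcal B_j)$ divided by $\mathfrak t_i-\mathfrak t_k$ ($i\neq k$), a diagonal operator $\mathcal D_j$ equal to $\widetilde\chi$ times the diagonal part of $\mathcal B_j$, and a new off-diagonal remainder $\mathcal B_{j+1}$, such that, with $N_j=I+N^{(j)}$,
$$
(D_t-\mathcal D+\mathcal B_j)\,N_j=N_j\,\big(D_t-\mathcal D+\mathcal D_j+\mathcal B_{j+1}\big)
$$
modulo $C^\infty([0,T],\Op(S^{-\infty,-\infty}))$, exactly as in the proof of Proposition~\ref{diag_2}. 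Solvability of the homological equation for $N^{(j)}$ inside the symbol classes is guaranteed by the hyperbolicity bound \eqref{eq:tau_hyperbolicity} (equivalently $\det m_0\gtrsim1$), which gives $|\mathfrak t_1-\mathfrak t_2|=2|\mathfrak t_2|\gtrsim\lambda(t)\norm{x}\norm{\xi}$ on $Z_\hyp(N)$; by construction $\sigma(N^{(j)})$, $\sigma(\mathcal D_j)$, $\sigma(\mathcal B_{j+1})$ all vanish on $Z_\pd(N)\cup Z_\osc(N)$.

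The core of the proof is to show that this recursion lowers the remainder by exactly one level in the hierarchy $\{\SGO{-p}{-p}{-p}{p+1}{N}\}_{p\ge1}$: from $\mathcal B_2\in\SGO{-1}{-1}{-1}{2}{N}$ one gets $\mathcal B_j\in\SGO{-(j-1)}{-(j-1)}{-(j-1)}{j}{N}$ for all $j\ge2$. Indeed, dividing an entry of $\mathcal B_j$ by the zone-elliptic factor $\mathfrak t_1-\mathfrak t_2\in\SGH{1}{1}{1}{0}{N}$ lowers all indices by one unit (Lemmas~\ref{symcalculus} and~\ref{inv}), while $D_tN^{(j)}$ carries, by Proposition~\ref{prop:hierarchy}(4), an extra factor $\lambda(t)\Lambda(t)^{-1}\ln(\Lambda(t)^{-1})$, which on $Z_\reg(N)$ becomes, by the displayed inequality, precisely the $(\ln\norm{x}\norm{\xi})^{-1}$ improvement recorded by passing from the $p$-th to the $(p+1)$-th class of the hierarchy; the mixed terms $\mathcal B_jN^{(j)}$, $N^{(j)}\mathcal D_j$ and the lower-order remainders from the calculus are of strictly higher order and get absorbed. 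The same estimate is the one that, via the computation preceding the statement, yields the integrated bound on $Z_\reg(N)$ later needed for the convergence of the Neumann series for the fundamental solution.

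It then remains to sum up. By the regular-zone analogue of Lemma~\ref{lem:asymptotic} (Remark~\ref{rem:asymptexpbis}) one takes $\mathcal B_\infty$ to be the residual symbol, which on $Z_\reg(N)$ lies in $\bigcap_{p\ge1}\SGO{-p}{-p}{-p}{p+1}{N}$ (here the extra $\norm{x}^{-1}\norm{\xi}^{-1}$ decay, combined with the $Z_\reg(N)$-inequality, makes the $p\ge1$ classes sit inside the $p=0$ one), sets $\mathcal D_2\sim\sum_{j\ge2}\mathcal D_j$, and lets $\sigma(N_2)$ be the asymptotic sum of the finite products of the $\sigma(N^{(j)})$'s. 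Then $\sigma(N_2)\equiv I$ and $\sigma(\mathcal D_2)=0$ on $Z_\pd(N)\cup Z_\osc(N)$, $\sigma(N_2)-I$ is of negative order there, and $(D_t-\mathcal D+\mathcal D_1+\mathcal B_2)N_2=N_2(D_t-\mathcal D+\mathcal D_2+\mathcal B_\infty)$ holds modulo regularizing operators. Off $Z_\reg(N)$ the whole construction is trivial, so there $\mathcal B_\infty$ coincides with $\mathcal D_1+\mathcal B_2$, which, by Proposition~\ref{diag_2} and the properties of $\mathcal D_1$, is of type $\SGP{2N}\cap\big(\SGH{0}{0}{0}{0}{2N}+\SGH{-1}{-1}{-1}{2}{2N}\big)$ and satisfies \eqref{eq:B1_bound_pd} on $Z_\pd(2N)$; since $Z_\reg(N)\subset Z_\hyp(2N)$, on $Z_\reg(N)$ the membership $\mathcal B_\infty\in\SGO{-1}{-1}{-1}{2}{N}$ is contained in the restriction of that class, and $\mathcal D_2$ likewise inherits the $\SGO{0}{0}{0}{0}{N}+\SGO{-1}{-1}{-1}{2}{N}$ structure, so that altogether $\sigma(\mathcal B_\infty)\in\mathcal{HG}_N$. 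Finally, $\sigma(N_2)$ is a perturbation of the identity which, exactly as in the proof of Proposition~\ref{diag_2}, is of size $O(1/N)$ on $Z_\hyp(N)$; for $N$ large enough $|\det\sigma(N_2)|\ge1/2$ on $[0,T]\times\R^{2d}$, so Lemma~\ref{inv} furnishes the parametrix $N_2^\sharp$ with $\sigma(N_2^\sharp)\in\SGH{0}{0}{0}{0}{N}$.

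The main obstacle is the stability of the regular-zone hierarchy under the recursion — that solving the homological equation for $N^{(j)}$ and then forming $D_tN^{(j)}$ and the commutators genuinely lands in the next class of the scale. This is exactly where the inequality defining $Z_\reg(N)$, with its $(\ln\norm{x}\norm{\xi})^2$ in place of the $\ln\norm{x}\norm{\xi}$ of $Z_\hyp(N)$, is indispensable, since it produces the per-step logarithmic gain; without it the iteration would stall. A secondary, purely bookkeeping difficulty is to verify that the $\SGP{2N}$ and $\SGH{0}{0}{0}{0}{2N}+\SGH{-1}{-1}{-1}{2}{2N}$ memberships of $\sigma(\mathcal B_\infty)$ survive on the overlap region $Z_\pd(2N)\cap Z_\hyp(N)$, which follows from \eqref{eq:B1_bound_pd} and the support properties of the cutoffs.
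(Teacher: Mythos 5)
The paper omits this proof (``We omit the details of the proof, since it follows similar lines to that of Theorem \ref{diag_2}''), so there is no line-by-line comparison to be made; but your proposal does execute exactly the strategy the paper clearly intends. The core technical point — dividing the off-diagonal entries of $\mathcal B_j$ by $\mathfrak t_1-\mathfrak t_2$ lowers the indices $(m,\mu,\kappa)$ by one unit, applying $D_t$ raises $\ell$ by one, and on $Z_\reg(N)$ each such step produces an extra factor
$$
\frac{\ln(1/\Lambda(t))}{\norm{x}\norm{\xi}\,\Lambda(t)}\lesssim\frac{1}{N\ln(\norm{x}\norm{\xi})},
$$
which is a genuine per-step logarithmic gain precisely because the defining equation \eqref{regular_zone} of $Z_\reg$ has $(\ln\norm{x}\norm{\xi})^2$ rather than the single logarithm of \eqref{txxi_equation} — is identified correctly and is the heart of the matter. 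Asymptotic summation via the regular-zone analogue of Lemma~\ref{lem:asymptotic} (Remark~\ref{rem:asymptexpbis}) then produces $N_2$, $\mathcal D_2$, $\mathcal B_\infty$; ellipticity of $N_2$ for $N$ large follows as in the proof of Proposition~\ref{diag_2}. You also correctly verify, via Proposition~\ref{prop:hierarchy}(3), that the cross terms $\mathcal B_jN^{(j)}$, $N^{(j)}\mathcal D_j$ and the symbol-calculus remainders fall even deeper into the scale $\SGO{-p}{-p}{-p}{p+1}{N}$, so they do not obstruct the recursion.

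Two small imprecisions worth fixing. First, $\mathcal B_{j+1}$ is not an ``off-diagonal remainder'': just as $\mathcal B_2$ in Proposition~\ref{diag_2}, it has both diagonal and off-diagonal entries (e.g.\ $\mathcal B_jN^{(j)}$ contributes to the diagonal), and the iteration must peel off the diagonal part into $\mathcal D_{j+1}$ at each step; this is harmless but should be stated accurately. Second, and more substantively, the identity to be proved has $\mathcal D_1$ on the \emph{left}, so your recursion should be run for the operator $D_t-\mathcal D+\mathcal D_1+\mathcal B_2$, not $D_t-\mathcal D+\mathcal B_2$. Since $\mathcal D_1$ is already diagonal, adding it does not change the homological equation, but it does generate commutator terms $[\mathcal D_1,N^{(j)}]$ in the remainder; you should say explicitly that these are absorbed because $\sigma(\mathcal D_1)\in\SGH{0}{0}{0}{1}{2N}$ is of lower order than $\mathcal D$, so the commutator with $N^{(j)}\in\SGO{-j}{-j}{-j}{j}{N}$ lands again one level deeper in the hierarchy. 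Your description of the cutoff $\widetilde\chi$ (``supported in $Z_\reg(N)$, equals $1$ off $Z_\reg(2N)$'') is also internally inconsistent as written; what is wanted is $1-\widetilde\chi$ supported in $Z_\reg(N)$ and identically $1$ for $\Lambda(t)\norm{x}\norm{\xi}\ge 4N(\ln\norm{x}\norm{\xi})^2$. None of these affects the validity of the core argument.
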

We omit the details of the proof, since it follows similar lines to that of Theorem \ref{diag_2}.

\section{The parametrix of the diagonalized problem}\label{sec:param}
\setcounter{equation}{0}
%%%%%%%%%%%%
%
% param
%
% S 230716
%

As a consequence of Proposition \ref{diag_2} and Theorem \ref{diag_3},  in order to construct a fundamental solution $E=E(t,s)$ solving (\ref{fundamental_solution}) it is sufficient to study the system  
\begin{equation*}
%\label{diagonalized_eq}
D_tE - {\mathcal D}E + \mathcal{D}_{2}E + \mathcal{B}_{\infty}E + R_\infty E=0,\quad E(s,s)=I\;.
\end{equation*}
Since it is enough, for our aims, to obtain $E$ modulo smooth families of regularizing operators, we will ignore the term $R_\infty E$.
% (to include this term one can use standard considerations using the regularizing property of $\si(R_\infty)$). 
As customary in this approach, we construct the parametrix in two steps, corresponding to the diagonal terms ($D_{t}-\mathcal{D}+\mathcal{D}_{2}$) and to the non-diagonal term 
$\mathcal{B}_{\infty}$. 

Firstly, we construct the parametrix $E_{1}$ of  
$$
D_{t}E_{1}-\mathcal{D}E_{1}+\mathcal{D}_{2}E_{1}\sim 0,\ E_{1}(s,s)\sim I.
$$  
As it turns out, the parametrix $E_{1}$ is a diagonal Fourier Integral Operator. The main difficulty here is in  determining the inhomogeneous phase function of SG type. Further, to determine the amplitude of $E_{1}$, we recognize that the terms of the expression $-\mathcal{D}+\mathcal{D}_{2}$ have symbols in different classes, i.e.,  
$$
\sigma(\mathcal{D})\in \SGs_N^\pd\cap\SGH{1}{1}{1}{0}{N}
$$
and
$$
\sigma(\mathcal{D}_{2})\in \SGs_N^\pd\cap\left(\SGO{0}{0}{0}{0}{N}+\SGO{-1}{-1}{-1}{2}{N}\right).
$$
In view of this, we determine the phase function based on the top-order term $-\mathcal{D}$ and determine the amplitude using the whole term $-\mathcal{D}+\mathcal{D}_{2}$.

Secondly, we construct the parametrix $Q(t,s)$ corresponding to the non-diagonal part such that $E=E_{1}Q$. As we do not have a diagonal structure in this case, we cannot obtain the parametrix as a diagonal Fourier integral operator. Nevertheless, we can take advantage of the fact that $\si(\mathcal{B}_{\infty}) \in \mathcal{HG}_{N}$ and observe that the parametrix is indeed a pseudodifferential operator. 

\subsection{Construction of the phase functions}\label{subs:HamFlow}
Let us denote by $\vartheta = \vartheta(t,x,\xi)$ the real part of one of the functions $\mathfrak{t}_k,\; k=1,2$. We consider the Hamiltonian flow
$(q,p) = (q,p) (t,s,y,\eta) =  \psi_{s,t}(y,\eta)$, defined as the solution to 
\begin{equation}
\label{eq:Hamiltonian} 
\begin{aligned}
&& \frac{dq}{dt} = \nabla_\xi \vartheta(t,q,p),\;\;
q(s,s,y,\eta)=y,\\ && \frac{dp}{dt} = -\nabla_x \vartheta(t,q,p),\;\; p(s,s,y,\eta)=\eta.
\end{aligned}
\end{equation}
For convenience, in the sequel we will sometimes denote $q(t,s,y,\eta)$ by $q(t,s)$ and, similarly, $p(t,s,y,\eta)$  by $p(t,s)$.  Following the approach used in \cite{Yagdjian},
we prove the next result.

\begin{lem}\label{hamiltonian_flow1}
There  exists $T_{0}\in (0,T]$ such that the solution $(q(t,s,y,\eta),$ $p(t,s,y,\eta))$ to \eqref{eq:Hamiltonian} exists uniquely on $[0,T_0]^2\times \R^{d}_{y}\times \R^d_\eta$.
Moreover,
\[ \partial_t^k\partial_s^l\partial_y^\alpha\partial_{\eta}^\beta (q(t,s,y,\eta), p(t,s,y,\eta))\in C([0,T_0]^2\times \R^d_y\times \R^d_\eta).\]  
In particular, it holds
\begin{equation}
\label{eq:p,q_representation}
\begin{aligned}
p(t,s)&=\eta+\int_{s}^{t}\nabla_{x}\vartheta(\tau,q(\tau,s),p(\tau,s))d\tau, \\ q(t,s)&=y-\int_{s}^{t}\nabla_{\xi}\vartheta(\tau,q(\tau,s),p(\tau,s))d\tau,
\end{aligned}
\end{equation}
for all $(s,t)\in [0,T_0]^2$ and $(y,\eta)\in \R^{2d}$.
\end{lem}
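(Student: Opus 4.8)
The plan is to regard \eqref{eq:Hamiltonian} as an ordinary differential equation for $(q,p)$ whose right-hand side is smooth in the spatial variables, continuous in $t$ up to $t=0$, and of at most linear growth, and then to invoke the classical Cauchy--Lipschitz theorem together with a Gronwall a priori bound. First I would record the relevant properties of the vector field. Writing $\vartheta=\mathrm{Re}\,\mathfrak{t}_k$ with $\mathfrak{t}_k\in\SGP{2N}\cap\SGH{1}{1}{1}{0}{N}$: for each fixed $(x,\xi)\in\R^{2d}$ one has $\norm{x}\norm{\xi}\ge e$, so $\Lambda(t)\norm{x}\norm{\xi}/(N\ln\norm{x}\norm{\xi})\to0$ as $t\to0$, hence the factor $1-\chi(\Lambda(t)\norm{x}\norm{\xi}/(N\ln\norm{x}\norm{\xi}))$ vanishes for $t$ near $0$; there $\mathfrak{t}_k$ agrees with $d_k\rho$, which is $C^\infty$ up to $t=0$ since $\rho^2$ is (recall $\lambda^2/\Lambda\in C^\infty([0,T])$) and $\rho\ge1$, while on $(0,T]\times\R^{2d}$ the symbol $\mathfrak{t}_k$ is smooth by construction ($a$ being bounded below on $Z_\hyp$, so $\sqrt{a}$ is smooth there). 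Thus $\vartheta\in C([0,T]\times\R^{2d})\cap C^\infty((0,T]\times\R^{2d})$ with $\partial_t^j\vartheta\in C([0,T],S^{1+j,1+j})$ for all $j$, and, combining the zone estimates of $\SGP{2N}$ and $\SGH{1}{1}{1}{0}{N}$ with the boundedness of $\lambda$ on $[0,T]$, $\vartheta$ obeys $SG$-bounds of order $(1,1)$ uniformly in $t$; in particular
$$
|\nabla_\xi\vartheta(t,x,\xi)|+|\nabla_x\vartheta(t,x,\xi)|\le C(\norm{x}+\norm{\xi}),
$$
and all higher $(x,\xi)$-derivatives of $\nabla_x\vartheta,\nabla_\xi\vartheta$ are bounded, uniformly in $t\in[0,T]$.

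Next I would apply Cauchy--Lipschitz to the system: the map $(t,(q,p))\mapsto(\nabla_\xi\vartheta(t,q,p),-\nabla_x\vartheta(t,q,p))$ is continuous in $t$ and $C^\infty$ (hence locally Lipschitz) in $(q,p)$, so each datum $(s,y,\eta)\in[0,T]\times\R^{2d}$ gives a unique maximal solution. Testing with $(q,p)$ and using the linear bound above, $\tfrac{d}{dt}(|q|^2+|p|^2)\lesssim 1+|q|^2+|p|^2$, so Gronwall (applied for $t\gtrless s$ alike) yields $|q(t,s,y,\eta)|^2+|p(t,s,y,\eta)|^2\le C_T(1+|y|^2+|\eta|^2)$ on the maximal interval; no finite-time blow-up can occur, so the solution in fact exists on all of $[0,T]$ and one may take $T_0=T$ (the notation $T_0$ is retained for later use). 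Integrating the two equations from $s$ to $t$ and using $(q,p)(s,s)=(y,\eta)$ gives \eqref{eq:p,q_representation}.

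For the claimed joint regularity I would invoke the standard theorems on smooth dependence of solutions of ODEs on initial data and parameters. Since the vector field is $C^\infty$ in $(q,p)$ with derivatives locally bounded uniformly in $t$, the flow $(s,y,\eta)\mapsto(q(\cdot,s,y,\eta),p(\cdot,s,y,\eta))$ is $C^\infty$ in $(y,\eta)$; the derivatives $\partial_y^\alpha\partial_\eta^\beta(q,p)$ solve the successively differentiated variational equations, i.e. linear ODEs whose coefficients are derivatives of $\vartheta$ composed with the flow, hence continuous in all variables. The $s$-derivatives are governed by the same linearised equations, with data obtained by differentiating $(q,p)(s,s)=(y,\eta)$, namely $\partial_s q|_{t=s}=-\nabla_\xi\vartheta(s,y,\eta)$, $\partial_s p|_{t=s}=\nabla_x\vartheta(s,y,\eta)$; and the $t$-derivatives are read off the equation itself, $\partial_t(q,p)=(\nabla_\xi\vartheta,-\nabla_x\vartheta)(t,q,p)$. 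By induction, using the continuity up to $t=0$ of $\partial_t^j\vartheta$ and its spatial derivatives, every mixed derivative $\partial_t^k\partial_s^l\partial_y^\alpha\partial_\eta^\beta(q,p)$ is continuous on $[0,T_0]^2\times\R^d_y\times\R^d_\eta$.

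I expect the genuinely delicate point to be not the ODE theory — which is routine once the vector field is understood — but establishing, uniformly up to $t=0$, the two structural facts it relies on: the $C^\infty$-regularity and the $SG$-type (in particular at most linear) growth in $(x,\xi)$ of $\vartheta$. Both hinge on the construction of $\mathfrak{t}_k$, which interpolates via the cutoff $\chi$ between the globally smooth $\rho$ near $t=0$ and $\tau_k=d_k\sqrt{a}$ on $Z_\hyp$ (where $a$ is bounded below and $\sqrt{a}$ smooth); hence most of the effort is careful bookkeeping with the symbol estimates for $\mathfrak{t}_k$ and $\rho$ proved above, rather than with \eqref{eq:Hamiltonian} itself.
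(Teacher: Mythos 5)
Your proof is correct, but it takes a genuinely different route from the paper's. Where the paper handles the non‑triviality of a \emph{uniform} existence interval by rescaling: setting (in essence) $z=(q/|y|,\,p/|\eta|)$, so that both the rescaled vector field and its Jacobian become uniformly bounded on a fixed neighbourhood of the unit vectors, and then invoking the quantitative Picard--Lindel\"of time $T_0\le a/M_1$, which is independent of $(y,\eta)$ — you instead prove an a priori Gronwall bound $|q|^2+|p|^2\le C_T(1+|y|^2+|\eta|^2)$ directly from the linear $SG$-growth of $\nabla\vartheta$, and conclude that the maximal solution cannot leave compact sets in finite time, hence exists on all of $[0,T]$. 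Your argument is more elementary, avoids the rescaling entirely, and gives the stronger conclusion $T_0=T$; the paper's rescaling buys an explicit small $T_0$ together with the quantitative fact that the flow stays $O(1)$-close to the initial datum in the normalized variables, which is in the spirit of the estimates that Lemma~\ref{lem:estimates_p,q} later establishes via a separate energy/Gronwall argument anyway. Both are legitimate; your route is arguably the cleaner one for this particular lemma.

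One small but genuine imprecision: the claim that \emph{all} higher $(x,\xi)$-derivatives of $\nabla_x\vartheta,\nabla_\xi\vartheta$ are bounded uniformly in $t$ is not right. From $\vartheta\in L_\infty([0,T],S^{1,1})$ one gets that the diagonal blocks $\partial_q\nabla_\xi\vartheta$ and $\partial_p\nabla_x\vartheta$ lie in $S^{0,0}$ and hence are bounded, but the off‑diagonal blocks satisfy $\partial_p\nabla_\xi\vartheta\in S^{1,-1}$ and $\partial_q\nabla_x\vartheta\in S^{-1,1}$, i.e.\ they grow like $\norm{x}/\norm{\xi}$ and $\norm{\xi}/\norm{x}$ respectively and are not globally bounded. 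This does not damage your argument — the Gronwall a priori bound uses only the first-order linear growth of the vector field, and the smooth dependence theorem only needs the derivatives to be locally bounded, uniformly in $t$, which they are — but the statement as written overclaims and should be weakened to ``locally bounded, uniformly in $t$.'' A second, minor caveat shared with the paper: the $t$-regularity hypothesis on $\vartheta$ available from the lemma on $\mathfrak{t}_k$ is $\partial_t^j\mathfrak{t}_k\in L_\infty([0,T],S^{1+j,1+j})$ rather than $C([0,T],\,\cdot\,)$, so the inductive argument for continuity of the mixed $\partial_t^k\partial_s^l\partial_y^\alpha\partial_\eta^\beta(q,p)$ up to $t=0$ implicitly uses the stronger continuity of $\partial_t^j\vartheta$, which indeed holds here (since near $t=0$ one has $\vartheta=d_k\rho$ and $\rho$ is $C^\infty$ up to $t=0$, while on $(0,T]$ everything is smooth) but deserves a word.
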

\begin{proof}
Let us define 
\[f(t,q,p)=(\nabla_{\xi}\theta(t,q,p),-\nabla_{x}\theta(t,q,p)).\]
Since $\theta\in C([0,T],S^{1,1})\subset C ([0,T],C^\infty(\R^{2d})),$ we get that $f$ is continuous together with its partial derivatives $\partial_{p_i}f$ and $\partial_{q_i}f$ in $[0,T]\times \R^{2d}$. As a consequence (see \cite[Theorem 15]{Pontriagin}), for all $s\geq 0$ and $(y,\eta)\in \R^{2d}$ there exists $r_s>0$ and $\sigma_s>0$, depending on $s, y$ and $\eta$,
such that the solution $(p(t,\tau,\tilde{y},\tilde{\eta}),q(t,\tau,\tilde{y},\tilde{\eta}))$ to \eqref{eq:Hamiltonian} with initial condition $\tilde{y}$ and $\tilde{\eta}$ is defined and continuous with respect to the variables $t,\tau, \tilde{y},\tilde{\eta}$ for 
\[|t-\tau|<r_s, \quad |\tau-s|<\sigma_s, \quad \|(\tilde{y},\tilde{\eta})-(y,\eta)\|<\sigma.
\]
The existence of a common interval of definition $[0,T_0]$, independent of $y$ and $\eta$, is not trivial. 

Let $(y,\eta)\in \R^{2d}$, with $|y|>M$ and $|\eta|>L$. We denote 
\begin{equation*}
\begin{aligned}
&z:=(z_1, z_2):= (p/|y|, q/|\eta|);\\
&f:=(f_1(t,z_1,z_2), f_2(t,z_1,z_2))\\& \hspace{6pt}:=(|y|^{-1}\nabla_\xi \theta(t, |y|z_1, |\eta|z_2), -|\eta|^{-1}\nabla_{x}\theta(t,|y|z_1,|\eta|z_2)).
\end{aligned}
\end{equation*}
Then, $z$ satisfies the equation 
\begin{equation}
\label{eq:Hamiltonian_existence}
\frac{dz}{dt}=f(t,z), \quad z|_{t=s}=(\omega,\zeta)\in \R^{2d}, \; |\omega|=|\zeta|=1.
\end{equation}
It remains to prove that for all $|y|>M$ and $|\eta|>L$ there exists a common domain of existence for $z(t,s,\omega, \zeta, y,\eta)$. We consider 
\begin{align*}
\Pi&:=\{(t,z)\in [0,T]\times \R^{2d}:\; |z_1-\omega|+|z_2-\zeta|\leq a^2\leq 1/4\},\\
\Pi_r&:=\{(s,t,z)\in [0,T]^2\times \R^{2d}:\; t+s\leq r, (t,z)\in \Pi \}.
\end{align*}
It is obvious that there exist $M_1$ and $K$ positive constants such that
\[ |f|\leq M_1, \quad \Big|\frac{\partial f_i}{\partial z_j}\Big|\leq K, \quad i,j=1,2,\]
uniformly in $\Pi$ with respect to $|y|$ and $|\eta|$. As a consequence (see \cite{Pontriagin}, Theorem 15 and formulae (19), (22) in Section 21), the solution to \eqref{eq:Hamiltonian_existence} exists and is continuous in $\Pi_r$ if $(s,t)\in [0,T_0]^2$ with
\[T_0\leq r\leq \frac{a}{M_1}, \quad r\leq \frac{k}{4 d^2 K}, \text{ for some } k<1.\]
In particular, this solution depends smoothly on the parameters $y$ and $\eta$.
%Moreover, since $\vartheta(t,x,\xi)\in \SGP{2N}\cap \SGH{1}{1}{1}{0}{N}$ the function $f$ satisfies the a-priori estimate
% %
%\begin{align*}
%\|f(t,q,p)\|&=\sqrt{|\nabla_{\xi}\theta(t,q,p)|^2+|\nabla_x\theta(t,q,p)|^2}\\
%%
%&\lesssim \sqrt{\<q\>^2+\<p\>^2}|\lesssim \|(q,p)\|.
%\end{align*}
%%
%Thus, for all $s\geq 0$ and $(y,\eta)\in \R^{2d}$ the solution to \eqref{eq:Hamiltonian} exists in all $[0,T]$.....
%%
\end{proof}
Let us describe the behavior of the solution $(q(t,s,y,\eta),p(t,s,y,\eta))$. To this end we introduce an auxiliary point $\tilde t_{x,\xi}$ such that 
\[ \Lambda(\tilde t_{x,\xi})\<x\>\<\xi\>=N_1 \ln(\<x\>\<\xi\>),\]
where $N_1<N$. Clearly, it holds $\tilde t_{x,\xi}< t_{x,\xi}$.
\begin{lem}\label{lem:hamiltonian_path}
For $(s,y,\eta)\in Z_{\hyp}(N)$ there exists $N_{1}<N$ such that  $(t,q(t,s),p(t,s))\in Z_{\hyp}(N_{1})$ for all $t\in [s,T_0]$, taking $T_0>s$ sufficiently small.
\end{lem}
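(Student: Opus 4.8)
The plan is to track the quantity $\Lambda(t)\langle q(t,s)\rangle\langle p(t,s)\rangle$ along the Hamiltonian flow and show it cannot leave the region defining $Z_{\hyp}(N_1)$ if we start inside $Z_{\hyp}(N)$ and $T_0-s$ is small. First I would fix $(s,y,\eta)\in Z_{\hyp}(N)$, so that $\Lambda(s)\langle y\rangle\langle\eta\rangle \ge N\ln(\langle y\rangle\langle\eta\rangle)$ by the definition of the hyperbolic zone (and \eqref{txxi_equation}). By Lemma \ref{hamiltonian_flow1} the flow exists on $[0,T_0]^2\times\R^{2d}$, and by the representation \eqref{eq:p,q_representation} together with the simple-phase estimates $\norm{\nabla_\xi\vartheta}\asymp\langle q\rangle$, $\norm{\nabla_x\vartheta}\asymp\langle p\rangle$ (recall $\vartheta=\operatorname{Re}\mathfrak t_k$, and from Lemma \ref{lem:h}/the properties of $\mathfrak t_k$ in the hyperbolic zone we have $\vartheta\asymp h\asymp\lambda(t)\langle q\rangle\langle p\rangle$, so $|\nabla_\xi\vartheta|\lesssim\lambda(t)\langle q\rangle\langle p\rangle\cdot\langle p\rangle^{-1}=\lambda(t)\langle q\rangle$ and similarly for $\nabla_x\vartheta$), I would derive the differential inequality
\[
\Big|\frac{d}{dt}\langle q(t,s)\rangle\langle p(t,s)\rangle\Big|\le C\,\lambda(t)\,\langle q(t,s)\rangle\langle p(t,s)\rangle,
\]
which by Gronwall gives
\[
e^{-C(\Lambda(t)-\Lambda(s))}\,\langle y\rangle\langle\eta\rangle \le \langle q(t,s)\rangle\langle p(t,s)\rangle \le e^{C(\Lambda(t)-\Lambda(s))}\,\langle y\rangle\langle\eta\rangle
\]
for $t\in[s,T_0]$. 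Since $\Lambda$ is continuous and increasing with $\Lambda(0)=0$, the factor $e^{C(\Lambda(t)-\Lambda(s))}$ is as close to $1$ as we wish by shrinking $T_0-s$; in particular $\langle q(t,s)\rangle\langle p(t,s)\rangle\asymp\langle y\rangle\langle\eta\rangle$ on that interval.

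Next I would combine this with the monotonicity of $\Lambda$. For $t\ge s$ we have $\Lambda(t)\ge\Lambda(s)$, hence
\[
\Lambda(t)\,\langle q(t,s)\rangle\langle p(t,s)\rangle \ge \Lambda(s)\,e^{-C(\Lambda(t)-\Lambda(s))}\langle y\rangle\langle\eta\rangle \ge e^{-C(\Lambda(T_0)-\Lambda(s))}\,N\ln(\langle y\rangle\langle\eta\rangle).
\]
Because $\ln(\langle q(t,s)\rangle\langle p(t,s)\rangle)\le\ln(\langle y\rangle\langle\eta\rangle)+C(\Lambda(t)-\Lambda(s))\le\ln(\langle y\rangle\langle\eta\rangle)+o(1)$, and (taking $M$ large) $\ln(\langle y\rangle\langle\eta\rangle)$ is bounded below by a positive constant, we get
\[
\Lambda(t)\,\langle q(t,s)\rangle\langle p(t,s)\rangle \ge N_1\,\ln(\langle q(t,s)\rangle\langle p(t,s)\rangle)
\]
for a suitable $N_1<N$, provided $T_0-s$ is small enough that $e^{-C(\Lambda(T_0)-\Lambda(s))}$ times the ratio $\ln(\langle y\rangle\langle\eta\rangle)/\ln(\langle q\rangle\langle p\rangle)$ stays above $N_1/N$. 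Using again that $\zeta(\sigma)=\sigma/\ln\sigma$ is increasing on $[e,\infty)$ (as in Remark \ref{rem:log_lambda}), this inequality is exactly the statement $(t,q(t,s),p(t,s))\in Z_{\hyp}(N_1)$. One also has to handle the low-frequency region $|q|+|p|\le M$ separately, but there $\langle q\rangle\langle p\rangle$ is bounded and the cutoff structure makes the zone membership automatic after possibly enlarging constants; alternatively one notes that $Z_{\hyp}$ is only relevant for $|x|+|\xi|\ge M$.

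I expect the main obstacle to be the uniformity in $(y,\eta)$: the constant $C$ in the Gronwall estimate and the smallness of $T_0$ must be chosen independently of the (possibly large) initial data, which is precisely where the $\SG$-structure — the bounds $|\nabla_\xi\vartheta|\lesssim\lambda(t)\langle q\rangle$, $|\nabla_x\vartheta|\lesssim\lambda(t)\langle p\rangle$ with constants uniform in $x,\xi$ — is essential, and where the rescaled variables $z=(p/|y|,q/|\eta|)$ from the proof of Lemma \ref{hamiltonian_flow1} may be needed to make the argument rigorous. A secondary subtlety is that the auxiliary time $\tilde t_{x,\xi}$ introduced just before the statement (with $\Lambda(\tilde t_{x,\xi})\langle x\rangle\langle\xi\rangle=N_1\ln(\langle x\rangle\langle\xi\rangle)$, $N_1<N$) should be used to phrase the conclusion cleanly: showing $t\ge\tilde t_{q(t,s),p(t,s)}$ is equivalent to the displayed inequality, and this is the form in which the lemma will be applied in the parametrix construction.
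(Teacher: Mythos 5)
Your argument is essentially the same as the paper's: derive the gradient bounds $|\nabla_x\vartheta|\lesssim\lambda(t)\langle\xi\rangle$, $|\nabla_\xi\vartheta|\lesssim\lambda(t)\langle x\rangle$ from the $\SG$-structure, apply Gronwall along the flow to get $\langle q(t,s)\rangle\langle p(t,s)\rangle\asymp\langle y\rangle\langle\eta\rangle$ up to factors $e^{\pm C(\Lambda(t)-\Lambda(s))}$, then use the monotonicity of $\Lambda$ together with the smallness of $\Lambda(T_0)-\Lambda(s)$ to push the starting inequality $\Lambda(s)\langle y\rangle\langle\eta\rangle\ge N\ln(\langle y\rangle\langle\eta\rangle)$ to $\Lambda(t)\langle q\rangle\langle p\rangle\ge N_1\ln(\langle q\rangle\langle p\rangle)$ for some $N_1<N$. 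The only cosmetic differences are that you work with the product $\langle q\rangle\langle p\rangle$ directly while the paper runs Gronwall on $\langle q\rangle$ and $\langle p\rangle$ separately and then multiplies, and the paper packages the final step as a comparison of the ratios $\Lambda(t)\langle q\rangle\langle p\rangle/(N_1\ln\langle q\rangle\langle p\rangle)$ at times $t$ and $s$, using that $Ne^{-\omega}>N_1(1+\omega)$ for small $\omega$ — the same inequality you verify in expanded form.
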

\begin{proof}
To describe the behavior of the solution $(q(t,s,y,\eta),p(t,s,y,\eta))$ with respect to the zones we consider the properties of the function $\vartheta(t,x,\xi)$: 
for a sufficiently small $T_{0}$,
$$
|\nabla_{x}\vartheta(t,x,\xi)|\leq c\lambda(t)\norm{\xi}  \textnormal{  and  } |\nabla_{\xi}\vartheta(t,x,\xi)|\leq c\lambda(t)\norm{x},
$$ 
for all $(t,x,\xi)\in Z_\hyp(N)$. So, we obtain, for $0\leq s\leq t\leq T_{0}$,
$$
|\frac{d}{dt}\norm{p(t)}^{2}|\leq c\norm{p(t)}^{2}\lambda(t)\textnormal{  and   } |\frac{d}{dt}\norm{q(t)}^{2}|\leq c\norm{q(t)}^{2}\lambda(t).
$$
By applying Gronwall's lemma we get
\begin{equation*}
%\label{eq:p_gromwall_estimate}
\<p(s)\>e^{2c(\Lambda(s)-\Lambda(t))} \leq \<p(t)\>\leq \<p(s)\>e^{2c(\Lambda(t)-\Lambda(s))}
\end{equation*}
and
$$
\<q(s)\>e^{2c(\Lambda(s)-\Lambda(t))} \leq \<q(t)\>\leq \<q(s)\>e^{2c(\Lambda(t)-\Lambda(s))}.
$$

Note that the function $\Lambda(t)$ is a positive, strictly increasing, and continuous function. So, we can choose the interval $[0,T_{0}]$ sufficiently small, $t>s$ and $N_1<N$ appropriately so that we have
\begin{align}
\nonumber \frac{\Lambda(t)\norm{q(t)}\norm{p(t)}}{N_{1}\ln (\norm{q(t)}\norm{p(t)})}&\geq \frac{\Lambda(s)\norm{q(s)}\norm{p(s)}}{N\ln (\norm{q(s)}\norm{p(s)})}\frac{Ne^{c[\Lambda(s)-\Lambda(t)]}}{N_{1}(1+\frac{c[\Lambda(t)-\Lambda(s)]}{\ln (\norm{q(s)}\norm{p(s)})})}\\
\nonumber &\geq \frac{\Lambda(s)\norm{q(s)}\norm{p(s)}}{N\ln (\norm{q(s)}\norm{p(s)})}\frac{N}{N_1}\frac{e^{-c[\Lambda(t)-\Lambda(s)]}}{(1+c[\Lambda(t)-\Lambda(s)])}. 
\end{align}

Choosing $N_1<N$, there exists $\varepsilon>0$ such that $Ne^{-\omega}>N_1 (1+\omega)$ for all $\omega\in [0,\eps)$. Then, there exists $T_0$ sufficiently small such that,
for all $0\leq s\leq t\leq T_0$, it holds 
\begin{align}
\nonumber \frac{\Lambda(t)\norm{q(t)}\norm{p(t)}}{N_{1}\ln (\norm{q(t)}\norm{p(t)})}
\nonumber &\geq \frac{\Lambda(s)\norm{q(s)}\norm{p(s)}}{N\ln (\norm{q(s)}\norm{p(s)})}.
\end{align}
The desired result follows by the definition of $Z_{\hyp}(N)$.
\end{proof}
\begin{lem}
\label{lem:estimates_p,q}
Let $\alpha, \beta \in \N^d$ and $j, k\in \N$ with $j+k\in \{0,1\}$. For $T>0$ sufficiently small, there exist constants $C_{jk\alpha\beta},$ such that, 
for all $(y,\eta)\in \R^{2d}$, if $0\leq s,t \leq t_{y,\eta}\leq T$ it holds
\begin{equation}
\label{eq:p,q_estimate_s}
\begin{aligned}
&|D_t^jD_s^kD_y^\alpha D_\eta^\beta (q(t,s,y,\eta)-y)|\leq  C_{jk\alpha\beta}\<y\>^{\frac{1}{2}+\eps-|\alpha|}\<\eta\>^{-\frac{1}{2}+\eps-|\beta|}\\& \hspace{30pt}\times |\sqrt{\Lambda(t)}-\sqrt{\Lambda(s)}|^{1-j-k}\bigg(\frac{\lambda(t)}{\sqrt{\Lambda(t)}}\bigg)^j\bigg|\ln\bigg(\frac{\Lambda(t)}{\Lambda(s)}\bigg)\bigg|^k, \\
&
|D_t^jD_s^kD_y^\alpha D_\eta^\beta (p(t,s,y,\eta)-\eta)\leq C_{jk\alpha\beta}\<y\>^{-\frac{1}{2}+\eps-|\alpha|}\<\eta\>^{\frac{1}{2}+\eps-|\beta|}\\& \hspace{30pt}\times |\sqrt{\Lambda(t)}-\sqrt{\Lambda(s)}|^{1-j-k}\bigg(\frac{\lambda(t)}{\sqrt{\Lambda(t)}}\bigg)^j\bigg|\ln\bigg(\frac{\Lambda(t)}{\Lambda(s)}\bigg)\bigg|^k.
\end{aligned}
\end{equation}
Further, there exist constants $T_0, M_1$, $T_0\in(0,T]$, $M_1>M$ such that for any $j,k$ positive integers and $\alpha,\beta$ multi-indices, there exist constants $C_{jk\alpha\beta},$ such that for all $(y,\eta)\in \R^{2d}$, $|y|+|\eta|\geq M$, we have the following estimates:
\begin{itemize}
\item for $\tilde t_{y,\eta}\leq s\leq t \leq T_0$ or $0\leq s\leq \tilde t_{y,\eta}\leq t_{y,\eta}\leq t \leq T_0$ it holds 
\begin{equation}
\label{eq:p,q_estimate_hyp}
\begin{aligned}
&|D_t^jD_s^kD_y^\alpha D_\eta^\beta (q(t,s,y,\eta)-y)|\\& \hspace{20pt}\leq C_{jk\alpha\beta}\Lambda(t)\<y\>^{1-|\alpha|}\<\eta\>^{-|\beta|}\bigg(\frac{\lambda(t)}{\Lambda(t)}\ln\bigg(\frac{1}{\Lambda(t)}\bigg)\bigg)^j\bigg(\frac{\lambda(s)}{\Lambda(s)}\ln\bigg(\frac{1}{\Lambda(s)}\bigg)\bigg)^k, \\
&
|D_t^jD_s^k D_y^\alpha D_\eta^\beta  (p(t,s,y,\eta)-\eta)|\\& \hspace{20pt}\leq C_{jk\alpha\beta}\Lambda(t)\<y\>^{-|\alpha|}\<\eta\>^{1-|\beta|}\bigg(\frac{\lambda(t)}{\Lambda(t)}\ln\bigg(\frac{1}{\Lambda(t)}\bigg)\bigg)^j\bigg(\frac{\lambda(s)}{\Lambda(s)}\ln\bigg(\frac{1}{\Lambda(s)}\bigg)\bigg)^k.
\end{aligned}
\end{equation}
%\item for $\tilde t_{y,\eta}\leq t\leq s \leq T_0$
%%
%\begin{equation}
%\label{eq:p,q_estimate_hyp}
%\begin{aligned}
%&|D_t^jD_s^kD_y^\alpha D_\eta^\beta (q(t,s,y,\eta)-y)|\\& \hspace{50pt}\leq C_{j,k,\alpha,\beta}\Lambda(s)\<y\>^{1-|\alpha|}\<\eta\>^{-|\beta|}\bigg(\frac{\lambda(t)}{\Lambda(t)}\ln\bigg(\frac{1}{\Lambda(t)}\bigg)\bigg)^k\bigg(\frac{\lambda(s)}{\Lambda(s)}\ln\bigg(\frac{1}{\Lambda(s)}\bigg)\bigg)^j, \\
%&
%|D_t^jD_s^k D_y^\alpha D_\eta^\beta  (p(t,s,y,\eta)-\eta)|\\& \hspace{50pt}\leq C_{j,k,\alpha,\beta}\Lambda(s)\<y\>^{-|\alpha|}\<\eta\>^{1-|\beta|}\bigg(\frac{\lambda(t)}{\Lambda(t)}\ln\bigg(\frac{1}{\Lambda(t)}\bigg)\bigg)^k\bigg(\frac{\lambda(s)}{\Lambda(s)}\ln\bigg(\frac{1}{\Lambda(s)}\bigg)\bigg)^j. 
%\end{aligned}
%\end{equation}
\end{itemize}
\end{lem}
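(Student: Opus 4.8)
The plan is to follow the strategy of \cite[Section 2.1]{Yagdjian}, treating $Z_{\pd}(2N)$ and $Z_{\hyp}(N)$ separately and working throughout from the integral representation \eqref{eq:p,q_representation} of the flow, combined with the pointwise bounds for $\vartheta=\operatorname{Re}\mathfrak{t}_k$ that follow from Lemma \ref{lem:rho}, Lemma \ref{lem:h} and from the estimates a)--d) for $\mathfrak{t}_1,\mathfrak{t}_2$ in Section \ref{sec:diag} (and, in the hyperbolic zone, from \eqref{eq:tau_estimate}). First I would establish the a priori control of the weights along the trajectory: reasoning as in the proof of Lemma \ref{lem:hamiltonian_path}, i.e.\ applying Gronwall's inequality to $\tfrac{d}{dt}\langle q(t,s)\rangle^{2}$ and $\tfrac{d}{dt}\langle p(t,s)\rangle^{2}$ and using $|\nabla_\xi\vartheta(\tau,x,\xi)|\lesssim\tfrac{\lambda(\tau)}{\sqrt{\Lambda(\tau)}}\langle x\rangle^{\frac12+\eps}\langle\xi\rangle^{-\frac12+\eps}$ on $Z_{\pd}(2N)$, $|\nabla_\xi\vartheta(\tau,x,\xi)|\lesssim\lambda(\tau)\langle x\rangle$ on $Z_{\hyp}(N)$, together with the symmetric bounds for $\nabla_x\vartheta$, one obtains, for $T_0$ small, $\langle q(\tau,s)\rangle\asymp\langle y\rangle$ and $\langle p(\tau,s)\rangle\asymp\langle\eta\rangle$ uniformly for $\tau$ between $s$ and $t$ and for $(y,\eta)\in\R^{2d}$; combined with Lemma \ref{lem:hamiltonian_path}, this tells us, in each of the regimes listed in the statement, in which zone the integrand of \eqref{eq:p,q_representation} must be estimated along the whole path.

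Next I would dispatch the zeroth-order estimates and the $t$- and $s$-derivatives. For $j=k=0$, $\alpha=\beta=0$, one inserts the pointwise bounds for $\nabla_\xi\vartheta$ and $\nabla_x\vartheta$ into \eqref{eq:p,q_representation}, extracts the (now essentially constant) weights $\langle y\rangle,\langle\eta\rangle$ from the integral, and evaluates the elementary time integrals
$$
\int_s^t\lambda(\tau)\,d\tau=\Lambda(t)-\Lambda(s),\qquad \int_s^t\frac{\lambda(\tau)}{\sqrt{\Lambda(\tau)}}\,d\tau=2\bigl(\sqrt{\Lambda(t)}-\sqrt{\Lambda(s)}\bigr),\qquad \int_s^t\frac{\lambda(\tau)}{\Lambda(\tau)}\,d\tau=\ln\frac{\Lambda(t)}{\Lambda(s)},
$$
which, together with the elementary comparisons between $\lambda$ and $\Lambda$ coming from Section \ref{subs:lambda} and \eqref{eq:lambda'_control}, reproduce precisely the factors displayed in \eqref{eq:p,q_estimate_s} and \eqref{eq:p,q_estimate_hyp}. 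The $t$-derivative is read off directly from \eqref{eq:Hamiltonian}: $|D_t(q(t,s)-y)|=|\nabla_\xi\vartheta(t,q(t,s),p(t,s))|$, whose magnitude (using \eqref{eq:tau_estimate} for the extra $t$-derivatives when $j\ge1$ in the hyperbolic case) produces the factor $(\lambda(t)/\sqrt{\Lambda(t)})^{j}$, resp.\ $\bigl(\tfrac{\lambda(t)}{\Lambda(t)}\ln\tfrac1{\Lambda(t)}\bigr)^{j}$. The $s$-derivative I would get by differentiating \eqref{eq:p,q_representation} in $s$ — a boundary contribution $\nabla_\xi\vartheta(s,y,\eta)$ plus a Volterra remainder — or, equivalently, from the backward variational identity $\partial_s\psi_{s,t}=-(D\psi_{s,t})\,f(s,\cdot\,)$ with $f=(\nabla_\xi\vartheta,-\nabla_x\vartheta)$; the magnitude of this source term $\nabla\vartheta(s,y,\eta)$ feeds the $s$-factors, after absorbing the crude bounds into the (weaker) claimed ones by means of $\Lambda(s)\le\Lambda(t)$ and $\Lambda(T_0)<1$.

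The spatial derivatives I would then obtain by induction on $|\alpha|+|\beta|$, jointly with the order of the $t,s$ derivatives: differentiating \eqref{eq:p,q_representation} and applying the Fa\`a di Bruno formula yields, for the vector of top-order derivatives of $(q-y,p-\eta)$, a linear Volterra equation whose source is the $\int_s^t$ of the corresponding derivative of $\nabla\vartheta$ composed with $(q,p)$ — controlled by the symbol bounds and the previous step — and whose kernel, built from second derivatives of $\vartheta$, is $\lesssim\lambda(\tau)/\sqrt{\Lambda(\tau)}$ (resp.\ $\lesssim\lambda(\tau)$) times weight ratios which are $O(1)$; since $\int_s^t\lambda(\tau)/\sqrt{\Lambda(\tau)}\,d\tau\le 2\sqrt{\Lambda(T_0)}$ and $\int_s^t\lambda(\tau)\,d\tau\le\Lambda(T_0)$ can be made small, Gronwall's inequality closes the estimate with the same weights as the source — that is, exactly the right-hand sides of \eqref{eq:p,q_estimate_s} and \eqref{eq:p,q_estimate_hyp} — while the lower-order terms of the Fa\`a di Bruno expansion are absorbed by the inductive hypothesis. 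The hard part, I expect, will be the bookkeeping across the interface of the two zones, i.e.\ the regime in which the trajectory starts in $Z_{\pd}$ and later enters $Z_{\hyp}(N_1)$: there one must splice the pseudodifferential-zone estimate up to the crossing time $\tilde t_{y,\eta}$ with the hyperbolic-zone estimate beyond it, matching the $\lambda/\sqrt{\Lambda}$- and $\lambda/\Lambda$-type quantities at $\tilde t_{y,\eta}$ via $\Lambda(\tilde t_{y,\eta})\langle y\rangle\langle\eta\rangle=N_1\ln(\langle y\rangle\langle\eta\rangle)$ and Lemma \ref{lem:log_lambda}, all while keeping every constant uniform in $(y,\eta)$; this is also the only place where the unboundedness of $\vartheta$ in the $x$ variable genuinely bites, through the need to have $\langle q\rangle\asymp\langle y\rangle$ and $\langle p\rangle\asymp\langle\eta\rangle$ with constants independent of $(y,\eta)$.
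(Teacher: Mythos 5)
Your overall strategy agrees very closely with the paper's: Gronwall control of $\langle q\rangle\asymp\langle y\rangle$, $\langle p\rangle\asymp\langle\eta\rangle$ along the flow (as in Lemma \ref{lem:hamiltonian_path}); zeroth-order bounds from \eqref{eq:p,q_representation} and the elementary time integrals of $\lambda$, $\lambda/\sqrt\Lambda$, $\lambda/\Lambda$; $t$-derivatives read off from \eqref{eq:Hamiltonian}; spatial derivatives by induction via a weighted Volterra/Gronwall scheme (the paper's energy $E(t)=\|\nabla_y q-I\|^2+\|\langle y\rangle^{-1}\langle\eta\rangle\nabla_\eta q\|^2+\|\langle y\rangle\langle\eta\rangle^{-1}\nabla_y p\|^2+\|\nabla_\eta p-I\|^2$ is exactly your "weighted ratios $O(1)$" normalization); and splicing across $\tilde t_{y,\eta}$ via \eqref{txxi_equation} and Lemma \ref{lem:log_lambda}. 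On all of that you and the paper coincide.

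The one spot where you deviate, and where your sketch is not obviously closable, is the $s$-derivative in the pseudodifferential zone. Your route (differentiating \eqref{eq:p,q_representation} in $s$, or the backward variational identity $\partial_s\psi_{s,t}=-(D\psi_{s,t})f(s,\cdot)$) produces a boundary contribution $\nabla_\xi\vartheta(s,y,\eta)$ of size $\tfrac{\lambda(s)}{\sqrt{\Lambda(s)}}\langle y\rangle^{\frac12+\eps}\langle\eta\rangle^{-\frac12+\eps}$, which you propose to absorb into the claimed factor $\bigl|\ln\bigl(\Lambda(t)/\Lambda(s)\bigr)\bigr|$ using $\Lambda(s)\le\Lambda(t)$ and $\Lambda(T_0)<1$. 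But that factor vanishes as $s\to t$ while $\lambda(s)/\sqrt{\Lambda(s)}$ stays bounded away from zero for fixed $t>0$, so such an absorption is not available with constants uniform in $s$. The paper avoids this boundary term entirely by passing to the time-shifted auxiliary system \eqref{eq:auxiliary_Hamilton}, whose initial condition $Q(0,s,y,\eta)=y$ is $s$-independent: differentiating the ODE in $s$ then yields a Volterra equation for $\partial_s Q$ with vanishing initial data and with source $\partial_t\nabla_\xi\rho\in C([0,T],S^{1+\eps,1+\eps})$, of size $\tfrac{\lambda(\tau)}{\Lambda(\tau)}\langle q\rangle^{\frac12+\eps}\langle p\rangle^{-\frac12+\eps}$, which integrates precisely to $\ln(\Lambda(t)/\Lambda(s))$. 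That structural device — moving the $s$-dependence out of the initial data and into the coefficients of a new Hamiltonian — is the key idea your outline misses, and it is what makes the logarithmic factor appear without any ad hoc absorption.
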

\begin{proof}
We first note that for $(\tau,x,\xi)\in Z_\pd(N)$ it holds $\theta(\tau,x,\xi)=\rho(\tau,x,\xi)\in C([0,T],S^{\frac{1}{2}+\eps,\frac{1}{2}+\eps})$. Thus, it holds
\[ |\nabla_{x}\theta(\tau,x,\xi)|\lesssim \frac{\lambda(t)}{\sqrt{\Lambda(t)}} \<x\>^{-\frac{1}{2}+\eps}\<\xi\>^{\frac{1}{2}+\eps},\]
and
\[ |\nabla_{\xi}\theta(\tau,x,\xi)|\lesssim \frac{\lambda(t)}{\sqrt{\Lambda(t)}}\<x\>^{\frac{1}{2}+\eps}\<\xi\>^{-\frac{1}{2}+\eps}.\]
Since $(p,q)$ satisfies \eqref{eq:Hamiltonian} and $0\leq s,t,\leq t_{x,\xi}$, we find 
\begin{align*} 
\left|\frac{d}{dt}\<p(t,s)\>^2\right|&=2 \<p(t,s)\>|\nabla_{x}\theta(t,q,p)|\\&\leq 2 \frac{\lambda(t)}{\sqrt{\Lambda(t)}} \<p(t,s)\>\<q(t,s)\>^{-\frac{1}{2}+\eps}\<p(t,s)\>^{\frac{1}{2}+\eps} \\ & \leq C\frac{\lambda(t)}{\sqrt{\Lambda(t)}}\<p(t,s)\>^2.
\end{align*}
Thus, by Gronwall' inequality we may estimate for all $s,t\in [0,T_0]$,
\begin{align*}
\notag \<\eta\>\lesssim \<\eta\> e^{C(\sqrt{\Lambda(s)}-\sqrt{\Lambda(t)})}&\lesssim
\<p(t,s,y,\eta)\> \\
& \lesssim \<\eta\> e^{C(\sqrt{\Lambda(t)}-\sqrt{\Lambda(s)})}\lesssim \<\eta\>,
\end{align*}
that is, $\<p\>\sim \<\eta\>$. Similarly, we find
\[ \<q(t,s,y,\eta)\>\sim \<y\>.\]
By using representation \eqref{eq:p,q_representation}, we get also 
\begin{align*}
| p(t,s,y,\eta)-\eta|&\lesssim\<y\>^{-\frac{1}{2}+\eps}\<\eta\>^{\frac{1}{2}+\eps}|\sqrt{\Lambda(t)}-\sqrt{\Lambda(s)}|,
\\
|q(t,s,y,\eta)-y|&\lesssim \<y\>^{\frac{1}{2}+\eps}\<\eta\>^{-\frac{1}{2}+\eps}|\sqrt{\Lambda(t)}-\sqrt{\Lambda(s)}|. 
\end{align*}
In order to give the estimate for $j=k=0$ and $|\alpha|=|\beta|=1$, we define
\begin{align*}
Q_1=\nabla_{y}q(t,s,y,\eta),&\quad Q_2=\nabla_{\eta}q(t,s,y,\eta), \\ P_1=\nabla_{y}p(t,s,y,\eta),&\quad P_2=\nabla_{\eta}p(t,s,y,\eta),
\end{align*}
which satisfy
\[ \begin{cases} \frac{d}{dt} \begin{pmatrix}
Q_1 & Q_2 \\ P_1 & P_2 
\end{pmatrix}=\begin{pmatrix}
-\nabla_{\eta}\nabla_{y}\theta& -\nabla_{\eta}\nabla_{\eta}\theta \\ \nabla_{y}\nabla_{y}\theta & \nabla_{\eta}\nabla_{y}\theta 
\end{pmatrix} \begin{pmatrix}
Q_1 & Q_2 \\ P_1 & P_2 
\end{pmatrix}\\
\begin{pmatrix}
Q_1 & Q_2 \\ P_1 & P_2 
\end{pmatrix}_{t=s}= \begin{pmatrix}
I & 0 \\ 0 & I 
\end{pmatrix}.
\end{cases}\]
Let 
\begin{equation}
\label{eq:energy}
\begin{aligned}
E(t)=\|Q_1(t)-I\|^2&+\|\<y\>^{-1}\<\eta\>Q_2(t)\|^2 \\ &+\|\<y\>\<\eta\>^{-1}P_1(t)\|^2+\|P_2(t)-I\|^2.
\end{aligned}
\end{equation}
It is easy to check that the following estimates hold:
\[ \frac{d}{dt} E(t)\lesssim \frac{\lambda(t)}{\sqrt{\Lambda(t)}}\<y\>^{-\frac{1}{2}+\eps}\<\eta\>^{-\frac{1}{2}+\eps}(E(t)+\sqrt{E(t)}).\]
Then, taking account that $E(s)=0$, we conclude that
\[ E(t)\lesssim (\Lambda(t)-\Lambda(s)) \<y\>^{-1+2\eps}\<\eta\>^{-1+2\eps}.\]
This allows to conclude the desired estimates
\begin{align*}
|\nabla_y ( q(t,s,y,\eta)-y)|&\lesssim\<y\>^{-\frac{1}{2}+\eps}\<\eta\>^{-\frac{1}{2}+\eps}(\sqrt{\Lambda(t)}-\sqrt{\Lambda(s)}), \\
|\nabla_\eta (q(t,s,y,\eta)-y)|&\lesssim \<y\>^{\frac{1}{2}+\eps}\<\eta\>^{-\frac{3}{2}+\eps}(\sqrt{\Lambda(t)}-\sqrt{\Lambda(s)}),\\
|\nabla_y (p(t,s,y,\eta)-\eta)|&\lesssim \<y\>^{-\frac{3}{2}+\eps}\<\eta\>^{\frac{1}{2}+\eps}(\sqrt{\Lambda(t)}-\sqrt{\Lambda(s)}),\\
|\nabla_\eta ( p(t,s,y,\eta)-\eta)|&\lesssim\<y\>^{-\frac{1}{2}+\eps}\<\eta\>^{-\frac{1}{2}+\eps}(\sqrt{\Lambda(t)}-\sqrt{\Lambda(s)}).
\end{align*}
%
%By induction on $|\alpha+\beta|$ we prove \eqref{eq:p,q_estimate_pseudo} for all $\alpha$ and $\beta$ when $j=k=0$.
By induction on $|\alpha+\beta|$, \eqref{eq:p,q_estimate_s} can be proved for all $\alpha$ and $\beta$ when $j=k=0$.

In order to give the estimate for the derivatives with respect to $s$, we consider the auxiliary system 
\begin{equation} 
\label{eq:auxiliary_Hamilton}
\frac{dQ}{dt}=-\nabla_{\xi}\theta(t+s, Q, P), \qquad \frac{dP}{dt}=\nabla_{x}\theta(t+s, Q, P),
\end{equation}
with initial conditions
$$ Q(0,s,y,\eta)=y, \quad P(0,s,y,\eta)=\eta. $$
Then, 
\[ q(t,s,y,\eta)=Q(t-s,s,y,\eta), \qquad p(t,s,y,\eta)=P(t-s,s,y,\eta).\]
Differentiating \eqref{eq:auxiliary_Hamilton} with respect to $s$ we obtain
\begin{align*}
\frac{d}{dt}\left|\left(\frac{dQ}{ds}\right)(t-s)\right|^2&= 2 \left|\left(\frac{dQ}{ds}\right)(t-s)\right| \Big(\frac{d}{ds}(-\nabla_\xi\theta(t+s,Q,P))\Big)(t-s) \\
& = - 2 \left|\left(\frac{dQ}{ds}\right)(t-s)\right| \left(\frac{d}{dt}\nabla_\xi\theta\right)(t,Q(t-s),P(t-s))\\
&
\lesssim \left|\left(\frac{dQ}{ds}\right)(t-s)\right| \frac{\lambda(t)}{\Lambda(t)}\<q\>^{\frac{1}{2}+\eps}\<p\>^{-\frac{1}{2}+\eps},
\end{align*}
since $\partial_t \rho\in C([0,T], S^{1+\eps,1+\eps})$. Applying Gronwall' inequality, we obtain the desired estimate
\[ |\partial_s q(t,s,y, \eta)|\lesssim \<y\>^{\frac{1}{2}+\eps}\<\eta\>^{-\frac{1}{2}+\eps} \left|\ln\bigg(\frac{\Lambda(t)}{\Lambda(s)}\bigg)\right|.\]
Similarly, we may estimate
\[ |\partial_s p(t,s,y, \eta)|\lesssim \<y\>^{-\frac{1}{2}+\eps}\<\eta\>^{\frac{1}{2}+\eps} \left|\ln\bigg(\frac{\Lambda(t)}{\Lambda(s)}\bigg)\right|.\]
%With the same approach one can evaluate higher order derivatives with respect to $s$.

Finally, for $k=0$ and $j=1$, the desired estimates can be derived directly by equation \eqref{eq:Hamiltonian}. Indeed, we get
\[ |\partial_t q(t,s,y,\eta)| =|\nabla_{\xi}\theta(t,q,p)|\lesssim \frac{\lambda(t)}{\sqrt{\Lambda(t)}}\<y\>^{\frac{1}{2}+\eps}\<\eta\>^{-\frac{1}{2}+\eps}. \]
% and for all $j\in \N$, $j\geq 2$
%%
%\[ |\partial_t^j q(t,s,y,\eta)| =|\partial_t^{j-1}\nabla_{\xi}\theta(t,q,p)|\lesssim \<y\>^{j-1+\eps}\<\eta\>^{j-2+\eps}.\]
%%
Similarly, we get the estimate for the derivatives of $p$ with respect to $t$.

By using the same approach, we can prove estimate \eqref{eq:p,q_estimate_hyp} for all $\alpha, \, \beta\in\N^d$ and $j,k\in \N$ such that $j+k\in \{0,1\}$. In particular, we note that, if 
$(s,y,\eta)\in Z_\hyp(N_1)$, then there exists $N_0<N_1$ such that $(t,q(t,s,y,\eta), p(t,s,y,\eta))$ belongs to $ Z_\hyp(N_0)$, as a consequence of Lemma \ref{lem:hamiltonian_path}. Moreover, it holds $\<p(t,s,y,\eta)\>\sim \<\eta\>$ and $\<q(t,s,y,\eta)\>\sim \<y\>$. In particular, it is easy to prove that the energy $E(t)$ defined by \eqref{eq:energy} satisfies the estimate
\[ \frac{d}{dt}E(t)\lesssim \lambda(t)(E(t)+\sqrt{E(t)}).\] 
Finally, the proof of \eqref{eq:p,q_estimate_hyp} for $k+j=1$ exploits the non-increasing monotonicity of the function $\lambda(t)/\Lambda(t)$, that is guaranteed by assumption \eqref{eq:lambda'_control}, being $C_1<1$.

In order to prove estimate \eqref{eq:p,q_estimate_hyp} for $0\leq s\leq \tilde t_{y,\eta}\leq t_{y,\eta}\leq t \leq T_0$ we combine the estimates obtained in the hyperbolic and 
pseudodifferential zones. In particular, we note that for all $\tau\in [s,\tilde{t}_{y,\eta}]$ it holds $(\tau, p(\tau,s,y,\eta),q(\tau,s,y,\eta))\in Z_\pd(N_1)$ and 
$\theta(\tau,q,p)=\rho(\tau,q,p)$. Moreover, we may estimate
\[|( D_y^\alpha D_\eta^\beta \rho)(t,p,q)|\lesssim \frac{\lambda(t)}{\sqrt{\Lambda(t)}} \<q\>^{\frac{1}{2}-|\alpha|}\<p\>^{\frac{1}{2}-|\beta|}\sqrt{\ln(\<q\>\<p\>)},\]
and then
\begin{align*} 
\int_s^{\tilde{t}_{y,\eta}} |D_y^\alpha D_\eta^\beta \theta(\tau,q,p)|\, d\tau& \lesssim \sqrt{\Lambda(\tilde{t}_{y,\eta})} \<q\>^{\frac{1}{2}-|\alpha|}\<p\>^{\frac{1}{2}-|\beta|}\sqrt{\ln(\<q\>\<p\>)}\\
&= \frac{1}{N} \Lambda(t)\<q\>^{1-|\alpha|}\<p\>^{1-|\beta|},
\end{align*}
being 
\[\ln(\<q\>\<p\>)\lesssim \Lambda(t)\<q\>\<p\>,\]
for all $(t,q,p)\in Z_\hyp(N)$.
\end{proof}
\begin{rem}
As a conquence of Lemma \ref{lem:estimates_p,q} we may conclude that $q(t,s)$ and $p(t,s)$ satisfy 
\begin{eqnarray*}
	\label{q_conditions} \textstyle{\frac{q(t,s)-y}{\Lambda(t)-\Lambda(s)}}\;,\;
	\pa_t q(t,s),\; \pa_s q(t,s) \in L_\iy\Big([0,T_0]^2,S^{1,0}(\R^n_y \times \R^n_\eta)\Big),\\ 
	\label{p_conditions} \textstyle{\frac{p(t,s)-\eta}{\Lambda(t)-\Lambda(s)}}\;,\; \pa_t p(t,s),\; \pa_s p(t,s) \in L_\iy\Big([0,T_0]^2,S^{0,1}(\R^n_y \times \R^n_\eta)\Big). 
	\end{eqnarray*}
\end{rem}
The obtained information about the behavior of the Hamiltonian flow allow to prove the following result.
\begin{lem}
\label{lem:hamiltonian_inverse}
There exists a constant $T_1$ with $0<T_1\leq T_0$ such that the mappings 
\begin{align*} 
x&=q(t,s,\cdot,\eta): y\in \R^d \to x\in \R^d, \\
\xi&= p(t,s,y, \cdot): \eta\in \R^d \to \xi\in \R^d,
\end{align*}
with parameters $(t,s,\eta)$, $s,t\in [0,T_1]$, both admit an inverse mapping $y(t,s,x,\eta)$ and $\eta(t,s,x,\xi)$, respectively, 
satisfying the following estimates, for $j=0,\, 1$ and $\alpha,\beta\in\N^d$, with suitable positive constants $C_{j\alpha\beta}$:
\begin{itemize}
\item for $0\leq s,t \leq t_{x,\xi}$
\begin{equation*}
\begin{aligned}
&
%\label{eq:p,q_estimate_pseudo_inverse}
|D_t^jD_x^\alpha D_\xi^\beta (y(t,s,x,\xi)-x)|\leq  C_{j\alpha\beta}\<x\>^{\frac{1}{2}+\eps-|\alpha|}\<\xi\>^{-\frac{1}{2}+\eps-|\beta|}\\& \hspace{80pt} \times|\sqrt{\Lambda(t)}-\sqrt{\Lambda(s)}|^{1-j}\bigg(\frac{\lambda(t)}{\sqrt{\Lambda(t)}}\bigg)^j
\end{aligned}
\end{equation*}
and 
\begin{equation*}
	\begin{aligned}
		&|D_t^jD_x^\alpha D_\xi^\beta (\eta(t,s,x,\xi)-\xi)|\leq  C_{j\alpha\beta}\<x\>^{-\frac{1}{2}+\eps-|\alpha|}\<\xi\>^{\frac{1}{2}+\eps-|\beta|}\\& \hspace{80pt} |\sqrt{\Lambda(t)}-\sqrt{\Lambda(s)}|^{1-j}\bigg(\frac{\lambda(t)}{\sqrt{\Lambda(t)}}\bigg)^j;
	\end{aligned}
\end{equation*}
\item for $\tilde t_{x,\xi}\leq s\leq t \leq T_0$ or $0\leq s\leq \tilde t_{x,\xi}\leq t_{x,\xi}\leq t \leq T_0$  
\begin{equation*}
%\label{eq:p,q_estimate_hyp_inverse}
\begin{aligned}
&|D_t^jD_x^\alpha D_\xi^\beta (y(t,s,x,\xi)-x)|\\& \hspace{50pt}\leq C_{j\alpha\beta}\Lambda(t)\<x\>^{1-|\alpha|}\<\xi\>^{-|\beta|}\bigg(\frac{\lambda(t)}{\Lambda(t)}\ln\bigg(\frac{1}{\Lambda(t)}\bigg)\bigg)^j
\end{aligned}
\end{equation*}
and 
\begin{equation*}
	\begin{aligned}
		&|D_t^jD_x^\alpha D_\xi^\beta (\eta(t,s,x,\xi)-\xi)|\\& \hspace{50pt}\leq C_{j\alpha\beta}\Lambda(t)\<x\>^{-|\alpha|}\<\xi\>^{1-|\beta|}\bigg(\frac{\lambda(t)}{\Lambda(t)}\ln\bigg(\frac{1}{\Lambda(t)}\bigg)\bigg)^j.
	\end{aligned}
\end{equation*}
\end{itemize}
\end{lem}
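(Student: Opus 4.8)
The plan is to obtain the two inverse maps by a global inverse function theorem, exploiting that on a sufficiently small time interval the Hamiltonian flow is a \emph{uniformly} small perturbation of the identity, and then to extract the quantitative bounds by implicit differentiation, reducing every estimate to the ones for $q$ and $p$ already established in Lemma~\ref{lem:estimates_p,q}.

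First I would fix $(t,s,\eta)$ and study $y\mapsto q(t,s,y,\eta)$. By Lemma~\ref{lem:estimates_p,q} (and the energy estimate for $E(t)$ used in its proof) one has $\|\nabla_y q(t,s,y,\eta)-I\|\lesssim |\sqrt{\Lambda(t)}-\sqrt{\Lambda(s)}|\,\langle y\rangle^{-1/2+\eps}\langle\eta\rangle^{-1/2+\eps}$, uniformly in $(y,\eta)\in\R^{2d}$; hence, shrinking $T_0$ to some $T_1$, we can force $\|\nabla_y q-I\|\le 1/2$ for all $0\le s,t\le T_1$ and all $(y,\eta)$, so that $\nabla_y q$ is everywhere invertible with $\|(\nabla_y q)^{-1}\|\le 2$. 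Since moreover $|q(t,s,y,\eta)-y|\lesssim\langle y\rangle^{1/2+\eps}\langle\eta\rangle^{-1/2+\eps}|\sqrt{\Lambda(t)}-\sqrt{\Lambda(s)}|=o(|y|)$ as $|y|\to\infty$, the map is proper, and a Hadamard-type global inverse function theorem (equivalently, a contraction argument in the spirit of \cite{Kumano-go,Yagdjian}) yields a $C^\infty$-diffeomorphism with inverse $y=y(t,s,x,\eta)$. Substituting this into $p$ and repeating the argument for $\eta\mapsto p(t,s,y(t,s,x,\eta),\eta)$, whose $\eta$-Jacobian is again $I$ plus a uniformly small perturbation, produces $\eta=\eta(t,s,x,\xi)$ and, by back-substitution, $y=y(t,s,x,\xi)$; this coupled inversion is what accounts for the dependence on both $x$ and $\xi$ in the statement.

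For the estimates, I would start from $q(t,s,y(t,s,x,\eta),\eta)=x$, which gives $y-x=-\bigl(q(t,s,y,\eta)-y\bigr)\big|_{y=y(t,s,x,\eta)}$; since the displacement is $o(\langle y\rangle)$, this forces $\langle y(t,s,x,\eta)\rangle\asymp\langle x\rangle$ (and, analogously, $\langle\eta(t,s,x,\xi)\rangle\asymp\langle\xi\rangle$), so the right-hand side may be re-expressed in terms of $\langle x\rangle,\langle\xi\rangle$, and inserting the two sets of bounds from Lemma~\ref{lem:estimates_p,q} (the pseudodifferential regime $0\le s,t\le t_{x,\xi}$ and the hyperbolic regime) yields the order-zero estimates in each zone. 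For the $x$- and $\xi$-derivatives I would differentiate the identity to obtain $\nabla_x y=(\nabla_y q)^{-1}$, expand $(\nabla_y q)^{-1}=\sum_{k\ge0}(I-\nabla_y q)^k$ by the Neumann series, and propagate to all higher orders by induction on $|\alpha|+|\beta|$ via the Fa\`a di Bruno formula, each step using only the bounds for $\partial^\gamma_{(y,\eta)}q$ and $\langle y\rangle\asymp\langle x\rangle$; the case $j=1$ comes from differentiating in $t$, i.e. $\partial_t y=-(\nabla_y q)^{-1}\partial_t q$, together with the $j=1$ bound for $q$. The powers of $\langle x\rangle$ and $\langle\xi\rangle$ are bookkept exactly as in Lemma~\ref{lem:estimates_p,q}, and the argument for $\eta(t,s,x,\xi)$ is entirely parallel, starting from $p(t,s,y(t,s,x,\xi),\eta(t,s,x,\xi))=\xi$.

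The main obstacle is keeping the inversion \emph{global and uniform}: the smallness needed both for the global inverse function theorem and for the Neumann expansion of $(\nabla_y q)^{-1}$ must hold simultaneously for all $(y,\eta)\in\R^{2d}$ and all $s,t\in[0,T_1]$, not merely locally — this is precisely what forces the shrinking of the time interval and uses crucially that $|\sqrt{\Lambda(t)}-\sqrt{\Lambda(s)}|$ can be made small independently of $(x,\xi)$. A secondary technical point is the passage between the two zone regimes: to apply the zone-restricted estimate of Lemma~\ref{lem:estimates_p,q} one must know that the inverse image of a point of $Z_{\pd}$ (resp.\ $Z_{\hyp}$) under the flow lies in a comparable zone, which follows from Lemma~\ref{lem:hamiltonian_path} together with the equivalences $\langle q\rangle\asymp\langle y\rangle$, $\langle p\rangle\asymp\langle\eta\rangle$.
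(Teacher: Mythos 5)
Your proposal is correct and follows essentially the same route as the paper's proof: show $\|\nabla_y q(t,s,y,\eta)-I\|\le 1-\eps$ uniformly after shrinking the time interval to $[0,T_1]$ (via Lemma~\ref{lem:estimates_p,q}), deduce global invertibility, and then extract the estimates from the identity $y(t,s,x,\xi)-x=-\bigl(q(t,s,y,\eta)-y\bigr)\big|_{y=y(t,s,x,\xi)}$ together with $\langle y\rangle\asymp\langle x\rangle$, $\langle p\rangle\asymp\langle\eta\rangle$, and the zone-invariance from Lemma~\ref{lem:hamiltonian_path}. The paper states this more tersely, deferring the implicit-differentiation bookkeeping to Yagdjian, which you fill in explicitly with the Neumann series and Fa\`a di Bruno step.
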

\begin{proof}
Applying Lemma \ref{lem:hamiltonian_path}, it is easy to get that there exists $T_1>0$ and $\eps>0$ with $0<T_1\leq T_0$ and $0<\eps<1$ such that
\[ \left|\frac{\partial}{\partial_y}q(t,s,y,\xi)-I\right|\leq 1-\eps, \quad \text{ for } s,t\in [0,T_1],\; y, \eta\in \R^d, \; |y|+|\xi|\geq M.\]
As a consequence, the invertibility of $q(t,s,\cdot,\xi)$ follows. The desired estimates can be derived after noticing that 
\[ y(t,s,x,\xi)-x=y(t,s,x,\xi)-q(t,s,y(t,s,x,\xi),\xi),\]
following the same approach of \cite{Yagdjian}, employing the estimates in Lemma \ref{lem:estimates_p,q}. The same idea can be used to prove the existence of the inverse function $\eta=\eta(t,s,x,\xi)$ and the corresponding estimates.
\end{proof}
Now, we deal with the construction of the phase function $\phi = \phi(t,s,x,\xi)$ solving the eikonal equation
\begin{equation} 
\label{eq:eikonal_equation}
\begin{cases} 
&\pa_t \phi(t,s,x,\xi)-\vartheta(t,x,\nabla_x \phi(t,s,x,\xi))=0,\\
&\phi(s,s,x,\xi)= x \cdot \xi\,.
\end{cases}
\end{equation}

\begin{lem}\label{phase_inhomogeneous}
Let $\phi = \phi(t,s,x,\xi)$  be defined as 
\begin{equation}
\label{eq:phase_representation}
\phi(t,s,x,\xi) = v(t,s, y(t,s,x,\xi),\xi) 
\end{equation}
where 
\beqst v(t,s,y,\eta)= y\cdot \eta - \il^t_s \Big(p \cdot \nabla_\xi \vartheta - \vartheta\Big) \Big(\tau , q(\tau ,s,y,\eta),p(\tau,s,y,\eta)\Big)d\tau\;. \eeqst
Then, $\phi$ solves the Cauchy problem \eqref{eq:eikonal_equation}. Moreover, for all $\alpha,\beta \in \N^d$, there exist positive constants $C_{\alpha\beta}$ such that, for all $(x,\xi)\in \R^{2d}$, it holds
\begin{equation}
\label{eq:phase_estimate_pseudo}
\Big|D_x^\alpha D_\xi^\beta \Big(\phi(t,s,x,\xi)-x\cdot \xi\Big)\Big|\leq C_{\alpha\beta}\<x\>^{\eps-|\alpha|}\<\xi\>^{\eps-|\beta|},
\end{equation}
if  $0\leq s,t\leq t_{x,\xi}$, with $\eps>0$ arbitrarily small, and  
\begin{equation}
\label{eq:phase_estimate_hyp}
\Big|D_x^\alpha D_\xi^\beta \Big(\phi(t,s,x,\xi)-x \cdot \xi\Big) \Big| \le C_{\al\beta} \norm{x}^{1-|\alpha|}\norm{\xi}^{1-|\beta|} |\Lambda(t)-\Lambda(s)|,
\end{equation} 
if $\max (s,t) \ge \tilde{t}_{x,\xi}$.
\end{lem}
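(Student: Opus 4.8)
The plan is to realise $\phi$ as the generating function of the Hamiltonian flow of $\vartheta$ by the classical Cauchy method of characteristics, and then to extract the size estimates from the flow bounds in Lemmas \ref{hamiltonian_flow1}, \ref{lem:estimates_p,q} and \ref{lem:hamiltonian_inverse}. The initial condition $\phi(s,s,x,\xi)=x\cdot\xi$ in \eqref{eq:eikonal_equation} is immediate, since $y(s,s,x,\xi)=x$ and the integral in $v$ is empty at $t=s$. For the equation itself, set $g=p\cdot\nabla_\xi\vartheta-\vartheta$, so that $v(t,s,y,\eta)=y\cdot\eta-\int_s^t g(\tau,q(\tau,s,y,\eta),p(\tau,s,y,\eta))\,d\tau$. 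Differentiating in $y_j$, the terms containing $\partial_{y_j}p$ cancel and, by the Hamilton equations in the integral form \eqref{eq:p,q_representation}, $\partial_{y_j}g(\tau)=-\frac{d}{d\tau}\big(p(\tau)\cdot\partial_{y_j}q(\tau)\big)$; integrating and using $q(s)=y$, $p(s)=\eta$ gives $\partial_{y_j}v(t,s,y,\eta)=\sum_k p_k(t,s,y,\eta)\,\partial_{y_j}q_k(t,s,y,\eta)$. Composing with $y=y(t,s,x,\xi)$ and using the identity $q(t,s,y(t,s,x,\xi),\xi)=x$ together with the chain rule, this reduces to $\nabla_x\phi(t,s,x,\xi)=p(t,s,y(t,s,x,\xi),\xi)$. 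Differentiating $\phi$ in $t$, with $\partial_t v=-g(t)$ and the $t$-derivative of the constraint (which gives $\sum_j\partial_{y_j}q\cdot\partial_t y=\nabla_\xi\vartheta$ at $(t,q(t),p(t))$), all cross terms recombine into $\partial_t\phi(t,s,x,\xi)=\vartheta(t,q(t),p(t))=\vartheta(t,x,\nabla_x\phi)$, i.e.\ \eqref{eq:eikonal_equation}; the required smoothness and the common time interval $[0,T_1]$ come from Lemmas \ref{hamiltonian_flow1} and \ref{lem:hamiltonian_inverse}.

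For the estimates, integrating the eikonal equation from $s$ to $t$ gives the representation $\phi(t,s,x,\xi)-x\cdot\xi=\int_s^t\vartheta(\tau,x,\nabla_x\phi(\tau,s,x,\xi))\,d\tau$, and since $\nabla_x\phi(\tau,s,x,\xi)=p(\tau,s,y(\tau,s,x,\xi),\xi)$, Lemmas \ref{lem:estimates_p,q}--\ref{lem:hamiltonian_inverse} give $\langle\nabla_x\phi\rangle\asymp\langle\xi\rangle$, so that the evaluation point lies in the same zone as $(\tau,x,\xi)$, up to harmless constants. I would then split $\int_s^t=\int_{[s,t]\cap Z_\pd}+\int_{[s,t]\cap Z_\hyp}$. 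On $Z_\hyp$ one has $\vartheta=\mathrm{Re}\,\mathfrak{t}_k$ with $|\vartheta(\tau,x,\cdot)|\lesssim\lambda(\tau)\langle x\rangle\langle\xi\rangle$ (from the symbol estimates for $\mathfrak{t}_k$ and the weak ellipticity of $a$, cf.\ Remark \ref{rem:a_classes}), so that part contributes $\lesssim\langle x\rangle\langle\xi\rangle\int\lambda\,d\tau\le\langle x\rangle\langle\xi\rangle\,|\Lambda(t)-\Lambda(s)|$; on $Z_\pd$ one has $\vartheta=\rho$, and combining the bounds for $\rho$ from Lemma \ref{lem:rho} with $\int_s^t\frac{\lambda(\tau)}{\sqrt{\Lambda(\tau)}}\,d\tau=2(\sqrt{\Lambda(t)}-\sqrt{\Lambda(s)})$ and the relation \eqref{txxi_equation} defining $t_{x,\xi}$ (together with the analogous one for $\tilde t_{x,\xi}$), exactly as in the proof of Lemma \ref{lem:estimates_p,q}, one bounds that part by $C\langle x\rangle^\eps\langle\xi\rangle^\eps$ when $0\le s,t\le t_{x,\xi}$ — which proves \eqref{eq:phase_estimate_pseudo} after renaming $\eps$ — and by a quantity absorbable into $\langle x\rangle\langle\xi\rangle\,|\Lambda(t)-\Lambda(s)|$ when $\max(s,t)\ge\tilde t_{x,\xi}$, which together with the $Z_\hyp$ part yields \eqref{eq:phase_estimate_hyp}. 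The case $\alpha,\beta\neq 0$ follows from the same representation by differentiation, induction on $|\alpha|+|\beta|$, the fact that each $x$- or $\xi$-derivative of $\rho$ (resp.\ $\mathfrak{t}_k$) gains a factor $\langle x\rangle^{-1}\langle\xi\rangle^{-1}$, and the flow-derivative estimates \eqref{eq:p,q_estimate_s}--\eqref{eq:p,q_estimate_hyp}.

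The step I expect to be the main obstacle is the pseudodifferential-zone estimate. A naive termwise bound, using only $|(y(t,s,x,\xi)-x)\cdot\xi|\lesssim\langle x\rangle^{1/2+\eps}\langle\xi\rangle^{1/2+\eps}$ from Lemma \ref{lem:hamiltonian_inverse}, is a full power of $\langle x\rangle\langle\xi\rangle$ too large; one must instead use the cancellation — built into the eikonal representation — between $(y-x)\cdot\xi$ and the term $\int_s^t p\cdot\nabla_\xi\vartheta\,d\tau$ of $v$, and then absorb the surviving factor $\langle x\rangle^{1/2}\langle\xi\rangle^{1/2}\sqrt{\Lambda(t_{x,\xi})}\asymp\sqrt{\ln(\langle x\rangle\langle\xi\rangle)}$ into an arbitrarily small power of $\langle x\rangle\langle\xi\rangle$ via \eqref{txxi_equation} and the freedom in $\eps$. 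A secondary technical point, treated exactly as in the proof of Lemma \ref{lem:estimates_p,q}, is keeping the two-zone bookkeeping consistent along a time interval $[s,t]$ that straddles $t_{x,\xi}$.
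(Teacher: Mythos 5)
Your proposal reaches the estimates but by a route different from the paper's, and the ``main obstacle'' you flag at the end is not in fact the difficulty.

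For the eikonal equation, the paper simply cites classical results (Kumano-go); your characteristics argument supplies a derivation of the same fact, which is a fine supplement (the eventual identity $\nabla_x\phi=p(t,s,y(t,s,x,\xi),\xi)$ is the standard one, though be careful with the bookkeeping: the boundary term at $\tau=s$ and the sign of $\partial_{y_j}g$ must both be tracked before the chain rule with $\partial_x y=(\partial_y q)^{-1}$ produces it). For the estimates, the paper applies Fa\`a di Bruno directly to the composition $v(t,s,y(t,s,x,\xi),\xi)$ in \eqref{eq:phase_representation}, combining the flow bounds of Lemma \ref{lem:estimates_p,q} with the inverse-flow bounds of Lemma \ref{lem:hamiltonian_inverse}. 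You instead pass to the integrated eikonal representation $\phi-x\cdot\xi=\int_s^t\vartheta(\tau,x,\nabla_x\phi(\tau,s,x,\xi))\,d\tau$, split the time integral by zone, and estimate the integrand via $\nabla_x\phi=p(\tau,s,y(\tau,s,x,\xi),\xi)$. Both are legitimate; yours removes the $(y-x)\cdot\xi$ term from the outset, the paper's estimates it directly. Higher derivatives require Fa\`a di Bruno in either case.

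Where your reasoning goes wrong is the claim that a ``naive termwise bound'' on $(y-x)\cdot\xi$ is a full power of $\norm{x}\norm{\xi}$ too large, so that a cancellation with $\int_s^t p\cdot\nabla_\xi\vartheta\,d\tau$ is essential. The bound $|(y(t,s,x,\xi)-x)\cdot\xi|\lesssim\norm{x}^{1/2+\eps}\norm{\xi}^{1/2+\eps}$ drops the factor $|\sqrt{\Lambda(t)}-\sqrt{\Lambda(s)}|$ that Lemma \ref{lem:hamiltonian_inverse} actually provides in the pseudodifferential zone, and that factor is precisely what closes the argument. Since both $(s,x,\xi)$ and $(t,x,\xi)$ lie in $Z_\pd(N)$, the zone definition \eqref{txxi_equation} gives $\Lambda(r)\norm{x}\norm{\xi}\le N\ln(\norm{x}\norm{\xi})$ for $r\in\{s,t\}$, hence
\[
\norm{x}^{1/2}\norm{\xi}^{1/2}\max\{\sqrt{\Lambda(t)},\sqrt{\Lambda(s)}\}\leq\sqrt{N\ln(\norm{x}\norm{\xi})},
\]
which is exactly the inequality the paper highlights at the end of its proof. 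With it,
\[
|(y-x)\cdot\xi|\lesssim\norm{x}^{\eps}\norm{\xi}^{\eps}\sqrt{N\ln(\norm{x}\norm{\xi})}\lesssim\norm{x}^{2\eps}\norm{\xi}^{2\eps},
\]
and the integral term in $v$ is controlled the same way (in $Z_\pd(N)$, $\vartheta=\pm\rho$, and both $|p\cdot\nabla_\xi\vartheta|$ and $|\vartheta|$ are $\lesssim 1+\tfrac{\lambda(\tau)}{\sqrt{\Lambda(\tau)}}\norm{x}^{1/2+\eps}\norm{\xi}^{1/2+\eps}$ by Lemma \ref{lem:rho}, so $\int_s^t$ is again absorbed via $\int\lambda/\sqrt{\Lambda}=2(\sqrt{\Lambda(t)}-\sqrt{\Lambda(s)})$). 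Thus \eqref{eq:phase_estimate_pseudo} follows from a plain termwise estimate; no cancellation is required. Your integrated-eikonal route builds in the same arithmetic automatically, so your argument also goes through --- but the operative mechanism is the zone-boundary relation trading $\norm{x}^{1/2}\norm{\xi}^{1/2}\sqrt{\Lambda}$ for $\sqrt{\ln\norm{x}\norm{\xi}}$, which is then absorbed into $\norm{x}^\eps\norm{\xi}^\eps$; cancellation plays no essential role.
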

\begin{proof}
The fact that the function $\phi$ defined in \eqref{eq:phase_representation} solves the Cauchy problem \eqref{eq:eikonal_equation} follows by classical results (see, for instance, \cite{Kumano-go}). The desired estimates follow by \eqref{eq:phase_representation}, employing Faa' di Bruno formula to evaluate the derivatives of composite functions, together with the estimates obtained in Lemma \ref{lem:estimates_p,q}. In particular, in estimate \eqref{eq:phase_estimate_pseudo} we use that 
\[ \<x\>^{\frac{1}{2}}\<\xi\>^\frac{1}{2}\max\{\sqrt{\Lambda(t)},\sqrt{\Lambda(s)}\}\leq \sqrt{N\ln(\<x\>\<\xi\>)},\]
being $(s,x,\xi)$ and $(t,x,\xi)$ in $Z_\pd(N)$.
\end{proof}
From now on, we denote by $\phi^-$ and $\phi^+$ the solutions to \eqref{eq:eikonal_equation}, with $\vartheta$ equal to the real part of $\mathfrak{t}_1$ and,  respectively, the real part of $\mathfrak{t}_2$.

\begin{lem}\label{sym_log}
Suppose $\psi_{s,t}(x,\xi)=(y,\eta)=(\psi^1_{s,t}(x,\xi),\psi^2_{s,t}(x,\xi))$ be the canonical relation associated with the Hamiltonian $\vartheta=\vartheta(t,x,\xi)$ and let $$b(t,x,\xi)=e^{i\int^{T}_{t}a(\tau,x,\xi)d\tau},$$
with $a(t,x,\xi)\in \SGH{0}{0}{1}{0}{N}$ supported in $Z_{\hyp}(N)$.
Then,
$$
c(t,x,\xi)=b(t, \psi_{s,t}(x,\xi))\in \SGH{0}{0}{0}{0}{N_1}
$$
for $0\leq s < t\leq T$ and sufficiently large $N_1\geq N$.
\end{lem}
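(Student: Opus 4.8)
The plan is to split the argument into two steps: first show that the exponential factor $b$ by itself belongs to $\SGH{0}{0}{0}{0}{N}$, and then show that precomposition with the Hamiltonian flow $\psi_{s,t}$ keeps us in this class, at the price of enlarging the zone parameter to some $N_1\ge N$.

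\emph{Step 1.} Set $A(t,x,\xi):=\int_t^T a(\tau,x,\xi)\,d\tau$, so that $b=e^{iA}$. Since $\supp a\subset Z_\hyp(N)$, for $|x|+|\xi|\ge M$ both $a(\tau,x,\xi)$ and all its derivatives vanish for $\tau<t_{x,\xi}$, hence $A(t,x,\xi)=\int_{\max(t,t_{x,\xi})}^{T}a(\tau,x,\xi)\,d\tau$. Using the $\SGH{0}{0}{1}{0}{N}$ estimate for $a$ with no $t$-derivative together with $\int_{t_{x,\xi}}^{T}\lambda(\tau)\,d\tau\le\Lambda(T)$, one obtains $|D_x^\alpha D_\xi^\beta A(t,x,\xi)|\lesssim\Lambda(T)\,\norm{x}^{-|\beta|}\norm{\xi}^{-|\alpha|}$ for every $(t,x,\xi)$; in particular $|A|\le C\Lambda(T)$, so $|b|=e^{-\operatorname{Im}A}$ is bounded above and below. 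For the time derivatives, $\partial_t^kA=-\partial_t^{k-1}a$ for $k\ge1$, and here I would invoke the elementary ``conversion'' inequality $\lambda(t)\le c_0^{-1}\tfrac{\lambda(t)}{\Lambda(t)}\ln\tfrac{1}{\Lambda(t)}$ on $(0,T]$ — valid because $s\mapsto s^{-1}\ln(1/s)$ is decreasing, hence bounded below, on $(0,\Lambda(T)]$ once $T$ is so small that $\Lambda(T)<1$ — which turns the extra power of $\lambda$ carried by $a$ (its index $\kappa=1$) into one extra power of $\tfrac{\lambda}{\Lambda}\ln\tfrac1{\Lambda}$; this yields $|D_t^kD_x^\alpha D_\xi^\beta A|\lesssim\norm{x}^{-|\beta|}\norm{\xi}^{-|\alpha|}\big(\tfrac{\lambda(t)}{\Lambda(t)}\ln\tfrac1{\Lambda(t)}\big)^{k}$ on $Z_\hyp(N)$. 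Expanding $D_t^kD_x^\alpha D_\xi^\beta e^{iA}$ by the Fa\`a di Bruno formula, each resulting term is $e^{iA}$ times a product of derivatives of $A$; the $x$- and $\xi$-weights of the factors multiply to $\norm{x}^{-|\beta|}\norm{\xi}^{-|\alpha|}$, while a product of factors whose $t$-orders sum to $k$ contributes, after the conversion inequality, at most $\big(\tfrac{\lambda}{\Lambda}\ln\tfrac1{\Lambda}\big)^{k}$. Together with the boundedness of $|b|$ and the regularity $b\in C([0,T],S^{0,0})\cap C^\infty((0,T]\times\R^{2d})$ (inherited from $a$), this gives $b\in\SGH{0}{0}{0}{0}{N}$.

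\emph{Step 2.} By Lemma \ref{lem:hamiltonian_path} the flow $\psi_{s,t}$ of $\vartheta=\operatorname{Re}\mathfrak t_k$ carries hyperbolic zones into hyperbolic zones with only a controlled decrease of the zone parameter, so I would fix $N_1\ge N$ so large that $(s,x,\xi)\in Z_\hyp(N_1)$ forces $(t,\psi_{s,t}(x,\xi))\in Z_\hyp(N)$ for all $t\in[s,T_1]$ — including the borderline configuration $0\le s\le\tilde t_{x,\xi}\le t_{x,\xi}\le t$ covered by Lemma \ref{lem:estimates_p,q} — and recall that then $\norm{\psi_{s,t}^1(x,\xi)}\asymp\norm{x}$, $\norm{\psi_{s,t}^2(x,\xi)}\asymp\norm{\xi}$, while by Lemmas \ref{lem:estimates_p,q} and \ref{lem:hamiltonian_inverse} the map $\psi_{s,t}$ is a parameter-dependent $\SG$-diffeomorphism whose $x,\xi$-derivatives preserve the $\SG$ structure and whose $t$-derivative carries, in $Z_\hyp(N)$, one factor $\tfrac{\lambda}{\Lambda}\ln\tfrac1{\Lambda}$ (this is precisely estimate \eqref{eq:p,q_estimate_hyp}). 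Differentiating $c(t,x,\xi)=b(t,\psi_{s,t}(x,\xi))$ and again invoking Fa\`a di Bruno, every $D_x,D_\xi$ reproduces the decay $\norm{x}^{-|\beta|}\norm{\xi}^{-|\alpha|}$, whereas each $D_t$ either falls on the first slot of $b$, producing $(D_tb)(t,\psi_{s,t})$ — which already carries one $\tfrac{\lambda}{\Lambda}\ln\tfrac1{\Lambda}$ since $b\in\SGH{0}{0}{0}{0}{N}$ — or falls through $\psi_{s,t}$, producing $(\nabla_{y,\eta}b)(t,\psi_{s,t})\cdot D_t\psi_{s,t}$, in which the one power of $\norm{\cdot}$ that $D_t\psi_{s,t}$ costs is absorbed by the decay of $\nabla_{y,\eta}b$ and the remaining factor $\tfrac{\lambda}{\Lambda}\ln\tfrac1{\Lambda}$ survives. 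Tracking all terms one obtains $|D_t^kD_x^\alpha D_\xi^\beta c|\lesssim\norm{x}^{-|\beta|}\norm{\xi}^{-|\alpha|}\big(\tfrac{\lambda(t)}{\Lambda(t)}\ln\tfrac1{\Lambda(t)}\big)^{k}$ on $Z_\hyp(N_1)$, and the regularity of $c$ follows from that of $b$ and the smooth dependence of $\psi_{s,t}$ on all of its variables (Lemma \ref{hamiltonian_flow1}); hence $c\in\SGH{0}{0}{0}{0}{N_1}$.

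I expect the main difficulty to lie in Step 2 and to be one of bookkeeping rather than of principle: one has to (i) choose $N_1$ large enough that the Hamiltonian flow keeps the argument of $b$ inside the zone $Z_\hyp(N)$ where $b$ enjoys its symbol estimates, paying attention to the case in which $(s,x,\xi)$ is still in the pseudodifferential zone while $(t,x,\xi)$ has already crossed into the hyperbolic one, and (ii) verify, through both the exponential $e^{iA}$ and the change of variables $\psi_{s,t}$, that every time derivative manufactures exactly one factor $\tfrac{\lambda}{\Lambda}\ln\tfrac1{\Lambda}$ and no residual power of $\lambda$ or of $\norm{x}\norm{\xi}$ — which is exactly the content encoded by the conversion inequality $\lambda\lesssim\tfrac{\lambda}{\Lambda}\ln\tfrac1{\Lambda}$ together with the sharp Hamiltonian-flow estimates of Lemmas \ref{lem:estimates_p,q}--\ref{lem:hamiltonian_inverse}.
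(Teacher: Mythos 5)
Your proof is correct and rests on the same three ingredients as the paper's: the uniform bound $|a|\lesssim\lambda$ making $|c|\lesssim1$; the crude but sufficient bound $\int_t^T\lambda(\tau)\,d\tau\le\Lambda(T)$ for the $x,\xi$-derivatives; and the conversion $\lambda(t)\lesssim\frac{\lambda(t)}{\Lambda(t)}\ln\frac1{\Lambda(t)}$ (equivalently $1\lesssim-\ln\Lambda(t)/\Lambda(t)$) together with the $\SG$-diffeomorphism properties of the flow for the $t$-derivatives. The one genuine organizational difference is that you factor the argument through the intermediate claim $b\in\SGH{0}{0}{0}{0}{N}$ and then prove a general composition statement ``precomposing a symbol in $\SGH{m}{\mu}{\kappa}{\ell}{N}$ with $\psi_{s,t}$ lands in $\SGH{m}{\mu}{\kappa}{\ell}{N_1}$,'' whereas the paper differentiates $c(t,x,\xi)=\exp\bigl(i\int_t^T a(\tau,\psi_{s,t}(x,\xi))\,d\tau\bigr)$ directly, keeping the $\tau$-integral inside the chain rule. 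Your two-step version isolates a reusable lemma (flow-invariance of the hierarchy classes up to enlarging $N$) and makes the zone bookkeeping more explicit; the paper's version is more compact and avoids stating that auxiliary fact. Either way the Fa\`a di Bruno bookkeeping and the final estimates coincide, so this counts as essentially the same proof presented modularly.
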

\begin{proof}
%The function $\psi_{s,t}(x,\xi)$ is constant along the Hamiltonian flow $\frac{\partial}{\partial t}-\mathcal{H}_{\vartheta}$. Thus, by the initial condition at $t=s$ we have 
%$$
%a(t,\psi_{s,t}(x,\xi))=a(s, \psi_{s,t}(x,\xi)).
%$$
We assume $T$ sufficiently small in $Z_{\hyp}(N)$ to ensure that the canonical transformation $\psi_{s,t}$ is a $\SG$-diffeomorphism. This
implies, in particular, $\psi^1_{s,t} \in S^{1,0}$, 
$\psi^2_{s,t} \in S^{0,1}$, $\jap{\psi^1_{s,t}(x,\xi)}\asymp\jap{x}$
and $\jap{\psi^2_{s,t}(x,\xi)}\asymp\jap{\xi}$, uniformly with respect
to $s,t$, $0\le s\le t\le T$. 

As the symbol is a constant in the pseudodifferential zone, we can assume, without loss of generality, that we are working in the hyperbolic zone, i.e., $t\geq t_{x,\xi}$. Of course, since $a$ is uniformly bounded,
$|c(t,x,\xi)|\lesssim 1$. Let us now estimate the derivatives of first
order. By the chain rule and the hypotheses, for $j=1,\dots,d$,
\begin{align*}
\left|\partial_{x_j}c(t,x,\xi)\right| &= \left|c(t,x,\xi)\right|
\!\cdot\!\left|
\sum_{k=1}^d\int_t^T\!\!\!\left[(\partial_{x_k}a)(\tau, \psi_{s,t}(x,\xi))
(\partial_{x_j}\psi^{1k}_{s,t})(x,\xi) +\right.\right.
\\
&\left.\phantom{|c(t,x,\xi)|
\cdot|
\sum_{k=0}^d\int_t^T[} \!\!\!\!
\left.+(\partial_{\xi_k}a)(\tau, \psi_{s,t}(x,\xi))
(\partial_{x_j}\psi^{2k}_{s,t})(x,\xi)
\right]d\tau
\right| 
\\
&\lesssim
\sum_{k=1}^d\int_t^T\left[
\lambda(\tau)
\jap{\psi^1_{s,t}(x,\xi)}^{-1}
\jap{\psi^2_{s,t}(x,\xi)}^{0}
\jap{x}^0\jap{\xi}^0+
\right.
\\
&\phantom{\lesssim
\sum_{k=1}^d\int^T}
\left.
\!\!+\lambda(\tau)
\jap{\psi^1_{s,t}(x,\xi)}^{0}
\jap{\psi^2_{s,t}(x,\xi)}^{-1}
\jap{x}^{-1}\jap{\xi}
\right]d\tau
\\
&\lesssim\left(\jap{x}^{-1}+ \jap{\xi}^{-1} \jap{x}^{-1} \jap{\xi}\right)
\int_t^T\lambda(\tau)d\tau
\\
&\le \left[\Lambda(T)-\Lambda(t_{x,\xi})\right]\jap{x}^{-1}
\lesssim\jap{x}^{-1}.
\end{align*}
In a completely similar fashion, we obtain, for $j=1,\dots,d$,
the estimates 
$$
\left|\partial_{\xi_j}c(t,x,\xi)\right|\lesssim \norm{\xi}^{-1}.
$$
Finally, by the estimates for the components of the Hamiltonian flow, 
proved above, with $N_1\ge N$ sufficiently large, we find
\begin{align*}
	|\partial_t c(t,x,\xi)|&=\left|c(t,x,\xi)\right|\!\cdot\!
	\left| -a(t,\psi_{s,t}(x,\xi))+
\phantom{
\sum_{k=0}^d\int_t^T[}\right.
\\
& \left.\phantom{|c(t,x,\xi)|
\cdot|\;\;}
+
\sum_{k=1}^d\int_t^T\!\!\!\left[(\partial_{x_k}a)(\tau, \psi_{s,t}(x,\xi))
(\partial_{t}\psi^{1k}_{s,t})(x,\xi) +\right.\right.
\\
&\left.\phantom{|c(t,x,\xi)|
\cdot|
\sum_{k=0}^d\int_t^T[} \!\!\!\!
\left.+(\partial_{\xi_k}a)(\tau, \psi_{s,t}(x,\xi))
(\partial_{t}\psi^{2k}_{s,t})(x,\xi)
\right]d\tau\right|
\\
&\lesssim\lambda(t)+
\\
&+
\sum_{k=1}^d\int_t^T\left[
\lambda(\tau)
\jap{\psi^1_{s,t}(x,\xi)}^{-1}
\jap{\psi^2_{s,t}(x,\xi)}^{0}
\lambda(t)\jap{x}\jap{\xi}^0+
\right.
\\
&\phantom{\lesssim
\sum_{k=1}^d\int^T}
\left.
\!\!+\lambda(\tau)
\jap{\psi^1_{s,t}(x,\xi)}^{0}
\jap{\psi^2_{s,t}(x,\xi)}^{-1}
\lambda(t)\jap{x}^{0}\jap{\xi}
\right]d\tau
\\
&\lesssim\lambda(t)\left[1+\left(\jap{x}^{-1}\jap{x}+ \jap{\xi}^{-1} \jap{\xi}\right)
\int_t^T\lambda(\tau)d\tau \right]
\\
&\lesssim\lambda(t)
\lesssim \frac{\lambda(t)}{\Lambda(t)}\ln\frac{1}{\Lambda(t)}.
\end{align*}
Notice that the last inequality is equivalent to $\displaystyle 1\lesssim 
-\frac{\ln\Lambda(t)}{\Lambda(t)}$, which holds true for $t\ge t_{x,\xi}$.
The estimates for the derivatives of higher arbitrary order follow inductively, by means of the F\'aa di Bruno formula.
\end{proof}

\subsection{Construction of the amplitudes}\label{determination_E2}
As usual, we look for operator families $E^\mp_2(t,s)$ of the form
\beqst 
E_2^\mp (t,s)w(x) = \il_{\kR^d}\il_{\kR^d} e^{i(\phi^\mp(t,s,x,\xi)-y.\xi)}e_2^\mp (t,s,x,\xi)w(y)dyd\xi , w \in \mathscr{S}(\R^d),
\eeqst
with \beqst \phi^\mp(s,s,x,\xi)= x \cdot \xi\;,\;e_2^\mp(s,s,x,\xi)=1. \eeqst 
The asymptotic representation 
\beqst &&e_{2}^\mp(t,s,x,\xi) \sim \sum^\iy_{j=0} e^\mp_{2,j}(t,s,x,\xi) \quad {\rm modulo}\;\; C\Big([0,T_0]^2,S^{-\iy,-\iy}\Big),\hspace*{2cm}\\ &&e_{2,0}^\mp(s,s,x,\xi) = 1\;,\; e^\mp_{2,j}(s,s,x,\xi)=0 \quad {\rm for}\;\; j \ge 1, 
\eeqst 
allows us to derive the transport equation. Namely, we need to study the action of $D_t - \Op(\mathfrak{t}_1(t))$ and $D_t - \Op(\mathfrak{t}_2(t))$, respectively, on $E_2^-\;,\; E_2^+$. We confine ourselves to the fundamental solution $E_{2}^{-}(t,s)$, corresponding to the amplitude function $e_{2}^{-}(t,s,x,\xi)$: the computations for the case $E^+_2(t,s)$ are similar. Formally, we can of course write
\begin{small}
\beqst
D_tE^{-}_2(t,s)w(x)=\il_{\kR^d}\il_{\kR^d}e^{i(
\phi^{-}(t,s,x,\xi)-y\cdot\xi)}\textstyle{\Big(\pa_t \phi^-
\sum\limits^\iy_{j=0} e^{-}_{2,j}} + \textstyle{\frac{1}{i}} \pa_t
\sum\limits^\iy_{j=0} e^{-}_{2,j} \Big)w
(y)dyd\xi.
\eeqst 
\end{small}
In order to find the expansion terms $e_{2,j}^{-}$, $j=0,1,2,\dots$,
we introduce the abbreviations 
\begin{eqnarray}
\nonumber g_\alpha(t,s,x,\xi)&=&\partial_\xi^\alpha\mathfrak{t}_{1}(t,x,\nabla_{x}\phi^{-}(t,s,x,\xi)), \quad \text{for }|\alpha|\geq 1,\\
\nonumber g_0(t,s,x,\xi)&=&-i\sum_{|\alpha|=2}\frac{1}{\alpha!}g_\alpha(t,s,x,\xi)\partial^{\alpha}_{x}\phi^{-}(t,s,x,\xi),\\
\nonumber Z(t,s)&=&D_{t}-\sum_{|\alpha|=1} g_\alpha(t,s,x,\xi)\partial_{x}^\alpha+g_0(t,s,x,\xi).
\end{eqnarray}
Then, according to the asymptotic expansion for the compositions, 
we obtain the equations
$$
Z(t,s)e^{-}_{2,0} = 0 \textnormal{  and  } Z(t,s)e^{-}_{2,j}+r_{j-1} = 0,
$$
for $0\leq t<T$, with the initial conditions 
$$
e^{-}_{2,0}(s,s)=1 \textnormal{ and } e^{-}_{2,j}(s,s)=0,\ j=1,2,\dots,
$$
where 
\begin{align*}
r_j(t,s,x,\xi)&=\sum_{|\alpha|>2} g_\alpha(t,s,x,\xi)\sum_{\substack{|\alpha_1|+|\alpha_2|=|\alpha| \\ |\alpha_1|\neq 1}}D^{\alpha_1}_x \phi^-(t,x,y)D_x^{\alpha_2} e_{2,j}^-(t,s,x,\xi)\\ &+\sum_{|\alpha|=2} g_\alpha(t,s,x,\xi)D_x^\alpha e_{2,j}^-(t,s,x,\xi).
\end{align*}
The solutions $e^{-}_{2,j}(t,s,x,\xi)$, $j=0,1,2,\dots$, to the above initial value problems are
\begin{eqnarray}
\nonumber e^{-}_{2,0}(t,s,x,\xi)&=&\textnormal{exp}[-i\int_{s}^{t}g_0(\sigma, s, q(\sigma,s,y(t,s,x,\xi)),\xi)d\sigma], \\
\nonumber e^{-}_{2,j}(t,s,x,\xi)&=&-i\int_{s}^{t}r_{j-1}(\sigma, s, q(\sigma,s,y(t,s,x,\xi)),\xi)\\
\nonumber                  &  &\quad\quad\times \textnormal{exp}[-i\int_{s}^{t}g_0(\sigma', s, q(\sigma',s,y(t,s,x,\xi)),\xi)d\sigma']d\sigma.
\end{eqnarray}
\begin{lem}\label{lem:amplitude_E2}
The following useful estimates hold:
\begin{itemize}
\item If $0\leq s, t \leq t_{x,\xi}$ then 
\begin{equation*}
|D_x^\alpha D_\xi^\beta g_\alpha(t,s,x,\xi)|\lesssim \<x\>^{\frac{1}{2}+\eps-|\alpha|}\<\xi\>^{-\frac{1}{2}+\eps-|\beta|}, \quad \text{for } |\alpha|=1,
\end{equation*}
and
\begin{align*}
|D_x^\alpha D_\xi^\beta g_0(t,s,x,\xi)|&\lesssim \<x\>^{-1+\eps-|\alpha|}\<\xi\>^{-1+\eps-|\beta|}|t-s|,
\\
|D_x^\alpha D_\xi^\beta e_0^\mp(t,s,x,\xi)|&\lesssim \<x\>^{-1+\eps-|\alpha|}\<\xi\>^{-1+\eps-|\beta|}|t-s|^2,
\\
|D_x^\alpha D_\xi^\beta r_0(t,s,x,\xi)|&\lesssim \<x\>^{-2+\eps-|\alpha|}\<\xi\>^{-2+\eps-|\beta|}|t-s|^2;
\end{align*}
\item if $\max\{s,t\}\geq \tilde{t}_{x,\xi}$ then
\begin{equation*}
|D_x^\alpha D_\xi^\beta g_\alpha(t,s,x,\xi)|\lesssim \lambda(t)\<x\>^{1-|\alpha|}\<\xi\>^{-|\beta|}, \quad \text{for } |\alpha|=1,
\end{equation*}
and
\begin{align*}
|D_x^\alpha D_\xi^\beta g_0(t,s,x,\xi)|&\lesssim \<x\>^{-|\alpha|}\<\xi\>^{-|\beta|}\lambda(t)|\Lambda(t)-\Lambda(s)|,\\
|D_x^\alpha D_\xi^\beta e_0^\mp(t,s,x,\xi)|&\lesssim \<x\>^{-|\alpha|}\<\xi\>^{-|\beta|}|\Lambda(t)-\Lambda(s)|^2,\\
|D_x^\alpha D_\xi^\beta r_0(t,s,x,\xi)|&\lesssim \<x\>^{-1-|\alpha|}\<\xi\>^{-1-|\beta|}\lambda(t)|\Lambda(t)-\Lambda(s)|^2.
\end{align*}
\end{itemize}
\end{lem}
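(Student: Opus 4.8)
The plan is to derive all four families of estimates by differentiating the explicit formulas defining $g_\alpha$, $g_0$, $e_{2,0}^\mp$ and $r_0$, and inserting the bounds already available for the building blocks: the phase estimates of Lemma~\ref{phase_inhomogeneous} for $\phi^\mp-x\cdot\xi$ (and, by one further $x$-derivative, for $\nabla_x\phi^\mp$), the zone-split symbol bounds for $\mathfrak{t}_j$ (roughly $|D_x^\gamma D_\xi^\delta\mathfrak{t}_j|\lesssim\tfrac{\lambda(t)}{\sqrt{\Lambda(t)}}\<x\>^{1/2+\eps-|\gamma|}\<\xi\>^{1/2+\eps-|\delta|}$ in $Z_\pd$, and $\mathfrak{t}_j\in\SGH{1}{1}{1}{0}{N}$ in $Z_\hyp$) together with the bounds on $\rho$ from Lemma~\ref{lem:rho}, and the Hamiltonian-flow estimates of Lemmas~\ref{lem:estimates_p,q} and \ref{lem:hamiltonian_inverse}, in particular $\<q(t,s,y,\eta)\>\asymp\<y\>$, $\<p(t,s,y,\eta)\>\asymp\<\eta\>$ and the companion relations for the inverse maps $y(t,s,x,\xi)$, $\eta(t,s,x,\xi)$. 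The composites are handled by the Fa\`a di Bruno formula; only finitely many derivatives enter for a fixed multi-index, so the finitely many $\eps$-losses accumulate into the single arbitrarily small $\eps$ renamed in the statement. Throughout, I will use three elementary facts: (i) $\lambda/\sqrt\Lambda$ is bounded on $[0,T]$ (since $\lambda^2/\Lambda\in C^\infty([0,T])$), hence $\sqrt\Lambda$ is Lipschitz and $|\sqrt{\Lambda(t)}-\sqrt{\Lambda(s)}|\lesssim|t-s|$; (ii) in $Z_\pd$, $\sqrt{\Lambda(t)}\,\<x\>^{1/2}\<\xi\>^{1/2}\le\sqrt{N\ln(\<x\>\<\xi\>)}\lesssim\<x\>^\eps\<\xi\>^\eps$, the mechanism already used in the proof of Lemma~\ref{phase_inhomogeneous}; (iii) in $Z_\hyp$, $\ln(\<x\>\<\xi\>)\lesssim\Lambda(t)\<x\>\<\xi\>$. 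Since $\mathfrak{t}_j$ and $\rho$ are evaluated at $(t,x,\nabla_x\phi^\mp)$, I will also use that $\phi^\mp$ is a simple phase, so $\<\nabla_x\phi^\mp\>\asymp\<\xi\>$ for $T$ small.

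For $g_\alpha$ with $|\alpha|=1$ I differentiate $g_\alpha=(D_\xi^\alpha\mathfrak{t}_1)(t,x,\nabla_x\phi^-)$ by the chain rule: derivatives on the explicit $x$-slot lower a power of $\<x\>$, derivatives on the inner argument reproduce a factor $D_\xi^\gamma\mathfrak{t}_1$ (gaining a power of $\<\xi\>$) times derivatives of $\nabla_x\phi^-$ controlled by Lemma~\ref{phase_inhomogeneous}. In $Z_\pd$ each $\mathfrak{t}_1$-factor carries $\lambda(t)/\sqrt{\Lambda(t)}\lesssim1$ by (i), giving the asserted $\<x\>^{1/2+\eps-|\alpha|}\<\xi\>^{-1/2+\eps-|\beta|}$; in $Z_\hyp$ the $\SGH{1}{1}{1}{0}{N}$ bounds produce the $\lambda(t)$ and the $\SG$-powers. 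The bound on $g_0$ follows from $g_0=-i\sum_{|\alpha|=2}\tfrac{1}{\alpha!}g_\alpha\,\partial_x^\alpha\phi^-$: the factor $g_\alpha$ ($|\alpha|=2$) is treated as above, while the crucial $|t-s|$ (resp.\ $|\Lambda(t)-\Lambda(s)|$) smallness is supplied by $\partial_x^\alpha\phi^-$ with $|\alpha|\ge2$. Indeed $\phi^\mp-x\cdot\xi$ vanishes identically at $t=s$, so by the eikonal equation \eqref{eq:eikonal_equation}, $\partial_x^\alpha\phi^\mp(t,s,x,\xi)=\int_s^t\partial_x^\alpha\big[\vartheta(\sigma,x,\nabla_x\phi^\mp(\sigma,s,x,\xi))\big]\,d\sigma$; bounding the integrand by the zone-split estimates for $\vartheta=\mathrm{Re}\,\mathfrak{t}_j$ (using (i) in $Z_\pd$ and (iii) in $Z_\hyp$) gives $|\partial_x^\alpha\phi^\mp|\lesssim|t-s|\,\<x\>^{1/2+\eps-|\alpha|}\<\xi\>^{1/2+\eps}$ in $Z_\pd$ and $|\partial_x^\alpha\phi^\mp|\lesssim\lambda(t)\,|\Lambda(t)-\Lambda(s)|\,\<x\>^{1-|\alpha|}\<\xi\>$ in $Z_\hyp$. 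Multiplying the two factors, and running the same scheme for the $x,\xi$-derivatives, yields exactly the claimed estimates for $g_0$.

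For $e_{2,0}^\mp$---more precisely its deviation $e_0^\mp=e_{2,0}^\mp-1$ from its value $1$ at $t=s$, which is what the stated bound measures---one writes $e_0^\mp=\exp\big[-i\int_s^t g_0(\sigma,s,q(\sigma,s,y(t,s,x,\xi)),\xi)\,d\sigma\big]-1$; by the previous step $|g_0|\lesssim|\sigma-s|\,\<x\>^{-1+\eps}\<\xi\>^{-1+\eps}$ there (using $\<q\>\asymp\<x\>$ along the flow), so the exponent is $O\big(|t-s|^2\,\<x\>^{-1+\eps}\<\xi\>^{-1+\eps}\big)$, hence so is $e^{i(\cdot)}-1$; the $x,\xi$-derivatives pass through the chain rule together with the flow estimates of Lemmas~\ref{lem:estimates_p,q}--\ref{lem:hamiltonian_inverse}, and in $Z_\hyp$ one replaces $|t-s|$ by $|\Lambda(t)-\Lambda(s)|$ throughout. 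Finally $r_0$ is a finite sum of products of factors $g_\alpha$ ($|\alpha|\ge2$), $\partial_x^{\alpha_1}\phi^\mp$ ($|\alpha_1|\ne1$) and $\partial_x^{\alpha_2}e_{2,0}^\mp$; each $\partial_x^{\alpha_1}\phi^\mp$ with $|\alpha_1|\ge2$ and each $\partial_x^{\alpha_2}e_{2,0}^\mp$ with $|\alpha_2|\ge1$ carries $|t-s|$ (resp.\ $|\Lambda(t)-\Lambda(s)|$) smallness by the two preceding steps, and collecting the $\<x\>,\<\xi\>,\lambda$ powers---again using (ii)--(iii) to absorb $\sqrt\Lambda$'s and logarithms into $\<x\>^\eps\<\xi\>^\eps$---produces the two stated bounds for $r_0$.

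I expect the main obstacle to be organizational rather than conceptual: no individual estimate is deep, but one must simultaneously track the three small quantities $\lambda(t)$, $|t-s|\sim|\sqrt{\Lambda(t)}-\sqrt{\Lambda(s)}|$ and $\<x\>^{-1},\<\xi\>^{-1}$ through two geometrically different regimes---$Z_\pd$, where $\sqrt\Lambda$-factors and logarithms must be repeatedly traded for $\<x\>^\eps\<\xi\>^\eps$ via (ii), and $Z_\hyp$, where the $\lambda(t)$-weights and the $\SGH{m}{\mu}{\kappa}{\ell}{N}$-hierarchy are in force via (iii)---while keeping the accumulated $\eps$-losses infinitesimal and exploiting the vanishing of $\phi^\mp-x\cdot\xi$ at $t=s$ at the right order to extract the exact powers of $|t-s|$ in the final bounds.
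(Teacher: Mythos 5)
The paper states this lemma without proof; the reader is expected to supply it from the tools assembled in the preceding sections, so there is no ``paper proof'' to compare against.

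Your proposal is correct, and it is the natural argument dictated by the machinery already in place. You correctly identify the crucial non-obvious step: the smallness factors $|t-s|$ (in $Z_\pd$) and $|\Lambda(t)-\Lambda(s)|$ (in $Z_\hyp$) in the bounds for $g_0$, $e_0^\mp$, and $r_0$ do \emph{not} come directly from the statement of Lemma~\ref{phase_inhomogeneous}, which in the pseudodifferential zone gives only the static bound $\<x\>^{\eps-|\alpha|}\<\xi\>^{\eps-|\beta|}$ after trading $\sqrt{\Lambda}$-factors for $\<x\>^{\eps}\<\xi\>^{\eps}$. Instead one must return to the integral representation $\partial_x^\alpha\phi^\mp(t,s,\cdot)=\int_s^t \partial_x^\alpha\big[\vartheta(\sigma,x,\nabla_x\phi^\mp(\sigma,s,\cdot))\big]\,d\sigma$ (or equivalently to the $|\sqrt{\Lambda(t)}-\sqrt{\Lambda(s)}|$ factor buried in Lemma~\ref{lem:estimates_p,q}) and integrate, using the boundedness of $\lambda/\sqrt{\Lambda}$ in $Z_\pd$ and the $\SGH{1}{1}{1}{0}{N}$ bound in $Z_\hyp$; this is exactly what you do. Likewise, your reading of $e_0^\mp$ as $e_{2,0}^\mp-1$ is the only interpretation under which the stated bound is consistent (the exponential equals $1$ at $t=s$), and the $|t-s|^2$ then falls out from the $|t-s|$ already present in $g_0$ together with one more integration. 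The remaining ingredients --- Fa\`a di Bruno to pass derivatives through the compositions, the equivalences $\<q\>\asymp\<y\>$, $\<p\>\asymp\<\eta\>$, $\<\nabla_x\phi^\mp\>\asymp\<\xi\>$, and the absorption of logarithms via $\sqrt{\Lambda(t)}\,\<x\>^{1/2}\<\xi\>^{1/2}\le\sqrt{N\ln(\<x\>\<\xi\>)}$ in $Z_\pd$ and $\ln(\<x\>\<\xi\>)\lesssim\Lambda(t)\<x\>\<\xi\>$ in $Z_\hyp$ --- are all used appropriately. The exponent bookkeeping checks out (e.g.\ in $Z_\pd$, for $|\alpha|=2$, $|g_\alpha|\lesssim\<x\>^{1/2+\eps}\<\xi\>^{-3/2+\eps}$ against $|\partial_x^\alpha\phi^-|\lesssim|t-s|\<x\>^{-3/2+\eps}\<\xi\>^{1/2+\eps}$ gives the stated $|t-s|\<x\>^{-1+\eps}\<\xi\>^{-1+\eps}$), so the argument closes.
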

As a consequence, by using induction on $j\geq 0$ we get the following statement.
\begin{prop}\label{prop_E2}
The parametrix $E_2(t,s)= \; {\rm diag} \; \Big(E^-_2(t,s), E_2^+(t,s)\Big)$ is a diagonal matrix of Fourier integral operators with 
\beqst 
E_2^\mp (t,s)w(x) = \il_{\kR^d}\il_{\kR^d} e^{i(\phi^\mp(t,s,x,\xi)-y\cdot\xi)}e_2^\mp (t,s,x,\xi)w(y)dyd\xi, \\
\phi^\mp(s,s,x,\xi) = x \cdot \xi\; ,\; e_2^\mp(s,s,x,\xi)=1\;. \hspace*{1cm}
\eeqst
The phase functions $\phi^\mp$ satisfy estimate \eqref{eq:phase_estimate_pseudo} and \eqref{eq:phase_estimate_hyp}.
Further, for $(t,x,\xi)\in Z_{\pd}(N),$ the amplitude functions $e^{\mp}_{2}$ satisfy
\begin{equation*}
|D_x^\alpha D_\xi^\beta e^\mp_{2,j}(t,s,x,\xi)|\lesssim \<x\>^{-1-j+\eps-|\alpha|}\<\xi\>^{-1-j+\eps-|\beta|}|t-s|.
\end{equation*}
Moreover, there exists a positive number $N_{0}$ such that, for $(t,x,\xi)\in Z_{\hyp}(N_0)$, it holds
\begin{equation*}
|D_x^\alpha D_\xi^\beta e_{2,j}^\mp(t,s,x,\xi)|\lesssim \<x\>^{-|\alpha|}\<\xi\>^{-|\beta|}\Lambda(t)^j|\Lambda(t)-\Lambda(s)|
\end{equation*}
for $j=0,1,2,\dots$
\end{prop}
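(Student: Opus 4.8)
The plan is to observe first that most of the statement is already in hand: the Fourier‑integral‑operator form of $E_2^\mp(t,s)$, the normalisations $\phi^\mp(s,s,x,\xi)=x\cdot\xi$ and $e^\mp_{2,0}(s,s,\cdot)=1$, $e^\mp_{2,j}(s,s,\cdot)=0$ for $j\ge1$, together with the eikonal and transport equations, are precisely what the construction of Section~\ref{determination_E2} produces, and the two estimates for $\phi^\mp$ are nothing but \eqref{eq:phase_estimate_pseudo} and \eqref{eq:phase_estimate_hyp} of Lemma~\ref{phase_inhomogeneous}, applied with $\vartheta=\mathrm{Re}\,\mathfrak t_1$, resp.\ $\mathrm{Re}\,\mathfrak t_2$. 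So the only work is to prove the two amplitude estimates, which I would do by induction on $j\ge0$, carried out simultaneously in $Z_\pd(N)$ and in $Z_\hyp(N_0)$; here $\eps>0$ is fixed and small, and $N_0<N_1<N$ are chosen so that, by Lemmas~\ref{lem:hamiltonian_path}--\ref{lem:hamiltonian_inverse}, the forward flow started in $Z_\hyp(N_1)$ stays in $Z_\hyp(N_0)$, the inverse maps $y(t,s,\cdot,\cdot)$, $\eta(t,s,\cdot,\cdot)$ exist, and $\<q(t,s,y)\>\asymp\<y\>\asymp\<x\>$, $\<p(t,s)\>\asymp\<\eta\>\asymp\<\xi\>$ uniformly for $0\le s\le t\le T_1$.

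For the base case $j=0$ I would start from the explicit formula for $e^\mp_{2,0}$ as the exponential of $-i\int_s^t g_0(\sigma,s,q(\sigma,s,y(t,s,x,\xi)),\xi)\,d\sigma$, differentiate it via the Fa\`a di Bruno formula, and insert the bounds for $g_0$ from Lemma~\ref{lem:amplitude_E2} together with the flow estimates of Lemmas~\ref{lem:estimates_p,q} and~\ref{lem:hamiltonian_inverse}; since $|e^z-1|\le|z|\,e^{|z|}$ and $|t-s|\le T_1$ is small, this gives the claimed bound for $e^\mp_{2,0}-1$, the subtracted constant being irrelevant once $\alpha+\beta\ne0$. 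For the inductive step, assuming the estimates for $e^\mp_{2,j}$, I would first estimate the remainder $r_j$, a finite sum of terms $g_\alpha\,(D_x^{\alpha_1}\phi^\mp)\,(D_x^{\alpha_2}e^\mp_{2,j})$ with $|\alpha|=|\alpha_1|+|\alpha_2|\ge2$, $|\alpha_1|\ne1$ and $g_\alpha=\partial_\xi^\alpha\mathfrak t_k(t,x,\nabla_x\phi^\mp)$: applying the chain rule to $g_\alpha$ as in the proof of Lemma~\ref{lem:amplitude_E2} but now with $|\alpha|\ge2$ — so that each further $\xi$-derivative of $\mathfrak t_k$ (i.e.\ of $\rho$ in $Z_\pd$, of $\tau_k$ in $Z_\hyp$) lowers the $\<\xi\>$-order by one — and combining with the phase estimates for $D_x^{\alpha_1}\phi^\mp$ and the inductive bound on $D_x^{\alpha_2}e^\mp_{2,j}$, one gets that $r_j$ is two orders more decaying than $e^\mp_{2,j}$ in $Z_\pd(N)$ (still carrying a factor $|t-s|$), and carries a factor $\<x\>^{-1}\<\xi\>^{-1}\lambda(t)\,\Lambda(t)^j|\Lambda(t)-\Lambda(s)|$ in $Z_\hyp(N_0)$. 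Feeding this into the flow‑integral formula for $e^\mp_{2,j+1}$ (whose $g_0$‑exponential factor is bounded), differentiating under the integral and again using $\<q\>\asymp\<x\>$, $\<p\>\asymp\<\xi\>$: in $Z_\pd(N)$ one picks up one more power of $|t-s|$ — so the accumulated power is at least one, the surplus absorbed by $|t-s|\le T_1$ — while the order drops by one, producing the exponents $-1-(j+1)+\eps$; in $Z_\hyp(N_0)$ the integration yields $\int_s^t\lambda(\sigma)\Lambda(\sigma)^j|\Lambda(\sigma)-\Lambda(s)|\,d\sigma\lesssim \Lambda(t)^j|\Lambda(t)-\Lambda(s)|(\Lambda(t)-\Lambda(s))\le\Lambda(t)^{j+1}|\Lambda(t)-\Lambda(s)|$, using $\Lambda(\sigma)\le\Lambda(t)$, $|\Lambda(t)-\Lambda(s)|\le\Lambda(t)$ and the monotonicity of $\lambda/\Lambda$. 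This closes the induction, and since the orders of the $e^\mp_{2,j}$ strictly decrease in both $x$ and $\xi$, the series $\sum_j e^\mp_{2,j}$ can finally be asymptotically summed in the zone‑adapted parameter‑dependent symbol class by Lemma~\ref{lem:asymptotic} and Remark~\ref{rem:asymptexpbis}, giving amplitudes $e^\mp_2$ with $e^\mp_2(s,s,\cdot)=1$ for which $E^\mp_2(t,s)$ solves the diagonal Cauchy problem modulo $C([0,T_1]^2,\Op(S^{-\infty,-\infty}))$.

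The hard part will not be the induction scheme itself, but the uniform Fa\`a di Bruno bookkeeping feeding it: estimating all $x,\xi$-derivatives of $g_\alpha=\partial_\xi^\alpha\mathfrak t_k(t,x,\nabla_x\phi^\mp)$ and of the compositions $e^\mp_{2,j}(t,s,q(\sigma,s,y(t,s,x,\xi)),\xi)$ so that the output orders match in \emph{both} zones at once — the pseudodifferential zone governed by the $S^{1/2+\eps,1/2+\eps}$-behaviour of $\rho$ and powers of $|t-s|$, the hyperbolic one by the $\lambda(t)$-weighted $S^{1,1}$-behaviour of $\tau_k$ and powers of $\Lambda(t)$ and $\Lambda(t)-\Lambda(s)$ — while ensuring that the small loss of $\eps$ incurred at each iteration in $Z_\pd(N)$ stays absorbed by the genuine gain of a full order of decay that each integration along the Hamiltonian flow provides. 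The estimates of Lemmas~\ref{lem:estimates_p,q}--\ref{lem:hamiltonian_inverse} for the flow and of Lemma~\ref{phase_inhomogeneous} for the phase are tailored exactly so that this matching works, which is what makes the induction go through as asserted.
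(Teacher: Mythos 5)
Your argument follows the paper's (unwritten) route exactly: the paper states this proposition with only the remark ``by using induction on $j\ge 0$,'' relying on Lemma~\ref{lem:amplitude_E2} for the base case and the explicit integral/exponential formulas for the $e^\mp_{2,j}$; your proposal carries out precisely that induction, with the same ingredients — the Hamiltonian-flow estimates of Lemmas~\ref{lem:estimates_p,q}--\ref{lem:hamiltonian_inverse}, the Fa\`a di Bruno bookkeeping, the zone-by-zone treatment, and the final asymptotic summation via Lemma~\ref{lem:asymptotic}/Remark~\ref{rem:asymptexpbis}. One small inaccuracy: in $Z_{\pd}(N)$ the remainder $r_j$ gains \emph{one} extra order of decay in $\langle x\rangle,\langle\xi\rangle$ over $e^\mp_{2,j}$ (compare $r_0\lesssim\langle x\rangle^{-2+\varepsilon}\langle\xi\rangle^{-2+\varepsilon}|t-s|^2$ with $e^\mp_{2,0}\lesssim\langle x\rangle^{-1+\varepsilon}\langle\xi\rangle^{-1+\varepsilon}|t-s|^2$ from Lemma~\ref{lem:amplitude_E2}), not ``two orders''; this does not affect the final exponents $-1-(j+1)+\varepsilon$, which you compute correctly, so the induction still closes.
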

\begin{rem}
We may write 
\[ E_2^{\mp}(t,s)=\Op(a^\mp(t,s))+\Op_{\varphi^\mp(t,s)}(b^\mp(t,s)),\]
where $\Op(a^\mp(t,s))$ is a pseudodifferential operator with amplitude
\[ a^\mp(t,s,x,\xi):=\chi\left(\frac{\Lambda(t)\norm{x}\norm{\xi}}{2N\ln (\norm{x}\norm{\xi})}\right)e^{i(\varphi^\mp(t,s,x,\xi)-x\cdot\xi)}e_2^\mp(t,s,x,\xi),\]
and $\Op_{\varphi^\mp(t,s)}(b^\mp(t,s))$ is the $\SG$ Fourier integral operator of type I (see Definition \ref{def:sgfios}) with phase $\phi^\mp$ and amplitude 
$$ b^\mp(t,s,x,\xi)=\left(1-\chi\left(\frac{\Lambda(t)\norm{x}\norm{\xi}}{2N\ln (\norm{x}\norm{\xi})}\right)\right)e_2^\mp(t,s,x,\xi).$$
We note that, as a consequence of \eqref{eq:phase_estimate_pseudo} and Lemma \ref{lem:amplitude_E2}, the symbol $a^\mp$ belongs to $L_\infty([0,T_0]^2, S^{-1+\eps,-1+\eps}_{(\eps)})$, according to the definition given in \eqref{eq:generalSGclass}.
\end{rem}

\begin{lem}\label{sym_estimate_reg}
Let $a(t,x,\xi)\in \SGO{-p}{-p}{-p}{p+k}{N}$ be supported in the regular zone defined in \eqref{regular_zone} with $t_{x,\xi}'$. Then, the symbol $$b(t,x,\xi)=e^{i\int^{T}_{t}a(\tau, x, \xi)d\tau}$$ satisfies 
$$
|\partial^{\alpha}_{x}\partial^{\beta}_{\xi}b(t,x,\xi)|\leq C_{\alpha\beta}\norm{x}^{-|\alpha|}\norm{\xi}^{-|\beta|}
$$
for any $p\geq k$. 
\end{lem}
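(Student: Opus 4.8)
The plan is to reduce the statement to a symbol estimate for the logarithmic amplitude $A(t,x,\xi):=\int_{t}^{T}a(\tau,x,\xi)\,d\tau$, and then to transfer it to $b=e^{iA}$ by means of the Fa\`a di Bruno formula. Since $a\in\SGO{-p}{-p}{-p}{p+k}{N}$ is supported in the regular zone \eqref{regular_zone}, we have $a(\tau,x,\xi)=0$ for $\tau<t_{x,\xi}'$, so that $A(t,x,\xi)=\int_{\max\{t,\,t_{x,\xi}'\}}^{T}a(\tau,x,\xi)\,d\tau$; as $a$ is smooth in $(x,\xi)$ with $(x,\xi)$-derivatives bounded uniformly in $\tau\in[0,T]$, one may differentiate under the integral sign, obtaining $D_{x}^{\alpha}D_{\xi}^{\beta}A(t,x,\xi)=\int_{t}^{T}D_{x}^{\alpha}D_{\xi}^{\beta}a(\tau,x,\xi)\,d\tau$.

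The core of the argument is the estimate
\[
\big|D_{x}^{\alpha}D_{\xi}^{\beta}A(t,x,\xi)\big|\leq C_{\alpha\beta}\,\norm{x}^{-|\alpha|}\norm{\xi}^{-|\beta|}\qquad\text{on }[0,T]\times\R^{2d},
\]
and, in particular, $|A|\lesssim1$. Since $\SGO{-p}{-p}{-p}{p+k}{N}\subset C\big([0,T],S^{-p,-p}\big)$, the family $a$ satisfies $\big|D_{x}^{\alpha}D_{\xi}^{\beta}a(\tau,x,\xi)\big|\leq C_{\alpha\beta}\norm{x}^{-p-|\alpha|}\norm{\xi}^{-p-|\beta|}$ uniformly in $\tau$; integrating over $[\max\{t,t_{x,\xi}'\},T]$ and using $p\geq0$ yields the estimate for $A$. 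The same bound can be obtained, keeping track of the zone structure, directly from the defining estimates \eqref{eq:estimates_hypSymbolClass} of $\SGO{-p}{-p}{-p}{p+k}{N}$ with $\kappa=-p$, $\ell=p+k$: using $\lambda(\tau)^{-p}\big(\lambda(\tau)/\Lambda(\tau)\big)^{p+k}=\lambda(\tau)^{k}\Lambda(\tau)^{-(p+k)}$ and integrating as in the computation displayed immediately before Theorem~\ref{diag_3}, together with $\ln\tfrac1{\Lambda(\tau)}\lesssim\ln(\norm{x}\norm{\xi})$ and the relation $\Lambda(t_{x,\xi}')\norm{x}\norm{\xi}=2N(\ln\norm{x}\norm{\xi})^{2}$ in the regular zone, one checks that the logarithmic factors of $\norm{x}\norm{\xi}$ produced by the integration are controlled exactly under the hypothesis $p\geq k$.

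It then remains to pass from $A$ to $b=e^{iA}$. By the Fa\`a di Bruno formula, $D_{x}^{\alpha}D_{\xi}^{\beta}b$ equals $e^{iA}$ times a finite linear combination of products $\prod_{j}D_{x}^{\alpha_{j}}D_{\xi}^{\beta_{j}}A$ with $\sum_{j}\alpha_{j}=\alpha$, $\sum_{j}\beta_{j}=\beta$ and $|\alpha_{j}|+|\beta_{j}|\geq1$. Since $|e^{iA}|=e^{-\mathrm{Im}\,A}\lesssim1$ (because $|\mathrm{Im}\,A|\leq|A|\lesssim1$) and each factor obeys $\big|D_{x}^{\alpha_{j}}D_{\xi}^{\beta_{j}}A\big|\lesssim\norm{x}^{-|\alpha_{j}|}\norm{\xi}^{-|\beta_{j}|}$, the product is $\lesssim\norm{x}^{-|\alpha|}\norm{\xi}^{-|\beta|}$, which is the claimed estimate. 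I expect the main obstacle to be the middle paragraph: the careful bookkeeping of the $\lambda$-, $\Lambda$- and logarithmic contributions inside the regular zone, and the verification that $p\geq k$ is precisely what makes the residual logarithms of $\norm{x}\norm{\xi}$ innocuous; once the $S^{0,0}$-type bound for $A$ is secured, the transfer to $b$ is routine.
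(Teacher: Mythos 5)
Your second and third paragraphs match the paper's proof in substance: integrate the defining estimate of $\SGO{-p}{-p}{-p}{p+k}{N}$ over $[\max\{t,t'_{x,\xi}\},T]$, use $\ln(1/\Lambda(\tau))\lesssim\ln(\norm{x}\norm{\xi})$ and the relation $\Lambda(t'_{x,\xi})\norm{x}\norm{\xi}=2N(\ln\norm{x}\norm{\xi})^{2}$ defining $Z_\reg(N)$ to reduce the integral to $\norm{x}^{-|\alpha|}\norm{\xi}^{-|\beta|}(\ln\norm{x}\norm{\xi})^{k-p}/(2N)^{p}$, and observe that $p\geq k$ kills the residual logarithm; then transfer the resulting $S^{0,0}$-type bound on $A=\int_{t}^{T}a$ to $b=e^{iA}$ via Fa\`a di Bruno. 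The paper organizes the same two steps inductively (estimating $\partial_{x_j}b$ by $|b|\int|\partial_{x_j}a|$ and so on), but the content is identical.

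Your first paragraph, however, is a shortcut you should have distrusted. If the literal membership $\SGO{-p}{-p}{-p}{p+k}{N}\subset C([0,T],S^{-p,-p})$ were the mechanism, the hypothesis $p\geq k$ would be vacuous and the lemma pointless. What the lemma actually records is that the bound is controlled by the constants in the shape-function estimates \eqref{eq:estimates_hypSymbolClass} — which is what the recursive parametrix construction needs — not by whatever constants happen to make $a$ a continuous $S^{-p,-p}$-valued family. The $C([0,T],S^{m,\mu})$ clause in Definition~\ref{symbol_class_hyp} is a qualitative regularity requirement whose seminorms are not the ones being propagated; compare Proposition~\ref{prop:hierarchy}(5), where the $L_\infty$ order one extracts from \eqref{eq:estimates_hypSymbolClass} alone is the weaker $\max\{0,m+\ell\}$, not $m$. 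Strike the first paragraph; the remainder is the paper's proof.
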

\begin{proof}
We need to prove the estimate on in $Z_{\reg}(N)$ as $b(t,x,\xi)$ is constant outside $Z_{\reg}(N)$. For $|\alpha|=|\beta|=0$, the result clearly holds. We can then proceed inductively, 
by observing that, for some positive constants $C_{j}$, $C'_{j}$, we have 
$$
\Big|\partial_{x_j}b(\tau,x,\xi)\Big| \leq C_{j}\int^{T}_{t}|\partial_{x_j}a(\tau,x,\xi)|d\tau.
$$
By the estimate on the symbols in $\SGO{-p}{-p}{-p}{p+k}{N}$ we have
\begin{align}
\nonumber  \Big|\partial_{x_j}b(\tau,x,\xi)\Big| &\leq  C_{j}\norm{x}^{-1}\int ^T_{t} \textstyle{\frac{1}{\norm{x}^{p}\norm{\xi}^{p}\lambda(\tau)^{p}}\Big(\frac{\lambda(\tau)}{\Lambda(\tau)} \ln \frac{1}{\Lambda(\tau)}\Big)^{p+k} d\tau }\\
\nonumber &\leq C_{j}\norm{x}^{-1}\frac{ (\ln \norm{x}\norm{\xi})^{(p+k)}}{(\norm{x}\norm{\xi} \Lambda(t_{x,\xi}'))^p}. 
\end{align}
In $Z_{\reg}(N)$ we can simplify the above expression to  
$$
\Big|\partial_{x_j}b(\tau,x,\xi)\Big| \leq \norm{x}^{-1}\frac{C_{j}}{(2N)^p} (\ln \norm{x}\norm{\xi})^{(p+k)-2p} \leq C'_{j}\norm{x}^{-1}  \textnormal{   as   }p\geq k.
$$
Similarly, we have 
$
|\partial_{\xi_j}b(\tau,x,\xi)|\leq C'_{j}\norm{\xi}^{-1}.
$
The estimates for the higher order derivatives are proved inductively, 
by means of the hypotheses and the F\'aa di Bruno formula.
\end{proof}

\begin{thm}[Egorov's Theorem]\label{egorov}
Let $E_{2}^\mp(t,s)$ be the fundamental solutions given in Proposition \ref{prop_E2} with $0\leq s\leq t$. Assume that $p\in \SGH{0}{0}{0}{0}{N}$ and it is supported in $Z_{\hyp}(N)$. Then, for a sufficiently large $N_{1}$, the operator
$$ 
\Op(p_{1}(t,s))=E_{2}^\mp(s,t)\Op(p(t))E_{2}^\mp(t,s)
$$ 
is a pseudodifferential operator with symbol $p_{1}(\cdot,s,\cdot,\cdot)\in \SGH{0}{0}{0}{0}{N_1}$ defined for all $s\in[0,t)$ for $t\in [0,T_1]$ and vanishing in the pseudodifferential zone. 
\end{thm}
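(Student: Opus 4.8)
The plan is to follow the classical Egorov scheme, adapted to the parameter-dependent $\SG$-calculus built above. First I would observe that, since $p$ is supported in $Z_\hyp(N)$ and the $\SG$-diffeomorphisms $\psi_{s,t}$ generated by the Hamiltonian flow map $Z_\hyp(N_1)$ into $Z_\hyp(N_0)$ for suitable $N_0<N_1\le N$ (Lemma \ref{lem:hamiltonian_path}), all the operators involved can be taken supported in the hyperbolic zone, where $\mathfrak{t}_1,\mathfrak{t}_2$ are elliptic and the phases $\phi^\mp$ satisfy the hyperbolic-zone estimates \eqref{eq:phase_estimate_hyp}; outside $Z_\hyp$ everything is constant/trivial, which handles the vanishing claim in $Z_\pd$.

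The key step is to differentiate $\Op(p_1(t,s))=E_2^\mp(s,t)\Op(p(t))E_2^\mp(t,s)$ with respect to $t$ and turn the identity into a transport equation for the symbol. Using $D_tE_2^\mp(t,s)=\Op(\mathfrak{t}_k(t))E_2^\mp(t,s)$ modulo regularizing terms (which is exactly how $E_2^\mp$ was constructed in Section \ref{determination_E2}), together with the analogous backward equation for $E_2^\mp(s,t)$, one gets
\[
D_t\Op(p_1(t,s)) = E_2^\mp(s,t)\big(D_t\Op(p(t)) + [\Op(\mathfrak{t}_k(t)),\Op(p(t))]\big)E_2^\mp(t,s)
\]
modulo $C^\infty([0,T_1]^2,\Op(S^{-\infty,-\infty}))$. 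Then I would make the ansatz $p_1(t,s)\sim\sum_{j\ge0}p_1^{(j)}(t,s,x,\xi)$ and plug in the composition formulas from Lemma \ref{symcalculus} and Theorem \ref{thm:composition}; matching orders, the leading term must satisfy the first-order linear PDE $\partial_t p_1^{(0)} = \{\mathfrak{t}_k, p_1^{(0)}\} + (\partial_t p)\circ\psi_{s,t}$ along the Hamiltonian bicharacteristics, whose solution is $p_1^{(0)}(t,s,x,\xi)=p(t,\psi_{s,t}(x,\xi))$ plus a contribution from $\partial_t p$; the lower-order terms $p_1^{(j)}$, $j\ge1$, solve inhomogeneous transport equations along the same flow with right-hand sides built from the previously determined $p_1^{(j')}$, $j'<j$, and the derivatives of $\mathfrak{t}_k$. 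Solving these ODEs by integration along the flow and invoking Lemma \ref{lem:asymptotic} to asymptotically sum the $p_1^{(j)}$ produces the desired symbol $p_1\in\SGH{0}{0}{0}{0}{N_1}$.

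The estimates that put $p_1^{(j)}$ in the right symbol class are essentially Lemma \ref{sym_log}: the leading term $p(t,\psi_{s,t}(x,\xi))$ is precisely of the form treated there (composition of a symbol in $\SGH{0}{0}{0}{0}{}$-type classes with the canonical relation), so it lies in $\SGH{0}{0}{0}{0}{N_1}$ for $N_1$ large; the lower-order terms pick up extra decay in $\norm{x}\norm{\xi}$ from each composition, by the hierarchy in Proposition \ref{prop:hierarchy} and the bounds on $\partial^\alpha_x\partial^\beta_\xi\psi_{s,t}$ from Lemma \ref{lem:estimates_p,q}, and the integration in $t$ over $[s,t]\subset[0,T_1]$ costs at most a factor $\Lambda(t)-\Lambda(s)$ which is harmless. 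I expect the main obstacle to be bookkeeping the two-sided composition consistently — one must compose $\Op(p(t))$ with a type I operator on the right and with a type II operator on the left, and verify that the resulting operator is genuinely a $\SG$-pseudodifferential operator (no residual phase), which rests on the fact that $\psi_{s,t}$ is a $\SG$-diffeomorphism with $S^{0,0}$ parameter dependence and on the stationary-phase/change-of-variables arguments underlying Theorem \ref{thm:composition}; keeping track of which enlarged parameter $N_1$ is needed at each composition (so that all supports stay inside a common hyperbolic zone) is the delicate point, but it is forced by Lemma \ref{lem:hamiltonian_path} and the monotonicity of $\Lambda$.
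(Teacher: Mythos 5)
Your proposal follows essentially the same route as the paper's proof: reduce to a transport (commutator) equation for the symbol along the Hamiltonian flow of $\mathfrak{t}_k$, expand the candidate symbol asymptotically, solve the resulting ODEs by integration along the flow, and invoke Lemma~\ref{sym_log} together with Lemma~\ref{lem:hamiltonian_path} (and Lemma~\ref{lem:asymptotic}) to place the result in $\SGH{0}{0}{0}{0}{N_1}$ and handle the zone bookkeeping. The only cosmetic difference is that the paper fixes the time at which $p$ is evaluated and differentiates in the other time variable, so the transport equation is the homogeneous commutator equation $D_tP_1=[\Op(\mathfrak{t}_1),P_1]$ with a pure pull-back solution $p(s,\psi)$, whereas you differentiate at the moving time, picking up an inhomogeneous $D_tp$ term; and the paper's argument is cast as the standard indirect scheme (construct an auxiliary $\Psi$DO $Q(t)=\Op(q(t,s))$ solving the same approximate Cauchy problem and identify $P_1=Q$ by uniqueness modulo smoothing) rather than a direct order-matching in the triple composition — this makes the appeal to Theorem~\ref{thm:composition} unnecessary, whereas you gesture at it without fully spelling out how the absence of a residual phase is guaranteed.
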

The proof follows in similar lines to that of the standard Egorov's theorem, cf. \cite{Yagdjian}. For the sake of completeness, we illustrate the argument.
%Some additional details are included in the Appendix.
%
\begin{proof}
Let $s$ be fixed while $t$ runs over $[0,s]$, then we denote 
$$
P_1(t):=P_1(s,t)=E_{2}^-(t,s)\Op(p(s))E_{2}^-(s,t),\quad 0\leq t\leq s.
$$
Clearly, $P_1(t)$ solves, modulo smoothing operators, the initial value problem 
\begin{equation}\label{eqn_Pst}
D_{t}P_1(t)=\Op(\mathfrak{t}_1(t))P_1(t)-P_1(t)\Op(\mathfrak{t}_1(t)), P_1(s)=\Op(p(s)).
\end{equation}
We are going to construct an approximating solution, $Q(t)$ to (\ref{eqn_Pst}) in the form of a pseudodifferential operator $Q(t):=Q(s,t) =\Op(q(t,s))$ with a parameter dependent symbol $q(t,s)$ belonging to $S^{0,0}$. So we determine $Q(t)$ solving the Cauchy problem 
$$
\frac{\partial}{\partial t}Q(t)=i\Op(\mathfrak{t}_1(t))Q(t)-iQ(t)\Op(\mathfrak{t}_1(t))+R(t),\quad Q(s)=\Op(p(s)),
$$
where $R(t)$ is a smooth family of regularizing operators on $\mathbb{R}^{d}$. 

In order to solve for $q(t, s, x, \xi)$, we assume an asymptotic expansion in the form 
$$
q(t,s,x,\xi)\sim q_{0}(t,s,x,\xi)+q_{1}(t,s,x,\xi)+\cdots,
$$
where $q_{j}(t,s,x,\xi)\in C^{\infty}([0,T]; S^{-j,-j}(\mathbb{R}^{d}\times \mathbb{R}^{d}))$. Therefore, we first solve the transport equation 
$$
\nonumber \frac{\partial q_{0}}{\partial t}-\sum^{d}_{j=1}\frac{\partial \phi}{\partial \xi_{j}}\frac{\partial q_{0}}{\partial x_{j}}+\sum_{j=0}^{d}\frac{\partial \phi}{\partial x_{j}}\frac{\partial q_{0}}{\partial \xi_{j}}=0,
$$
or, written equivalently by means of the Hamiltonian flow, 
$$\left(\frac{\partial }{\partial t}-\mathcal{H}_{\mathfrak{t_1}}\right)q_0=0,$$ 
with the initial condition $q_{0}(s,s,x,\xi)=p(s,x,\xi)$.

As $p(t,s,x,\xi)$ vanishes on the pseudodifferential zone, evidently, $q_{0}(t,s,x,\xi)=0 \textnormal{ for all } (s,x,\xi)\in Z_{\pd}(N), t\in [0,s]$. To solve for $q_{0}(t,s,x,\xi)$ in the hyperbolic zone, we observe that, by choosing $k$ and $T_{1}$ sufficiently small, we have a $\SG$-diffeomorphism, that is, $(y,\eta)\rightarrow (x,\xi)$ with  
$$
(x,\xi)=(x(t,s,y,\eta), \xi(t,s,y,\eta)): \mathbb{R}^{d}_{y}\times \mathbb{R}^{d}_{\eta} \rightarrow \mathbb{R}^{d}_{x}\times \mathbb{R}^{d}_{\xi}.
$$
For the parameters $t,s$ satisfying $t\leq s< T_{1}$ there exists an inverse mapping $(x,\xi)\rightarrow (y,\eta)$ with 
$$
(y,\eta)=(y(t,s,x,\xi), \eta(t,s,x,\xi)): \mathbb{R}^{d}_{x}\times \mathbb{R}^{d}_{\xi} \rightarrow \mathbb{R}^{d}_{y}\times \mathbb{R}^{d}_{\eta},
$$ 
for all $(t,x,\xi)\in Z_{\pd}(N), (s,x,\xi)\in Z_{\hyp}(N), t,s\leq T_{1}$. Moreover, the function $q_{0}(t,s,x,\xi)$ is constant on the integral curves of the Hamiltonian flow corresponding to 
$\mathfrak{t}_1$.  Thus, by the initial condition $q_{0}(s,s,x,\xi)=p(s,x,\xi)$ we have
$$
q_{0}(t,s,x,\xi)=p(s,y(t,s,x,\xi), \eta(t,s,x,\xi)).
$$ 
Similarly, we can determine the lower order terms $q_{l}(t,s,x,\xi)$ for $l=1,2,\dots$ by the equation
$$
\frac{\partial q_{l}}{\partial t}-\sum^{d}_{j=1}\frac{\partial \phi}{\partial \xi_{j}}\frac{\partial q_{l}}{\partial x_{j}}+\sum_{j=0}^{d}\frac{\partial \phi}{\partial x_{j}}\frac{\partial q_{l}}{\partial \xi_{j}}= \zeta_{l}(t,s,x,\xi)
$$ 
with  
$$
\zeta_{l}(t,s,x,\xi)=\sum_{i=0}^{l-1}\sum_{|\alpha|=l-i+1}\frac{1}{\alpha !}\left(D^{\alpha}_{\xi}\phi\partial^{\alpha}_{x}q_{i}-D^{\alpha}_{\xi}q_{i}\partial^{\alpha}_{x} \phi\right),\quad l=1,2,\dots
$$
and the initial condition $q_{k}(s,s,x,\xi)=0$.

By a variant of the usual asymptotic expansion argument, we can show that there exists a symbol $q(t,s,x,\xi)\sim \sum^{\infty}_{l=0}q_{l}(t,s,x,\xi)$. Furthermore, by Lemma \ref{hamiltonian_flow1} and Lemma \ref{sym_log}, we have $$q(t,s,x,\xi)\in \SGH{0}{0}{0}{N_1}.$$ The uniqueness of the solution modulo a smooth family of regularizing 
operators $R(t)$ is also a consequence of our approach. The proof is complete.  
\end{proof}

\subsection{Parametrix for the diagonal terms}
In this section we will construct the parametrix to 
\begin{equation}\label{parametrix_three_term}
D_{t}E_{1}-\mathcal{D}E_{1}+\mathcal{D}_{2}E_{1}= 0,\quad E_{1}(s,s)= I.
\end{equation}
The equalities of course holds modulo smoothing terms. Using $E_2=E_2(t,s)$ from the previous section, we define
 \beqst \label{4.8} E_1(t,s) = E_2(t,s) Q_1(t,s),\;\; Q_1(s,s) = I\,. \eeqst 
 Substituting into (\ref{parametrix_three_term}) gives the
Cauchy problem \beq \label{4.9} D_tQ_1 + E_2(s,t)\mathcal{D}_{2}(t) E_2(t,s) Q_1 = 0\;,\;\; Q_1(s,s) = I\;. \eeq 

By the Egorov's Theorem \ref{egorov}, we have that the matrix-valued operator $R_1(t,s) =E_2(s,t)\mathcal{D}_{2}(t)E_2(t,s)$ consists of pseudodifferential operators
with principal symbol 
$$r_1= r_1(t,s,x,\xi) = \sigma_p(\mathcal{D}_{2})\Big(t,
\psi_{s,t}(x,\xi)\Big).$$
%\textcolor{red}{a me sembra si componga con l'inverso del flusso hamiltoniano.}

Since $\sigma(\mathcal{D}_2)$ belongs to  $\SGH{0}{0}{0}{0}{N} + \SGO{-1}{-1}{-1}{2}{N}$ and vanishes in $Z_{\pd}(N)\cup Z_{\osc}(N)$, we may write 
\[ \mathcal{D}_{2}= \mathcal{D}_{2,0} + \mathcal{D}_{2,1}, \] 
where
\[d_{2,0}:=\sigma(\mathcal{D}_{2,0}) \in \SGH{0}{0}{0}{0}{N}, \quad d_{2,0}\equiv 0 \text{ in } Z_{\pd}(N); \]
and 
\[d_{2,1}:=\sigma(\mathcal{D}_{2,1}) \in \SGO{-1}{-1}{-1}{2}{N}, \quad  \sigma(\mathcal{D}_{2,1}) \equiv 0 \text{ in } Z_{\pd}(N)\cup Z_\osc(N).\] 
As a consequence of Lemma \ref{lem:hamiltonian_inverse}, we see that the compositions $d_{2,0}(t,\psi_{s,t})$ and $d_{2,0}(t,\psi_{s,t})$ satisfy
\begin{align*}
|D_x^\alpha D_\xi^\beta d_{2,0}(t,\psi_{s,t}(x,\xi))|&\lesssim \Lambda(t) \<x\>^{-|\alpha|}\<\xi\>^{-|\beta|},\\
|D_x^\alpha D_\xi^\beta d_{2,1}(t,\psi_{s,t}(x,\xi))|&\lesssim  \<x\>^{-1-|\alpha|}\<\xi\>^{-1-|\beta|}\frac{\lambda(t)}{\Lambda(t)}\bigg(\ln\bigg(\frac{1}{\Lambda(t)}\bigg)\bigg)^2.
\end{align*}
In particular, since  $\Lambda(t)\leq T\lambda(t)$ and, for all $(t,x,\xi)\in Z_\hyp(N)$, it holds
\[ \<x\>^{-1}\<\xi\>^{-1}\frac{\lambda(t)}{\Lambda(t)}\bigg(\ln\bigg(\frac{1}{\Lambda(t)}\bigg)\bigg)^2\lesssim \lambda(t),\]
we may estimate 
\begin{equation}
\label{eq:amplitude_second_step}
|D_x^\alpha D_\xi^\beta r_1(t,s,x,\xi)|\lesssim \lambda(t) \<x\>^{-|\alpha|}\<\xi\>^{-|\beta|}
\end{equation}
for all $(t,x,\xi)\in Z_\hyp(N)$. The obtained estimates allows to prove that the fundamental solution to \eqref{4.9} is given by a parameter-dependent
pseudodifferential operator
\beq \label{4.10} 
\quad Q_1(t,s)w(x)= \il_{\kR^n} e^{i\: x \cdot
\xi}q_1(t,s,x,\xi) \hat{w} (\xi)d\xi,\;\; q_1(s,s,x,\xi)=1. 
\eeq
Let us determine the matrix amplitude $q_1$ by means of an asymptotic expansion  \beqst q_1(t,s,x,\xi) \sim \sum^\iy_{j=0}
q_{1,j}(t,s,x,\xi). \eeqst 
Inserting \eqref{4.10} into \eqref{4.9} we get the following system of ordinary differential equations for $q_{1,j}$: 
\beqst  D_t q_{1,0} + r_1(t,s,x,\xi) q_{1,0} &=&0,\quad q_{1,0}(s,s,x,\xi)=1,
\eeqst 
and, for all $j>0$,
\beqst 
D_t q_{1,j} + r_1(t,s,x,\xi) q_{1,j} &=& - \sum^j_{|\al|=1} \textstyle{\frac{1}{\al !}}D^\al_\xi r_1(t,s,x,\xi)\pa^\al_x q_{1,j-|\al|}(t,s,x,\xi), 
\eeqst 
with the initial condition $q_{1,j}(s,s,x,\xi) =0$.
The solution of this system is given by
\beqst q_{1,0}(t,s,x,\xi)&=& \exp\Big(-i \il^t_s
r_1(\tau,s,x,\xi)d\tau\Big),\\ q_{1,j}(t,s,x,\xi)&=& - i \il^t_s
\exp\Big(-i \il^t_\si r_1 (\tau,s,x,\xi)d\tau\Big)\\ && \times
\sum^j_{|\al|=1} \textstyle{\frac{1}{\al !}} D^\al_\xi
r_1(\si,s,x,\xi)\pa^\al_x q_{1,j-|\al|}(\si,s,x,\xi) d\si.
\eeqst 
Employing the estimate \eqref{eq:amplitude_second_step}, we can show that the function $q_{1,j}=q_{1,j}(t,s,x,\xi)$ satisfies
\begin{equation*}
	\label{4.11} \Big|\pa_x^\beta \pa_\xi^\al q_{1,j}(t,s,x,\xi)\Big|
	\le C_{\al \beta } \norm{x}^{-j-|\beta|}\norm{\xi}^{-j-|\al|}|\Lambda(t)-\Lambda(s)|,
\end{equation*}
for each $j=0,1,\dots$

%Then, the amplitudes $q_{1,j} = q_{1,j}(t,s,x,\xi)$ can be represented in the form \[ q_{1,j}(t,s,x,\xi)= \exp\Big(-i \il^t_s
%r_1(\tau,s,x,\xi)d\tau \Big)p_{1,j}(t,s,x,\xi), \] 
%%
%where
%%
%\begin{align*} p_{1,j}(t,s,x,\xi)=-i  \int_s^t &\exp\Big(i \il_s^\sigma
%r_1(\tau,s,x,\xi)d\tau\Big)\\
%%
%& \times \sum^j_{|\al|=1} \textstyle{\frac{1}{\al !}}D^\al_\xi
%r_1(\si,s,x,\xi)\pa^\al_x q_{1,j-|\al|}(\si,s,x,\xi) d\si
%\end{align*}
% by Lemma \ref{lem:hamiltonian_path} and Lemma \ref{sym_estimate_reg} $p_{1,j}=p_{1,j}(t,s,x,\xi)$  satisfies the estimates 

Collecting all the results of this section, we have proved the next Proposition \ref{prop_E4}.

\begin{prop}\label{prop_E4}
The parametrix $E_1 = E_1(t,s)$ to the operator $D_t - {\mathcal D}_1 + \mathcal{D}_{2}$ can be written as $E_1(t,s)= E_2(t,s)Q_1(t,s)$, where $E_2(t,s)$ is the diagonal 
matrix of Fourier integral operator from Proposition \ref{prop_E2} and $Q_1=Q_1(t,s)$ is a diagonal pseudodifferential operator with symbol belonging to $W^1_\iy\Big([0,T_0]^2, S^{0,0}\Big)$.
\end{prop}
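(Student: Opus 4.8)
The plan is to assemble the statement from the ingredients already established in Sections~\ref{sec:diag} and \ref{sec:param}, essentially reading off the two factors of $E_1(t,s)$ from the construction. The claim has two parts: (i) the factorization $E_1(t,s)=E_2(t,s)Q_1(t,s)$ with $E_2$ as in Proposition~\ref{prop_E2}, and (ii) the regularity $\sigma(Q_1)\in W^1_\infty\bigl([0,T_0]^2,S^{0,0}\bigr)$. Part (i) is immediate from the ansatz: substituting $E_1=E_2Q_1$ into \eqref{parametrix_three_term} and using that $E_2$ solves $D_tE_2-\mathcal D E_2+\mathcal D_2^{\text{diag-part}}E_2\sim0$ (more precisely, that $E_2$ is the parametrix built from the phase $-\mathcal D$ and amplitude incorporating $-\mathcal D+\mathcal D_2$, per the discussion opening Section~\ref{sec:param}) reduces \eqref{parametrix_three_term} to the Cauchy problem \eqref{4.9} for $Q_1$; one then notes $E_1(s,s)=E_2(s,s)Q_1(s,s)=I$ holds since $E_2(s,s)\sim I$ and $Q_1(s,s)=I$. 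Here I would be careful to state that all equalities are modulo $C^\infty([0,T_0]^2,\Op(S^{-\infty,-\infty}))$, as flagged throughout the section.

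\textbf{For part (ii)}, the key input is Egorov's Theorem~\ref{egorov}: it gives that $R_1(t,s)=E_2(s,t)\mathcal D_2(t)E_2(t,s)$ is a pseudodifferential operator, and a direct symbol computation (splitting $\mathcal D_2=\mathcal D_{2,0}+\mathcal D_{2,1}$ and composing with the canonical relation $\psi_{s,t}$ via Lemma~\ref{lem:hamiltonian_inverse}) yields the estimate \eqref{eq:amplitude_second_step}: $|D_x^\alpha D_\xi^\beta r_1(t,s,x,\xi)|\lesssim\lambda(t)\norm{x}^{-|\alpha|}\norm{\xi}^{-|\beta|}$ on $Z_\hyp(N)$, with $r_1$ vanishing on the complementary zones. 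Then $Q_1$ is constructed as the pseudodifferential operator \eqref{4.10} by asymptotic expansion $q_1\sim\sum_j q_{1,j}$, solving the transport ODEs for $q_{1,0}$ and $q_{1,j}$ explicitly. The crucial observation is that each solution formula involves only the factor $\exp(-i\int_s^t r_1\,d\tau)$ (which is bounded, since $\int_s^t\lambda(\tau)\,d\tau=\Lambda(t)-\Lambda(s)\le\Lambda(T_0)\lesssim1$) and integrals of derivatives of $r_1$, giving the bound $|D_x^\beta D_\xi^\alpha q_{1,j}|\le C_{\alpha\beta}\norm{x}^{-j-|\beta|}\norm{\xi}^{-j-|\alpha|}|\Lambda(t)-\Lambda(s)|$ by induction on $j$. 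Hence $q_{1,j}\in C^\infty([0,T_0]^2,S^{-j,-j})$ with uniform bounds, and asymptotic summation (the standard $SG$ argument, cf.\ Lemma~\ref{lem:asymptotic} applied in the ordinary $S^{m,\mu}$ hierarchy) produces $q_1\in S^{0,0}$ uniformly in $(t,s)$.

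\textbf{To upgrade from $L_\infty$ to $W^1_\infty$ in time}, I would differentiate the defining ODEs for the $q_{1,j}$ once with respect to $t$ (and also $s$, since $W^1_\infty$ of the pair $(t,s)$ is wanted), observe that $D_tq_{1,0}=-r_1q_{1,0}$ is again controlled by the bound on $r_1$, and that $\partial_s r_1$ is controlled because $\partial_s\psi_{s,t}$ satisfies the estimates of Lemma~\ref{lem:estimates_p,q}; thus $\partial_t q_{1,j},\partial_s q_{1,j}\in L_\infty([0,T_0]^2,S^{-j,-j})$ as well, and the asymptotic sum $q_1$ inherits $W^1_\infty\bigl([0,T_0]^2,S^{0,0}\bigr)$. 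Finally, the diagonal structure of $Q_1$ is inherited from the diagonal structure of $\mathcal D_2$ (hence of $r_1$): the off-diagonal entries of $r_1$ vanish, so the ODE system decouples into two scalar problems, and $q_1=\operatorname{diag}(q_1^-,q_1^+)$.

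\textbf{The main obstacle} I expect is not any single step but the careful bookkeeping of the zone structure: one must verify that $r_1$ genuinely vanishes on $Z_\pd(N)\cup Z_\osc(N)$ (so that the $S^{0,0}$ estimate, which is only natural on $Z_\hyp$, in fact holds globally), and that Egorov's Theorem~\ref{egorov} applies with a possibly enlarged parameter $N_1$, requiring one to track how $N$ changes through the composition with $E_2^\mp$ and through the Hamiltonian flow estimates. A secondary subtlety is justifying that the asymptotic expansion $q_1\sim\sum q_{1,j}$ can be performed with estimates \emph{uniform} in the parameters $(t,s)\in[0,T_0]^2$ and compatible with one $t$- and $s$-derivative; this is routine but must be stated, since it is what the $W^1_\infty$ claim rests on.
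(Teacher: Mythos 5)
Your proposal follows the paper's argument essentially step for step: factor $E_1=E_2Q_1$ and reduce to the Cauchy problem \eqref{4.9} for $Q_1$; apply Egorov's Theorem~\ref{egorov} to recognize $R_1(t,s)=E_2(s,t)\mathcal D_2(t)E_2(t,s)$ as a pseudodifferential operator; split $\mathcal D_2=\mathcal D_{2,0}+\mathcal D_{2,1}$ and use Lemma~\ref{lem:hamiltonian_inverse} to obtain the key bound \eqref{eq:amplitude_second_step} $|D_x^\alpha D_\xi^\beta r_1|\lesssim\lambda(t)\norm{x}^{-|\alpha|}\norm{\xi}^{-|\beta|}$; then solve the scalar transport ODEs for $q_{1,j}$, getting the estimate with the factor $|\Lambda(t)-\Lambda(s)|$, and asymptotically sum. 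The only difference is that you spell out the $W^1_\infty$-in-$(t,s)$ upgrade (by differentiating the ODEs and controlling $\partial_s r_1$ via the flow estimates), which the paper leaves implicit when it "collects the results"; this is a correct fill-in rather than a different route.
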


\subsection{Parametrix for the full system}
Finally, we devote ourself to find $E=E(t,s)$ such that  
\beqst D_tE- {\mathcal D}E + \mathcal{D}_{2}E + \mathcal{B}_{\infty}E = 0,\quad E(s,s) =
I,\;\eeqst
modulo smoothing terms. Setting, as above, $E(t,s)=E_1(t,s)Q(t,s)$ we have to study 
\beq \label{4.12} D_tQ+E_1(s,t)\mathcal{B}_{\infty}(t)E_1(t,s)Q = 0,\quad Q_1(s,s) = I.\eeq 

The matrix operator $\mathcal{B}_{\infty}$ does not have diagonal form, so Egorov's theorem cannot be applied in this case. 
However, employing the property $\sigma(\mathcal{B}_\infty)\in \mathcal{HG}_N$ (see \eqref{eq:defHGN}), we will be able to 
prove that the composition $E_1(s,t)\mathcal{B}_{\infty}(t)E_1(t,s)$ is still a parameter-dependent pseudodifferential operator 
for all $0\leq s\leq t \in [0,T]$.

Let us first consider the composition $\mathcal{B}_{\infty}(t)E_1(t,s)$ of the matrix $\mathcal{B}_{\infty}=(\mathcal{B}^{jk}_{\infty})$ with the diagonal matrix $E_{1}(t,s)=\; {\rm diag}\; (E_1^-(t,s),E_1^+(t,s))$. Notice that, as a consequence of Proposition \ref{prop_E4}, there exists $e_1^\mp \in L_\infty \Big([0,T_0]^2, S^{0,0}\Big)$ such that 
$$
E_1(t,s)^{\mp} w(x)= \il_{\kR^n}\il_{\kR^n} e^{i (\phi^{\mp} (t,s,x,\xi)-y\cdot\xi)} e_1^{\mp} (t,s,x,\xi) w(y) dyd\xi.
$$ 
Applying Theorem \ref{thm:composition}, we may conclude that $\mathcal{B}_{\infty}(t)E_1(t,s)$ is a matrix-valued Fourier integral operator 
\beqst  \mathcal{B}_{\infty}(t)E_1(t,s)=\il_{\kR^n}\il_{\kR^n} e^{i(\phi^{\mp}(t,s,x,\xi)-y\cdot\xi)}r_{\mp}(t,s,x,\xi)w(y)dyd\xi \eeqst 
with symbols having entries admitting asymptotic expansions of the form
\begin{align*}
r_{\mp}^{jk}(t,&s,x,\xi)\\
& 
\sim \sum_{\alpha}\frac{i^{|\alpha|}}{\alpha !}(D^{\alpha}_{\xi}\si(\mathcal{B}^{jk}_\iy))(t,s,x,\nabla_{x}\varphi^\mp(t,x,\xi))D^{\alpha}_{y}[e^{i\Phi^\mp(t,x,y,\xi)}e_1^\mp(t, s,y,\xi)]_{y=x},
\end{align*}
where $\Phi^\mp(t,x,y,\xi)=\phi^\mp(t,y,\xi)-\phi^\mp(t,x,\xi)+(x-y)\cdot \nabla_{x}\phi^\mp(t,x,\xi)$.
%with \beqst && r^{\mp}_{jk}(t,s,x,\xi) \\ &&\sim \sum^\iy_{|\al|=0} \textstyle{\frac{1}{\al!}} D_y^\al \Big( \pa^\al_\xi \si(\mathcal{B}^{jk}_\iy)\Big(t,x,\il^1_0 \nabla_x \phi^{\mp}(t,s,y+r(x-y),\xi)dr\Big) e_1^{\mp}(t,s,y,\xi)\Big)_{y=x}. \eeqst 

By using Lemma \ref{lem:hamiltonian_path}, Lemma \ref{sym_estimate_reg},  Proposition \ref{prop_E2},  Theorem \ref{egorov} and Proposition \ref{prop_E4}, 
we can obtain for all $\alpha, \beta\in \N^d$, the estimate
\begin{equation*}
\begin{aligned}
|D_x^\alpha &D_\xi^\beta r_\mp(t,s,x,\xi)|\lesssim 
 C_{\alpha\beta}\<x\>^{-|\alpha|} \<\xi\>^{-|\beta|}\\
& \times \begin{cases}
\lambda(t)\<x\>^{-p}\<\xi\>^{-p} \bigg(\frac{1}{\Lambda(t)}\ln\bigg(\frac{1}{\Lambda(t)}\bigg)\bigg)^{p+1}  & \text{ in } Z_\reg(2N),\\
1+\lambda(t)\<x\>^{-1}\<\xi\>^{-1} \bigg(\frac{1}{\Lambda(t)}\ln\bigg(\frac{1}{\Lambda(t)}\bigg)\bigg)^2 & \text{ in } Z_\osc(2N), \\ 
\<x\>\<\xi\> &  \text{ in } Z_\pd(2N),
\end{cases}
\end{aligned}
\end{equation*}
where $p\in \N$ can assume an arbitrary large value.

Since we want to prove that $\mathcal{B}_{\infty}(t)E_1(t,s)$ is a pseudodifferential operator, we write it as 
\[ \mathcal{B}_{\infty}(t)E_1(t,s)=
\il_{\kR^n} \il_{\kR^n} e^{i (x-y) \cdot \xi} e^{i \phi^{\mp}(t,s,x,\xi)-ix \cdot \xi} r_{\mp}(t,s,x,\xi) w(y)dyd\xi, \]
and we prove that 
$$  \tilde{r}_{\mp}(t,s,x,\xi)):= e^{i \phi^{\mp}(t,s,x,\xi)-ix \cdot \xi} r^\mp(t,s,x,\xi)$$
is the symbol of a pseudodifferential operator. \\
We note that the exponential term satisfies:
\begin{equation}
\label{eq:exponential_estimate_Zpd}
 |D_x^\alpha D_\xi^\beta e^{i \phi^{\mp}(t,s,x,\xi)-ix \cdot \xi} |\lesssim C_{\alpha\beta} \<x\>^{-(1-\eps)|\alpha|+\eps|\beta|}\<\xi\>^{\eps|\alpha|-(1-\eps)|\beta|} 
\end{equation}
if $\max\{s,t\}\leq t_{x,\xi}$  and
\[ |D_x^\alpha D_\xi^\beta e^{i \phi^{\mp}(t,s,x,\xi)-ix \cdot \xi} |\lesssim C_{\alpha\beta} \<x\>^{|\beta|}\<\xi\>^{|\alpha|} \Lambda(t)^{|\alpha|+|\beta|},\]
if $t\geq s\geq t_{x,\xi}$.
In particular, if $(t,x,\xi)$ belongs to $Z_\osc(2N)$ we may estimate 
\[ \<x\>^\beta\Lambda(t)^{|\beta|}\leq 2N \<\xi\>^{-|\beta|}\ln(\<x\>\<\xi\>)^{2|\beta|}\lesssim \<x\>^{\eps|\beta|} \<\xi\>^{-(1-\eps)|\beta|},\]
and, similarly, 
\[ \<\xi\>^{|\alpha|} \Lambda(t)^{|\alpha|}\leq 2N \<x\>^{-|\alpha|}\ln(\<x\>\<\xi\>)^{2|\alpha|}\lesssim \<x\>^{-(1-\eps)|\alpha|} \<\xi\>^{\eps|\alpha|},\]
for any $\eps>0$ arbitrarily small. Then, \eqref{eq:exponential_estimate_Zpd} holds also in $Z_\osc(2N)$. Moreover, if $t\geq \max\{s,t'_{x,\xi}\}$, we obtain 
	\begin{align*}
		|D_x^\alpha D_\xi^\beta & \tilde{r}_{\mp}(t,s,x,\xi)| \\
		&\lesssim \sum_{\substack{|\alpha_1|+|\alpha_2|=|\alpha| \\|\beta_1|+|\beta_2|=|\beta|}} \lambda(t)\frac{\<x\>^{|\beta_2|-|\alpha_1|-p}\<\xi\>^{|\alpha_2|-|\beta_1|-p}}{\Lambda(t)^{p+1-|\alpha_2|-|\beta_2|}}\bigg(\ln\bigg(\frac{1}{\Lambda(t)}\bigg)\bigg)^{p+1};
	\end{align*}
	taking $p>1$ sufficiently large we can estimate \[ \Lambda(t)^{p-|\alpha_2|-|\beta_2|}>N\ln(\<x\>\<\xi\>)^{2p-2|\alpha_2|-2|\beta_2|}(\<x\>\<\xi\>)^{-p+|\alpha_2|+|\beta_2|}. \]
	Taking into account that 
	$$ \ln\bigg(\frac{1}{\Lambda(t)}\bigg)\leq \ln(\<x\>\<\xi\>), $$
	we may conclude
	\begin{align*}
		|D_x^\alpha& D_\xi^\beta \tilde{r}_\mp(t,s,x,\xi)|\\
		&\lesssim  \frac{\lambda(t)}{\Lambda(t)}\ln\bigg(\frac{1}{\Lambda(t)}\bigg)\<x\>^{-(1-\eps)|\alpha|+\eps|\beta|}\<\xi\>^{\eps|\alpha|-(1-\eps)|\beta|}\ln(\<x\>\<\xi\>)^{-p} 
	\end{align*}
	for any $\eps>0$ arbitrarily small.\\
	Similarly, since $\sigma(\mathcal{B}_\infty)$ belongs to $\SGH{0}{0}{0}{0}{2N} + \SGH{-1}{-1}{-1}{2}{2N}$ we may estimate for all $(t,x,\xi)\in Z_\osc(2N)$
	\begin{align*}
	|D_x^\alpha D_\xi^\beta \tilde{r}_\mp(t,s,x,\xi)|&\lesssim \<x\>^{-(1-\eps)|\alpha|+\eps|\beta|}\<\xi\>^{\eps|\alpha|-(1-\eps)|\beta|} \\
	&\hspace{40pt}\times\bigg(1+\ln(\<x\>\<\xi\>)^2\<x\>^{-1}\<\xi\>^{-1}\frac{\lambda(t)}{\Lambda(t)^2}\bigg),
	\end{align*}
	for any $\alpha$ and $\beta$ multi-indices. \\
	Finally, since $\sigma(\mathcal{B}_\infty)$ satisfies \eqref{eq:B1_bound_pd} if $(s,x,\xi), (t,x,\xi)\in Z_\pd(2N)$, then we may estimate 
	\begin{align*}
		|D_x^\alpha D_\xi^\beta \tilde{r}_\mp(t,s,x,\xi)|&\lesssim \<x\>^{-(1-\eps)|\alpha|+\eps|\beta|}\<\xi\>^{\eps|\alpha|-(1-\eps)|\beta|} \\
		&\hspace{40pt}\times\bigg(\rho(t,x,\xi)+\frac{\partial_t \rho(t,x,\xi)}{\rho(t,x,\xi)}\bigg),
	\end{align*}
	for any $\alpha$ and $\beta$ multi-indices. 
	Summarizing, we obtain that for all $(s,t)\in [0,T_0]^2$ the operator  $\mathcal{B}_{\infty}(t)E_1(t,s)$ is a pseudodifferential operator with symbol $\tilde{r}_\mp$ satisfying
	\begin{equation*}
		|D_x^\alpha D_\xi^\beta \tilde{r}_\mp(t,s,x,\xi)|\leq
		C_{\alpha\beta}\<x\>^{-(1-\eps)|\alpha|+\eps|\beta|}\<\xi\>^{\eps|\alpha|-(1-\eps)|\beta|}g_p(t,x,\xi)
	\end{equation*}
	where 
	\begin{equation}
		\label{eq:hp}
		\small
		g_pt,x,\xi):=\begin{cases}
			\frac{\lambda(t)}{\Lambda(t)}\ln\Big(\frac{1}{\Lambda(t)}\Big)\ln(\<x\>\<\xi\>)^{-p} & \text{ in } Z_\reg(2N),\\
			1+\ln(\<x\>\<\xi\>)^2\<x\>^{-1}\<\xi\>^{-1}\frac{\lambda(t)}{\Lambda(t)^2} & \text{ in } Z_\osc(2N), \\ 
			\rho(t,x,\xi)+\frac{\partial_t \rho(t,x,\xi)}{\rho(t,x,\xi)} &  \text{ in } Z_\pd(2N);
		\end{cases}
\end{equation} 
In particular, there exist $K_1$, $K_2$, $K_3$, positive constants depending only on $N$, such that 
\begin{align*}
\int_{t'_{x,\xi}}^T g_p(t,x,\xi)\,dt & \leq \ln(\Lambda(t'_{x,\xi}))^2 \ln(\<x\>\<\xi\>)^{-p} \leq K_1, \\
\int_{t_{x,\xi}}^{t'_{x,\xi}} g_p(t,x,\xi)\,dt & \leq \frac{\ln(\<x\>\<\xi\>)^2}{\Lambda(t_{x,\xi})\<x\>\<\xi\>}\leq K_2 \ln(\<x\>\<\xi\>),\\
\end{align*}
and
\begin{align*}
\int_0^{t_{x,\xi}} g_p(t,x,\xi)\,dt & \leq \frac{K_3}{2}\bigg(1+\sqrt{\<x\>\<\xi\>\ln(\<x\>\<\xi\>)}\int_0^{t_{x,\xi}}\frac{\lambda(t)}{\sqrt{\Lambda(t)}}\,dt\bigg) \\
& \leq  K_3 \ln(\<x\>\<\xi\>).
\end{align*}
In the same way we can treat the operator 
\beq \label{4.14}  R(t,s):=E_1(s,t)\Big(\mathcal{B}_{\infty}(t)E_1(t,s)\Big). \eeq  
We have proved the next Proposition \ref{prop:param_pseudo}.

\begin{prop}\label{prop:param_pseudo}
For all $s\leq t \in [0,T_0]$, the matrix operator $R=R(t,s)$ defined by \eqref{4.14} is a pseudodifferential operator whose symbol $\si(R(t,s))$ belongs to $L_\iy\Big([0,T_0]^2, S^{1,1}_{(\ve)}\Big)$ and satisfies 
\begin{equation*}
|D_x^\alpha D_\xi^\beta \si(R)(t,s,x,\xi)|\lesssim 
C_{\alpha\beta}\<x\>^{-(1-\eps)|\alpha|+\eps|\beta|}\<\xi\>^{\eps|\alpha|-(1-\eps)|\beta|}g_p(t,x,\xi),
\end{equation*} 
for every $p \ge 0$ and $\ve >0$ arbitrarily small, with the function $g_p$ defined by \eqref{eq:hp}.
\end{prop}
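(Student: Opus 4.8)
The plan is to read $R(t,s)$ as a two-step composition and to show that each step keeps us inside the (generalized) $\SG$-pseudodifferential calculus while only redistributing weights. The preceding analysis has already established that $\mathcal{B}_\infty(t)E_1(t,s)=\Op(\widetilde r_\mp)$ is a matrix-valued pseudodifferential operator whose symbol satisfies, zone by zone, $|D_x^\alpha D_\xi^\beta\widetilde r_\mp(t,s,x,\xi)|\lesssim\<x\>^{-(1-\eps)|\alpha|+\eps|\beta|}\<\xi\>^{\eps|\alpha|-(1-\eps)|\beta|}g_p(t,x,\xi)$ with $g_p$ as in \eqref{eq:hp}; since $g_p(t,x,\xi)\lesssim\<x\>\<\xi\>$ uniformly (in $Z_{\pd}(2N)$ one uses $\rho+\partial_t\rho/\rho\lesssim\<x\>\<\xi\>$ from Lemmas \ref{lem:rho}--\ref{lem:h}, and in $Z_{\hyp}(N)$ the factor $g_p$ is bounded), this already puts $\widetilde r_\mp$ in $L_\infty([0,T_0]^2,S^{1,1}_{(\eps)})$. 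It then remains to compose on the left with $E_1(s,t)$, which by Proposition \ref{prop_E4} equals $E_2(s,t)Q_1(s,t)$, the diagonal matrix of type I Fourier integral operators $E_2^\mp(s,t)=\Op_{\phi^\mp(s,t)}(e_2^\mp(s,t))$ of Proposition \ref{prop_E2}, composed with the diagonal pseudodifferential operator $Q_1(s,t)$ with symbol in $W^1_\infty([0,T_0]^2,S^{0,0})$.

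First I would absorb $Q_1(s,t)$: by the $\SG$-symbolic calculus of Lemma \ref{symcalculus}, in the $S^{m,\mu}_{(\eps)}$-variant, $\Op(q_1(s,t))\Op(\widetilde r_\mp)=\Op(\widehat r_\mp)$ with $\widehat r_\mp$ in the same class and obeying the same $g_p$-bound up to constants, the product with an $S^{0,0}$-symbol and the $t$-differentiation being harmless thanks to the $W^1_\infty$-time regularity of $q_1$. Next I would write $\Op(\widehat r_\mp)=\Op_{\phi^\mp(t,s)}(r'_\mp)$ with $r'_\mp=e^{i(x\cdot\xi-\phi^\mp(t,s))}\widehat r_\mp$ — a legitimate type I Fourier integral operator, since $\phi^\mp$ is a simple and regular family of phases on $[0,T_1]$ by \eqref{eq:phase_estimate_pseudo}--\eqref{eq:phase_estimate_hyp} — and compose it with $E_2^\mp(s,t)$ entrywise. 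For the diagonal entries this is precisely the Egorov conjugation of Theorem \ref{egorov}, so they are pseudodifferential; for the off-diagonal entries one composes a type I FIO with phase $\phi^\mp(s,t)$ and a pseudodifferential operator, the result being, by the $\SG$-FIO composition rules (Theorem \ref{thm:composition} and its adjoint/type II version), again a type I FIO with outer phase $\phi^\mp(s,t)$. Re-absorbing that outer phase, $R(t,s)=\Op(\sigma(R))$ with $\sigma(R)=e^{i(\phi^\mp(s,t,x,\xi)-x\cdot\xi)}\times(\text{composed amplitude})$; the composed amplitude is a finite-plus-asymptotic sum of products of the mildly oscillatory factors $e^{i(\phi^\mp(s,t)-x\cdot\xi)}$ and $e^{i(\phi^\mp(t,s)-x\cdot\xi)}$ (evaluated at the flow-shifted arguments $q(\cdot,s,\cdot)$, $p(\cdot,s,\cdot)$, for which $\<q\>\asymp\<x\>$, $\<p\>\asymp\<\xi\>$ by Lemma \ref{lem:estimates_p,q}), of the $S^{0,0}$-amplitudes $e_2^\mp$, $q_1$, and of $\widetilde r_\mp$ carrying the $g_p$-weight. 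Each exponential factor lies in $S^{0,0}_{(\eps)}$: in $Z_{\pd}(2N)$ by \eqref{eq:exponential_estimate_Zpd}; in $Z_{\osc}(2N)$ after trading the growth $\Lambda(t)^{|\alpha|+|\beta|}$ against $\Lambda(t)\<x\>\<\xi\>\le 2N\ln^2(\<x\>\<\xi\>)$, exactly as done for $\widetilde r_\mp$; in $Z_{\reg}(2N)$ after using the arbitrarily-high-order decay $\bigcap_{p}\SGO{-p}{-p}{-p}{p+1}{N}$ of $\sigma(\mathcal{B}_\infty)$ (see \eqref{eq:defHGN} and Lemma \ref{sym_estimate_reg}) to beat, for each fixed $(\alpha,\beta)$, the factor $\Lambda(t)^{|\alpha|+|\beta|}\<x\>^{|\beta|}\<\xi\>^{|\alpha|}$. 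Multiplying through, $\sigma(R)\in L_\infty([0,T_0]^2,S^{1,1}_{(\eps)})$ and $|D_x^\alpha D_\xi^\beta\sigma(R)|\lesssim\<x\>^{-(1-\eps)|\alpha|+\eps|\beta|}\<\xi\>^{\eps|\alpha|-(1-\eps)|\beta|}g_p$ for every $p\ge0$, which is the claim; the operator in \eqref{4.14} is treated identically.

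The main obstacle is not the calculus itself but the uniform zone-wise bookkeeping of the oscillatory phase factors: one must check that conjugation by the Fourier integral operators $E_1(s,t)$, $E_1(t,s)$ neither destroys the $g_p$-weight carried by $\sigma(\mathcal{B}_\infty)$ nor introduces growth beyond the $\eps$-redistribution of the exponents, and that this holds uniformly for $(s,t)\in[0,T_0]^2$ — in particular near $t=0$, where $\Lambda(t)\to0$, the amplitudes $e_2^\mp$ are only $S^{0,0}$, and the boundary between $Z_{\pd}$, $Z_{\osc}$, $Z_{\reg}$ is most delicate. Concretely, the error-prone point is the $\ln$-power accounting in $Z_{\osc}(2N)$, where \eqref{eq:hp} carries $\Lambda(t)^{-2}$ rather than $\Lambda(t)^{-1}$; this is controlled, as above, by the comparisons $\ln(1/\Lambda(t))\le\ln(\<x\>\<\xi\>)$ valid in the hyperbolic zone and by the integral estimates $\int g_p\,dt\lesssim\ln(\<x\>\<\xi\>)$ recorded before the statement. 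The $Z_{\reg}$ step is safe only because $\sigma(\mathcal{B}_\infty)$ decays there to arbitrary order, which is exactly what forces the conclusion to be phrased "for every $p\ge0$".
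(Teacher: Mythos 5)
Your proposal follows the paper's (quite terse) treatment in all essentials: the paper computes zone-by-zone estimates for $\widetilde r_\mp$ — the phase-absorbed symbol of $\mathcal{B}_\infty(t)E_1(t,s)$ — and then dispatches the left composition with $E_1(s,t)$ by saying "in the same way," which is precisely the step you flesh out (absorb $Q_1$ via the $\SGs^{m,\mu}_{(\eps)}$ calculus, compose with $E_2^\mp(s,t)$, re-absorb the outer phase, check zone-wise estimates, control $Z_{\osc}$ via $\Lambda(t)\<x\>\<\xi\>\le 2N\ln^2(\<x\>\<\xi\>)$, and $Z_{\reg}$ via the arbitrary-order decay in $\mathcal{HG}_N$). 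Your identification of the delicate points — the $\ln$-power accounting in $Z_{\osc}$, the uniformity near $t=0$, the role of the arbitrary $p$ — matches the paper's.

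One point to fix: you claim the diagonal entries of $E_1(s,t)\mathcal{B}_\infty(t)E_1(t,s)$ are handled by the Egorov conjugation of Theorem \ref{egorov}. The paper explicitly states that Egorov's theorem \emph{cannot} be applied here, and the hypotheses of Theorem \ref{egorov} in fact fail: it requires the middle symbol to lie in $\SGH{0}{0}{0}{0}{N}$ and be supported in $Z_{\hyp}(N)$, whereas $\sigma(\mathcal{B}_\infty)\in\mathcal{HG}_N$ has support reaching into $Z_{\pd}(2N)$ and carries the zone-dependent weight $g_p$ rather than uniform $S^{0,0}$ bounds. The paper therefore treats \emph{all} entries, diagonal and off-diagonal alike, by the direct FIO-composition-and-phase-absorption computation, which is also what your argument actually needs; the Egorov detour you sketch would require an extension of Theorem \ref{egorov} to this larger class, and since you in any case verify the zone-wise estimates directly, you should drop the Egorov appeal and rely on the FIO composition throughout. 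With that amendment the proof matches the paper's.
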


Following the same approach in \cite[Proposition 3.9.1]{Yagdjian} and in \cite[Proposition 3.8]{Dreher} one can prove the following Lemma \ref{lem:QsigmaR}, that allows to estimate the loss of regularity and the loss of decay due to the \textit{bad behavior} in the pseudodifferential and oscillation zones.
	\begin{lem}\label{lem:QsigmaR}
	Let $Q(t,s)$ the solution, modulo smoothing  operators, to the Cauchy problem 
	\begin{equation} 
			\label{eq:lemYag}
			D_tQ+R(t,s)Q=0, \quad Q(t,s)=I,
	\end{equation}
	where $\sigma(R)\in L_\infty([0,T_0]^2, \SGs^{m,\mu}_{r_1,r_2,\rho_1,\rho_2})$. Assume that, for all $\alpha,\beta \in \N^d$, there exists $C_{\alpha\beta}>0$ such that
	\begin{equation}
		\label{eq:generalSGclass}
		|D^{\alpha}_{x}D_{\xi}^{\beta}\sigma(R)|\leq C_{\alpha\beta}\norm{x}^{-r_1 |\alpha|+r_2 |\beta|}\norm{\xi}^{\rho_1|\alpha|-\rho_2|\beta|}g(t,x,\xi),
	\end{equation}
	for some $g\in C([0,T_0]^2\times \R^{2d})$. Suppose that $g(t,x,\xi)\lesssim \<x\>^\ell\<\xi\>^\omega$ for some $\ell, \omega \in \R$ and it holds
	\[ \int_0^{T_0}g(t,x,\xi)dt\leq K \ln(\<x\>\<\xi\>).\]
	Then, there exists a solution $Q$ to \eqref{eq:lemYag} with matrix symbol $\sigma(Q)$ satisfying
	\begin{equation*}
		%\label{eq:generalSGclass}
		|D^{\alpha}_{x}D_{\xi}^{\beta}\sigma(Q)|\leq C_{\alpha\beta}\norm{x}^{K-r_1 |\alpha|+r_2 |\beta|}\norm{\xi}^{K+\rho_1|\alpha|-\rho_2|\beta|}\ln(\<x\>\<\xi\>)^{|\alpha|+|\beta|+1}.
	\end{equation*}
Such solution is unique modulo $C([0,T_0]^2, \Op(S^{-\infty,-\infty}))$.
\end{lem}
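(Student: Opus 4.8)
The plan is to solve \eqref{eq:lemYag} by a matrix-valued ansatz whose symbol is constructed via the (standard, but here parameter-dependent) iterative scheme for evolution equations of pseudodifferential type, keeping careful track of how the time integral of $g$ produces the weight $\langle x\rangle^{K}\langle\xi\rangle^{K}$ rather than a fixed-order gain. First I would reduce to the scalar/entrywise statement, since $R(t,s)$ acts by a matrix symbol and the estimates are entrywise. Then I would seek $\sigma(Q)(t,s,x,\xi)\sim\sum_{j\ge0}q_j(t,s,x,\xi)$ with $q_0$ solving the principal transport ODE
\[
D_tq_0+\sigma_p(R)(t,s,x,\xi)\,q_0=0,\qquad q_0(s,s,x,\xi)=I,
\]
i.e.\ $q_0(t,s,x,\xi)=\exp\!\bigl(i\int_s^t\sigma_p(R)(\tau,s,x,\xi)\,d\tau\bigr)$ (for the matrix case, a time-ordered exponential / Dyson series, which converges because $\sigma_p(R)$ is uniformly bounded in $x,\xi$ up to the factor $g$, and $\int g\,dt\le K\ln\langle x\rangle\langle\xi\rangle$ is only logarithmically large). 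The lower-order terms $q_j$, $j\ge1$, are then obtained recursively by solving
\[
D_tq_j+\sigma_p(R)\,q_j=-\sum_{1\le|\alpha|\le j}\tfrac{1}{\alpha!}D_\xi^\alpha\sigma(R)\,\partial_x^\alpha q_{j-|\alpha|},\qquad q_j(s,s)=0,
\]
via Duhamel with the propagator associated with $q_0$.

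The heart of the matter is the derivative estimate for $q_0$. Differentiating the exponential and using the Faà di Bruno / Leibniz rule, each $x$- or $\xi$-derivative that hits the exponent produces a factor $\int_s^t D_x^{\alpha'}D_\xi^{\beta'}\sigma_p(R)\,d\tau$; by \eqref{eq:generalSGclass} this is bounded by $\langle x\rangle^{-r_1|\alpha'|+r_2|\beta'|}\langle\xi\rangle^{\rho_1|\alpha'|-\rho_2|\beta'|}\int_s^t g\,d\tau$. The two mechanisms for growth are: (i) the bare exponential itself, which by $|q_0|\le\exp\bigl(\int|\mathrm{Im}\,\sigma_p(R)|\,d\tau\bigr)\le\exp(CK\ln\langle x\rangle\langle\xi\rangle)=\bigl(\langle x\rangle\langle\xi\rangle\bigr)^{CK}$ (after absorbing constants into $K$, or more precisely using that the real part of the exponent contributes the loss while $g\lesssim\langle x\rangle^\ell\langle\xi\rangle^\omega$ guarantees the integrand is actually integrable), gives the claimed $\langle x\rangle^{K}\langle\xi\rangle^{K}$; and (ii) each derivative produces either a genuine symbolic decay $\langle x\rangle^{-r_1}$ or $\langle\xi\rangle^{-\rho_2}$ (from differentiating $\sigma_p(R)$) together with one extra $\int g\,dt\le K\ln\langle x\rangle\langle\xi\rangle$, and when several derivatives pile onto the same exponential factor one collects at most $\ln(\langle x\rangle\langle\xi\rangle)^{|\alpha|+|\beta|}$; the extra single power of the logarithm in the statement comes from the outermost integration in Duhamel. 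Combining, $|D_x^\alpha D_\xi^\beta q_0|\lesssim\langle x\rangle^{K-r_1|\alpha|+r_2|\beta|}\langle\xi\rangle^{K+\rho_1|\alpha|-\rho_2|\beta|}\ln(\langle x\rangle\langle\xi\rangle)^{|\alpha|+|\beta|+1}$, which is exactly the asserted bound.

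For $j\ge1$ one checks inductively that $q_j$ gains $j$ extra orders of decay in both variables relative to $q_0$ (from the $j$-fold loss of order in the right-hand side of the recursion, as in the standard parametrix construction), while the logarithmic powers and the $\langle x\rangle^{K}\langle\xi\rangle^{K}$ prefactor are unchanged; the Duhamel integral against the $q_0$-propagator again contributes only a bounded factor because $\int g\,dt$ is controlled. One then forms the asymptotic sum $\sigma(Q)\sim\sum_j q_j$ by a routine Borel-type summation adapted to this weighted class (cf.\ Lemma \ref{lem:asymptotic} and Remark \ref{rem:asymptexpbis}), obtaining a genuine symbol $\sigma(Q)$ in $L_\infty([0,T_0]^2,\SGs^{m+K',\mu+K'}_{r_1,r_2,\rho_1,\rho_2})$ for the stated $K$ and with the displayed derivative bounds; the operator $\Op(\sigma(Q))$ solves \eqref{eq:lemYag} modulo $C([0,T_0]^2,\Op(S^{-\infty,-\infty}))$. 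Uniqueness modulo regularizing operators follows as usual: the difference of two solutions satisfies the homogeneous equation with regularizing right-hand side and vanishing data, hence is regularizing by a Gronwall/energy argument in the $H^{s,\sigma}$ scale (Theorem \ref{regular_fio}).

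I expect the main obstacle to be the bookkeeping in step (ii): showing that differentiating the exponential $q_0$ really does convert ``one derivative'' into ``one genuine symbolic decay factor $\langle x\rangle^{-r_1}$ (or $\langle\xi\rangle^{-\rho_2}$) times one logarithm'' uniformly in $(t,s)\in[0,T_0]^2$, including across the zone boundaries where $g_p$ (from Proposition \ref{prop:param_pseudo}) changes form — in particular, verifying that the three zone-wise estimates for $\int g_p\,dt$ established just before the lemma combine to the single clean bound $\int_0^{T_0} g\,dt\le K\ln\langle x\rangle\langle\xi\rangle$ that drives the whole argument.
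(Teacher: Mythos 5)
Your proposal is correct and matches the approach the paper takes: the paper gives no explicit argument for this lemma but defers to Yagdjian [Prop.\ 3.9.1] and Dreher [Prop.\ 3.8], where precisely this iterative scheme --- (time-ordered) exponential for $q_0$, Duhamel recursion for the $q_j$ with the gain of one order per step, asymptotic summation in the weighted classes, and a Gronwall-type uniqueness argument --- is carried out, and your bookkeeping of how $\int_0^{T_0}g\,dt\le K\ln(\langle x\rangle\langle\xi\rangle)$ turns the exponential into the polynomial loss $(\langle x\rangle\langle\xi\rangle)^{K}$ and how each derivative produces one symbolic decay factor plus one logarithm is exactly the point of the cited proofs. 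The only minor slips are the sign (it should be $q_0=\exp\bigl(-i\int_s^t\sigma_p(R)\,d\tau\bigr)$, consistent with the construction of $q_{1,0}$ in the preceding subsection) and the fact that the stated exponent $K$ tacitly absorbs the constant $C_{00}$ from \eqref{eq:generalSGclass} with $\alpha=\beta=0$, both of which you already flag.
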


As a consequence of the results proved above, 
employing Lemma \ref{lem:QsigmaR}, we finally obtain the concluding result of this section, the next Proposition \ref{prop: loss_K0}.

\begin{prop}\label{prop: loss_K0}
The parametrix $E=E(t,s)$ to the operator $D_t - \mathcal{D} + \mathcal{D}_{2} + \mathcal{B}_{\infty}$ can be written as $E(t,s)= E_1(t,s)Q(t,s)$, where $E_1=E_1(t,s)$ is the 
matrix of Fourier integral operators given in \eqref{4.12} and $Q$ is a matrix of parameter-dependent pseudodifferential operators with symbol belonging to 
$$L_\iy\Big([0,T_0]^2,S^{K_0,K_0}_{\ve}\Big) \cap W^1_\iy\Big([0,T_0]^2,S^{K_0+1+\ve,K_0+1+\ve}_{\ve}\Big),$$ for every small $\ve>0$. Here, the constant $K_0$ 
describes the loss of derivatives and decay coming from the pseudodifferential zone $Z_{\pd}(2N)$ and the oscillations subzone $Z_\osc(2N)$.
\end{prop}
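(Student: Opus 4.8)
The plan is to assemble the ingredients already prepared in this section: the substitution $E=E_1Q$, the structure of $R(t,s)=E_1(s,t)\,\mathcal{B}_{\infty}(t)\,E_1(t,s)$ furnished by Proposition~\ref{prop:param_pseudo}, and the solvability estimate of Lemma~\ref{lem:QsigmaR}. First I would insert $E(t,s)=E_1(t,s)\,Q(t,s)$ into $D_tE-{\mathcal D}E+\mathcal{D}_{2}E+\mathcal{B}_{\infty}E=0$, $E(s,s)=I$. Since, by Proposition~\ref{prop_E4}, $E_1(t,s)$ solves $D_tE_1-{\mathcal D}E_1+\mathcal{D}_{2}E_1\equiv0$ modulo smoothing with $E_1(s,s)=I$, the Leibniz rule leaves $E_1(t,s)\,D_tQ+\mathcal{B}_{\infty}(t)\,E_1(t,s)\,Q\equiv0$. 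Composing on the left with $E_1(s,t)$, which is a two-sided inverse of $E_1(t,s)$ modulo $C^{\infty}([0,T_0]^2,\Op(S^{-\infty,-\infty}))$ (as a consequence of the semigroup identity $E_1(t,r)E_1(r,s)\equiv E_1(t,s)$, valid in the present calculus exactly as in \cite{KR,Yagdjian}), one recovers the Cauchy problem \eqref{4.12}, namely $D_tQ+R(t,s)\,Q=0$, $Q(s,s)=I$, with $R(t,s)=E_1(s,t)\,\mathcal{B}_{\infty}(t)\,E_1(t,s)$.

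Next I would invoke Proposition~\ref{prop:param_pseudo}: $R(t,s)$ is a parameter-dependent pseudodifferential operator with $\sigma(R)\in L_\infty([0,T_0]^2,S^{1,1}_{(\ve)})$ obeying $|D_x^\alpha D_\xi^\beta\sigma(R)(t,s,x,\xi)|\lesssim\langle x\rangle^{-(1-\ve)|\alpha|+\ve|\beta|}\langle\xi\rangle^{\ve|\alpha|-(1-\ve)|\beta|}\,g_p(t,x,\xi)$, with $g_p$ as in \eqref{eq:hp}. The time-integral hypothesis needed to apply Lemma~\ref{lem:QsigmaR} is then obtained by splitting $[0,T_0]=[0,t_{x,\xi}]\cup[t_{x,\xi},t_{x,\xi}']\cup[t_{x,\xi}',T_0]$ and using the three bounds established just before Proposition~\ref{prop:param_pseudo}, namely $\int_{t_{x,\xi}'}^{T_0}g_p\le K_1$, $\int_{t_{x,\xi}}^{t_{x,\xi}'}g_p\le K_2\ln(\langle x\rangle\langle\xi\rangle)$ and $\int_0^{t_{x,\xi}}g_p\le K_3\ln(\langle x\rangle\langle\xi\rangle)$, so that $\int_0^{T_0}g_p(t,x,\xi)\,dt\le K_0\ln(\langle x\rangle\langle\xi\rangle)$ for $|x|+|\xi|\ge M$, with $K_0:=K_1+K_2+K_3$. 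The contribution $K_1$ from the regular zone is harmless; the loss recorded by $K_0$ is produced by the oscillation subzone $Z_\osc(2N)$ (the term $K_2$) and the pseudodifferential zone $Z_\pd(2N)$ (the term $K_3$), which is the content of the last sentence of the statement.

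Finally I would apply Lemma~\ref{lem:QsigmaR} with $g=g_p$, $K=K_0$ and the metric exponents $r_1=\rho_2=1-\ve$, $r_2=\rho_1=\ve$ of the class $S^{m,\mu}_{(\ve)}$. This yields a solution $Q(t,s)$, unique modulo $C([0,T_0]^2,\Op(S^{-\infty,-\infty}))$, whose symbol satisfies $|D_x^\alpha D_\xi^\beta\sigma(Q)|\lesssim\langle x\rangle^{K_0-(1-\ve)|\alpha|+\ve|\beta|}\langle\xi\rangle^{K_0+\ve|\alpha|-(1-\ve)|\beta|}\ln(\langle x\rangle\langle\xi\rangle)^{|\alpha|+|\beta|+1}$; absorbing the logarithmic factors into the orders at the cost of an arbitrarily small amount of growth/decay gives $\sigma(Q)\in L_\infty([0,T_0]^2,S^{K_0,K_0}_{(\ve)})$ for every small $\ve>0$. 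For the $W^1_\infty$ claim I would use the equation itself: $D_tQ\equiv-R(t,s)Q$ modulo smoothing, so by the composition calculus of Lemma~\ref{symcalculus} together with the $L_\infty$-in-$t$ bound on $\sigma(R)$ one gets $\sigma(\partial_tQ)\in L_\infty([0,T_0]^2,S^{K_0+1,K_0+1}_{(\ve)})$, hence, again absorbing logs, $\sigma(Q)\in W^1_\infty([0,T_0]^2,S^{K_0+1+\ve,K_0+1+\ve}_{(\ve)})$; since the Hamiltonian flow $q,p$, and therefore $E_1$ and $R$, are only once continuously differentiable in $t$ (Lemma~\ref{hamiltonian_flow1}), one cannot expect more than one $t$-derivative, consistently with the stated regularity. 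Setting $E(t,s)=E_1(t,s)Q(t,s)$ with $E_1$ as in Proposition~\ref{prop_E4} then gives the asserted factorization and mapping properties.

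The proof involves no new analytic difficulty: the substantive estimates are those already proven in Proposition~\ref{prop:param_pseudo} and Lemma~\ref{lem:QsigmaR}. The one place that requires genuine care is the bookkeeping that consolidates the three zone contributions $K_1,K_2,K_3$ into the single loss constant $K_0$ and, correlatively, the (routine but delicate) verification that the logarithmic corrections produced by Lemma~\ref{lem:QsigmaR} are absorbable, uniformly in $(s,t)$, into the $S^{\cdot,\cdot}_{(\ve)}$ scale; the left-inversion of $E_1(t,s)$ by $E_1(s,t)$ modulo smoothing should also be stated precisely, as it is what makes the reduction to \eqref{4.12} legitimate.
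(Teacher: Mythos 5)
Your proposal is correct and follows essentially the same route as the paper, which simply assembles Proposition~\ref{prop:param_pseudo}, the zone-wise bounds on $\int g_p\,dt$, and Lemma~\ref{lem:QsigmaR}, leaving the reduction to \eqref{4.12}, the log absorption, and the $W^1_\infty$ estimate implicit; you have usefully spelled those out. The one point worth polishing is that the left-inverse relation $E_1(s,t)E_1(t,s)\equiv I$ modulo smoothing, which legitimizes the passage to \eqref{4.9}--\eqref{4.12}, should indeed be recorded explicitly (it follows from the semigroup property and the calculus, as you note), and your identification $r_1=\rho_2=1-\ve$, $r_2=\rho_1=\ve$ is the consistent reading of the $S^{m,\mu}_{(\ve)}$ class needed to match Proposition~\ref{prop:param_pseudo}.
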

\begin{rem}
We notice that the function $g_p$ in \eqref{eq:hp} is integrable in $[0,T]$. In particular, there exists a constant $K_0$ such that 
\begin{equation}
\label{eq:K0}
 \int_0^T g_p(t,x,\xi) \, dt \lesssim K_0 \ln(\<x\>\<\xi\>).
\end{equation}
This can be shown observing that, since $\Lambda$ is increasing in $[0,T]$, we may estimate
\begin{align*}
	\int_{t'_{x,\xi}}^{T} \frac{\lambda(t)}{\Lambda(t)^{q+1}}\ln\bigg(\frac{1}{\Lambda(t)}\bigg)^{q+1}\,dt &\lesssim \frac{1}{\Lambda(t'_{x,\xi})^{q}}\ln\bigg(\frac{1}{\Lambda(t'_{x,\xi})}\bigg)^{q+1} \\
	&\lesssim \<x\>^{q}\<\xi\>^q \ln(\<x\>\<\xi\>),
\end{align*}
for any exponent $q>0$. The value of $K_0$ in \eqref{eq:K0} determines the loss of derivatives and the loss of decay obtained in Proposition \ref{prop: loss_K0}.
\end{rem}

\section{Solution of the Cauchy problem}\label{solution_cauchy_problem}
\setcounter{equation}{0}
%%%%%%%%%
%
% CP
%
% S 230716
%

The analysis performed in the previous sections allows now us to prove our main results, namely, well-posedness, regularity and decay of the solutions to \eqref{eq:main}.
The uniqueness of the solution of \eqref{eq:main}
in the weighted Sobolev spaces follows by standard arguments, see \cite{Cord}. The same is true concerning the existence and 
uniqueness of the fundamental solution of the system in \eqref{main_system}, which we have determined modulo smoothing operators. Let us state this result precisely,
in the next Lemma \ref{lem:fundsolsyst}.
%We constructed a parametrix to the operator $D_t - A+H_{t}H^{\sharp}$. To solve the initial value problem (\ref{main_system}) we compute the fundamental solution so that we can apply the fundamental solution $F=F(t,s)$ solving 
%\beqst  D_t F(t,s) - A F(t,s)+ H_{t}H^{\sharp}F(t,s)=0,\,\, F(s,s)=I, \,\, \mbox{\rm for} \, \, 0\leq s\leq t \leq T_0. 
%\eeqst 
%Such a fundamental solution exists by standard considerations. 
\begin{lem}\label{lem:fundsolsyst}
The fundamental solution $F(t,s)$ of \eqref{main_system} has the representation $F(t,s)=E(t,s)+\mathcal{G}_\infty(t,s)$, where $E(t.s)$ comes from
Proposition \ref{diag_2}, Theorem \ref{diag_3}, and Propositions \ref{prop_E4} and \ref{prop: loss_K0}. Moreover, $\mathcal{G}_\infty(t,s)=\Op(g_\infty(t,s))$
with $g\in W^1_\iy\Big([0,T_0]^2,SG^{-\infty,-\infty}\Big)$, for a suitably small $T_0\in(0,T]$.
\end{lem}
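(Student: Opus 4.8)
\textbf{Proof plan for Lemma \ref{lem:fundsolsyst}.}
The plan is to assemble the fundamental solution of \eqref{main_system} from the pieces constructed in Sections \ref{sec:diag} and \ref{sec:param}, and to control the error terms accumulated along the way. Recall that, by Lemma \ref{lem:parametrix}, the coefficient matrix $K(t)$ of \eqref{main_system} has symbol in $\SGP{2N}\cap\SGH{1}{1}{1}{0}{N}$, and that the chain of reductions $E(t,s)=M(t)E_0(t,s)M^\sharp(s)$, $E_0=N_1E_1'$, and so on, each introduced only a smooth family of regularizing operators as remainder. The first step is therefore to write $F(t,s)=E(t,s)+\mathcal{G}_\infty(t,s)$ as a \emph{definition} of $\mathcal{G}_\infty$, where $E(t,s)$ is the operator obtained by composing $M$, $M^\sharp$, the normalizers $N_1$, $N_2$ (from Proposition \ref{diag_2} and Theorem \ref{diag_3}), the diagonal Fourier integral parametrix $E_1=E_2 Q_1$ (Proposition \ref{prop_E4}), and the pseudodifferential factor $Q$ (Proposition \ref{prop: loss_K0}). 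Plugging $E(t,s)$ into the defining equation $D_tF-K(t)F=0$, $F(s,s)=I$, and using that every composition identity in Sections \ref{sec:diag}--\ref{sec:param} holds modulo $C^\infty([0,T_0]^2,\Op(S^{-\infty,-\infty}))$, one sees that $E(t,s)$ satisfies
$$
D_tE(t,s)-K(t)E(t,s)=\mathcal{R}(t,s),\qquad E(s,s)=I+\mathcal{R}_0(s),
$$
with $\mathcal{R}\in C([0,T_0]^2,\Op(S^{-\infty,-\infty}))$ and $\mathcal{R}_0\in\Op(S^{-\infty,-\infty})$; here I would track, term by term, that each of the finitely many remainders produced by Lemmas \ref{symcalculus}, \ref{inv}, Theorem \ref{thm:composition}, Theorem \ref{egorov}, and the asymptotic summations is regularizing, and also uniformly bounded (together with one $t$-derivative) in $[0,T_0]^2$, so that $\mathcal{R},\mathcal{R}_0\in W^1_\infty$ in the time variables.

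The second step is to correct $E$ to an exact fundamental solution by a Duhamel/Neumann-series argument at the level of regularizing operators. After absorbing $\mathcal{R}_0$ by a harmless modification near the diagonal (replacing $E$ by $E(t,s)(I+\mathcal{R}_0(s))^{-1}$, which is legitimate since $I+\mathcal{R}_0$ is invertible modulo $\Op(S^{-\infty,-\infty})$ with inverse again of that type), we look for $F=E+\mathcal{G}_\infty$ with $\mathcal{G}_\infty$ solving
$$
D_t\mathcal{G}_\infty(t,s)-K(t)\mathcal{G}_\infty(t,s)=-\mathcal{R}(t,s),\qquad \mathcal{G}_\infty(s,s)=0.
$$
Since $K(t)$ has symbol in $\SGH{1}{1}{1}{0}{N}$, it generates, by the energy estimates implicit in Proposition \ref{prop: loss_K0} (equivalently, by the $H^{s,\sigma}$-boundedness of the full parametrix $F$ and of $K$), a strongly continuous evolution on each Sobolev--Kato space with at most polynomial loss; composing this evolution with the regularizing source $\mathcal{R}$ and integrating in time, $\mathcal{G}_\infty(t,s)=-i\int_s^t F(t,\tau)\mathcal{R}(\tau,s)\,d\tau$, one obtains an operator mapping $H^{s,\sigma}\to H^{s',\sigma'}$ for \emph{all} $s,\sigma,s',\sigma'$, hence with kernel in $\mathscr S(\R^{2d})$, i.e. symbol in $SG^{-\infty,-\infty}$. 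The uniform bound on $[0,T_0]^2$ and the $W^1_\infty$ regularity in $(t,s)$ follow from the corresponding properties of $\mathcal R$ together with $D_t\mathcal G_\infty = K\mathcal G_\infty-\mathcal R$ and the analogous identity differentiated in $s$; this gives $g_\infty\in W^1_\infty\big([0,T_0]^2,SG^{-\infty,-\infty}\big)$.

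The uniqueness assertion is then the statement that any two fundamental solutions of \eqref{main_system} differ by an operator family with kernel in $\mathscr S$, which is the standard argument recalled from \cite{Cord}: the difference solves the homogeneous system with zero initial data, and the energy inequality in the scale $H^{s,\sigma}$ forces it to vanish modulo smoothing; since both candidates have the stated form, the decomposition $F=E+\mathcal{G}_\infty$ is unique up to such a smoothing term, which is already absorbed in $\mathcal G_\infty$. I expect the main obstacle to be the second step: making precise that the Duhamel integral of a regularizing source against the (only approximately known) evolution of $K$ is again regularizing \emph{uniformly} on the square $[0,T_0]^2$ and stays so after one $t$- or $s$-differentiation. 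This requires invoking the $H^{s,\sigma}\to H^{s,\sigma}$ continuity of the Fourier integral and pseudodifferential factors (Theorem \ref{regular_fio} and the calculus) with constants locally uniform in the parameters, and checking that the polynomial loss $K_0$ from Proposition \ref{prop: loss_K0} is harmless because the source already lies in every $S^{-\infty,-\infty}$; everything else is bookkeeping of remainders already produced in Sections \ref{sec:diag}--\ref{sec:param}.
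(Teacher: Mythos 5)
Your proposal is correct and matches the paper's (implicit) approach. The paper gives no explicit proof of this lemma, instead stating just before it that the existence and uniqueness of the fundamental solution of the system ``follows by standard arguments, see \cite{Cord}''; what you have written out is precisely that standard argument — assemble the parametrix $E$ from the pieces of Sections \ref{sec:diag}--\ref{sec:param}, observe that it solves the system modulo a $W^1_\infty([0,T_0]^2,\Op(S^{-\infty,-\infty}))$ remainder, and correct it via Duhamel, using the Sobolev--Kato boundedness of the evolution (with finite loss $K_0$) to conclude that the correction $\mathcal{G}_\infty$ is again regularizing.
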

\noindent
The next Corollary \ref{cor_5.1} follows then immediately, by the Duhamel's principle.

\begin{cor}\label{cor_5.1}
The unique solution $U=U(t)$ of \eqref{main_system} can be written as \beqst U(t,x)=F(t,0)U_0(x)+\il^t_0 F(t,s)G(s,x)ds. \eeqst
\end{cor}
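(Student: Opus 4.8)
The final statement is Corollary~\ref{cor_5.1}, which follows from Lemma~\ref{lem:fundsolsyst} by Duhamel's principle. Let me plan a proof.

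The plan is to verify directly, through Duhamel's principle, that the right-hand side of the asserted identity solves the Cauchy problem \eqref{main_system}, and then to conclude by uniqueness.

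First I would invoke Lemma~\ref{lem:fundsolsyst}: $F(t,s)$ is the fundamental solution of \eqref{main_system}, so that $D_tF(t,s)-K(t)F(t,s)=0$ and $F(s,s)=I$, and, through the representation $F(t,s)=E(t,s)+\mathcal{G}_\infty(t,s)$ together with Propositions~\ref{diag_2}, \ref{prop_E4} and \ref{prop: loss_K0}, Theorem~\ref{diag_3}, and the mapping properties recorded in Theorem~\ref{regular_fio}, the family $(t,s)\mapsto F(t,s)$ is continuously differentiable on $[0,T_0]^2$ into the relevant operator class and bounded on the Sobolev--Kato scale (with the loss $K_0$ of derivatives and decay). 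This is exactly what makes the right-hand side well defined and legitimizes differentiating under the integral sign.

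Then I would set $V(t,x)=F(t,0)U_0(x)+\int_0^t F(t,s)G(s,x)\,ds$ and check the two conditions of \eqref{main_system}. The initial condition is immediate, since $F(0,0)=I$ gives $V(0,x)=U_0(x)$. For the evolution equation I would differentiate $V$ in $t$, applying the Leibniz rule to the term with variable upper limit: the boundary contribution reproduces (up to the normalization fixed by the convention for $D_t$) the source term $G(t,x)$, while the contribution of the $t$-dependence of $F(t,s)$ inside the integral, combined with the $F(t,0)U_0$ term, is disposed of by substituting $D_tF(t,s)=K(t)F(t,s)$ and pulling $K(t)$ outside the integral. This is the classical variation-of-parameters identity and yields $D_tV-K(t)V=G$, i.e.\ $V$ solves \eqref{main_system}; no new estimate is needed.

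Finally, the uniqueness of the solution of \eqref{main_system} in the Sobolev--Kato spaces, recalled before Lemma~\ref{lem:fundsolsyst} and resting on the standard energy arguments of \cite{Cord}, forces $U=V$, which is the claimed formula. I do not expect a genuine obstacle here: all the analytic work has already been carried out in the construction of $F$, and the Corollary is a formal consequence; the only point requiring a little attention is to have the $C^1$-dependence of $F$ on the parameters $(t,s)$ at one's disposal, so that the differentiation under the integral is rigorous.
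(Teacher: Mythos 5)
Your proposal is correct and matches the paper's (implicit) argument exactly: the paper simply states that Corollary~\ref{cor_5.1} "follows then immediately, by the Duhamel's principle" from Lemma~\ref{lem:fundsolsyst}, and your write-up is just the standard variation-of-parameters verification that the paper leaves to the reader. You also rightly flag the $D_t=-i\partial_t$ normalization and the need for $C^1$ parameter dependence of $F(t,s)$ to justify differentiating under the integral, both of which are the only points of mild care here.
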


Corollary \ref{cor_5.1} implies our first main result, Theorem \ref{cor_5.2}, about solutions of the Cauchy problem \eqref{eq:main} in the scale of Sobolev-Kato spaces.

\begin{thm}[Loss of regularity and decay for solutions in weighted Sobolev spaces] \label{cor_5.2}
Consider the Cauchy problem \eqref{eq:main}, where the coefficients $a_j,b_j$, $j=1,\dots,d$, and $c$ satisfy the assumptions in Proposition \ref{prop: equivalence} (A). 
Assume also that the initial data satisfy $\va \in H^{s,\sigma}(\R^d), \psi \in H^{s-1,\sigma-1}(\R^d)$ and that $g \in C\Big([0,T],H^{s,\sigma}(\R^d)\Big)$,
with suitably large $s,\sigma$. Then, for a suitably small $T_0\in(0,T]$, the Cauchy problem \eqref{eq:main} admits a unique solution 
\begin{align*}
u &\in C\Big([0,T_0],H^{s-s_a,\sigma-s_a}(\R^d)\Big) 
\\
&\cap
C^1\Big([0,T_0],H^{s-s_a-1,\sigma-s_a-1}(\R^d)\Big) \cap C^2\Big([0,T_0],H^{s-s_a-2,\sigma-s_a-2}(\R^d)\Big).
\end{align*}
The loss of derivatives and decay $s_a$ 
depends on the constant $K_0$ from Proposition \ref{prop: loss_K0}.
\end{thm}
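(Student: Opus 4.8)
The strategy is to reduce everything to the mapping properties, on the Sobolev--Kato scale, of the operators assembled in the previous sections and then to read off the loss index $s_a$ from the symbol class of the parametrix $Q$ in Proposition \ref{prop: loss_K0}. First, I would invoke Corollary \ref{cor_5.1}, which already represents the unique solution $U=(U_1,U_2)^T$ of the first order system \eqref{main_system} as
\[
U(t,x)=F(t,0)U_0(x)+\int_0^t F(t,s)G(s,x)\,ds,
\]
with $F(t,s)=E(t,s)+\mathcal G_\infty(t,s)$ by Lemma \ref{lem:fundsolsyst}. Since $\mathcal G_\infty$ has symbol in $W^1_\infty\big([0,T_0]^2,SG^{-\infty,-\infty}\big)$, its contribution maps into $C^1([0,T_0],\scS(\R^d))$ and is harmless; so all the analysis is about $E(t,s)$.

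Next, I would track the continuity of $E(t,s)$ on $H^{s,\sigma}$. By Proposition \ref{prop: loss_K0}, $E(t,s)=E_1(t,s)Q(t,s)$, where $E_1=E_2\,Q_1$ is a matrix of $\SG$-Fourier integral operators with phases $\phi^\mp$ simple and regular (by Lemma \ref{phase_inhomogeneous}, the estimates \eqref{eq:phase_estimate_pseudo}--\eqref{eq:phase_estimate_hyp} give the required simple/regular bounds for $T_0$ small) and amplitudes in $L_\infty\big([0,T_0]^2,S^{0,0}\big)$ (Propositions \ref{prop_E2}, \ref{prop_E4}), while $Q(t,s)$ is a matrix of pseudodifferential operators with symbol in $L_\infty\big([0,T_0]^2,S^{K_0,K_0}_{(\varepsilon)}\big)\cap W^1_\infty\big([0,T_0]^2,S^{K_0+1+\varepsilon,K_0+1+\varepsilon}_{(\varepsilon)}\big)$. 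By Theorem \ref{regular_fio} the FIOs $E_2$ are bounded on every $H^{s,\sigma}$, hence so is $E_1$, while $\Op(S^{K_0,K_0}_{(\varepsilon)})$ maps $H^{s,\sigma}\to H^{s-K_0,\sigma-K_0}$ (the $\varepsilon$ only costs an arbitrarily small extra amount, absorbed into the loss). Composing, $E(t,0)$ maps $H^{s,\sigma}\to H^{s-s_a,\sigma-s_a}$ with $s_a=K_0+\varepsilon$ (possibly after enlarging $s_a$ by a fixed amount coming from the matrix block structure and the order-$(1,1)$ of $H(0)$, since $U_0=([\Op(h(0))\phi],-i\psi)^T$ and $\Op(h(0))$ has order $(1,1)$; this is why one needs $\varphi\in H^{s,\sigma}$ and $\psi\in H^{s-1,\sigma-1}$). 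The Duhamel integral is handled the same way, using $g\in C([0,T],H^{s,\sigma})$ and integrating the uniform-in-$s$ bounds.

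Then I would descend from the system back to \eqref{eq:main}. Recall $U_1(t)=\Op(h(t))u(t)$ with $h(t)$ hypoelliptic and, by \eqref{eq:h_elliptic}, satisfying $h(t,x,\xi)\le C\langle x\rangle\langle\xi\rangle$ together with the two-sided bounds; its parametrix $\Op(h(t))^\sharp$ exists and maps $H^{s-s_a,\sigma-s_a}\to H^{s-s_a+1,\sigma-s_a+1}$ in the worst case, but since we only claim $u\in C([0,T_0],H^{s-s_a,\sigma-s_a})$ the crude estimate $u(t)=\Op(h(t))^\sharp U_1(t)$ combined with $\Op(h(t))^\sharp\in\Op(S^{0,0})$ on the hyperbolic zone (and the $S^\pd$ control near $t=0$) suffices; then $D_tu=U_2$ gives the $C^1$ statement with one extra unit of loss, and differentiating \eqref{eq:main} once more, using $L$ of order $(2,2)$ and the coefficient bounds \eqref{eq:coefficients_assumptions}, gives the $C^2$ statement with two extra units, exactly as in the claimed regularity chain. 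The continuity in $t$ of each term follows from the $W^1_\infty$-in-$(t,s)$ control of the symbols and the strong continuity of the FIO families, which is built into the construction; smallness of $T_0$ is whatever is required by Lemmas \ref{hamiltonian_flow1}, \ref{lem:hamiltonian_path}, \ref{lem:hamiltonian_inverse}, \ref{sym_log} and Theorem \ref{egorov}.

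\textbf{Main obstacle.} The delicate point is not the functional-analytic bookkeeping but pinning down $s_a$ precisely, i.e.\ showing that the loss index is genuinely governed by $K_0$ from \eqref{eq:K0} and not worse. This requires that the ``bad'' contributions — the pseudodifferential zone $Z_\pd(2N)$, where $\si(R)$ only satisfies the weak bound \eqref{eq:B1_bound_pd} involving $\rho+\partial_t\rho/\rho$, and the oscillation subzone $Z_\osc(2N)$ — produce only a \emph{finite}, integrable-in-$t$ defect, which is exactly the content of the three integral estimates for $g_p$ preceding Proposition \ref{prop: loss_K0} and of Lemma \ref{lem:QsigmaR}. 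So the real work is to verify that assembling $E_1(s,t)\mathcal B_\infty(t) E_1(t,s)$ via Theorem \ref{thm:composition}, and then solving \eqref{eq:lemYag} by Lemma \ref{lem:QsigmaR}, yields a symbol of order $(K_0,K_0)$ with only logarithmic excess in the derivatives — and that the same $K_0$ controls both the loss of Sobolev regularity and the loss of decay, which is the genuinely $\SG$-specific feature highlighted in the introduction and in Example \ref{ex:oprnohyp}.
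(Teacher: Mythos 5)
Your proposal matches the paper's own (very terse) proof essentially verbatim in structure: split $F=E+\mathcal G_\infty$ via Lemma \ref{lem:fundsolsyst}, use Duhamel (Corollary \ref{cor_5.1}), apply the Sobolev--Kato boundedness of the $\SG$-FIO factor $E_1$ (Theorem \ref{regular_fio}, Propositions \ref{prop_E2}, \ref{prop_E4}) and of the order-$(K_0,K_0)$ pseudodifferential factor $Q$ (Proposition \ref{prop: loss_K0}), and then pass from $U$ back to $u$ through $\Op(h)$ and $D_t$. The only point worth polishing in a final write-up is the passage from $U_1=\Op(h)u$ to $u$: rather than saying $\Op(h)^\sharp\in\Op(S^{0,0})$, it is cleaner to observe that by the lower bound in \eqref{eq:h_elliptic} the symbol $h$ is globally hypoelliptic with $h\gtrsim 1$, so $\Op(h)^\sharp$ is at worst order $(0,0)$ uniformly in $t$, which is exactly what is needed for $u\in C([0,T_0],H^{s-s_a,\sigma-s_a})$; the extra unit losses for $D_tu$ and $D_t^2u$ then come from $U_2$ and from the equation itself as you say.
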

\begin{proof}
The claim follows immediately by the decomposition $F=E+\mathcal{G}_\infty$ of the fundamental solution of the system \eqref{main_system} from Lemma \ref{lem:fundsolsyst}, 
the relation between its solution $U$ and the solution $u$ of \eqref{eq:main}, the mapping properties of $\SG$ pseudodifferential and Fourier integral operators on the Sobolev-Kato 
spaces, and the properties of $E$, proved in Section \ref{sec:param}.
\end{proof}

Since $\scS(\R^{d})=H^{\infty,\infty}(\mathbb{R}^{d})=\cap_{s,\sigma\in \mathbb{R}}H^{s,\sigma}(\R^d)$, from Theorem \ref{cor_5.2} we deduce our
second and final main result, the next Theorem \ref{cor_5.3}. 

\begin{thm} [$\scS(\R^{d})$-wellposedness]\label{cor_5.3}
Consider the Cauchy problem \eqref{eq:main}, where the coefficients $a_j,b_j$, $j=1,\dots,d$, and $c$ satisfy the assumptions in Proposition \ref{prop: equivalence} (A). 
Assume also that the initial data satisfy $\va,\psi \in\scS(\R^{d})$ and that $g \in C\Big([0,T],\scS(\R^{d})\Big)$. 
Then, for a suitably small $T_0\in(0,T]$, the Cauchy problem \eqref{eq:main} admits a unique solution $u \in C^2\Big([0,T_0],\scS(\R^{d})\Big)$.
\end{thm}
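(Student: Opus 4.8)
The plan is to deduce Theorem~\ref{cor_5.3} directly from Theorem~\ref{cor_5.2} by exploiting the characterization $\scS(\R^d)=\bigcap_{s,\sigma\in\R}H^{s,\sigma}(\R^d)$ together with the uniqueness already established in the Sobolev--Kato scale. First I would fix initial data $\va,\psi\in\scS(\R^d)$ and a right-hand side $g\in C([0,T],\scS(\R^d))$. Since $\scS(\R^d)\subset H^{s,\sigma}(\R^d)$ for every $s,\sigma$, and $g(t)\in\scS(\R^d)$ continuously in $t$ (in particular $g\in C([0,T],H^{s,\sigma}(\R^d))$ for all $s,\sigma$), the hypotheses of Theorem~\ref{cor_5.2} are met for arbitrarily large $s,\sigma$. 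Applying that theorem, for each such pair we obtain, on a common small interval $[0,T_0]$, a unique solution
\[
u_{s,\sigma}\in C\big([0,T_0],H^{s-s_a,\sigma-s_a}\big)\cap C^1\big([0,T_0],H^{s-s_a-1,\sigma-s_a-1}\big)\cap C^2\big([0,T_0],H^{s-s_a-2,\sigma-s_a-2}\big),
\]
where the loss $s_a$ is a fixed constant depending only on $K_0$ (hence independent of $s,\sigma$). The key point is that $T_0$ and $s_a$ do not depend on $s,\sigma$: this is visible from the construction, since the parametrix $E(t,s)$ and the smoothing remainder $\mathcal G_\infty(t,s)$ in Lemma~\ref{lem:fundsolsyst} are built once and for all, and the loss $K_0$ comes solely from the zone analysis, cf. Proposition~\ref{prop: loss_K0} and \eqref{eq:K0}.

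Next I would invoke uniqueness to glue these solutions. By the uniqueness part of Theorem~\ref{cor_5.2} (equivalently, by uniqueness for the first-order system \eqref{main_system} in the Sobolev--Kato scale, which follows by standard energy arguments as in \cite{Cord}), whenever $s'\le s$ and $\sigma'\le\sigma$ the solution $u_{s,\sigma}$ also solves the Cauchy problem in the larger space $H^{s'-s_a,\sigma'-s_a}$, hence coincides with $u_{s',\sigma'}$ there. Therefore all the $u_{s,\sigma}$ agree on $[0,T_0]$ and define a single function $u$ with
\[
u\in\bigcap_{s,\sigma}C\big([0,T_0],H^{s-s_a,\sigma-s_a}\big)=\bigcap_{s,\sigma}C\big([0,T_0],H^{s,\sigma}\big)=C\big([0,T_0],\scS(\R^d)\big),
\]
using that the loss $s_a$ is a fixed shift and may be absorbed by letting $s,\sigma\to\infty$. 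The same reasoning applied to the $C^1$ and $C^2$ statements yields $u\in C^1([0,T_0],\scS(\R^d))$ and $u\in C^2([0,T_0],\scS(\R^d))$. Finally, $Lu=g$ holds in $C([0,T_0],\scS(\R^d))$ because the differential operator $L$ maps $\scS(\R^d)$ continuously into itself with $t$-smooth (indeed continuous, by the coefficient assumptions in Proposition~\ref{prop: equivalence}~(A)) dependence, and the initial conditions $u(0,\cdot)=\va$, $u_t(0,\cdot)=\psi$ persist in $\scS(\R^d)$ since the convergence above is in every $H^{s,\sigma}$.

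The uniqueness of $u$ in $C^2([0,T_0],\scS(\R^d))$ is immediate: any such solution is in particular a Sobolev--Kato solution for every $s,\sigma$, so it agrees with the constructed $u$ by the uniqueness in Theorem~\ref{cor_5.2}. The only genuinely non-trivial point I expect to need care with is the $s,\sigma$-independence of the pair $(T_0,s_a)$ and the compatibility of the solutions across the scale; both are essentially already packaged in the statement of Theorem~\ref{cor_5.2} and in the parametrix construction of Section~\ref{sec:param}, so the present proof is short. I would close by noting that, as in \cite{Cord}, the same argument run in $\scSp(\R^d)=\bigcup_{s,\sigma}H^{s,\sigma}(\R^d)$ gives $\scSp$-wellposedness as well, though this is not needed for the stated theorem.
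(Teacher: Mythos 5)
Your proof is correct and takes essentially the same route as the paper, which simply notes that $\scS(\R^d)=\bigcap_{s,\sigma}H^{s,\sigma}(\R^d)$ and invokes Theorem~\ref{cor_5.2}; you have merely made explicit the gluing via uniqueness and the $(s,\sigma)$-independence of $T_0$ and $s_a$, which the paper leaves implicit.
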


\begin{exmp}
Choose a shape function $\lambda$, satisfying the hypotheses described in Section \ref{subs:lambda}, and
consider the Cauchy problem
\begin{equation}\label{eq:exmplfinal}
	\begin{cases}
		\displaystyle\partial_{tt} u(t,x) + \lambda(t)^2\left[2+\cos\ln\!\left(\frac{1}{\Lambda(t)}\right)\right](1+|x|^2)(1-\Delta_x)u(t,x)=0 \\
		u(0,x)=\phi(x), \quad u_t(0,x)=\psi(x),\rule{0mm}{7mm}
	\end{cases}
\end{equation}
$t\in[0,T]$, $x\in\R^d$. For the operator in \eqref{eq:exmplfinal} we have
\begin{align*}
	a_j(t,x)&=\lambda(t)^2\left[2+\cos\ln\!\left(\frac{1}{\Lambda(t)}\right)\right]\jap{x}^2=c(t,x), 
	\\
	b_j(t,x)&=0, \quad j=1,\dots,d,
\end{align*}
so that
\[
	a(t,x,\xi)=\lambda(t)^2\left[2+\cos\ln\!\left(\frac{1}{\Lambda(t)}\right)\right]\jap{x}^2\jap{\xi}^2.
\]
Evaluating explicitely the two roots $\tau_1$ and $\tau_2$ of the complete symbol $a(t,x,\xi)$ it is easy to derive that Assumption \textbf{(H)} in Proposition \ref{prop: equivalence}
holds true, and then  our theory applies to the operator in \eqref{eq:exmplfinal}. Notice that, with the same approach used in the proof of Proposition \ref{prop: equivalence} it is also possible to prove that the coefficient  $c(t,x)$ satisfies
\[ |D_t^k D_x^\alpha c(t,x)|\leq C_{k\alpha}\lambda(t)^2 \<x\>^{-|\alpha|} \bigg(\frac{\ln\lambda(t)}{\Lambda(t)}\bigg)^2 \bigg(\frac{\lambda(t)}{\Lambda(t)}\ln\bigg(\frac{1}{\Lambda(t)}\bigg)\bigg) ^k,\]
for suitable positive constants $C_{k\alpha}$ depending on $k\in \N$ and $\alpha\in \N^d$. In particular,  in view of the presence of the oscillating $t$-dependent factor, here logarithms indeed appear in the coefficients estimates.

\end{exmp}

%\appendix 
%\setcounter{equation}{0}
%\input{Appendix}

%
\end{document}